\theoremstyle{remark}
\newtheorem{remark}{Remark}
\theoremstyle{plain}
\newtheorem{theorem}{Theorem}
\theoremstyle{plain}
\newtheorem{prop}{Proposition}
\theoremstyle{plain}
\newtheorem{lemma}{Lemma}
\theoremstyle{plain}
\newtheorem{cor}{Corollary}
\theoremstyle{definition}
\newtheorem{defin}{Definition}
\theoremstyle{definition}
\newtheorem{example}{Example}
\pretocmd{\proof}{\setlength{\parindent}{0pt}}{}{}
\numberwithin{equation}{section}
\begin{document}
\title{Wide neural networks with general weights:\\
convergence rate and explicit dependence on the hyper-parameters}
\author{Lucia Celli}

\date{} 
\affil[]{Department of Mathematics, University of Luxembourg}

\maketitle
\begin{abstract}
    Using Stein's method techniques introduced by Chatterjee (2008) and further extended by Kasprzak and Peccati (2022) and by Lachi\`eze-Rey and Peccati (2017), we derive novel quantitative bounds on the convergence in distribution of feed-forward fully connected neural networks (with Lipschitz activation functions) towards Gaussian processes, as the hidden layer width $n$ tends to infinity. We consider networks initialized with independent and identically distributed (i.i.d.) weights possessing sufficiently many finite moments, and i.i.d.\ Gaussian biases independent of the weights.

Specifically, when the network is evaluated at a single input, we obtain convergence rates of order $O(n^{-1/2})$ in both total variation and Wasserstein distances. When evaluated at a general finite collection of inputs, we establish bounds of the same order in terms of the convex distance. All bounds are given in explicit and computable form.

As a consequence of our estimates, we also deduce a novel convergence result in the regime where the depth of the neural network increases simultaneously with the width $n$, up to order $O\big((\log_2 n)^{1/3}\big)$. To the best of our knowledge, this is the first CLT in the infinite width/depth limit holding for general (nonlinear) Lipschitz activation functions and non-Gaussian weight distributions.

Our analysis yields several results of independent interest, including: (i) an explicit lower bound on the determinant of the limiting covariance matrix and (ii) new advances in Stein's method, both for the one-dimensional Stein's equation associated with the square of a Lipschitz function and for the multivariate Stein's equation associated with the tensor product of a Lipschitz function with itself.

 \noindent{\bf Keywords:} Non-Gaussian Initialization; Limit Theorems; Neural Networks; Convex Distance; Wasserstein Distance; Total Variation Distance; Stein's Method; Infinite Depth–Width Regime; Covariance Matrix Lower Bound
\\
\noindent{\bf AMS classification:} 60F05, 60G15, 60G60, 68T07.
\end{abstract}

\tableofcontents

\section{Introduction}
\subsection{Overview}
In recent years, many papers have studied the problem of assessing the discrepancy between a neural network with Gaussian initialization and its limit in law when the inner widths diverge to infinity (see \cite{BT24,FHMNP,Torr23,Trev,CP25,BFF24,CC23,CMSV24,BR25}). Such a direction of research is motivated by applications to Bayesian neural networks (see \cite{Trev,CP25}), and has deep connections with the analysis of the so-called NTK regime (see \cite{CC23, Ye}). The inability of infinitely wide feed-forward neural networks with Gaussian initialization to learn data-dependent features (see \cite{COB19,YH20}) has motivated researchers to investigate alternative models. These include networks with different hyper-parameters scaling regimes (see e.g. \cite{MFBV25,G20,GSJW20}), alternative limiting procedures (e.g., \cite{NR21,CKZ23,HZ23,PAPGGR23,LNR23,BLR24,HN18}), or non-Gaussian initializations (see \cite{SMCG13,HXP20,FGAOWTYWA21}).

In this work we focus on the latter area and study the discrepancy between the infinite-width limit and the distribution of a fully connected neural network with independent and identically distributed (i.i.d.) parameter initialization and finite hidden-layer widths.
{
The main results of this paper (Theorems \ref{finale_uno_1} and \ref{pres_sec_prob}) are obtained through a version of Stein's method for functionals of independent random elements: such an approach, first introduced in \cite{C08} and then extended in \cite{KG22, LG17} uses difference operators built from independent copies of the underlying random quantities. The method developed in \cite{C08, KG22, LG17} is natural in our setting, which deals with functionals of conditionally independent random variables or vectors. Moreover, in the classical case of a sum of $n\in\mathbb{N}$ independent random variables or vectors, the previously mentioned papers provide an optimal convergence rate in $n$ to a Gaussian distribution.  
}

We proceed with an informal formulation of our main theorem, which extends the results of \cite{BT24,Trev,FHMNP,CP25,Torr23,BFF24} to the case of non-Gaussian weights. All necessary definitions can be found in Section~\ref{prel_sec}.

\begin{theorem}[Informal version of Theorems \ref{finale_uno_1} and \ref{pres_sec_prob}]\label{inf_th_conv_n}
    Let \( z^{(L+1)}(\mathcal{X}) \) be the output of a fully connected feedforward neural network evaluated on a finite set of distinct inputs \( \mathcal{X} = \{x^{(1)}, \dots, x^{(d)}\} \). Assume the network has i.i.d.\ Gaussian biases with variance \( C_b \neq 0 \), i.i.d.\ weights with sufficiently many finite moments, inner widths equal to \( n \in \mathbb{N} \), and a Lipschitz activation function.

    Then:
    \begin{itemize}
        \item If \( d = 1 \), the distribution of the neural network evaluated in one input converges to a Gaussian law as \( n \to \infty \), with a rate of convergence in both the Kolmogorov and Wasserstein distances of order \( O\left(\frac{1}{\sqrt{n}}\right) \).
        \item If \( d > 1 \), then \( z^{(L+1)}(\mathcal{X}) \) converges in law to a $d$-dimensional Gaussian vector in the convex distance, with the same rate \( O\left(\frac{1}{\sqrt{n}}\right) \), provided that adequate additional assumptions are satisfied by the derivative of the activation function and by the covariance matrix of the limiting Gaussian distribution.
    \end{itemize}
   Furthermore, the constants involved in the above bounds are explicitly given in terms of the neural network's hyper-parameters.
\end{theorem}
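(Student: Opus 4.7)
The plan is to exploit the recursive structure of a feed-forward network and to apply the version of Stein's method for functionals of independent random elements developed in \cite{C08, KG22, LG17}. The key observation is that, conditionally on the pre-activations of the last hidden layer, the output vector $z^{(L+1)}(\mathcal{X})$ is, up to a Gaussian bias, a sum of $n$ independent random vectors indexed by the neurons of that layer. Accordingly, I would proceed by induction on the depth $L$: at each step I compare the conditional law of $z^{(L+1)}(\mathcal{X})$ (given the preceding layers) with the centred Gaussian whose covariance is the empirical conditional covariance $\widehat{K}^{(L+1)}$, and then compare this Gaussian with the limit $\mathcal{N}(0,K^{(L+1)})$.

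For the single-input case ($d=1$), the first comparison is handled by the one-dimensional quantitative bounds of \cite{C08, LG17} applied to the conditionally i.i.d.\ sum, simultaneously in the Kolmogorov and Wasserstein distances; this contributes a term of order $1/\sqrt{n}$ whose prefactor involves a ratio of moments of the weights and is finite under the moment assumption. The second comparison, between two centred Gaussians, is controlled by $|\widehat{K}^{(L+1)} - K^{(L+1)}|$, that is by the speed of convergence of $\frac{1}{n}\sum_i \sigma(z^{(L)}_i(x))^2$ to its expectation. Since $\sigma^2$ is a square of a Lipschitz function rather than itself Lipschitz, the refined Stein's bound for this class of integrands announced in the abstract is crucial in order to preserve the rate $1/\sqrt{n}$. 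Invoking the induction hypothesis on the distribution of $z^{(L)}$ then closes the recursion, and a careful tracking of how the constants accumulate with $L$ yields the joint depth--width regime up to $L \asymp (\log_2 n)^{1/3}$.

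For the multi-input case ($d>1$) the architecture of the argument is unchanged, but the one-dimensional Stein step is replaced by its multivariate analogue for the convex distance developed in \cite{KG22}. Two new ingredients are then required: (i) invertibility of $K^{(L+1)}$ together with an explicit lower bound on its determinant (or smallest eigenvalue), needed for the Stein solution to be well-behaved; and (ii) a quantitative Stein estimate for the tensor product of a Lipschitz function with itself, which replaces the square-of-a-Lipschitz bound used when $d=1$. Both points are precisely the auxiliary results the abstract advertises. The lower bound on $\det K^{(L+1)}$ would itself be proved by a layerwise argument, showing that non-degeneracy of the input Gram matrix propagates through $\sigma$ under the stated hypotheses on $\sigma'$.

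The main difficulty lies in the multivariate step. Producing an \emph{explicit} lower bound on $\det K^{(L+1)}$ couples the geometry of the inputs, the analytic properties of $\sigma$, and the non-linear transformation of approximately Gaussian vectors at each layer; a non-sharp estimate there would either destroy the $1/\sqrt{n}$ rate or prevent the depth induction from closing. A secondary delicate task is bookkeeping the explicit constants so that the total factor depending on $L$ remains sub-exponential in $L$, which is what ultimately permits the $(\log_2 n)^{1/3}$ joint depth--width regime.
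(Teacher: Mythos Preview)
Your plan matches the paper's approach: condition on $\mathcal{F}_L$, apply the Stein bounds of \cite{C08,LG17,KG22} (modified via Gaussian integration by parts to absorb the bias), reduce to controlling $Q_2^{(L)}=\mathbb{E}\big[(\widehat K^{(L+1)}-K^{(L+1)})^2\big]$, and handle this by a layerwise Stein recursion for $\sigma^2$ (resp.\ $\sigma\otimes\sigma$ when $d>1$), together with the Cacoullos-based lower bound on $\det K^{(L+1)}$. One refinement worth noting: the paper's recursion reads $Q_{2p}^{(\ell)}\lesssim n_\ell^{-p}+c\,\sqrt{Q_{4p}^{(\ell-1)}}$, so the moment order \emph{doubles} at each layer---this is why $5\cdot 2^{L+1}$ moments of $W$ are required and the constants grow like $2^{O(L^3)}$ (not sub-exponentially in $L$), which is precisely what restricts the joint regime to $L\lesssim(\log_2 n)^{1/3}$.
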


\begin{remark}{\rm For neural networks evaluated on a finite number of inputs $d>1$, alternative approaches could improve the dependence on $d$. For example, \cite{FK21} gives finer bounds measured via hyper-rectangles, \cite{FK22} provides multidimensional Wasserstein distance results  (see Definition \ref{Wdist}) that are optimal in $n$ for independent entries, and \cite{FKLZ23} treats cases where the Gaussian limit has a singular covariance but satisfies non-degeneracy conditions on $2\times 2$ and $3\times 3$ sub-matrices, still yielding bounds optimal in $n$, but with a worse dependence on $d$ compared to the case with a non-singular covariance matrix.
These extensions are left for future work. Improving the dependence on $d$ is challenging, not only due to the initial distance bounds, but also because of lower bounds on the minimal eigenvalue of the limiting covariance matrix, when required.

}
\end{remark}

The availability of explicit bounds on the distances between the law of the neural network and that of the Gaussian limit enables us to analyze the joint limit in which both the depth \( L \) and the hidden-layer widths tend to infinity. The next corollary informally summarizes the content of Corollary~\ref{rem_L_inf_1} and~\ref{rem_L_inf_d}, as stated in Section \ref{main_results_sec}.

\begin{cor}\label{cor_intr_gen_Lninf}
Under the assumptions and notations of Theorem~\ref{inf_th_conv_n}, suppose that the \( p \)-th moments of the weights are bounded by a polynomial function of \( p \), and that the depth satisfies
\begin{equation}\label{cond_L}
L = O\left((\log_2 n)^{\frac{1}{3}}\right).
\end{equation}
Then:
\begin{itemize}
    \item If \( d = 1 \),  as \( n \to \infty \) the Kolmogorov and Wasserstein distances between the law of the output of the neural network evaluated at a single input and that of its infinite-width Gaussian limit converge to zero. 
    
    \item If \( d > 1 \), as \( n \to \infty \) the convex distance between the law of \( z^{(L+1)}(\mathcal{X}) \) and that of the infinite-width Gaussian limit converges to zero, provided that an additional assumption is satisfied by the expected value of the derivative of the activation function under the infinite-width Gaussian law. 
\end{itemize}
\end{cor}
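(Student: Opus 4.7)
The plan is to derive the corollary as a direct consequence of the explicit bounds of Theorems \ref{finale_uno_1} and \ref{pres_sec_prob}. Those theorems provide bounds of the form
\[
\mathrm{dist}\bigl(\mathrm{law}(z^{(L+1)}(\mathcal{X})),\,\mathcal{N}\bigr) \le \frac{C(L;\mu_p)}{\sqrt{n}},
\]
where $C(L;\mu_p)$ is given in closed form in terms of the depth $L$ and of the moments $\mu_p = \mathbb{E}|W|^p$ of the weights. The task reduces to verifying that, under the polynomial moment assumption and under (\ref{cond_L}), the factor $C(L;\mu_p)$ grows sufficiently slowly in $n$ to be absorbed by $n^{-1/2}$.

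First I would isolate the exact dependence of $C(L;\mu_p)$ on $L$ and on the relevant moment orders. In a feedforward network, the variances and higher moments of the pre-activations at layer $L+1$ are obtained through $L$ nested compositions involving the activation function; consequently the natural upper bounds grow exponentially in $L$, and they involve moments of the weights up to some order $p = p(L)$ that itself grows with $L$ (roughly linearly or quadratically, depending on whether one is computing the convex-distance or the total-variation bound). Under the polynomial moment assumption $\mu_p \le K p^c$ for some $K, c > 0$, each such factor contributes at most $p^{cL}$, so the combined constant can be bounded above, up to polylog factors, by $\exp(\alpha L^{3})$ for an explicit constant $\alpha > 0$ coming from the product of three $L$-linear contributions (the depth, the growing moment order, and the polynomial bound on $\mu_p$).

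Plugging in (\ref{cond_L}) gives $L^3 = O(\log_2 n)$, so that $\exp(\alpha L^{3}) = n^{\alpha/\ln 2}$. Choosing the implicit constant in the $O\bigl((\log_2 n)^{1/3}\bigr)$ sufficiently small, one obtains $\exp(\alpha L^3) = o(\sqrt{n})$, hence the bound tends to zero. This argument handles the case $d = 1$ in both the Kolmogorov and the Wasserstein distance. For $d > 1$, the very same strategy works for the convex distance, with the additional ingredient that the explicit lower bound on the determinant of the limiting covariance matrix announced in the abstract (and used in Theorem \ref{pres_sec_prob}) has to be controlled uniformly as $L$ grows; this is where the additional hypothesis on $\mathbb{E}[\sigma'(\cdot)]$ under the infinite-width Gaussian law is needed, since it produces a lower bound on the smallest eigenvalue of the limiting covariance matrix that either does not deteriorate with $L$ or deteriorates at most at a controlled rate.

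The main obstacle is precisely the careful bookkeeping of how each constant in Theorems \ref{finale_uno_1} and \ref{pres_sec_prob} scales with $L$: one must verify that the worst factor grows no faster than $\exp(\alpha L^3)$, so that the cubic exponent matches the exponent $1/3$ appearing in (\ref{cond_L}). In the multivariate case, the delicate point is to show that, under the derivative hypothesis, the minimal eigenvalue of the limiting covariance matrix is bounded below by a quantity whose inverse can be absorbed into this same exponential factor; a uniform-in-$L$ bound is ideal, but any decay of order $\exp(-\beta L^3)$ with $\beta$ strictly smaller than $\alpha$ would still suffice, since it only affects the implicit constant in (\ref{cond_L}).
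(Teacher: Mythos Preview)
Your strategy matches the paper's: the corollary follows by inserting the moment hypothesis into the explicit constants of Theorems~\ref{finale_uno_1} and~\ref{pres_sec_prob} and checking that the resulting $L$-dependent prefactor grows like $2^{O(L^3)}$, which is dominated by $n^{1/2}$ under~\eqref{cond_L}. The paper records this same growth in Corollaries~\ref{rem_L_inf_1} and~\ref{rem_L_inf_d} as $\sqrt{L/n}\cdot 2^{R L^2\log_2(5\cdot 2^{L+1}/\log_2(5\cdot 2^{L+1}))}$; the inner $\log_2$ is $\Theta(L)$, giving $2^{\Theta(L^3)}$. Your treatment of the multivariate case (controlling the minimal eigenvalue via the derivative hypothesis) is also in line with the paper's condition~\eqref{cond_der_s_inG}.

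However, your accounting of \emph{why} the exponent is cubic contains a genuine error. The moment order required in Theorems~\ref{finale_uno_1} and~\ref{pres_sec_prob} is not linear or quadratic in $L$: it is $p=5\cdot 2^{L+1}$ (resp.\ $5\cdot 2^{L+2}$), i.e.\ exponential. This is precisely why the paper phrases the moment condition as $\mathbb{E}[W^{5\cdot 2^{L+1}}]^{1/(5\cdot 2^{L+1})}\le C_1\, 2^{C_2 L}$ (equation~\eqref{cond_W_conv}): under a polynomial bound $\mu_p^{1/p}\le K p^c$, the $p$-th root moment at $p\sim 2^L$ is $\sim 2^{cL}$. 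The cube then arises because a factor of size $2^{O(L)}$ (coming both from the Rosenthal constant $p/\log p$ in Lemma~\ref{mom_p_iid0} and from $\mu_p^{1/p}$) is raised to power $L$ inside the recursion and then to a further power $\sim L$ in the iterated bound on $Q_{2p}^{(\ell)}$, yielding $2^{O(L^3)}$. With $p$ only polynomial in $L$, as you assume, the same bookkeeping would give $2^{O(L^2\log L)}$ and~\eqref{cond_L} could be relaxed to $L=O((\log_2 n)^{1/2-o(1)})$; so while your final exponent happens to be right, the mechanism you describe would not produce it.
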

{
The condition~\eqref{cond_L}, which restricts the growth of \(L\) to be at most logarithmic in \(n\), ensures that the depth increases slowly enough with respect to the width, for the network to remain close to its Gaussian limit. Intuitively, the constrain on $L$ controls the accumulation of non-Gaussian fluctuations across layers, allowing the explicit bounds on the distances to vanish as \(n \to \infty\).

The bounds of Corollary~\ref{cor_intr_gen_Lninf} can be further refined in the case of Gaussian initialization for several activation functions. These include the perceptron with $d=1$ (see Example~\ref{es_1_g_RL}), the identity activation (see Example~\ref{ex_id_G_1d} for $d=1$ and Section~\ref{id:mult} for $d>1$), and the ReLU activation with $d=1$ (see Example~\ref{ex_relu_1d}). 
Under suitable choices of the variances of the biases and weights, we obtain upper bounds for the distances under consideration of order $\frac{1}{n}$ for the perceptron activation and of order $\frac{L}{n}$ for the other activation functions.
}
These refinements are obtained by applying the same strategy as in the proof of Theorem~\ref{inf_th_conv_n}, while additionally exploiting specific properties of the Gaussian distribution and of the activation functions where applicable. The resulting bounds are consistent with those found in the existing literature (see~\cite{BLR24, Hanin2022CorrelationFI}) 
{
and show that Corollary~\ref{cor_intr_gen_Lninf} is not sharp in several specific regimes. This lack of sharpness should be understood as a reasonable trade-off for allowing non-Gaussian parameter initialization and general Lipschitz activation functions.

When $d=1$, we are also able to improve the result under non-Gaussian initialization, provided that the activation function is bounded and Lipschitz (see Section \ref{lip_bound_s_sec}). For a suitable range of values of the variances of the biases and weights, we obtain in this case bounds of order $\frac{L}{n}$ for the distances under consideration, under weaker moment assumptions on the weights than those required in Theorem~\ref{inf_th_conv_n} and Corollary~\ref{cor_intr_gen_Lninf}.

}
 To the best of our knowledge, our work is the first to establish the Gaussian fluctuations of the fully connected neural network, as both the depth and the hidden-layer widths tend to infinity, under general assumptions on the activation function and general i.i.d.\ initialization of the network parameters.

The proofs of our main results also lead to a number of intermediate insights and technical contributions of independent interest. 
\begin{enumerate}
    \item[(i)] The proof of Theorem~\ref{inf_th_conv_n} relies on a novel application of Stein's method (see Section~\ref{prel_sec}), in which we study the classical one-dimensional and multi-dimensional Stein's equations (see, respectively, \eqref{Stein_eq_1} and \eqref{stein_eq_mult_intr} below) in the situation where the test function is either the square of a Lipschitz function ($d=1$) or the tensor product of a real Lipschitz function \( \sigma \) with itself, i.e.,
\[
h(y) := \sigma(y_i)\sigma(y_j)
\]
for any \( y \in \mathbb{R}^d \) and fixed indices \( i, j \in \{1, \dots, d\} \)\footnote{ {Although Stein's method is effectively applied in a two-dimensional setting, the proof of Theorem~\ref{inf_th_conv_n} is conceptually simpler when the underlying function is regarded as defined on $\mathbb{R}^d$ rather than on $\mathbb{R}^2$.}
}. 
A precise characterization of the regularity properties of the solutions of the Stein's equations can be found in 
Lemmas~\ref{fs} and~\ref{lipf} in Appendix~A (one-dimensional case) and 
Lemmas~\ref{sol_st_mult}, \ref{norm_inf_hess}, \ref{lemma_hess}, \ref{mom_quart_der_sec}, and~\ref{der_ter_lemma} in Appendix~B (for $d\ge 2$).

    \item[(ii)] In order to prove the multi-dimensional case of Theorem~\ref{inf_th_conv_n}, we also establish a novel lower bound on the determinant of the covariance matrix of the Gaussian limit of the neural network as the inner widths tend to infinity (see Theorem~\ref{Hanin}). This result is stated in Theorem~\ref{bound_K_gauss} and is based on a classical inequality from~\cite{Ca82}, which we restate in Proposition~\ref{Caco}.

\end{enumerate}
\subsection{Structure of the paper}
In Section~\ref{prel_sec}, we introduce the neural network model in a rigorous manner, define the probability distances considered throughout the paper, and present some basic notations related to Stein's method. The main results, along with a number of remarks, are formally stated in Section~\ref{main_results_sec}. A discussion of related literature and connections to existing work is provided in Section~\ref{literature_review_sec}.

A high-level overview of the proof of Theorem~\ref{inf_th_conv_n} is given in Section~\ref{proof_scheme_sec}, mainly to guide the reader through the main ideas. The detailed analysis of the one-dimensional case is carried out in Section~\ref{uno_d}, where we also derive moment bounds for the neural network, and present examples involving Gaussian weights and activation functions that are Lipschitz continuous and bounded.

Section~\ref{multi_dimensional_problem_sec} contains the proof of the multi-dimensional case, including a lower bound on the determinant and eigenvalues of the limiting covariance matrix, as well as an illustrative example with a linear activation function.

Appendix~A collects the technical estimates required for the one-dimensional case, including results on the Stein's equation associated with the square of a Lipschitz function. Similarly, Appendix~B gathers the auxiliary lemmas used in the multi-dimensional case, and the analysis of the Stein's equation associated with the tensor product of a Lipschitz function with itself.

\section{Preliminaries}\label{prel_sec}

For any vector $x \in \mathbb{R}^d$, we denote by 
\[
\|x\| := \left( \sum_{i=1}^d x_i^2 \right)^{1/2}
\]the Euclidean norm, and by 
\[
\langle x, y \rangle := \sum_{i=1}^d x_i y_i
\]
the standard inner product in $\mathbb{R}^d$.
For any matrix $M \in \mathbb{R}^{d \times d}$, we denote by 
\[
\|M\|_{HS} := \left( \sum_{i,j=1}^{d} M_{i,j}^2 \right)^{1/2}
\]
the Hilbert--Schmidt norm, by \[
\|M\|_{op} := \max_{\|x\| = 1} x^\top M x
\]the operator norm, by 
\[
\operatorname{tr} M := \sum_{i=1}^d M_{i,i}
\]the trace of $M$, and by 
\[
\lambda(M) := \min_{\|x\| = 1} x^\top M x
\]
the smallest eigenvalue of $M$. Given $M, N \in \mathbb{R}^{d \times d}$, we denote their Hilbert--Schmidt inner product by 
\[
\langle M, N \rangle_{HS} := \sum_{i,j=1}^d M_{i,j} N_{i,j}.
\]

If $M \in \mathbb{R}^{d \times d}$ is positive semi-definite (risp. $M \ge 0$ when $d = 1$), we denote by $\mathcal{N}_d(\mu, M)$ the Gaussian distribution on $\mathbb{R}^d$ with mean $\mu \in \mathbb{R}^d$ and covariance matrix $M$ (resp. variance). We write $X \sim \mathcal{D}$ to indicate that a random variable $X$ has distribution $\mathcal{D}$.

\subsection{Random Fully Connected Neural Networks}\label{sec:def_nn}
Neural networks are widely used in supervised learning tasks, largely thanks to the universal approximation theorems~\cite{Cybenko,Hornik,Leshno,Pinkus,GRIPENBERG,Yarotsky,lu,HS18,kidger}, which guarantee their ability to approximate a broad class of functions. In a typical supervised learning setting, one is given a collection of input-output pairs \( \{(x^{(i)}, f(x^{(i)}))\} \), and the objective is to construct a neural network that approximates the unknown function \( f \), not only on the training inputs \( x^{(i)} \) but also on unseen data (see \cite{Goodfellow,shalev}).

To achieve this, one seeks network parameters that minimize a prescribed loss function, which quantifies the discrepancy between the network's predictions and the true values. This optimization is commonly carried out using algorithms such as gradient descent starting from a random initialization of the network parameters (see e.g. \cite{Agg,RYH22,Ye}).

\,

In the present paper, we focus on a specific class of neural networks: fully connected networks with independent and identically distributed (i.i.d.) random weights at initialization and a Lipschitz continuous activation function (see similar definitions in \cite{Han22,BT24,FHMNP,BR25,Trev,Han_Gas}).

\begin{defin}[Random Fully Connected Neural Network]\label{def_NN}
Let \( C_W > 0 \) and let 

\[
 L, n_0, n_1, \dots, n_L, n_{L+1} \ge 1 \]
 be positive integers. A \emph{random fully connected neural network} is specified by a depth \( L \) (i.e., \( L \) is the number of hidden layers), by an input dimension \( n_0 \), by an output dimension \( n_{L+1} \), and by the hidden layer widths \( n_1, \dots, n_L \). The network uses an activation function \( \sigma : \mathbb{R} \to \mathbb{R} \), which is assumed to be Lipschitz continuous, with \( \|\sigma\|_{\mathrm{Lip}} \) its Lipschitz norm.

Given an input \( x \in \mathbb{R}^{n_0} \), the network output is the vector
\[
z^{(L+1)}(x) := \left(z_1^{(L+1)}(x), \dots, z_{n_{L+1}}^{(L+1)}(x)\right) \in \mathbb{R}^{n_{L+1}},
\]
whose components are defined recursively by
\begin{equation}\label{NN_expr}
\begin{cases}
z_{j}^{(1)}(x) = b_j^{(1)} + \dfrac{\sqrt{C_W}}{\sqrt{n_0}} \displaystyle\sum_{k=1}^{n_0} W_{j,k}^{(1)} x_k & \text{if } \ell = 1, \\[1.5ex]
z_{j}^{(\ell)}(x) = b_j^{(\ell)} + \dfrac{\sqrt{C_W}}{\sqrt{n_{\ell-1}}} \displaystyle\sum_{k=1}^{n_{\ell-1}} W_{j,k}^{(\ell)} \sigma\big(z_{k}^{(\ell-1)}(x)\big) & \text{for } \ell = 2, \dots, L+1,
\end{cases}
\end{equation}
for all \( j = 1, \dots, n_\ell \) and \( \ell = 1, \dots, L+1 \).

The biases \( \{b_j^{(\ell)}\} \) are independent and identically distributed (i.i.d.) random variables with distribution \( \mathcal{N}_1(0, C_b) \), for some \( C_b \ge 0 \). The weights \( \{W_{j,k}^{(\ell)}\} \) are i.i.d.\ copies of a generic random variable \( W \), independent of the biases, and satisfying
\[
\mathbb{E}[W] = 0, \qquad \mathbb{E}[W^2] = 1.
\]
We stress that \( W \) is not assumed to be Gaussian.
\end{defin}

\begin{remark}
Definition~\ref{def_NN} is tailored to the specific setting considered in this paper. More general definitions of fully connected neural networks allow for different choices of activation functions and initialization schemes (see e.g.~\cite{Han22,Agg}).

\end{remark}

The following result, which holds under more general assumptions on the activation function \( \sigma \), describes the limiting behavior of a random neural network as the widths of the hidden layers tend to infinity.

\begin{theorem}[Theorem 1.2 in \cite{Han22}]\label{Hanin}
Fix \( n_0 \) and \( n_{L+1} \), and let \( T \subseteq \mathbb{R}^{n_0} \) be a compact set. Suppose the weight distribution \( W \) has finite moments of all orders. Then, as the hidden layer widths \( n_1, \dots, n_L \to \infty \), the sequence of stochastic processes
\[
x \in \mathbb{R}^{n_0} \mapsto z^{(L+1)}(x) \in \mathbb{R}^{n_{L+1}}
\]
converges weakly in \( C^0(T, \mathbb{R}^{n_{L+1}}) \) to a centered Gaussian process with independent and identically distributed coordinates. The coordinate-wise covariance function
\[
K_{j,k}^{(L+1)} := \lim_{n_1, \dots, n_L \to \infty} \operatorname{Cov}\left(z_{i}^{(L+1)}(x^{(j)}), z_{i}^{(L+1)}(x^{(k)})\right)
\]
satisfies the following layer-wise recursion for \( \ell \ge 2 \):
\[
K_{j,k}^{(\ell)} = C_b + C_W \, \mathbb{E} \left[ \sigma\left(G_{1}^{(\ell-1)}(x^{(j)})\right) \sigma\left(G_{1}^{(\ell-1)}(x^{(k)})\right) \right],
\]
where
\[
\left( G_{1}^{(\ell-1)}(x^{(j)}), G_{1}^{(\ell-1)}(x^{(k)}) \right) \sim \mathcal{N}\left(0,
\begin{pmatrix}
K_{j,j}^{(\ell-1)} & K_{j,k}^{(\ell-1)} \\
K_{j,k}^{(\ell-1)} & K_{k,k}^{(\ell-1)}
\end{pmatrix}
\right),
\]
with initial condition
\[
K_{j,k}^{(2)}=C_b+C_W\mathbb{E}\Big[\sigma(z_{1}^{(1)}(x^{(j)}))\sigma(z_{1}^{(1)}(x^{(k)}))\Big].
\]
\end{theorem}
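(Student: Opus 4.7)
The plan is to proceed by induction on the layer index $\ell$, combining a conditional central limit theorem with a law of large numbers for the finite-dimensional distributions, and handling tightness in $C^0(T,\mathbb{R}^{n_{L+1}})$ separately via Kolmogorov's criterion.

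First I would establish convergence of the finite-dimensional distributions. Fix a finite collection of inputs $x^{(1)},\dots,x^{(d)}\in T$ and prove by induction on $\ell$ that, as $n_1,\dots,n_{\ell-1}\to\infty$, the array $(z^{(\ell)}_j(x^{(i)}))_{j\le n_\ell,\,i\le d}$ converges jointly in distribution to an array of i.i.d.\ (over $j$) centered Gaussian vectors on $\mathbb{R}^d$ with covariance matrix $K^{(\ell)}$. The base case $\ell=1$ is immediate, since no width diverges and each $z^{(1)}_j(x^{(i)})$ is an affine function of the Gaussian biases and of the centered weights, whose distribution is determined explicitly.

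For the inductive step, condition on the $\sigma$-algebra $\mathcal{F}_\ell$ generated by $\{b^{(m)},W^{(m)}:m\le \ell\}$. Given $\mathcal{F}_\ell$, the random variable
\[
z^{(\ell+1)}_j(x^{(i)})=b_j^{(\ell+1)}+\frac{\sqrt{C_W}}{\sqrt{n_\ell}}\sum_{k=1}^{n_\ell}W_{j,k}^{(\ell+1)}\sigma\big(z_k^{(\ell)}(x^{(i)})\big)
\]
is a weighted sum of the independent (non-Gaussian) weights $W_{j,k}^{(\ell+1)}$ plus an independent Gaussian bias. A Lindeberg--Feller CLT applied conditionally on $\mathcal{F}_\ell$, using the finite moments of $W$, shows that the conditional law of $(z^{(\ell+1)}_j(x^{(i)}))_i$ converges to a centered Gaussian with random conditional covariance
\[
\widehat K^{(\ell+1)}_{i,i'}:=C_b+\frac{C_W}{n_\ell}\sum_{k=1}^{n_\ell}\sigma\big(z_k^{(\ell)}(x^{(i)})\big)\sigma\big(z_k^{(\ell)}(x^{(i')})\big),
\]
independently across the output index $j$. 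By the inductive hypothesis, the summands are asymptotically i.i.d.\ copies of $\sigma(G_1^{(\ell)}(x^{(i)}))\sigma(G_1^{(\ell)}(x^{(i')}))$, and a law of large numbers argument gives $\widehat K^{(\ell+1)}_{i,i'}\to K^{(\ell+1)}_{i,i'}$ in probability. Passing to the unconditional law (e.g.\ via characteristic functions or bounded Lipschitz testing) combines the two ingredients and closes the induction, also yielding the recursion for $K_{j,k}^{(\ell)}$.

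For tightness in $C^0(T,\mathbb{R}^{n_{L+1}})$, I would prove by induction on $\ell$ uniform-in-$n$ estimates of the form $\mathbb{E}\,\|z^{(\ell+1)}(x)-z^{(\ell+1)}(y)\|^{2p}\le C_p\,\|x-y\|^{2p}$, using the Lipschitz continuity of $\sigma$, Rosenthal-type inequalities, and the finiteness of all moments of $W$; then invoke Kolmogorov's continuity criterion. The main obstacle is the conditional CLT step in the presence of a conditional covariance that itself converges only in probability: one must carefully interchange the Gaussian approximation with the law of large numbers, and uniformly control moments of the pre-activations across layers, which is precisely why all moments of $W$ are assumed finite. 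Tightness propagates through an inductive moment bound whose constants may depend on $\ell$, but this is harmless since $L$ is fixed.
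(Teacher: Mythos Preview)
The paper does not prove this statement: Theorem~\ref{Hanin} is quoted verbatim as ``Theorem~1.2 in \cite{Han22}'' and is used throughout as a known background result, so there is no proof in the paper to compare against. Your sketch is the standard route (and essentially the one taken in \cite{Han22}): induction on the layer, a conditional Lindeberg--Feller CLT given $\mathcal{F}_\ell$ producing a Gaussian with random covariance $\widehat K^{(\ell+1)}$, a law of large numbers to show $\widehat K^{(\ell+1)}\to K^{(\ell+1)}$, and tightness via Kolmogorov's criterion using Lipschitz $\sigma$ and moment bounds on $W$. The one place your outline is slightly loose is the inductive LLN step: at finite $n_\ell$ the summands $\sigma(z_k^{(\ell)}(x^{(i)}))\sigma(z_k^{(\ell)}(x^{(i')}))$ are not i.i.d.\ but only exchangeable and conditionally i.i.d.\ given $\mathcal{F}_{\ell-1}$, so one must combine the inductive distributional convergence with a uniform-integrability/second-moment argument (or a Skorokhod representation) to pass from ``asymptotically i.i.d.'' to an actual LLN; this is routine but should be stated explicitly.
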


The previous result is key to understanding the convergence of gradient-based training algorithms. When all hidden layer widths tend to infinity, the training dynamics of the neural network becomes indistinguishable from its linearization. As shown in~\cite{Jacot,arora_2020,CC23,EAT25}, the parameter evolution of such a linearizated network is entirely determined by the \emph{Neural Tangent Kernel} (NTK), defined as follows.
\begin{defin}
Let \( \Theta := \{W_{r,s}^{(\ell)},\, b_r^{(\ell)}\}_{\ell, r, s} \) be the set of all trainable parameters as in Definition \ref{def_NN}. Given inputs \( \{x^{(1)}, \dots, x^{(n)}\} \), the NTK matrix \( \Sigma \in \mathbb{R}^{n \times n} \) is defined as
\[
\Sigma_{i,j} := \left\langle \nabla_\Theta z^{(L+1)}(x^{(i)}; \Theta),\; \nabla_\Theta z^{(L+1)}(x^{(j)}; \Theta) \right\rangle,
\]
where \( \nabla_\Theta \) denotes the gradient with respect to all parameters.
\end{defin}
In the infinite-width regime, \( \Sigma \) converges to a deterministic kernel that remains constant during training, and the network output evolves accordingly along a linear trajectory defined by this kernel (see e.g. \cite{Jacot,arora_2020,EAT25}).

\subsection{Probability metrics}
Theorem~\ref{Hanin} provides a qualitative Central Limit Theorem (CLT) for the distribution of a fully connected neural network with i.i.d.\ initialization. However, it does not yield a quantitative estimate of the discrepancy between the law of the neural network and that of its Gaussian limit. 

In this paper, we address this gap by studying a quantitative version of the above Central Limit Theorem, focusing in particular on the following distances between probability measures:

{ 
\begin{defin}[Multi-dimensional Kolmogorov distance]\label{def_mK}
Let $X$ and $Y$ be random variables taking values in $\mathbb{R}^d$, where $d \in \mathbb{N}$.  
The multi-dimensional Kolmogorov distance between the laws of $X$ and $Y$ is defined by
\[
d_{mK}(X,Y)
:=\sup_{t_1,\dots,t_d\in\mathbb{R}}
\Big|
\mathbb{P}\big(X_1\le t_1,\dots,X_d\le t_d\big)
-
\mathbb{P}\big(Y_1\le t_1,\dots,Y_d\le t_d\big)
\Big|.
\]
For $d=1$, this reduces to the classical Kolmogorov distance, which we denote by $d_K(X,Y)$ (see e.g. \cite[Definition C.2.1]{NP12}).
\end{defin}
}
\begin{defin}[$p$-Wasserstein Distance {\cite{NP12,villani}}]\label{d_w_def}
Let $p \geq 1$ be an integer, and let $X$ and $Y$ be $L_p$-integrable random variables taking values in $\mathbb{R}^d$, where $d \in \mathbb{N}$. The $p$-Wasserstein distance between the laws of $X$ and $Y$ is defined as
\[
W_p(X, Y) = \inf_{(Z, W) \in \Pi} \left( \mathbb{E}\left[\|Z - W\|^p\right] \right)^{\frac{1}{p}},
\]
where $\Pi$ denotes the set of all couplings of $X$ and $Y$, that is,
\[
\Pi = \left\{ (Z, W) \in L^p(\Omega; \mathbb{R}^d) : Z \overset{\text{Law}}{=} X, \; W \overset{\text{Law}}{=} Y \right\}.
\]
\end{defin}
When $p=1$ the following dual definition for the Wasserstein distance holds:
\begin{prop}[Kantorovich–Rubinstein Duality \cite{villani}]\label{form_dual_wass}
The 1-Wasserstein distance between the distributions of $X$ and $Y$ verifies the identity
\[
W_1(X, Y) = \sup_{h \in \mathrm{Lip}_d(1)} \left| \mathbb{E}[h(X)] - \mathbb{E}[h(Y)] \right|,
\]
where
\[
\mathrm{Lip}_d(1) := \left\{ h : \mathbb{R}^d \to \mathbb{R} \;\middle|\; \sup_{x \neq y} \frac{|h(x) - h(y)|}{\|x - y\|} \leq 1 \right\}
\]
is the set of all 1-Lipschitz functions on $\mathbb{R}^d$.
\end{prop}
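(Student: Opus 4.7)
The plan is to establish the two inequalities
\[
W_1(X,Y) \;\geq\; \sup_{h \in \mathrm{Lip}_d(1)} \big|\mathbb{E}[h(X)] - \mathbb{E}[h(Y)]\big|
\qquad \text{and} \qquad
W_1(X,Y) \;\leq\; \sup_{h \in \mathrm{Lip}_d(1)} \big|\mathbb{E}[h(X)] - \mathbb{E}[h(Y)]\big|
\]
separately. The first is elementary; the second is the substantive content of Kantorovich--Rubinstein duality.

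For the easy direction, I would fix $h \in \mathrm{Lip}_d(1)$ and any coupling $(Z,W) \in \Pi$. Since the laws of $Z$ and $W$ coincide with those of $X$ and $Y$, and $h$ has Lipschitz constant at most one,
\[
\big|\mathbb{E}[h(X)] - \mathbb{E}[h(Y)]\big| \;=\; \big|\mathbb{E}[h(Z) - h(W)]\big| \;\leq\; \mathbb{E}\big[|h(Z)-h(W)|\big] \;\leq\; \mathbb{E}[\|Z-W\|].
\]
Taking infimum over couplings, then supremum over $h$, yields the first inequality.

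For the hard direction, I would proceed in two steps. First, invoke the general Kantorovich duality for a lower semicontinuous cost $c \geq 0$:
\[
\inf_{(Z,W) \in \Pi} \mathbb{E}[c(Z,W)] \;=\; \sup_{\phi(x)+\psi(y)\leq c(x,y)} \left( \int \phi\, d\mu_X + \int \psi\, d\mu_Y \right),
\]
which can be derived from a Fenchel--Rockafellar / Hahn--Banach argument on the space of bounded continuous functions. Second, specialize to the metric cost $c(x,y) = \|x-y\|$ and reduce the dual to one-Lipschitz test functions via the $c$-transform construction. Specifically, given any admissible pair $(\phi,\psi)$, set
\[
\phi^c(y) \;:=\; \inf_{x \in \mathbb{R}^d}\big(\|x-y\| - \phi(x)\big);
\]
admissibility implies $\phi^c \geq \psi$, so the pair $(\phi,\phi^c)$ is still admissible and gives a dual value no smaller than $(\phi,\psi)$. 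Iterating the transform and exploiting the triangle inequality shows that one may restrict to pairs of the form $(h,-h)$ with $h \in \mathrm{Lip}_d(1)$. The absolute value in the statement is then recovered by replacing $h$ with $-h$ when needed.

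The main obstacle is the first step: a fully rigorous proof of Kantorovich duality on a general Polish space requires careful handling of tightness of couplings, approximation of measurable dual variables by continuous ones, and a minimax exchange argument. Since this is a classical result, the pragmatic route for this paper is to cite Villani's monograph \cite{villani} for step one and present only the short $c$-transform reduction of step two together with the easy direction above.
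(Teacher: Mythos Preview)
Your outline is a correct sketch of the standard proof of Kantorovich--Rubinstein duality, and your final remark anticipates exactly what the paper does: the proposition is stated with the citation \cite{villani} and given no proof in the paper. There is nothing to compare against; the paper treats this as a known result and simply invokes it.
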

\begin{defin}[Convex distance \cite{Torr23, FHMNP, KG22}]\label{d_c_def}
If $X,Y$ are random variables with values in $\mathbb{R}^d$, with $d\ge 1$ denoting an integer, then the convex distance between the laws of $X$ and $Y$ is defined as
    \[
    d_C(X,Y):=\sup_{C \subseteq\mathbb{R}^d, convex}\Big|\mathbb{P}\left(X\in C\right)-\mathbb{P}\left(Y\in C\right)\Big|,
    \]
    where the supremum runs over all the convex sets $C$ of $\mathbb{R}^d$.
\end{defin}
\begin{defin}[Total variation distance \cite{NP12}]\label{d_tv_def}
If $X,Y$ are random variables with values in $\mathbb{R}^d$, with $d\ge 1$ denoting an integer, then the total variation distance between the laws of $X$ and $Y$ is defined as
    \[
    d_{TV}(X,Y):=\sup_{B\in\mathcal{B}(\mathbb{R}^d)}\Big|\mathbb{P}\left(X\in B\right)-\mathbb{P}\left(Y \in B\right)\Big|,
    \]
    where the supremum runs over all the Borel sets $B$ of $\mathbb{R}^d$.
\end{defin}
{
\begin{remark}\label{d_c_less_d_TV}
    Since left half-lines are convex sets and convex sets are also Borel sets, it follows that
    \[
   d_{mK}(X,Y)\le d_C(X,Y)\le d_{TV}(X,Y).
    \]
\end{remark}}
\begin{remark}
{
    Thanks to Proposition C.3.1 in \cite{NP12}, we know that if $\{F_n\}_{n},F$ are random variables such that the Kolmogorov, convex, total variation, or Wasserstein distance between $F_n$ and $F$ convergence to zero as $n\to\infty$, then $F_n\to F$ in distribution.
    }
\end{remark}

\subsection{Stein's method}
A widely used method to derive upper bounds on a distance (such as those defined in the previous section) between the law of a random element and a Gaussian distribution is the so-called \emph{Stein's method} (see e.g. \cite{stein,CGS10,CM08,KG22,C08, BR25,Torr23}). In the one-dimensional case, this method relies on the Gaussian integration by parts formula presented in Lemma~\ref{int_gauss_uno}. As explained in \cite{NP12, CGS10}, the crux of the method consists in studying the properties of a canonical solution $f_h$ to the differential equation (known as the \emph{Stein's equation})
\begin{equation}\label{Stein_eq_1}
\sigma^2 f{'}(x) - x f(x) = h(x) - \mathbb{E}[h(N)],\quad{x\in\mathbb{R},}
\end{equation}
where $N$ is a Gaussian random variable with law $\mathcal{N}_1(0,\sigma^2)$, and $h:\mathbb{R} \to \mathbb{R}$ is a Borel-measurable function satisfying $\mathbb{E}[h(N)] < \infty$. Typically, the function $h$ is chosen from the class $\mathcal{H}$ of test functions associated with the considered distance. This allows one to study the quantity
\begin{equation}\label{St_eq_1d_prima}
\sup_{h \in \mathcal{H}} \left| \mathbb{E}[\sigma^2 f_h'(F_n)] - \mathbb{E}[F_n f_h(F_n)] \right|
\end{equation}
instead of
\begin{equation}\label{St_eq_1d_dopo}
\sup_{h \in \mathcal{H}} \left| \mathbb{E}[h(F_n)] - \mathbb{E}[h(N)] \right|.
\end{equation}
Studying \eqref{St_eq_1d_prima} instead of \eqref{St_eq_1d_dopo} is typically more amenable to analysis since the Gaussian random variable $N$—which may be defined on a different probability space than $F_n$—no longer appears explicitly in the quantities defining the supremum and one can use the regularity properties of the solution $f_h$ to derive an upper bound to the supremum in \eqref{St_eq_1d_prima}.

\,

In the multi-dimensional case, the Stein's approach must be slightly adapted, since the relevant multi-dimensional Gaussian integration by parts formula involves the second derivatives of the test function as well (see Lemma~\ref{Gauss_integ_multdim}). More precisely (see e.g. \cite{CM08,CGS10,NP12}), the Stein's equation takes the form
\begin{equation}\label{stein_eq_mult_intr}
\langle C, \operatorname{Hess} f(x) \rangle_{HS} - \langle x, \nabla f(x) \rangle = h(x) - \mathbb{E}[h(N)],
\end{equation}
where $C \in \mathbb{R}^{d \times d}$ is a positive definite matrix, $\operatorname{Hess}f$ is the Hessian matrix of $f$, $N \sim \mathcal{N}_d(0, C)$, and $h:\mathbb{R}^d \to \mathbb{R}$ is a measurable function such that $\mathbb{E}[h(N)] < \infty$. As in the one-dimensional setting, the left-hand side of~\eqref{stein_eq_mult_intr} can be used to facilitate the analysis of the previously introduced distances.

\,

Throughout the paper, we combine Stein's method with the approach introduced in \cite{C08} and further developed in \cite{KG22} and \cite{LG17}, which involves the introduction of an independent copy of the random variable under consideration (i.e., $F_n$).

\section{Main results}\label{main_results_sec}
This paper addresses two main problems:
\begin{enumerate}
    \item[\textbf{Problem 1}] Deriving upper bounds on the Kolmogorov and Wasserstein distances between the random neural network~\eqref{NN_expr} evaluated at a single point \( x^{(1)} := x \in \mathbb{R}^{n_0} \), and a Gaussian distribution with covariance given by \( K_{1,1}^{(L)} \), as stated in Theorem~\ref{Hanin};
    
    \item[\textbf{Problem 2}] Establishing a quantitative central limit theorem using the convex distance between the law of the neural network~\eqref{NN_expr} evaluated at a finite set of distinct points \( \mathcal{X} := \{x^{(1)}, \dots, x^{(d)}\} \subset \mathbb{R}^{n_0} \), with \( d \geq 2 \), and a Gaussian distribution with the covariance structure described in Theorem~\ref{Hanin}.
\end{enumerate}

In both cases, we obtain explicit bounds on the respective distances, which in turn allows us to study the convergence in law as both the hidden layer widths \( n_1, \dots, n_L \) and the depth \( L \) tend to infinity under suitable assumptions.

Our main result addressing {\bf{Problem 1}} is the following statement, which is proved in Section~\ref{uno_d} using the strategy outlined in Section~\ref{strategy_uno_d}.

\begin{theorem}\label{finale_uno_1}
Let \( L \ge 1 \) be an integer, fix \( C_b, C_W > 0 \), as well as \( x \in \mathbb{R}^{n_0} \). Let \( z_1^{(L+1)}(x) \), \( \sigma \), and \( W \) be defined as in Definition~\ref{def_NN}. If \( \mathbb{E}[W^{5\cdot 2^{L+1}}] < \infty \), then the following bound holds:
\begin{multline*}
  \max\Big\{d_K(z_{1}^{(L+1)}(x),G_{1}^{(L+1)}(x)),W_1(z_{1}^{(L+1)}(x),G_{1}^{(L+1)}(x))\Big\}\\
    \le 7\cdot 2^{4L+16}\Big(1+\frac{1}{C_b}\Big)
\left({M^{(L)}_1(x)}\right)^{\frac{L+1}{2}} \Big(\sum_{j=1}^{L}\frac{1}{n_j}\Big)^{1/2} \Big(1+(2C_W\|\sigma\|_{Lip}^2)^{L}\Big)^{L+1}
      L^{3L+11}\cdot\\
\cdot\Big( 4\sqrt{5\cdot 2^{L} }+ K\frac{5\cdot 2^{L+1}}{\log(5\cdot 2^{L+1})}\Big)^{2L+10}
\Bigg(8+ \Big( 6K\frac{5\cdot 2^{L+1}}{\log(5\cdot 2^{L+1})}  \sqrt{C_W} \|\sigma\|_{\mathrm{Lip}}\cdot\\
        \cdot  \mathbb{E}[(W_{1,1}^{(1)})^{5\cdot 2^{L+1}}]^{1/(5\cdot 2^{L+1})} \Big)^{L}\Bigg)^{\frac{L}{2}+3} ,
\end{multline*}
where $K>0$ is a constant defined in Lemma \ref{mom_p_iid0} and the quantity $M^{(L)}_1(x)$  depends on the parameters \[
\left\{C_W,C_b,\|\sigma\|_{\text{Lip}},|\sigma(0)|, \|x\|, n_0, \mathbb{E}[(W_{1,1}^{(1)})^{5\cdot 2^{L+1}}]^{\frac{1}{2^{L}}}\right\}
\]
and it is defined in Lemma \ref{cost_schif}.
\end{theorem}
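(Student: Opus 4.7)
The plan is to combine Stein's method with the Chatterjee/Lachi\`eze-Rey-Peccati framework for functionals of independent random elements \cite{C08, KG22, LG17}, and then iterate the resulting one-layer bound over the depth $L$. For a test function $h$ (either $h=\mathbf{1}_{(-\infty,t]}$ for the Kolmogorov distance or a $1$-Lipschitz function for $W_1$, using Proposition \ref{form_dual_wass}), I would let $f_h$ be the canonical solution of the Stein equation \eqref{Stein_eq_1} with variance $K_{1,1}^{(L+1)}$. The norms of $f_h$ and $f_h'$ are classical and depend on $K_{1,1}^{(L+1)}$; the factor $1+1/C_b$ appearing in the statement reflects that this variance is bounded below by $C_b>0$, which rules out degeneracy in the Stein solution. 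Bounding both distances reduces to controlling
\[
\Big| \mathbb{E}\big[K_{1,1}^{(L+1)} f_h'(z_1^{(L+1)}(x)) - z_1^{(L+1)}(x)\, f_h(z_1^{(L+1)}(x))\big] \Big|.
\]
Conditioning on $\mathcal{F}_L := \sigma(W^{(\ell)}, b^{(\ell)}:\ell\le L)$, the output $z_1^{(L+1)}(x)$ is a sum of conditionally independent random variables with conditional mean zero and conditional variance $V_L := C_b + \frac{C_W}{n_L}\sum_{k=1}^{n_L}\sigma(z_k^{(L)}(x))^2$, so the Stein expression naturally splits as
\[
\mathbb{E}\big[(K_{1,1}^{(L+1)}-V_L)\, f_h'(z_1^{(L+1)}(x))\big] \;+\; \mathbb{E}\big[V_L f_h'(z_1^{(L+1)}(x)) - z_1^{(L+1)}(x) f_h(z_1^{(L+1)}(x))\big].
\]

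For the second summand I would implement the difference-operator approach of \cite{C08, KG22, LG17}: introduce independent copies of $\{W_{1,k}^{(L+1)}\}_{k=1}^{n_L}$, express the error through a discrete second-order gradient acting on $f_h$, and control it using the regularity of $f_h$ together with Rosenthal/Marcinkiewicz-Zygmund type inequalities of Lemma \ref{mom_p_iid0} (hence the constant $K\, p/\log p$). This should yield a bound of order $n_L^{-1/2}$ times moments involving $\sigma(z_k^{(L)}(x))$, which are then estimated via Lemma \ref{cost_schif}, producing the factor $M_1^{(L)}(x)$. For the first summand I would decompose $V_L-K_{1,1}^{(L+1)}$ into the centred fluctuation $\frac{C_W}{n_L}\sum_{k=1}^{n_L}\big(\sigma(z_k^{(L)}(x))^2-\mathbb{E}[\sigma(z_k^{(L)}(x))^2]\big)$, controlled in $L^2$, plus the deterministic residue $C_W\big(\mathbb{E}[\sigma(z_1^{(L)}(x))^2]-\mathbb{E}[\sigma(G_1^{(L)}(x))^2]\big)$, which measures the one-dimensional discrepancy between $z_1^{(L)}(x)$ and its Gaussian limit tested against the \emph{squared} Lipschitz function $\sigma^2$. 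To bound this residue I would solve \eqref{Stein_eq_1} at depth $L$ with test function $y\mapsto\sigma(y)^2$, exploiting the fine regularity estimates on the corresponding solution established in Lemmas \ref{fs} and \ref{lipf} of Appendix A; this produces a Stein-type expression one layer below, to which the same decomposition applies, thereby generating a recursion in $L$. The base case $L=1$ is treated directly, since the first hidden layer is a sum of i.i.d.\ random variables.

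The main obstacle is the explicit bookkeeping of constants through this $L$-step recursion. At each layer reduction the error is multiplied by factors depending on $\|\sigma\|_{\mathrm{Lip}}$, $C_W$, the Stein-solution bounds (which themselves vary with the target variance $K_{1,1}^{(\ell)}$), and moments of $W$ of order $5\cdot 2^{\ell+1}$: the doubling in the moment requirement arises precisely from iterating the squaring operation on Lipschitz functions and from successive invocations of the Rosenthal-type inequalities. Tracking all these contributions is what produces the prefactors $(M_1^{(L)}(x))^{(L+1)/2}$, $L^{3L+11}$, and $(1+(2C_W\|\sigma\|_{\mathrm{Lip}}^2)^L)^{L+1}$ that appear in the statement. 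The delicate point is that the $n_L^{-1/2}$ errors from different layers must be combined additively rather than multiplicatively; this is achieved by telescoping the one-layer bounds and ensuring that the Lipschitz constants propagated through the covariance-map recursion remain uniformly controlled, ultimately yielding the claimed $\big(\sum_{j=1}^L 1/n_j\big)^{1/2}$ dependence on the widths rather than an exponential blow-up in $L$ alone.
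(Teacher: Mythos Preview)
Your overall strategy matches the paper's: condition on $\mathcal{F}_L$, apply a Chatterjee--Lachi\`eze-Rey--Peccati bound at the last layer (the paper's Theorems \ref{Kdist} and \ref{Wdist}), and then iterate a Stein's-equation argument with test function $\sigma^2$ down the layers (Lemmas \ref{lemma_prim_Q2}, \ref{iteration_Q2}, \ref{cost_schif}). You also correctly identify Lemmas \ref{fs}, \ref{lipf} as the source of the needed regularity for $f_{\sigma^2}$, and Lemma \ref{mom_p_iid0} as the source of the $Kp/\log p$ constants.

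There is, however, one genuine imprecision in your decomposition of the first summand. You center the fluctuation by the \emph{unconditional} mean, writing $\frac{C_W}{n_L}\sum_k\big(\sigma(z_k^{(L)})^2-\mathbb{E}[\sigma(z_k^{(L)})^2]\big)$ and claiming it is $O(n_L^{-1/2})$ in $L^2$. But the $z_k^{(L)}$ are only \emph{conditionally} i.i.d.\ given $\mathcal{F}_{L-1}$; unconditionally they are correlated through the shared earlier layers, and a direct variance computation gives an extra term $\frac{n_L-1}{n_L}\,\mathrm{Var}\big(\mathbb{E}[\sigma(z_1^{(L)})^2\mid\mathcal{F}_{L-1}]\big)$ that does \emph{not} vanish as $n_L\to\infty$. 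The paper avoids this by centering with the conditional mean, so that the residue is the \emph{random} quantity $\mathbb{E}[\sigma(z_1^{(L)})^2\mid\mathcal{F}_{L-1}]-\mathbb{E}[\sigma(G_1^{(L)})^2]$ rather than a deterministic number (see \eqref{bound_Q_2_postlemma}). It is the $L^{2p}$ norm of this random residue that is bounded via Stein's equation applied \emph{conditionally} on $\mathcal{F}_{L-1}$ (equation \eqref{eq_in_Q2} onward), and it is precisely the Cauchy--Schwarz step separating the product $A_\ell\cdot f'_{\sigma^2}$ inside this $L^{2p}$ bound that produces $\sqrt{Q_{4p}^{(\ell-1)}}$ and hence the doubling of moment orders --- not the squaring of Lipschitz functions per se. Once you make this correction the rest of your outline goes through exactly as in the paper.
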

\begin{remark}
    Theorem \ref{finale_uno_1} is stated considering only the first component of the neural network and of the corresponding limit. However, in the case of the Wasserstein distance, it can be extended to the full vectors
    \[
    z^{(L+1)}(x) = \left(z_1^{(L+1)}(x), \dots, z_{n_{L+1}}^{(L+1)}(x)\right) \quad \text{and} \quad G^{(L+1)}(x) = \left(G_1^{(L+1)}(x), \dots, G_{n_{L+1}}^{(L+1)}(x)\right).
    \]
    Indeed, by Definition \ref{d_w_def},
    \[
    W_1\left(z^{(L+1)}(x), G^{(L+1)}(x)\right) \le \sum_{i=1}^{n_{L+1}} W_1\left(z_i^{(L+1)}(x), G_i^{(L+1)}(x)\right),
    \]
    and one can then apply the bound from Theorem \ref{finale_uno_1} to each term in the sum to obtain a more general bound.
Clearly, the same argument does not apply in the case of the Kolmogorov distance, as it is only defined for real-valued random variables, but one could consider the multi-dimensional Kolmogorov distance defined in Definition \ref{def_mK}.
   In particular, assuming $z^{(L+1)}(x)$ independent of $G^{(L+1)}(x)$ and observing that the components of $z^{(L+1)}(x)$ are independent conditional on the $\sigma$-field generated by the weights and biases up to layer $L$, denoted by $\mathcal{F}_L$, we can easily deduce that
\[
    d_{mK}\bigl(z^{(L+1)}(x),G^{(L+1)}(x)\bigr)
    \le \sum_{i=1}^{n_{L+1}}
    \mathbb{E}\Bigl[
        d_K\bigl(z^{(L+1)}_i(x),G^{(L+1)}_i(x)\bigr)
        \,\big|\,\mathcal{F}_L
    \Bigr].
\]
Since the proof of Theorem~\ref{Kdist} provides an upper bound on 
\(
\mathbb{E}\bigl[d_K(z^{(L+1)}_i(x),G^{(L+1)}_i(x))\mid\mathcal{F}_L\bigr],
\)
it follows directly that the same upper bound as in Theorem~\ref{finale_uno_1} for the Kolmogorov distance also applies to the multidimensional Kolmogorov distance, up to a multiplicative constant depending only on $n_{L+1}$.    
\end{remark}

{

\begin{remark}\label{rem_Kne0imp}
Assuming that the infinite-width variance defined in Theorem~\ref{Hanin} is positive, i.e.\ $K^{(L+1)}>0$, one can study the distance between the distributions of the normalized neural network
\[
\frac{z_{1}^{(L+1)}(x)}{\sqrt{K^{(L+1)}}}
\]
and its infinite-width Gaussian limit. In our case, since $C_b \neq 0$, it follows that $K^{(L+1)} \ge C_b > 0$. 
Observe that
\[
d_{K}\bigl(z_{1}^{(L+1)}(x),G_{1}^{(L+1)}(x)\bigr)
= d_K\left(\frac{z_{1}^{(L+1)}(x)}{\sqrt{K^{(L+1)}}},\frac{G_{1}^{(L+1)}(x)}{\sqrt{K^{(L+1)}}}\right),
\]
whereas for the Wasserstein distance we have
\[
W_1\bigl(z_{1}^{(L+1)}(x),G_{1}^{(L+1)}(x)\bigr)
= \sqrt{K^{(L+1)}}\,W_1\left(\frac{z_{1}^{(L+1)}(x)}{\sqrt{K^{(L+1)}}},\frac{G_{1}^{(L+1)}(x)}{\sqrt{K^{(L+1)}}}\right).
\]

Suppose now that both distances between the laws of the two normalized random variables converge to zero.  
In our setting, since $K^{(L+1)}$ does not converge to zero as $L \to \infty$, we obtain the same rate of convergence whether or not we normalize.  
On the other hand, if $K^{(L+1)} \to 0$, then the rate of convergence of the Wasserstein distance between the normalized random variables would be slower than the rate of convergence between the non-normalized variables.

\end{remark}
}

\begin{remark}\label{rate_final_1d}
From the previous Theorem it follows that, under the same assumptions on weights and biases, if $n_1, \dots, n_L \to \infty$, then $z_1^{(L+1)}(x)$ converges in distribution to $G_1^{(L+1)}(x)$ both in the Kolmogorov and Wasserstein distances, with a rate of convergence given by $\sqrt{\sum_{j=1}^{L} \frac{1}{n_j}}$. This rate was already established in \cite{Torr23,FHMNP,BT24,BFF24} for the case of Gaussian weights and later improved to the rate $\sum_{j=1}^{L} \frac{1}{n_j}$ in \cite{FHMNP,Trev,CP25}.
\end{remark}

The bound provided by the previous Theorem can also be used to study the distance between the law of the neural network and that of the infinite-widths Gaussian limit when both the depth and the widths tend to infinity. 
\begin{cor}\label{rem_L_inf_1}
    Assume that $n_1, \dots, n_L \asymp n$, and that there exist constants $C_1, C_2 \ge 0$, independent of $L$, such that
\begin{equation}\label{cond_W_conv}
\mathbb{E}[(W_{1,1}^{(1)})^{5 \cdot 2^{L+1}}]^{\frac{1}{5\cdot 2^{L+1}}} \le C_1 \cdot 2^{C_2 L}.
\end{equation}
Then, there exists a constant $R$ independent of $L$ and $n$ such that
\begin{eqnarray}
&& \max\left\{d_K(z_1^{(L+1)}(x), G_1^{(L+1)}(x)), W_1(z_1^{(L+1)}(x), G_1^{(L+1)}(x))\right\}\notag \\
&& =O\left( \sqrt{\frac{L}{n}} \cdot 2^{R L^2\log_2\left(\frac{5\cdot 2^{L+1}}{\log_2(5\cdot 2^{L+1})}\right)}\right).\label{cond_l_n_gen1}
\end{eqnarray}
For instance, if $L \le \left(\left(\frac{1}{2}-\varepsilon \right)\log_2 n\right)^{1/3}$ for some $\varepsilon \in \left(0,\frac{1}{2}\right)$, then as $L,n \to \infty$ one obtains
\begin{equation*}
\max\left\{d_K(z_1^{(L+1)}(x), G_1^{(L+1)}(x)), W_1(z_1^{(L+1)}(x), G_1^{(L+1)}(x))\right\} = 
O\left(\frac{1}{n^{\varepsilon}}\right).
\end{equation*}
\end{cor}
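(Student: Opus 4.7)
The plan is to substitute both hypotheses directly into the explicit bound of Theorem~\ref{finale_uno_1} and to estimate the asymptotic growth of each factor as a function of $L$ alone. First, the regime $n_1,\dots,n_L\asymp n$ yields immediately $\bigl(\sum_{j=1}^{L}1/n_j\bigr)^{1/2}=O(\sqrt{L/n})$, which is precisely the $\sqrt{L/n}$ factor appearing in \eqref{cond_l_n_gen1}. Every other factor in the bound of Theorem~\ref{finale_uno_1} is purely $L$-dependent once the moment hypothesis \eqref{cond_W_conv} is invoked, and the goal is to show that their product is at most $2^{R L^2\log_2(5\cdot 2^{L+1}/\log_2(5\cdot 2^{L+1}))}$ for a constant $R$ depending only on the model parameters.

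I would then check the subdominant factors one by one. Using Lemma~\ref{cost_schif} together with the identity $\mathbb{E}[W^{5\cdot 2^{L+1}}]^{1/2^L}=\bigl(\mathbb{E}[W^{5\cdot 2^{L+1}}]^{1/(5\cdot 2^{L+1})}\bigr)^{10}$ and \eqref{cond_W_conv}, the quantity $M_1^{(L)}(x)$ is controlled by a polynomial in $2^{O(L)}$, so $(M_1^{(L)}(x))^{(L+1)/2}=2^{O(L^2)}$; the factor $(1+(2C_W\|\sigma\|_{Lip}^2)^L)^{L+1}$ is also $2^{O(L^2)}$; the factor $L^{3L+11}$ equals $2^{O(L\log L)}$; and the factor $\bigl(4\sqrt{5\cdot 2^L}+K\cdot 5\cdot 2^{L+1}/\log(5\cdot 2^{L+1})\bigr)^{2L+10}$ is $2^{O(L^2)}$, since its base is $O(2^L/L)$ raised to a linear power in $L$. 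All of these contributions are therefore absorbed into the target exponential.

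The dominant contribution comes from the last parenthesis of Theorem~\ref{finale_uno_1}. Under \eqref{cond_W_conv}, the quantity inside the inner $L$-th power is at most $\text{const}\cdot 2^{(1+C_2)L}/L$; raising to the $L$-th power produces an exponent of base~$2$ equal to $(1+C_2)L^2-L\log_2 L+O(L)$, and the outer exponent $L/2+3$ multiplies this by a factor of order $L$. A careful rearrangement then writes the cumulative exponent in the form $R L^2(L+O(\log L))=R L^2\log_2(5\cdot 2^{L+1}/\log_2(5\cdot 2^{L+1}))$ for an appropriate constant $R$, yielding \eqref{cond_l_n_gen1} once combined with the $\sqrt{L/n}$ factor obtained in the first step.

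For the explicit example, substituting $L\le((1/2-\varepsilon)\log_2 n)^{1/3}$ gives $L^3\le(1/2-\varepsilon)\log_2 n$, so the exponent $R L^2\log_2(\cdots)$, which behaves like $R L^3$ up to lower-order terms, is bounded by $R(1/2-\varepsilon)\log_2 n+o(\log n)$. The total bound then becomes $\sqrt{L/n}\cdot n^{R(1/2-\varepsilon)+o(1)}=n^{R(1/2-\varepsilon)-1/2+o(1)}$, which, after rescaling the free parameter $\varepsilon$ to absorb the multiplicative constant $R$, equals $O(n^{-\varepsilon})$. The main obstacle will be the bookkeeping in Step~3: collapsing the many multiplicative factors of Theorem~\ref{finale_uno_1} into the single clean exponential form $2^{R L^2\log_2(\cdots)}$ with a constant $R$ depending only on $C_b,C_W,\|\sigma\|_{Lip},|\sigma(0)|,\|x\|,n_0,K,C_1,C_2$, and neither on $L$ nor on $n$.
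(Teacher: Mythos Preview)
Your approach is precisely the one the paper intends: the corollary is stated without proof as a direct consequence of Theorem~\ref{finale_uno_1}, and the only thing to do is substitute the two hypotheses and estimate each factor's growth in $L$. Your treatment of the first part \eqref{cond_l_n_gen1} is correct: $(\sum_j 1/n_j)^{1/2}=O(\sqrt{L/n})$ under $n_j\asymp n$; the moment hypothesis~\eqref{cond_W_conv} makes $M_1^{(L)}(x)$ grow like $2^{O(L)}$ (via $\mathbb{E}[W^{5\cdot 2^{L+1}}]^{1/2^L}=(\mathbb{E}[W^{5\cdot 2^{L+1}}]^{1/(5\cdot 2^{L+1})})^{10}\le C_1^{10}2^{10C_2L}$); and the remaining factors are all $2^{O(L^2)}$ except the last, whose $\log_2$ is of order $L^2\cdot\log_2\!\big(\tfrac{5\cdot 2^{L+1}}{\log_2(5\cdot 2^{L+1})}\big)$, which dominates. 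That gives the claimed shape with a constant $R$ depending on $C_b,C_W,\|\sigma\|_{\mathrm{Lip}},|\sigma(0)|,\|x\|,n_0,K,C_1,C_2$.

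There is, however, a genuine gap in your final step. You arrive at the bound $n^{R(1/2-\varepsilon)-1/2+o(1)}$ and then ``rescale the free parameter $\varepsilon$ to absorb $R$''. This is not legitimate: the same $\varepsilon$ appears in the hypothesis $L\le((1/2-\varepsilon)\log_2 n)^{1/3}$ and in the conclusion $O(n^{-\varepsilon})$, so you cannot redefine it mid-argument. Your own computation shows that the conclusion $O(n^{-\varepsilon})$ follows \emph{only if} $R\le 1$, which is not guaranteed by the first part (for instance, large $C_2$ in~\eqref{cond_W_conv} inflates $R$). What you can honestly conclude for general $R$ is either that the bound is $O(n^{-\delta})$ with $\delta=\tfrac12-R(\tfrac12-\varepsilon)$ (positive only when $\varepsilon>\tfrac12-\tfrac{1}{2R}$), or equivalently that $L\le (c\log_2 n)^{1/3}$ with $c<\tfrac{1}{2R}$ yields a polynomial rate. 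This is a minor imprecision in the corollary as stated in the paper rather than a defect in your method, but you should flag it rather than paper over it.
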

{
Condition~\eqref{cond_W_conv} is satisfied by a broad class of distributions, including sub-Gaussian distributions (which in particular cover all bounded random variables) and sub-exponential distributions (see \cite{W19}).
}

\begin{remark}
{Section~\ref{Gaus_example} presents examples in which the Kolmogorov and Wasserstein distances between the neural network and its infinite-width Gaussian limit are analyzed for different activation functions—specifically, the perceptron, identity, and ReLU functions—under Gaussian initialization.}
The results are derived by following the proof strategy of Theorem~\ref{finale_uno_1}, with optimizations applied to improve the bounds on the two distances whenever possible. Assuming the critical initialization (see Section \ref{subsec_crit}), we show that both distances decay at the rate
\begin{equation}\label{d_Gauss_lim_1d}
O\left(\sqrt{\sum_{\ell=1}^{L} \frac{1}{n_\ell}}\right)
\end{equation}
in the cases of the identity and ReLU functions. For the perceptron function, a sharper rate of
\begin{equation}\label{d_Gauss_lim_1d_perc}
O\left(\frac{1}{\sqrt{n_L}}\right)
\end{equation}
is obtained, even for arbitrary $C_b >0$ and $C_W \ne 0$.
We also note that the general bound in~\eqref{cond_l_n_gen1} yields a slower rate of convergence compared to what is observed in these specific examples under Gaussian weights. This discrepancy is primarily due to the use of Lemma~\ref{mom_p_iid0}, which is suboptimal in several settings—particularly when the weights are Gaussian random variables.
{ Furthermore, in \eqref{d_Gauss_lim_1d} and \eqref{d_Gauss_lim_1d_perc} the bound scales with the square root of the width, whereas in \cite{FHMNP} the authors obtain bounds of order $1/\text{width}$ for the Total Variation distance (and hence also for the Kolmogorov distance) and for the Wasserstein distance.
In this work, the suboptimal dependence on the network's width is compensated by the fact that the constants underlying \eqref{d_Gauss_lim_1d} and \eqref{d_Gauss_lim_1d_perc} can be made completely explicit in terms of the hyperparameters of the network (see, again, Section \ref{Gaus_example}).  
This choice enables a rigorous analysis of the joint asymptotic regime $L, n_1, \dots, n_L \to \infty$, but comes at the expense of the optimal convergence rates.

}

\end{remark}

\begin{remark}
When the activation function is both Lipschitz and bounded, one can derive an upper bound on the Kolmogorov and Wasserstein distances between the neural network's output and its Gaussian limit, depending on the Wasserstein distance between the output of the previous layer (conditioned on an appropriate $\sigma$-field) and its corresponding Gaussian approximation. This recursive structure allows for an improvement over the general bound appearing in Theorem~\ref{finale_uno_1}.
More precisely, as shown in Section~\ref{lip_bound_s_sec}, in this case the Kolmogorov and Wasserstein distances decay at the rate
\[
O\left(\sqrt{\sum_{\ell=1}^{L} \frac{1}{n_\ell} \left( \frac{6C_W}{\pi C_b} \right)^{L - \ell}}\right),
\]
which represents a significantly faster convergence compared to the one featured in Theorem~\ref{finale_uno_1}.
For a more detailed discussion of these improvements, see Remark~\ref{rem_impr_lip_bound_prop}.
\end{remark}

For {\bf Problem 2}, we establish the following result:

\begin{theorem}\label{pres_sec_prob}
Let \( L \geq 1 \) be an integer, and assume \( C_b,C_W > 0 \). Let the neural network output \( z_1^{(L+1)} \), the activation function \( \sigma \), and the weight distribution \( W \) be defined as in Definition~\ref{def_NN}. Recall the Gaussian limit \( G_1^{(L+1)} \) from Theorem~\ref{Hanin}.

Suppose that \( \mathbb{E}[W^{5 \cdot 2^{L+2}}] < \infty \), and let \( d \geq 2 \) be an integer. Consider a set of distinct input points \( \mathcal{X} := \{x^{(1)}, \dots, x^{(d)}\} \subset \mathbb{R}^{n_0} \), and define
\[
z_1^{(L+1)}(\mathcal{X}) := \big(z_1^{(L+1)}(x^{(1)}), \dots, z_1^{(L+1)}(x^{(d)})\big) \in \mathbb{R}^{d \times n_{L+1}},
\]
and similarly define the Gaussian limit
\[
G_1^{(L+1)}(\mathcal{X}) := \big(G_1^{(L+1)}(x^{(1)}), \dots, G_1^{(L+1)}(x^{(d)})\big) \in \mathbb{R}^{d \times n_{L+1}}.
\]

If the covariance matrix \( K^{(2)} \) from Theorem~\ref{Hanin} satisfies \( \operatorname{det}(K^{(2)}) \neq 0 \) and for every $\ell=2,\dots,L$ and $i=1,\dots d-1$ we have that $\mathbb{E}\big[\sigma'(G^{(\ell)}_1(x^{(i)}))\big]\ne 0$, then

\begin{multline*}
d_C(z_1^{(L+1)}(\mathcal{X}),G_1^{(L+1)}(\mathcal{X}))\le 
 Cd^{d+6}(L + 1)^{3L+d+20}
\Big(\sum_{j=1}^{L}\frac{1}{n_j}\Big)^{1/2}\left(1+C_W+\frac{1}{C_W}\right)^{4d+dL-4}\cdot\\
\cdot (1+C_b)^{2d-2}(1+|\sigma(0)|)^4\Bigg(9d+\frac{\sum_{j=1}^d\|x^{(j)}\|^4}{n_0^2}\Bigg)^{d-1}\Big(2+(2C_W\|\sigma\|_{Lip}^2)^{2L}\Big)^{L+d}\cdot\\
\cdot \Bigg(2+ \Big( K\frac{5\cdot 2^{L+1}}{\log(5\cdot 2^{L+1})}  \sqrt{C_W} \|\sigma\|_{\mathrm{Lip}} \mathbb{E}[(W_{1,1}^{(1)})^{5\cdot 2^{L+1}}]^{1/(5\cdot 2^{L+1})} \Big)^{L - 1}\Bigg)^{\frac{L}{2}+24}\cdot\\
\cdot
 \Bigg(\sum_{i=1}^d\Big(1+\left(M^{(L)}_2(x^{(i)})\right)^{\frac{L}{2}+2}\Big)\Bigg)
  2^{4L}   \Big(
 4\sqrt{5\cdot 2^L }
+ K\frac{5\cdot 2^{L+1}}{\log(5\cdot 2^{L+1})}
\Big)^{2L+8}\cdot\\
\cdot \left(1+d!\lambda(K^{(2)})^{-2d}\sum_{\ell=2}^{L+1}\prod_{i=1}^d\left(\sum_{\substack{k_i=1\\k_i\ne k_s\, \forall s=1,\dots,i-1}}^d\prod_{r_i=1}^{\ell-2}\mathbb{E}\big[\sigma'(G^{(\ell-r_i)}_1(x^{(k_i)}))\big]^2\right)^{-1}\right),
\end{multline*}
where $C>0$ is a universal constant that does not depend on any of the neural network parameters, $K>0$ is a constant defined in Lemma \ref{mom_p_iid0}, and for every $j=1,\dots,d$, $M_2^{(L)}(x^{(j)})$ is defined as in Lemma \ref{cost_schif} and it depends  on
\[
\left\{C_W,k,|\sigma(0)|,\|\sigma\|_{\text{Lip}},C_b,n_0,\mathbb{E}[W^{5\cdot 2^{L+2}}]^{\frac{1}{2^L}},\mathcal{X}\right\}.
\]

\end{theorem}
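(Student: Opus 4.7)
The plan is to combine three ingredients: a smoothing-type reduction of $d_C$ to a supremum over thrice differentiable test functions, the multivariate Stein's method with independent copies of \cite{C08,KG22,LG17} applied conditionally on each layer, and the tensor-product Stein equation with test function $h(y)=\sigma(y_i)\sigma(y_j)$ whose solution is analysed in Appendix~B. The argument proceeds by backward induction on the layer index $\ell=L+1,L,\dots,2$.

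\textbf{Step 1 (smoothing).} I would first bound $d_C(z_1^{(L+1)}(\mathcal{X}),G_1^{(L+1)}(\mathcal{X}))$ by a dimension-dependent constant times the supremum of $|\mathbb{E}[h(z_1^{(L+1)}(\mathcal{X}))]-\mathbb{E}[h(G_1^{(L+1)}(\mathcal{X}))]|$ over a class of thrice differentiable test functions with controlled derivatives, adapting the convex-distance smoothing argument of \cite{KG22,Torr23,FHMNP}. This step requires a uniform lower bound on the smallest eigenvalue of the covariance matrix of the Gaussian limit, delivered by Theorem~\ref{bound_K_gauss}, and it is the origin of the factor $\lambda(K^{(2)})^{-2d}$ and of the combinatorial prefactor $C d^{d+6}$ in the statement.

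\textbf{Step 2 (conditional Stein at layer $\ell+1$).} For a smooth $h$ from Step~1, I would condition on $\mathcal{F}_\ell$, the $\sigma$-algebra generated by all weights and biases up to layer $\ell$. Given $\mathcal{F}_\ell$, the random vector $z_1^{(\ell+1)}(\mathcal{X})\in\mathbb{R}^d$ is the sum of the Gaussian bias $b_1^{(\ell+1)}\mathbf{1}_d$ and $n_\ell$ conditionally i.i.d.\ centred random vectors in $\mathbb{R}^d$, with conditional covariance
\[
\widehat K^{(\ell+1)}_{i,j}=C_b+\frac{C_W}{n_\ell}\sum_{k=1}^{n_\ell}\sigma\bigl(z^{(\ell)}_k(x^{(i)})\bigr)\sigma\bigl(z^{(\ell)}_k(x^{(j)})\bigr).
\]
Applying the difference-operator approach of \cite{C08,KG22,LG17} with an independent copy of the layer-$(\ell+1)$ parameters yields a conditional Stein bound whose two main contributions are an $O(1/\sqrt{n_\ell})$ term controlled by the fourth absolute moment of $W$, and the Hilbert--Schmidt distance $\|\widehat K^{(\ell+1)}-K^{(\ell+1)}\|_{HS}$ weighted by the Hessian and third-derivative norms of the Stein solution. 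The required regularity estimates on the multivariate Stein equation are precisely those supplied by Lemmas~\ref{sol_st_mult}, \ref{norm_inf_hess}, \ref{lemma_hess}, \ref{mom_quart_der_sec} and \ref{der_ter_lemma}.

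\textbf{Step 3 (recursion on the covariance bias).} I would then control $\mathbb{E}\|\widehat K^{(\ell+1)}-K^{(\ell+1)}\|_{HS}$ by splitting it into a statistical fluctuation, estimated by Lemma~\ref{mom_p_iid0} and Lemma~\ref{cost_schif} (which is what forces the moment assumption $\mathbb{E}[W^{5\cdot 2^{L+2}}]<\infty$, roughly doubling at every layer, and produces the factor $M_2^{(L)}(x^{(i)})$), and a bias of the form $\mathbb{E}[\sigma(z^{(\ell)}_1(x^{(i)}))\sigma(z^{(\ell)}_1(x^{(j)}))]-\mathbb{E}[\sigma(G^{(\ell)}_1(x^{(i)}))\sigma(G^{(\ell)}_1(x^{(j)}))]$. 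This bias is exactly the quantity estimated by the tensor-product Stein equation with $h(y)=\sigma(y_i)\sigma(y_j)$: feeding this Lipschitz $h$ back into Step~2 at levels $\ell=L,L-1,\dots,2$ gives a recursive inequality. Unwinding it, and repeatedly applying Gaussian integration by parts (Lemma~\ref{Gauss_integ_multdim}) to rewrite the iterated covariance bias in terms of products $\prod_r\mathbb{E}[\sigma'(G^{(\ell-r)}_1(x^{(k)}))]$, produces the powers $(2C_W\|\sigma\|_{Lip}^2)^{O(L)}$ and the combinatorial sum $\sum_{\ell=2}^{L+1}\prod_{i=1}^d(\sum_{k_i}\prod_{r_i}\mathbb{E}[\sigma'(G^{(\ell-r_i)}_1(x^{(k_i)}))]^2)^{-1}$ that appears in the last factor of the final bound.

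\textbf{Main obstacle.} The hardest part is making Step~3 quantitative while keeping only polynomially many factors of $L$ inside the explicit constants. The multivariate Stein equation with a merely Lipschitz tensor-product test function has solutions whose second and third derivatives degenerate when either $\lambda(K^{(\ell)})\downarrow 0$ or $\mathbb{E}[\sigma'(G^{(\ell)}_1(x^{(i)}))]\to 0$, so one must track through every iteration both a uniform lower bound on $\lambda(K^{(\ell)})$ (the role of Theorem~\ref{bound_K_gauss}, forcing the assumption $\det(K^{(2)})\neq 0$) and a lower bound on $|\mathbb{E}[\sigma'(G^{(\ell)}_1(x^{(i)}))]|$ (the reason for the second non-degeneracy hypothesis in the theorem). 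Keeping careful bookkeeping of which $d$-tuple of inputs dominates each iteration of the covariance recursion is the combinatorial source of the sum over ordered tuples $(k_1,\dots,k_d)$ in the last displayed factor.
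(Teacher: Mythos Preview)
Your proposal follows essentially the same architecture as the paper: condition on $\mathcal{F}_L$, apply the modified \cite{KG22} bound (Theorem~\ref{th_mult_fin}, which already packages the smoothing you isolate in Step~1) to obtain Theorem~\ref{mod_KG}, then control $\sum_{i\ne j}\mathbb{E}[B_L(x^{(i)},x^{(j)})^2]$ by the recursion of Lemma~\ref{somma_schifa} based on the tensor-product Stein equation, plug in Lemma~\ref{cost_schif} for the $Q_4^{(\ell)}$ terms, and finally invoke Theorem~\ref{bound_K_gauss} via Remark~\ref{HS_bondK} for the eigenvalue lower bounds.

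There is, however, one confusion in your Step~3 that is worth correcting because it would lead you astray if executed literally. You write that ``repeatedly applying Gaussian integration by parts (Lemma~\ref{Gauss_integ_multdim}) to rewrite the iterated covariance bias'' is what produces the products $\prod_r\mathbb{E}[\sigma'(G^{(\ell-r)}_1(x^{(k)}))]$. That is not how these factors arise. The recursion on $B_\ell$ in Lemma~\ref{somma_schifa} reads $\sum_{i\ne j}\mathbb{E}[B_\ell^2]\le V_1^{(\ell)}+V_2\sum_{i\ne j}\mathbb{E}[B_{\ell-1}^2]$ with the \emph{plain} constant $V_2=4C_W^2\|\sigma\|_{\mathrm{Lip}}^4$ (coming from Lemma~\ref{norm_inf_hess}); no $\sigma'$ products appear in the recursion itself, and Gaussian integration by parts on the covariance bias could not produce them anyway since $z_1^{(\ell)}$ is not Gaussian. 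The $\sigma'$ products enter only because each $V_1^{(\ell)}$ carries a factor $\|(K^{(\ell)})^{-1}\|_{\mathrm{HS}}^2$, and bounding $1/\lambda(K^{(\ell)})$ for every $\ell=2,\dots,L+1$ is done through Remark~\ref{HS_bondK} and Theorem~\ref{bound_K_gauss}, whose proof rests on Cacoullos's variance inequality (Proposition~\ref{Caco}) rather than on Lemma~\ref{Gauss_integ_multdim}. In short, the combinatorial sum $\sum_{\ell}\prod_i\bigl(\sum_{k_i}\prod_{r_i}\mathbb{E}[\sigma'(G^{(\ell-r_i)}_1(x^{(k_i)}))]^2\bigr)^{-1}$ in the statement lives in the \emph{denominator} of the determinant lower bound, not in the numerator of the iterated covariance bias.
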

\begin{remark}
    As in the one-dimensional case for the Wasserstein distance, the upper bound on the convex distance in Theorem~\ref{pres_sec_prob} can also be reformulated for the vector
    \begin{equation}\label{vec_z_chi}
        z^{(L+1)}(\mathcal{X}) = \left(z_1^{(L+1)}(x_1), \dots, z_1^{(L+1)}(x_d), \dots, z_{n_{L+1}}^{(L+1)}(x_1), \dots, z_{n_{L+1}}^{(L+1)}(x_d)\right),
    \end{equation}
    and its corresponding Gaussian limit.
Indeed, the proof of the theorem can be repeated for the vector \eqref{vec_z_chi}, noting that, according to Theorem~\ref{Hanin}, the covariance of the Gaussian limit has a block-diagonal structure, where all blocks are equal to \( K^{(L+1)} \) (also defined in Theorem~\ref{Hanin}). The resulting upper bound coincides with that of the one-dimensional output case, up to a multiplicative factor equal to the output dimension $n_{L+1}$.

\end{remark}

\begin{remark}
    The assumption that $C_b\ne 0$ could be dropped if one develops a lower bound for $Q^{(\ell)}_4$ (defined in \eqref{Q_k}) using the same strategy we used to give a lower bound to the determinant of the limiting covariance matrix $K^{(\ell)}$ (see Section \ref{sec_low_bound_detK}). {
   The same reasoning can then be applied to the one-dimensional problem, but in this case the assumption $C_b \neq 0$ must be replaced by the condition that for every $\ell = 2, \dots, L$ we have
\[
\mathbb{E}\bigl[\sigma'(G^{(\ell)}_1(x))\bigr] \neq 0.
\]

    }
\end{remark}
As in the one-dimensional case, thanks to Theorem \ref{pres_sec_prob}, it is possible to study the distance between the law of the neural network and the law of its Gaussian counterpart as both \( L \) and \( n_1, \dots, n_L \cong n \) diverge to \( \infty \).

\begin{cor}\label{rem_L_inf_d}

Let the assumptions of Proposition~\ref{fin_relu} hold and assume that there exist constants \( C_1, C_2, C_3, C_4\ge 0 \), independent of \( L \), such that
\begin{equation}\label{cond_W_conv_d}
\mathbb{E}\left[(W_{1,1}^{(1)})^{5 \cdot 2^{L+2}}\right]^{\frac{1}{2^{L+1}}} \le C_1 e^{C_2 L}.
\end{equation}
and for every $\ell=2,\dots, L-2$ and $i=1,\dots,d$
\begin{equation}\label{cond_der_s_inG}
    \mathbb{E}\big[\sigma'(G^{(\ell)}_1(x^{(i)}))\big]\ge C_42^{-C_3L}.
\end{equation}
Under these assumptions, there exists a constant \( S > 0 \), independent of \( L, n_1, \dots, n_L, n \), such that
\[ 
d_C\left(z_{1}^{(L+1)}(\mathcal{X}), G_{1}^{(L+1)}(\mathcal{X})\right) =
O\left( \sqrt{\frac{L}{n}} \cdot 2^{S L^2\log_2\left(\frac{5\cdot 2^{L+1}}{\log_2(5\cdot 2^{L+1})}\right)}\right).
\]
It follows that, if
\[
L \le \left(\left( \frac{1}{2} - \varepsilon\right) \log_2 n \right)^{1/3}
\quad \text{for some } \varepsilon \in \left(0,\frac{1}{2}\right),
\]
then as \( n \to \infty \),
\begin{equation*}
d_C\left(z_{1}^{(L+1)}(\mathcal{X}), G_{1}^{(L+1)}(\mathcal{X})\right) = O\left(  \frac{1 }{ n^{\varepsilon} }  \right).
\end{equation*}

\end{cor}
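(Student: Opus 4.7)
The plan is to reduce the corollary to a careful bookkeeping exercise starting from the explicit bound of Theorem~\ref{pres_sec_prob}. Since the assumption $n_1,\dots,n_L\asymp n$ gives $\sum_{j=1}^L 1/n_j\le c L/n$, the prefactor $(\sum_j 1/n_j)^{1/2}$ already produces the $\sqrt{L/n}$ term in the target bound; the rest of the argument is to show that every remaining factor in Theorem~\ref{pres_sec_prob} is of order $2^{O(L^2\log L)}$, so that it can be absorbed into the exponential factor $2^{SL^2\log_2(5\cdot 2^{L+1}/\log_2(5\cdot 2^{L+1}))}$ for a sufficiently large $S$ independent of $L$ and $n$.

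First I would group the factors of Theorem~\ref{pres_sec_prob} by type. The purely combinatorial/polynomial factors in $L$, such as $d^{d+6}(L+1)^{3L+d+20}$, $2^{4L}$, and the base $(4\sqrt{5\cdot 2^L}+K\,5\cdot 2^{L+1}/\log(5\cdot 2^{L+1}))^{2L+8}$, are easily seen to be $2^{O(L^2)}$ by taking logarithms. The factors of the form $(1+(2C_W\|\sigma\|_{\text{Lip}}^2)^{2L})^{L+d}$ and $(2+(2C_W\|\sigma\|_{\text{Lip}}^2)^{2L})^{L+d}$ contribute $2^{O(L^2)}$ as well. The only place where the assumption \eqref{cond_W_conv_d} enters is in the factors containing $\mathbb{E}[(W_{1,1}^{(1)})^{5\cdot 2^{L+1}}]^{1/(5\cdot 2^{L+1})}$ and in the quantity $M_2^{(L)}(x^{(i)})$; by \eqref{cond_W_conv_d} the former is bounded by $C_1 e^{C_2 L}$ (hence absorbed into $2^{O(L)}$), and using Lemma~\ref{cost_schif}'s expression for $M_2^{(L)}(x^{(i)})$, the $L$-th power of such a moment produces at worst a $2^{O(L^2)}$ factor.

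The most delicate estimate is the last factor
\[
1+d!\,\lambda(K^{(2)})^{-2d}\sum_{\ell=2}^{L+1}\prod_{i=1}^d\Bigl(\sum_{\substack{k_i=1\\k_i\ne k_s\,\forall s<i}}^d\prod_{r_i=1}^{\ell-2}\mathbb{E}\bigl[\sigma'(G^{(\ell-r_i)}_1(x^{(k_i)}))\bigr]^2\Bigr)^{-1}.
\]
The lower bound \eqref{cond_der_s_inG} gives $\mathbb{E}[\sigma'(G^{(\ell-r_i)}_1(x^{(k_i)}))]^2\ge C_4^2\,2^{-2C_3 L}$ for all relevant $\ell,r_i,k_i$; thus each inner product over $r_i$ is bounded below by $C_4^{2(\ell-2)}2^{-2C_3 L(\ell-2)}$, and the outer product over $i=1,\dots,d$ by $(C_4^{2(\ell-2)}2^{-2C_3 L(\ell-2)})^d$ (times a combinatorial factor in $d$ which I will absorb into the constant). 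Taking reciprocals and summing over $\ell\le L+1$ gives a contribution of order $L\cdot 2^{2C_3 d L(L-1)}$, which is again $2^{O(L^2)}$, and $\lambda(K^{(2)})^{-2d}$ is an $(L,n)$-independent constant. (If needed, a uniform positive lower bound for $\lambda(K^{(2)})$ is provided by Theorem~\ref{bound_K_gauss}.)

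Collecting all pieces I obtain a bound of the form $\sqrt{L/n}\cdot 2^{SL^2\log_2(5\cdot 2^{L+1}/\log_2(5\cdot 2^{L+1}))}$ for some $S>0$ independent of $L$ and $n$, which is the first displayed estimate. For the second claim, I substitute $L\le((\tfrac12-\varepsilon)\log_2 n)^{1/3}$: using $\log_2(5\cdot 2^{L+1}/\log_2(5\cdot 2^{L+1}))=L+O(\log L)$, we have $SL^2\log_2(\cdots)\le (\tfrac12-\varepsilon)\log_2 n$ for $n$ large enough (after replacing $S$ by a slightly larger constant to absorb the $\log L$ correction, and using that $L$ grows only poly-logarithmically in $n$), which combined with $\sqrt{L/n}\le n^{-1/2+o(1)}$ yields the claimed rate $O(n^{-\varepsilon})$. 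The main technical obstacle is the derivative-ratio factor in the last display: the interplay of \eqref{cond_der_s_inG} with the combinatorial sum over subsets $\{k_1,\dots,k_d\}$ needs to be handled carefully, since a naive bound would lose extra factors of $d!$ that, although independent of $L$ and $n$, matter for the explicit constant $S$.
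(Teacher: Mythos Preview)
Your bookkeeping approach from Theorem~\ref{pres_sec_prob} (equivalently Proposition~\ref{fin_relu}) is exactly what the paper intends; the corollary is stated there without a separate proof. Two points in your write-up need correction.

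First, your blanket claim that every remaining factor is $2^{O(L^2\log L)}$ undercounts the nested-power term
\[
\Bigl(2+\bigl(K\tfrac{5\cdot 2^{L+1}}{\log(5\cdot 2^{L+1})}\sqrt{C_W}\,\|\sigma\|_{\mathrm{Lip}}\,\mathbb{E}[(W_{1,1}^{(1)})^{5\cdot 2^{L+1}}]^{1/(5\cdot 2^{L+1})}\bigr)^{L-1}\Bigr)^{L/2+24}.
\]
Under \eqref{cond_W_conv_d} the innermost base is $2^{O(L)}$, so the inner parenthesis is $2^{O(L^2)}$ and the whole factor is $2^{O(L^3)}$, not $2^{O(L^2\log L)}$. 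This is harmless for the first display, since $\log_2\bigl(5\cdot 2^{L+1}/\log_2(5\cdot 2^{L+1})\bigr)=L-\log_2 L+O(1)$ and hence the target exponent $SL^2\log_2(\cdots)$ is itself $\Theta(L^3)$; you should simply record the correct order when collecting pieces.

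Second, and more seriously, your passage from $L^3\le(\tfrac12-\varepsilon)\log_2 n$ to ``$SL^2\log_2(\cdots)\le(\tfrac12-\varepsilon)\log_2 n$ for $n$ large enough'' tacitly assumes $S\le 1$: since $L^2\log_2(\cdots)\sim L^3$, you are effectively asserting $SL^3\le(\tfrac12-\varepsilon)\log_2 n$ with the \emph{same} $\varepsilon$. Nothing in the preceding estimates forces $S\le 1$; on the contrary, $S$ absorbs all the hyperparameter-dependent constants from Theorem~\ref{pres_sec_prob} and will in general be large. As written one only obtains $d_C=O\bigl(\sqrt{L}\,n^{S(\frac12-\varepsilon)-\frac12}\bigr)$, which is $O(n^{-\varepsilon})$ only when $S\le 1$. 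A minor side remark: Theorem~\ref{bound_K_gauss} lower-bounds $\det(\hat K^{(\ell)})$ for $\ell\ge 3$ and says nothing about $\lambda(K^{(2)})$; the positivity of the latter is part of the standing hypotheses, not a consequence.
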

Observe that this result yields the same rate of convergence as in the one-dimensional case (see Remark~\ref{rate_final_1d}). However, in the multi-dimensional setting, the lower bound on the minimum eigenvalue of the limiting covariance matrix tends to zero when \( C_3 \neq 0 \), and consequently the limit in law of the neural network (if it exists) as \( L, n \to \infty \) may be degenerate.
As previously noted, condition~\eqref{cond_W_conv_d} is satisfied by a broad class of distributions, including sub-Gaussian and sub-exponential ones (see~\cite{W19}). In contrast, condition~\eqref{cond_der_s_inG} is generally more difficult to verify; nonetheless, it is satisfied by both the linear activation function and the ReLU activation.
\begin{remark}\label{rem_d_c_ok_l0}
{
  Thanks to Proposition C.3.1 in \cite{NP12}, Theorem \ref{pres_sec_prob} implies the convergence in law of the neural network to a Gaussian distribution when the upper bound on the convex distance tends to zero as the inner widths $n_1,\dots,n_L$ diverge to infinite. However, it does not provide information about the Gaussian limit in the regime \( L \to \infty \). Nevertheless, the problem remains well-posed even when the limit for $L\to\infty$ of the covariance matrix $K^{(L)}$ is not invertible. Indeed the convex distance is invariant under normalization and Remark \ref{rem_Kne0imp} still applies.
}
   
\end{remark}

\begin{remark}
   In Section \ref{id:mult}, the computations are optimized for the case of a linear activation function ($\sigma(x) = x$ for all $x \in \mathbb{R}$) and Gaussian weight initialization. The result shows that, when $C_b = 0$ and $C_W = 1$, as $L,n_1,\dots,n_L\to \infty$
\begin{equation}\label{d_c_Gauss_lim_md}
d_C\left(z_1^{(L+1)}(\mathcal{X}), G_1^{(L+1)}(\mathcal{X})\right) = O\left(\sqrt{\sum_{i=1}^{L} \frac{1}{n_i}}\right).
\end{equation}
This bound is significantly tighter than the one obtained from Theorem~\ref{pres_sec_prob} (in the case $C_b \ne 0$), illustrating that the generality of that theorem comes at the cost of optimality. As in the one-dimensional case, this sub-optimality originates from the application of Lemma~\ref{mom_p_iid0} and Theorem~\ref{bound_K_gauss}, which in turn rely on Proposition~\ref{Caco}—a result that is not optimal for every choice of activation function. 
{ Furthermore, we note that in \eqref{d_c_Gauss_lim_md} the bound exhibits a square-root dependence, whereas in \cite{CP25} the authors obtain a bound on the Total Variation distance (and consequently on the Convex distance, by Remark~\ref{d_c_less_d_TV}) of order $1/\text{width}$ rather than its square root. As observed before in the one-dimensional case, the key difference here is that the constants in \eqref{d_c_Gauss_lim_md} are explicit, which allows for a rigorous analysis of the regime where $L, n_1, \dots, n_L \to \infty$.
}

\end{remark}

\section{Literature review}\label{literature_review_sec}
This paper investigates the discrepancy in distribution between the finite-dimensional output of a deep neural network and its Gaussian limit as identified in Theorem~\ref{Hanin}. We consider networks initialized with independent and identically distributed (i.i.d.) weights having sufficiently many finite moments, and with independent Gaussian biases.

While several works have addressed similar questions under the assumption of Gaussian-distributed weights—typically focusing on the role of the inner layer widths (see, e.g., \cite{BT24,FHMNP,Torr23,Trev,CP25,BFF24,CC23})—our approach allows for more general weight distributions. 

From Theorem~\ref{finale_uno_1}, we deduce that for any input $x \in \mathbb{R}^{n_0}$, there exists a constant $C > 0$, independent of the widths $n_1, \dots, n_L$, such that
\[
\max\left\{d_K\left(z_{1}^{(L+1)}(x), G_{1}^{(L+1)}(x)\right),\ 
           W_1\left(z_{1}^{(L+1)}(x), G_{1}^{(L+1)}(x)\right)\right\}
\le C\left(\sum_{j=1}^{L} \frac{1}{n_j}\right)^{1/2},
\]
where \( d_K \) and \( W_1 \) denote the Kolmogorov and Wasserstein distances, respectively defined in Definitions \ref{def_mK} and \ref{d_w_def}.

In the multidimensional case, Theorem~\ref{pres_sec_prob} guarantees that for any finite set of distinct inputs \( \mathcal{X} := \{x^{(1)}, \dots, x^{(d)}\} \subseteq \mathbb{R}^{n_0} \), there exists a constant \( D > 0 \), also independent of \( n_1, \dots, n_L \), such that
\[
d_C\left(z_1^{(L+1)}(\mathcal{X}), G_1^{(L+1)}(\mathcal{X})\right)
\le D\left(\sum_{i=1}^{L} \frac{1}{n_i}\right)^{1/2},
\]
where \( d_C \) denotes the convex distance as defined in Definition \ref{d_c_def}.

Notably, our results also apply when the weights are Gaussian, enabling a direct comparison with the bounds established in the aforementioned literature.

\subsection{One-dimensional case (Gaussian initialization)}
{
\begin{itemize}

\item In \cite{BFF24}, the authors consider a shallow neural network (i.e., with a single hidden layer, $L = 1$) and a univariate output at initialization, with Gaussian-distributed weights. They focus on a single input \( x \in \mathbb{R}^{n_0} \) and study the Kolmogorov, total variation, and Wasserstein distances (see respectively Definitions \ref{def_mK}, \ref{d_tv_def}, \ref{d_w_def}) between the network output and its Gaussian limit. They derive explicit bounds of order \( \frac{1}{\sqrt{n_1}} \), where the constants depend on the limiting covariance \( K^{(2)} \) (assumed to be non-zero), and on the growth properties of the activation function. In particular, the activation function and its first two derivatives are assumed to be polynomially bounded.

Our results recover the same rate of convergence, \( \frac{1}{\sqrt{n_1}} \), for both the Kolmogorov and Wasserstein distances, thus matching the bounds obtained in \cite{BFF24} in this setting.

It is worth noting that the approach in \cite{BFF24} heavily exploits the Gaussianity of the weights in order to apply a second-order Poincaré inequality from \cite{Vid}, which plays a central role in their analysis.

\medskip

\item 
In \cite{Torr23} the authors assumed \( L = 1 \), \( n_2 = 1 \), and Gaussian initialization, and under the assumption that \( \mathbb{E}[\sigma^2(z^{(1)}(x))] < \infty \) for a fixed input \( x \in \mathbb{R}^{n_0} \), they provide upper bounds for the Kolmogorov, total variation, and Wasserstein distances of order \( \frac{1}{\sqrt{n_1}} \). The constants involved are explicit and depend on \( \operatorname{Var}(\sigma(z^{(1)}(x))) \) and on  \( K^{(2)} \ne 0 \).

To analyze the convergence in law for a multidimensional output, the authors consider the convex and Wasserstein distances. Under general assumptions on the activation function, they establish a convergence rate of order
\[
O\left(\sqrt{\sum_{\ell=1}^{L} \frac{c_\ell}{n_\ell}}\right),
\]
where the constants \( c_\ell \) depend on \( \mathbb{E}[\sigma^4(z^{(\ell)}(x))] \). Therefore, the bound is not fully explicit, for example in the depth parameter \( L \), in contrast to our result in Theorem~\ref{finale_uno_1}.

The approach of the authors of \cite{Torr23} relies heavily on Stein's method and Gaussian integration by parts, which is feasible thanks to the Gaussian initialization. A crucial assumption in their analysis is the following: for any \( a_1, a_2 \ge 0 \) and constants \( C_b, C_W > 0 \), there exists a polynomial \( P \) with non-negative coefficients (depending only on \( \sigma, C_b, C_W \)), and with degree independent of \( \sigma, a_1, a_2, C_b, C_W \) such that for all \( x \in \mathbb{R} \),
\begin{equation}\label{hyp_T}
\left|\sigma^2\left(x\sqrt{C_b + C_W a_2}\right) - \sigma^2\left(x\sqrt{C_b + C_W a_1}\right)\right| 
\le P(|x|)\, |a_2 - a_1|.
\end{equation}
The authors verify that this condition holds for various activation functions, including the perceptron and Lipschitz continuous functions.

However, in our setting with general (non-Gaussian) weights, assumption \eqref{hyp_T} cannot be applied directly. To address this, we adapt Stein’s method to a new setting involving the square of a Lipschitz function. Specifically, for each \( \ell = 1, \dots, L \), we analyze the quantity
\[
\left| \mathbb{E}[\sigma^2(z_1^{(\ell)}(x)) \mid \mathcal{F}_{\ell-1}] 
- \mathbb{E}[\sigma^2(G_1^{(\ell)}(x))] \right|,
\]
where \( \mathcal{F}_{\ell-1} \) is the \( \sigma \)-field generated by the weights and biases up to layer \( \ell-1 \). 

In the Gaussian case, \( z_1^{(\ell)}(x) \) is conditionally Gaussian given \( \mathcal{F}_{\ell-1} \), and for \( N \sim \mathcal{N}(0,1) \), we have the identity in law
\[
\mathbb{E}[\sigma^2(z_1^{(\ell)}(x)) \mid \mathcal{F}_{\ell-1}] 
= \mathbb{E}\left[\sigma^2\left(N\sqrt{C_b + C_W \sigma^2(z_1^{(\ell-1)}(x))}\right) 
\mid \mathcal{F}_{\ell-1}\right],
\]
which makes assumption \eqref{hyp_T} easy to apply in that context.

Another key difference in our proof strategy is the replacement of Gaussian integration by parts (unavailable in the non-Gaussian setting) with a discrete version introduced in \cite{C08} and later used in \cite{KG22}. By combining this discretization with Stein’s method, we obtain the same main term as in the Gaussian setting, along with an additional remainder term. This remainder is tractable thanks to Taylor’s theorem and the regularity properties of the solution to Stein’s equation.

\medskip

\item In \cite{FHMNP}, it is shown that the true optimal rate of convergence of the neural network's law to that of its Gaussian counterpart is of order $1/n$, although our results recover the same rate as those in \cite{BFF24} and \cite{Torr23} when the inner widths satisfy $n_1, \dots, n_L \asymp n \to \infty$.
 In particular, the authors of \cite{FHMNP} establish both lower and upper bounds of this order for the Wasserstein and total variation distances in the case of deep neural networks (\( L \ge 1 \)), under the assumptions of a polynomially bounded activation function, evaluation at a fixed input, and a non-degeneracy condition on the limiting covariance matrices \( K^{(\ell)} \), which are required to be invertible for every \( \ell = 1, \dots, L+1 \).

The main ingredients in their proof are bounds from \cite{Han_Gas}, Stein’s method, and a refinement based on Lusin's Theorem, which allows for a tighter approximation and ultimately leads to the improved convergence rate. 

Moreover, \cite{FHMNP} actually provide a more general result: they study the convergence in law of the gradients of the neural network with respect to the input, comparing them to the corresponding Gaussian limit. They obtain upper bounds of the optimal order \( \frac{1}{n} \), without providing explicit constants. In contrast, although our results are suboptimal in the case of Gaussian initialization, they offer explicit dependence on the network’s hyper-parameters, including \( C_b, C_W, L, \sigma, x \), and the widths \( n_1,\dots,n_L \). This makes our bounds specifically suitable for understanding the behavior of the neural network output in the combined infinite width/depth limit.


\end{itemize}
}
\subsection{Multidimensional Case (Gaussian initialization)}
{
\begin{itemize}
\item The authors in \cite{BFF24} extend their results to the multidimensional setting, still under the assumption \( L = 1 \). In this case, they obtain a bound of order \( \frac{1}{\sqrt{n_1}} \), with an explicit constant depending on the minimum and maximum eigenvalues of the limiting covariance matrix \( K^{(2)} \), which is assumed to be invertible.

In Theorem~\ref{pres_sec_prob}, we consider a more general setting with \( L \ge 1 \) and multiple distinct inputs \( \mathcal{X} := \{x^{(1)}, \dots, x^{(d)}\} \subseteq \mathbb{R}^{n_0} \). In order to obtain a lower bound on the minimum eigenvalue of \( K^{(\ell)} \) for \( \ell \ge 3 \), we assume that
\[
\mathbb{E}\big[\sigma'(G^{(\ell)}_1(x^{(i)}))\big] \ne 0 \quad \text{for every } \ell = 2,\dots,L \text{ and } i = 1,\dots,d-1.
\]
This condition is not needed in the case \( L = 1 \), as the relevant term involving \( \mathbb{E}[\sigma'(G^{(\ell)}_1(x^{(i)}))] \) does not appear.

We thus conclude that, in the case of Gaussian weights, our result in Theorem~\ref{pres_sec_prob} is fully analogous to Theorem 3.3 in \cite{BFF24}, with the added benefit of not requiring any additional assumptions beyond those already stated.

\medskip

\item When considering the output of a neural network evaluated at multiple inputs, the authors of \cite{FHMNP} were not able to extend the approximation technique used in the one-dimensional case. Nevertheless, by again applying Stein’s method in conjunction with the bounds from \cite{Han_Gas}, they obtained a convergence rate of order \( O\left(\frac{1}{\sqrt{n}}\right) \) for the convex distance between the gradients of the neural network and their Gaussian limit, in the regime where \( L \ge 1 \) and \( n_1, \dots, n_L \asymp n \to \infty \). This rate is suboptimal, and highlights the increased technical challenges that arise when using Stein's method in the multidimensional setting.

\medskip

\item Without requiring the invertibility of the limiting covariance matrices \( K^{(\ell)} \), \( \ell = 1, \dots, L+1 \), the authors of \cite{BT24} presented a bound on the \( 2 \)-Wasserstein distance (see Definition \ref{d_w_def}). They assumed a Gaussian initialization, a Lipschitz activation function, \( L \ge 1 \), and a set of distinct inputs \( \mathcal{X} := \{x^{(1)}, \dots, x^{(d)}\} \) with \( d > 1 \). 

Their approach relied primarily on inductive arguments and key properties of the \( 2 \)-Wasserstein distance. They showed that
\[
W_2\left(z^{(L+1)}(\mathcal{X}), G^{(L+1)}(\mathcal{X})\right) \le \sqrt{n_{L+1}} \sum_{\ell=1}^L \frac{C^{(\ell+1)} \|\sigma\|_{\mathrm{Lip}}^{L-\ell} \sqrt{\prod_{i=\ell+1}^L C_W^i}}{\sqrt{n_\ell}},
\]
thus demonstrating, for the first time, an exponential dependence on the depth \( L \ge 1 \). However, the bound is still not fully explicit in the hyper-parameters, as the constants \( C^{(\ell)} \) are not specified.

\medskip
\item An optimal bound for the \( p \)-Wasserstein distance (see Definition \ref{d_w_def}) was first obtained in \cite{Trev} by applying results from optimal transportation theory. The authors proved a bound of order \( \sum_{\ell=1}^L \frac{1}{n_\ell} \) (with non explicit constants) for a neural network—more general than the fully connected architecture—initialized with Gaussian weights, equipped with a Lipschitz activation function, with \( L \ge 1 \), and evaluated on multiple inputs. Their analysis was conducted under a non-degeneracy condition similar to the one in \cite{FHMNP} (in the case without derivatives).

Our results in Theorem~\ref{pres_sec_prob}, as previously discussed, exhibit a slower rate of convergence. However, we focused on the convex distance, which is not directly comparable to the \( 2 \)-Wasserstein distance.

\bigskip

\item A result that can be directly compared with Theorem~\ref{pres_sec_prob} is Theorem 5 in \cite{CP25}. There, the authors show that the total variation distance between the gradients of the neural network (with respect to the inputs) and the corresponding Gaussian limit converges to zero at a rate of \( \frac{1}{n} \). This result also implies that the same rate of convergence holds for the convex distance (see Remark \ref{d_c_less_d_TV}). Consequently, Theorem~\ref{pres_sec_prob} is suboptimal in the case of Gaussian initialization, even in the multi-input setting.

This sub-optimality reflects the trade-off for addressing a more general setting (i.e., not necessarily Gaussian initialization) and for providing explicit constants in terms of the network's hyper-parameters, valid for many activation functions.

The proof of Theorem 5 in \cite{CP25} relies on entropic bounds in combination with the estimates from \cite{Han_Gas}. We also observe that one could apply Theorem 12 from \cite{CP25} together with Lemma~\ref{somma_schifa}, Lemma~\ref{cost_schif}, Proposition~\ref{mom_z_prop}, and Remark~\ref{HS_bondK} to obtain bounds for both the Kolmogorov and the \( 2 \)-Wasserstein distances. These bounds would be explicit in the hyper-parameters and optimal in the regime where \( L < \infty \) is fixed and \( n_1,\dots,n_L \asymp n \to \infty \).

\end{itemize}
}
\subsection{Non-Gaussian initialization}
{
The limiting distribution of a wide fully connected neural network with i.i.d.\ weights and i.i.d.\ biases (independent from the weights) has been proven to be Gaussian when the biases are Gaussian and the weights have all finite moments, as shown in \cite{Han22} (see Theorem~\ref{Hanin}). Observe that Theorems~\ref{finale_uno_1} and~\ref{pres_sec_prob} do not require the weights to have all finite moments, thus slightly improving the qualitative result of Theorem~\ref{Hanin} as well (using the fact that convergence in Kolmogorov, Wasserstein, and convex distance implies convergence in distribution, by Proposition~C.3.1 in \cite{NP12}).   
}

The situation is different when the weights and biases lack finite moments—for example, in the case of i.i.d.\ symmetric and centered $\alpha$-stable initializations: under a suitable scaling of the weights, the neural network converges to an $\alpha$-stable process (see \cite{PFF20,FFP23,BFF23,FFP22Kernel}).

In both settings, there are few works studying quantitative versions of these convergence theorems. For $\alpha$-stable initialization, the only result we are aware of is Theorem~11 in \cite{FFP23}, which provides rates for the sup-norm convergence under both ``joint growth'' and ``sequential growth'' of the inner layer widths.

As for the Gaussian limit case (i.i.d.\ weights with finite moments and Gaussian biases), the only quantitative central limit theorems we know are found in \cite{EMS21,Klu22,BGRS23,BR25}. In particular \cite{EMS21,Klu22,BGRS23} investigate the functional distance between a fully connected neural network and its Gaussian limit while instead \cite{BR25} the authors study the finite dimensional distributions of the neural network. Moreover, in \cite{EMS21,Klu22}, the authors focus on shallow networks (i.e., $L=1$) with zero bias, while in \cite{BGRS23,BR25}, the authors consider deep networks (i.e., $L \geq 1$).

{
\begin{itemize}
\item In \cite{EMS21}, the authors study the $p$-Wasserstein and $\infty$-Wasserstein distances, viewing the neural network as a random variable in $L^p(\mathbb{S}^{n_0-1})$, where 
\[
\mathbb{S}^{n_0-1} = \left\{x \in \mathbb{R}^{n_0} : \|x\| = 1\right\}.
\]
The network is defined with symmetric, sub-Gaussian random variables as weights in the first layer, and Bernoulli random variables taking values $1$ and $-1$ with equal probability as weights in the last layer. The authors consider various activation functions, including polynomials, ReLU, tanh, and more generally, functions that are square-integrable with respect to the standard Gaussian measure. The best convergence rate with respect to the inner width is of order $\frac{1}{n_1^{1/3}}$ for the $\infty$-Wasserstein distance.

Therefore, Theorems~\ref{finale_uno_1} and~\ref{pres_sec_prob} improve upon the results of \cite{EMS21}, achieving a convergence rate of order $\frac{1}{\sqrt{n}}$, though only for the finite-dimensional distributions of the neural network and under the assumption of Lipschitz activation functions.

\item A better bound for the $2$-Wasserstein distance, of order $\frac{1}{\sqrt{n_1}}$, is obtained in \cite{Klu22}, where it is also shown that, for general weight distributions, this rate is asymptotically sharp in $n_1$. In this setting, the neural network is initialized with i.i.d.\ weights uniformly distributed on the sphere of radius $n_0$ in the first layer, and i.i.d.\ weights with finite second moment in the second layer. The activation function is assumed to be a polynomial with zero mean under the uniform distribution on the sphere of radius $n_0$. The author also studies the case of ReLU activation, assuming additionally a Bernoulli distribution for the weights in the last layer. However, in this case the convergence rate is worse and depends explicitly on the input dimension $n_0$.

Hence Theorems~\ref{finale_uno_1} and~\ref{pres_sec_prob} extend the convergence rate obtained in \cite{Klu22} to the setting of deep fully connected neural networks ($L \ge 1$) but again only for the finite-dimensional distributions of the neural network and under the assumption of Lipschitz activation functions.


\item In the non-shallow case, the authors of \cite{BGRS23} consider general weight distributions, where the first-layer weight matrix $W^{(1)} := (W_{i,j}^{(1)})_{i,j}$ has i.i.d.\ rows, while the weight matrices in the subsequent layers, $W^{(\ell)} := (W_{i,j}^{(\ell)})_{i,j}$ for $\ell = 2, \dots, L+1$, have i.i.d.\ entries. Their moment assumption requires the existence of $p > n_0$, where $n_0$ is the input dimension, such that
\[
\mathbb{E}\left[\left(W_{i,j}^{(\ell)}\right)^{2p}\right] \le \left(\frac{C_W^{(\ell)}}{n_{\ell-1}}\right)^p \left(B^{(\ell)}\right)^{p/2},
\]
where $C_W^{(\ell)}$ and $B^{(\ell)}$ are constants. In contrast, our results do not rely on this assumption, since our definition of the neural network (see Definition~\ref{def_NN}) already includes a normalization by $\sqrt{C_W / n_{\ell-1}}$.

Assuming a Lipschitz activation function, the authors of \cite{BGRS23} establish an upper bound on the Wasserstein distance between the neural network and its Gaussian limit, both viewed as random fields on the sphere. However, the bound they obtain depends on the ratios between the widths of different layers, and as a result, their bound does not guarantee convergence in distribution when all hidden-layer widths diverge to infinity, as considered in \cite{Han22}. They provide an improved estimate in the case where the activation function has three bounded derivatives, yet the convergence remains suboptimal in the regime where all widths tend to infinity.

By contrast, Theorem~\ref{pres_sec_prob} in the present work provides a quantitative description of the infinite-width limit. Its main limitation, however, is that it applies to finite-dimensional distributions rather than to the full functional setting.

\item The recent paper \cite{BR25} investigates the Wasserstein distance (see Definition \ref{d_w_def} and Proposition \ref{form_dual_wass}) between the finite-dimensional distributions of a fully connected neural network and the corresponding Gaussian limit as described in Theorem \ref{Hanin}. The authors assume i.i.d.\ Gaussian biases with zero mean and i.i.d.\ weights with zero mean, under the following moment conditions: there exist constants $c_3^{(L)}$ and $c_{2p}^{(\ell)} \ge 1$ for $\ell = 1, \dots, L-1$ such that
\[
\mathbb{E}\left[\left(W_{i,j}^{(\ell)}\right)^{2p}\right] \le \frac{c_{2p}^{(\ell)}}{n_\ell^p}
\quad \text{and} \quad
\mathbb{E}\left[\left|W_{i,j}^{(L)}\right|^3\right] \le \frac{c_3^{(L)}}{n_L^{3/2}}.
\]
Under these assumptions, the authors prove the following bound:
\begin{equation}\label{W_1_bound_BR25}
W_1\left(z^{(L+1)}(\mathcal{X}), G^{(L+1)}(\mathcal{X})\right)
\le C n_{L+1}^{1/3} \sum_{\ell=1}^{L} n_\ell^{-\frac{1}{6}\left(\frac{p-2}{3(2p-1)}\right)^{L-\ell}},
\end{equation}
where $C>0$ is a constant that does not depend on the inner widths $n_1,\dots,n_L$ and grows exponentially in the depth $L$.
As a consequence, when $n_1 \asymp \dots \asymp n_L \asymp n$, the convergence rate is worse than $1/\sqrt{n}$, in contrast to the results obtained in Theorems \ref{finale_uno_1} and \ref{pres_sec_prob}. We note, however, that \cite{BR25} does not assume invertibility of the limiting covariance matrix $K^{(\ell)}$ for every $\ell = 1, \dots, L+1$, and that their analysis is based on the Wasserstein distance, whereas in our multidimensional setting we consider the convex distance.
\end{itemize}
}
\subsection{Infinite widths and depth}\label{subsec_crit}

The behavior of fully connected feedforward neural networks at random initialization has been largely studied in the infinite-width regime, mostly under Gaussian initialization. On the contrary, the joint limit where both depth and width tend to infinity remains largely open. The main contributions are given by \cite{Han_Gas, Hanin2022CorrelationFI, HN18, BLR24, LNR23,BR25} and by  \cite{BGIPR25, HZ23,HZ24} on the study of the law of the predictive posterior.

{
\begin{itemize}
\item In \cite{Hanin2022CorrelationFI}, the author studies the cumulants of a fully connected neural network and shows that, for a certain class of activation functions, the network exhibits non-Gaussian behavior as the \emph{effective depth} $\frac{L}{n}$ increases. Understanding the role of $\frac{L}{n}$ in the convergence in law of the neural network is closely related to the notion of \emph{critical initialization} (see also \cite{PLRSDG16, RPKGSD17, RYH22}). Indeed, when $L$ tends to infinity, the recurrence relation for the limiting covariance matrix described in Theorem~\ref{Hanin} may diverge. To prevent this, critical initialization refers to a specific choice of the parameters $C_b$ and $C_W$ (as defined in \eqref{NN_expr}) such that the triplet $(K^{(L+1)}_{i,i}, K^{(L+1)}_{i,j}, K^{(L+1)}_{j,j})$ converges to a fixed point $(K^*, K^*, K^*)$. 

For instance, for the ReLU activation function $\sigma(x) = x \mathbf{1}_{\{x \ge 0\}}$, the critical initialization is given by $C_b = 0$ and $C_W = 2$, while for the linear activation $\sigma(x) = x$, the critical initialization corresponds to $C_b = 0$ and $C_W = 1$.

\item  In \cite{HN18} it is studied the case when the activation function is linear, i.e. $\sigma(x) = x$. In this case the neural network \eqref{NN_expr} without biases reduces to the product of $L$ matrices with independent and identically distributed entries applied to an input vector. As $L, n_1, \dots, n_L \to \infty$, the limiting distribution depends on the value of the ratio $\frac{L}{n} := \alpha \ge 0$. In particular, for general weight distributions that are symmetric, atomless, and have all finite moments, the authors of \cite{HN18} show that the Kolmogorov distance
\[
d_K\left(\ln\left(\frac{n_0}{n_{L+1}}\|z^{(L+1)}(x)\|^2\right),\mathcal{N}_1\left(-\frac{1}{\beta},\beta\right)\right) = O\left(\left(\sum_{i=1}^{L+1}\frac{1}{n_i^2}\right)^{\frac{1}{5}}\right),
\]
where $x \in \mathbb{R}^{n_0}$ satisfies $\|x\| = 1$, and
\[
\beta := 2 \sum_{i=1}^{L+1} \frac{1}{n_i} + \frac{\mathbb{E}\left[\left(W_{1,1}^{(1)}\right)^4\right] - 3}{n_1} \|x\|^4.
\]
This result shows that, when $n_1 \asymp \dots \asymp n_{L+1} \asymp n$ and $\frac{L}{n} \to 0$ as $n, L \to \infty$, the logarithm of the $\ell_2$ norm of the neural network output applied to a unit-norm input converges in law to a Gaussian distribution.

\item In \cite{BLR24} the authors fully characterize the limiting behavior of a neural network as $\frac{L}{n} \to \alpha \ge 0$ when $L, n \to \infty$, considering arbitrary input sets and general output dimensions. In particular, they show that this limit is a Gaussian mixture (nontrivial when $\alpha > 0$) in the case of zero bias and i.i.d.\ weights of the form $W_{i,j}^{(\ell)} \sim \mathcal{N}\left(0,\frac{1}{C_W \lambda_\ell}\right)$ (in our setting, this corresponds to $\lambda_\ell = C_W^{-1}$ for every $\ell = 1, \dots, L+1$).

\begin{prop}[Proposition 3.1, \cite{BLR24}]\label{prop_id_B}
Assume $\alpha = 0$, $\min\{n_1, \dots, n_{L+1}\} \ge D \ge 1$, and $\lambda_1 \cdots \lambda_{L+1} \to \lambda^* > 0$ as $L \to \infty$. Then
\[
z^{(L+1)}(\mathcal{X}) \xrightarrow{d} \frac{Z}{\sqrt{n_0 \lambda^*}} X,
\]
where $\mathcal{X} = \{x^{(1)}, \dots, x^{(d)}\} \subseteq \mathbb{R}^{n_0}$ is a set of distinct input vectors, $Z$ is a $D \times n_0$ matrix with independent standard Gaussian entries, and $X = [x^{(1)}, \dots, x^{(d)}]$ is the matrix formed by placing the input vectors as columns.
\end{prop}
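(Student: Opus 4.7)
The plan is to exploit the linearity of the network and reduce the convergence to a concentration-of-measure statement for products of Gaussian matrices. Since $\sigma(x)=x$, the bias vanishes, and $W_{i,j}^{(\ell)}\sim\mathcal{N}(0,1/(C_W\lambda_\ell))$, reading off \eqref{NN_expr} yields
\[
z^{(L+1)}(\mathcal{X})=\frac{1}{\sqrt{n_0\,n_1\cdots n_L\,\lambda_1\cdots\lambda_{L+1}}}\,\tilde W^{(L+1)}\tilde W^{(L)}\cdots \tilde W^{(1)}\,X,
\]
where each $\tilde W^{(\ell)}\in\mathbb{R}^{n_\ell\times n_{\ell-1}}$ has i.i.d.\ $\mathcal{N}(0,1)$ entries. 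Using $\lambda_1\cdots\lambda_{L+1}\to\lambda^{*}$ and Slutsky's theorem, it suffices to show that
\[
M'X:=\frac{1}{\sqrt{n_1\cdots n_L}}\,\tilde W^{(L+1)}\,P^{(L)}\,X,\qquad P^{(L)}:=\tilde W^{(L)}\cdots\tilde W^{(1)},
\]
converges in distribution to $ZX$, with $Z\in\mathbb{R}^{D\times n_0}$ having i.i.d.\ standard Gaussian entries.

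I would then condition on the $\sigma$-field $\mathcal{F}_L$ generated by $\tilde W^{(1)},\dots,\tilde W^{(L)}$. Since $\tilde W^{(L+1)}$ is independent of $\mathcal{F}_L$ and has i.i.d.\ Gaussian entries, conditional on $\mathcal{F}_L$ the rows of $M'X$ are i.i.d.\ centered Gaussian vectors in $\mathbb{R}^d$ with common covariance
\[
\Sigma^{(L)}:=\frac{1}{n_1\cdots n_L}\,X^{T}(P^{(L)})^{T}P^{(L)}X.
\]
By the standard criterion for conditionally Gaussian sequences, it is enough to prove that $\Sigma^{(L)}\to X^{T}X$ in probability as $L,n_1,\dots,n_L\to\infty$ with $\alpha=0$. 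Unbiasedness is immediate by induction: conditioning on $\mathcal{F}_{\ell-1}$ gives $\mathbb{E}[(P^{(\ell)})^{T}P^{(\ell)}\mid\mathcal{F}_{\ell-1}]=n_\ell(P^{(\ell-1)})^{T}P^{(\ell-1)}$, and hence $\mathbb{E}[\Sigma^{(L)}]=X^{T}X$.

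The main technical step, where the hypothesis $\alpha=0$ actually bites, is the second-moment estimate. For any $u,v\in\mathbb{R}^{n_0}$, set $\tilde P^{(\ell)}:=P^{(\ell)}/\sqrt{n_1\cdots n_\ell}$. Wick's formula applied to the Gaussian matrix $\tilde W^{(\ell)}$ yields the conditional recursions
\[
\mathbb{E}\Bigl[\langle\tilde P^{(\ell)}u,\tilde P^{(\ell)}v\rangle^{2}\mid\mathcal{F}_{\ell-1}\Bigr]=\Bigl(1+\tfrac{1}{n_\ell}\Bigr)\langle\tilde P^{(\ell-1)}u,\tilde P^{(\ell-1)}v\rangle^{2}+\tfrac{1}{n_\ell}\|\tilde P^{(\ell-1)}u\|^{2}\|\tilde P^{(\ell-1)}v\|^{2},
\]
\[
\mathbb{E}\Bigl[\|\tilde P^{(\ell)}u\|^{2}\|\tilde P^{(\ell)}v\|^{2}\mid\mathcal{F}_{\ell-1}\Bigr]=\|\tilde P^{(\ell-1)}u\|^{2}\|\tilde P^{(\ell-1)}v\|^{2}+\tfrac{2}{n_\ell}\langle\tilde P^{(\ell-1)}u,\tilde P^{(\ell-1)}v\rangle^{2}.
\]
Iterating, together with the elementary inequality $\prod_{\ell=1}^{L}(1+c/n_\ell)-1=O(L/\min_{\ell}n_\ell)$ and the trivial Cauchy--Schwarz bound $\langle\tilde P^{(\ell)}u,\tilde P^{(\ell)}v\rangle^{2}\le\|\tilde P^{(\ell)}u\|^{2}\|\tilde P^{(\ell)}v\|^{2}$, one obtains $\operatorname{Var}(\Sigma^{(L)}_{i,j})=O(L/\min_{\ell}n_\ell)\to 0$ under $\alpha=0$, for every fixed pair of inputs $x^{(i)},x^{(j)}\in\mathcal{X}$, yielding the required convergence in probability.

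Combining the two previous paragraphs, the conditionally Gaussian law of $M'X$ given $\mathcal{F}_L$ converges in distribution to the matrix-normal law whose rows are i.i.d.\ $\mathcal{N}_d(0,X^{T}X)$. This law coincides with that of $ZX$ for $Z\in\mathbb{R}^{D\times n_0}$ standard Gaussian, so restoring the prefactor $1/\sqrt{n_0\lambda^{*}}$ yields the claim. The main obstacle in this scheme is the variance propagation: the per-layer multiplicative factor $(1+O(1/n_\ell))$ is precisely what forces the critical regime $\alpha=0$, and it is also the reason the argument is tight for the linear activation but would require substantially more work for nonlinear $\sigma$.
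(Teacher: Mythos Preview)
The paper does not prove this proposition; it is quoted verbatim from \cite{BLR24} in the literature review (Section~\ref{subsec_crit}) and no argument is given. There is therefore no ``paper's own proof'' to compare against.

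Your proposal is nonetheless correct and is the natural argument for this linear-activation result. The factorization into a scaled product of standard Gaussian matrices, the conditioning on $\mathcal{F}_L$ to reduce the problem to concentration of the random Gram matrix $\Sigma^{(L)}$, and the coupled second-moment recursions obtained from Wick's formula are all sound. The variance bound follows by summing the increments $A_\ell-A_{\ell-1}=\frac{1}{n_\ell}(A_{\ell-1}+B_{\ell-1})$ and controlling $A_{\ell-1}+B_{\ell-1}\le\prod_{k<\ell}(1+3/n_k)(A_0+B_0)$, which indeed gives $\operatorname{Var}(\Sigma^{(L)}_{i,j})=O\bigl(\sum_\ell 1/n_\ell\bigr)\to 0$ under $\alpha=0$. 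The only step a referee might ask you to spell out is the passage from ``$\Sigma^{(L)}\to X^TX$ in probability'' to convergence in law of the unconditional distribution; this is standard via dominated convergence for conditional characteristic functions, but is worth a sentence.
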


Note that in our setting ($\lambda_\ell = C_W^{-1}$), we have $C_W^{-(L+1)} \to \lambda^* > 0$ if and only if $C_W = 1$, which corresponds to the same parametrization used in the critical initialization for $\sigma(x) = x$.

Although the case $\alpha > 0$ is addressed with an explicit description of the limit (see Proposition 3.2), we do not analyze this setting in our paper.

In Example~\ref{ex_id_G_1d} and Section~\ref{id:mult}, we present quantitative results related to Proposition~\ref{prop_id_B}. Specifically, we study the Kolmogorov and Wasserstein distances (for the one-dimensional input case) and the convex distance (for the multi-input case) between the neural network with Gaussian initialization and the Gaussian limit described in Theorem~\ref{Hanin}. This allows us to evaluate the rate at which the neural network converges in law to the Gaussian limit (even though we do not consider the exact limiting distribution). Our results show a convergence rate of order
\[
O\left(\sqrt{\frac{L}{n}}\right)
\]
under the assumption $n_1 \asymp \dots \asymp n_L \asymp n$.

\item In the case of the ReLU activation function ($\sigma(x) = x\mathbf{1}_{\{x \ge 0\}}$), much less is known, especially in the multi-dimensional setting (i.e., for more than one input). In \cite{H18, HN18, ZVP21, Hanin2022CorrelationFI}, it is shown that the limiting distribution of a neural network with 1-homogeneous activation functions, evaluated at a single input and with a one-dimensional output, depends on the \emph{effective depth} $\lim_{L,n \to \infty} \frac{L}{n}$. 

For example, in \cite{Hanin2022CorrelationFI} it is proved that the following limit in distribution holds:
\begin{equation}\label{lim_relu}
\lim_{\substack{L,n\to\infty\\ \frac{L}{n} \to \alpha \in [0, \infty)}} z_1^{(L+1)}(x)
= \left( \frac{2\|x\|^2}{n_0} \right)^{1/2} Z_1 e^{ -\mu(\alpha) + \sqrt{\mu(\alpha)} Z_2 },
\end{equation}
where $x \in \mathbb{R}^{n_0}$, $\mu(\alpha) = \frac{5\alpha}{4}$, and $Z_1, Z_2 \sim \mathcal{N}(0,1)$ are independent standard Gaussian random variables.

The limit \eqref{lim_relu} clearly shows that when $\alpha = 0$, the limiting distribution is Gaussian, as we also demonstrate in Example~\ref{ex_relu_1d}.

\item The already mentioned paper~\cite{BR25} establishes the bound~\eqref{W_1_bound_BR25} on the Wasserstein distance between the output of a neural network with a Lipschitz activation function, evaluated at a finite set of input points, and its corresponding Gaussian limit. Notably, the bound is explicit in the network depth \( L \), except for the constant \( C > 0 \). However, based on the details of the proof, it may be possible to determine the precise dependence of this constant on \( L \). This information can then be used to analyze the conditions under which the finite-dimensional distributions of the neural network converge to a Gaussian law, in the regime where both the depth \( L \) and the hidden layer widths \( n_1, \dots, n_L \) tend to infinity.

\end{itemize}
}
 
\subsection{Minimum eigenvalue of the limiting covariance matrix}
The study of the limiting distribution of a neural network as the inner layer widths tend to infinity is closely related to the behavior of the Neural Tangent Kernel (NTK) matrix, which governs the dynamics of the gradient descent algorithm (see \cite{arora_2020, Jacot, RYH22, Agg, Ye, CC23}). In particular, it has been observed that if the minimum eigenvalue of the NTK matrix at initialization is bounded below by a positive constant, then gradient descent converges globally (see \cite{AZLS19, Duetall18, DLLWZ19, NM20, SY19, OS20, WDW19, ZCZG19, ZG19}).

Recently, in \cite{CC24}, the authors provided mild conditions on the activation function and the inputs under which the NTK matrix is positive definite: $\sigma$ must be continuous, almost everywhere differentiable, and non-polynomial, while the inputs must all be distinct. Moreover, if there is no bias term ($C_b = 0$), it is also required that the inputs be pairwise non-proportional. This result is based on the analysis of the positivity of the limiting covariance matrix $K^{(L+1)}$ introduced in Theorem~\ref{Hanin}. More specifically, it is shown that the above conditions are sufficient to guarantee the positive definiteness of $K^{(L+1)}$ for every $L \ge 1$.

Clearly, this result is significantly more general than what we obtain $L \ge 2$, where we estimate a lower bound for the determinant of $\hat{K}^{(L+1)}:=K^{(L+1)}-C_b\mathbf{1}_{d\times d}$, where $\mathbf{1}_{d\times d}$ is the $d\times d$ matrix of ones,
\begin{equation}\label{det_K_intr}
\operatorname{det}(\hat{K}^{(L+1)})\ge \frac{ C_W^{d(L-1)}}{d!}\lambda(K^{(2)})^{d}\prod_{i=1}^d\left(\sum_{\substack{k_i=1\\k_i\ne k_s\, \forall s=1,\dots,i-1}}^d\prod_{r_i=1}^{L-1}\mathbb{E}\big[\sigma'(G^{(\ell-r_i)}_1(x^{(k_i)}))\big]^2\right),
\end{equation}
denoting the minimum eigenvalue of \( K^{(2)} \) by \( \lambda(K^{(2)}) \). 

We recall from Remark~\ref{HS_bondK} that the right-hand side of inequality~\eqref{det_K_intr} provides a direct lower bound on the minimum eigenvalue of \( K^{(L+1)} \).

However, the bound~\eqref{det_K_intr} becomes uninformative in the case where \( \lambda(K^{(2)}) = 0 \). Moreover, this lower bound may also vanish when the derivative of the activation function, \( \sigma' \), is an odd function. Nevertheless, we provide an explicit bound that holds for any Lipschitz activation function and that, in many cases, can be made explicit in terms of the network depth \( L \).

Our bound is intended to be used in conjunction with the results of \cite{CC24}. Specifically, it is not used to prove the positive definiteness of the limiting covariance matrix, but rather to provide an explicit lower bound on its minimum eigenvalue under the assumption that the matrix is positive definite, as ensured by the aforementioned conditions on the inputs.

To the best of the our knowledge, the only other explicit lower bounds related to the positivity of \( K^{(L+1)} \) are those presented in the proof of Theorem 3.2 in \cite{NMM22} and in the proof of Theorem 8 in \cite{KMM25}. Both results concern the minimum eigenvalue of \( K^{(2)} \) when the activation function is the ReLU, \( \sigma(x) = x \mathbf{1}_{\{x \geq 0\}} \). 

In the first case, the bound is obtained under Gaussian initialization, while in the second case it is assumed that for every \( i = 1, \dots, n_1 \), the vector \( (W^{(1)}_{i,1}, \dots, W^{(1)}_{i,n_0}) \) is uniformly distributed on the sphere, and that the inputs \( \{x^{(1)}, \dots, x^{(d)}\} \subseteq \mathbb{S}^{d-1} \) are \( \delta \)-separated for some \( \delta \in (0, \sqrt{2}] \), meaning that
\[
\min_{i \ne j} \min\left\{ \|x^{(i)} - x^{(j)}\|, \|x^{(i)} + x^{(j)}\| \right\} \ge \delta.
\]
In the Gaussian case, the result is that for any \( r > 0 \),
\[
\lambda(K^{(2)}) \le \min_{i=1,\dots,d} \|x^{(i)}\|^{2r} - (d-1)\max_{i \ne j} |\langle x^{(i)}, x^{(j)} \rangle|^r,
\]
while in the spherical case,
\[
\lambda(K^{(2)}) \ge \frac{\delta^4}{n_0} \left(1 + n_0 \frac{\log\left(\frac{1}{\delta}\right)}{\log(n_0)}\right)^{-3}.
\]

These bounds can be incorporated into~\eqref{det_K_intr} to obtain a complete lower bound on the determinant of the limiting covariance matrix, and consequently on its minimum eigenvalue, following the approach outlined in Remark~\ref{HS_bondK}.

\section{Proof's scheme}\label{proof_scheme_sec}
\subsection{Strategy for the one-dimensional problem}\label{strategy_uno_d}
The main assumption on the random neural network, which is crucial for proving quantitative Central Limit Theorems, is that the weights $\{W_{i,j}^{(\ell)}\}$ are independent and identically distributed (i.i.d.) across different indices $i$, $j$, and $\ell$. Thanks to this assumption, when conditioning the output $z_1^{(L+1)}(x)$ on the $\sigma$-algebra $\mathcal{F}_{L}$ generated by the weights of the previous layers, the neural network's last layer can be expressed as a bias term plus a linear combination of i.i.d. random variables. 

In this setting, one aims to apply Theorem 4.2 from \cite{LG17} (for the Kolmogorov distance) and Theorem 2.2 from \cite{C08} (for the Wasserstein distance). These Theorems provide bounds on the respective distances between a centered Gaussian distribution and a real-valued measurable function $f$ of independent random variables $X = (X_1, \dots, X_n)$ defined on a Polish space, under the assumption that $f(X)$ is centered and has a finite sixth or fourth moment, respectively.

However, applying these theorems directly introduces a term that depends on the variance of the bias $b$, which does not vanish as the widths of the layers tend to infinity. This issue is circumvented by using the following Gaussian integration by parts formula:

\begin{lemma}[Lemma 3.1.2 in \cite{NP12}]\label{int_gauss_uno}
Let $N \sim \mathcal{N}_1(0, \sigma^2)$. Then, for every differentiable function $f: \mathbb{R} \to \mathbb{R}$ such that $\mathbb{E}[Nf(N)] < \infty$ and $\mathbb{E}[f'(N)] < \infty$, it holds that
\[
\mathbb{E}\big[ N f(N) \big] = \mathbb{E}\big[ \sigma^2 f'(N) \big].
\]
\end{lemma}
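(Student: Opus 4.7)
The plan is to prove this classical Gaussian integration by parts formula via a direct computation using the density of $N$, combined with ordinary integration by parts on the real line. Let $p_\sigma(x) := \frac{1}{\sqrt{2\pi\sigma^2}}\, e^{-x^2/(2\sigma^2)}$ denote the density of $N$. The key algebraic observation is that this density satisfies the first-order ODE
\[
p_\sigma'(x) = -\frac{x}{\sigma^2}\, p_\sigma(x), \qquad \text{equivalently}\qquad x\, p_\sigma(x) = -\sigma^2\, p_\sigma'(x),
\]
which converts the factor of $x$ inside the expectation $\mathbb{E}[N f(N)]$ into a derivative acting on the density.

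First I would write $\mathbb{E}[N f(N)] = \int_{\mathbb{R}} x f(x) p_\sigma(x)\, dx$, substitute the identity above to get $-\sigma^2\int_{\mathbb{R}} f(x) p_\sigma'(x)\, dx$, and then apply integration by parts to move the derivative from $p_\sigma$ onto $f$, yielding $\sigma^2 \int_{\mathbb{R}} f'(x) p_\sigma(x)\, dx = \mathbb{E}[\sigma^2 f'(N)]$. This is the whole computation; the content is to justify that the boundary term $\bigl[-\sigma^2 f(x) p_\sigma(x)\bigr]_{-R}^{R}$ vanishes in the limit $R\to\infty$.

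The main (and only) obstacle is therefore the control of the boundary term under the stated integrability hypotheses $\mathbb{E}[|Nf(N)|]<\infty$ and $\mathbb{E}[|f'(N)|]<\infty$. The strategy here is to argue that one cannot have $|f(x)|p_\sigma(x)$ failing to tend to zero along some sequence $x_k\to\pm\infty$: if it did, then since $p_\sigma$ has Gaussian tails, $|f|$ would have to grow at least like $e^{x^2/(2\sigma^2)}$ along that sequence, and combined with the factor $|x|$ this would contradict $\mathbb{E}[|N f(N)|]<\infty$. More concretely, I would first establish the identity on test functions $f$ with compact support (or $C^1_c$) where the boundary term is trivially zero, then extend to general differentiable $f$ satisfying the integrability hypotheses by a truncation/approximation argument: multiply $f$ by a smooth cutoff $\chi_R$ equal to $1$ on $[-R,R]$ and $0$ outside $[-R-1,R+1]$, apply the identity to $f\chi_R$, and pass to the limit $R\to\infty$ using dominated convergence, which is applicable precisely because of the two integrability assumptions in the statement.

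Once these justifications are in place, the proof concludes immediately. Since this is Lemma 3.1.2 from \cite{NP12}, a brief argument or even a direct reference to that monograph is sufficient at this level of exposition; I would likely give only the one-line computation together with a short remark on why the boundary terms vanish under the stated assumptions.
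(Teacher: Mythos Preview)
Your proposal is correct and follows the standard route to this identity. Note, however, that the paper does not prove this lemma at all: it is simply quoted as Lemma~3.1.2 from \cite{NP12} and used as a black box throughout. So there is nothing to compare against beyond observing that your density-ODE plus integration-by-parts argument is precisely the classical proof one finds in that reference.
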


Using this lemma, the aforementioned theorems can be suitably modified to yield Theorem~\ref{Kdist} and Theorem~\ref{Wdist}. From these results, one obtains the existence of two positive constants—explicitly computable—given by
\[
C_1 = C_1\big(K^{(L+1)}, \mathbb{E}[(W_{1,1}^{(1)})^6], \|\sigma\|_{\text{Lip}}, |\sigma(0)|, \mathbb{E}[|z_{1,\alpha}^{(L)}|^6], C_W, C_b\big),
\]
and
\[
C_2 = C_2(K^{(L+1)}),
\]
such that the following bound holds:
\begin{equation}\label{bound_dist_Q_2_noC}
\max\Big\{ d_K\big(z_1^{(L+1)}(x), G_1^{(L+1)}(x)\big),\; W_1\big(z_1^{(L+1)}(x), G_1^{(L+1)}(x)\big) \Big\} \leq \frac{C_1}{\sqrt{n_L}} + C_2 \sqrt{Q_2^{(L)}(x)},
\end{equation}
where, for $p \in \mathbb{N}$ and each $\ell = 1, \dots, L$, we define
\begin{equation}\label{Q_k}
Q_p^{(\ell)}(x) := \mathbb{E}\left[ \left| \sum_{j=1}^{n_\ell} \frac{C_W}{n_\ell} \sigma^2(z_j^{(\ell)}(x)) - K^{(\ell+1)} + C_b \right|^p \right].
\end{equation}

To ensure that inequality~\eqref{bound_dist_Q_2_noC} holds, it is necessary to prove that the neural network has finite sixth moments. This is established via Proposition~\ref{mom_z_prop}, which provides a bound on the moments of $z_1^{(\ell)}(x)$ for every $\ell = 1, \dots, L$ and $x \in \mathbb{R}^{n_0}$. The bound is finite provided the corresponding moments of $W$ are finite, and it does not depend on the hidden layer widths $n_1, \dots, n_L$.

More specifically, the proof of Proposition~\ref{mom_z_prop} relies on Lemma~\ref{mom_p_iid0}, which correspond to Theorem 1.5.11 in \cite{decoupling}.

The second step consists in bounding the quantity $Q_2^{(\ell)}(x)$ for every $\ell = 1, \dots, L$ in such a way that the bound converges to zero as the widths $n_1, \dots, n_L$ tend to infinity. This is achieved using Stein's method (see \cite{CGS10}), which in this case involves studying the properties of the solution $f_{\sigma^2}$ to the Stein's equation:
\[
K^{(\ell)} f'_{\sigma^2}(y) - y f_{\sigma^2}(y) = \sigma^2(y) - \mathbb{E}[\sigma^2(G_1^{(\ell)}(x))],
\]
where $\sigma$ is a Lipschitz function. This approach allows us to follow part of the proof of Theorem 4.2 in~\cite{LG17} to derive a bound on the quantity
\[
\left| \mathbb{E}[\sigma^2(z_1^{(\ell)}(x)) \mid \mathcal{F}_{\ell-1}] - \mathbb{E}[\sigma^2(G_1^{(\ell)}(x))] \right|,
\]
where $\mathcal{F}_{\ell-1}$ is the $\sigma$-algebra generated by the weights and biases up to layer $\ell - 1$ of the neural network.

This term appears in inequality~\eqref{bound_Q_2_postlemma}, which provides the following bound:
\begin{multline*}
Q_2^{(\ell)}(x) \leq \frac{2C_W^2}{n_\ell} \mathbb{E}\left[\left(\sigma^2(z_1^{(\ell)}(x)) - \mathbb{E}[\sigma^2(z_1^{(\ell)}(x))]\right)^2\right] \\
+ 2C_W^2 \mathbb{E}\left[\left| \mathbb{E}[\sigma^2(z_1^{(\ell)}(x)) \mid \mathcal{F}_{\ell-1}] - \mathbb{E}[\sigma^2(G_1^{(\ell)}(x))] \right|^2\right].
\end{multline*}

Applying Stein’s method, we obtain a bound on the second term, as shown in inequality~\eqref{bound_cond}. Specifically, there exist functions $g_1$, $g_2$, and $g_3$ such that, letting $W'$ denote an independent copy of $W$, the following inequality holds:
\begin{multline*}
\left| \mathbb{E}[\sigma^2(z_1^{(\ell)}(x)) \mid \mathcal{F}_{\ell-1}] - \mathbb{E}[\sigma^2(G_1^{(\ell)}(x))] \right| \\
\leq \frac{1}{n_{\ell-1}^{3/2}} \sum_{j=1}^{n_{\ell-1}} \mathbb{E}\left[ g_1\left(C_W, W, W', \sigma(z_j^{(\ell-1)}(x)), z_1^{(\ell)}(x), K^{(\ell)}, |\sigma(0)|, \|\sigma\|_{\text{Lip}} \right) \mid \mathcal{F}_{\ell-1} \right] 
+ \frac{1}{n_{\ell-1}}\cdot\\
\cdot g_2\left( \mathbb{E}[(W_{1,1}^{(1)})^4], C_W, \left(\sum_{j=1}^{n_{\ell-1}} \sigma^2(z_j^{(\ell-1)}(x))\right)^{1/2}, \|\sigma\|_{\text{Lip}}, |\sigma(0)|, K^{(\ell)}, \mathbb{E}[(z_1^{(\ell)}(x))^4 \mid \mathcal{F}_{\ell-1}] \right) \\
+ \mathbb{E}\left[ g_3\left(K^{(\ell)}, |\sigma(0)|, \|\sigma\|_{\text{Lip}}, z_1^{(\ell)}(x)\right) \mid \mathcal{F}_{\ell-1} \right] \cdot \left| \frac{C_W}{n_{\ell-1}} \sum_{j=1}^{n_{\ell-1}} \sigma^2(z_j^{(\ell-1)}(x)) - K^{(\ell)} + C_b \right|.
\end{multline*}

Applying H\"older inequality to the above estimate, one obtains a bound on $Q_2^{(\ell)}(x)$ (given explicitly by \eqref{bound_iter}) that depends on $\sqrt{Q_4^{(\ell-1)}(x)}$. For this reason, it is more convenient to study the general term
\[
\mathbb{E}\left[\left|\mathbb{E}[\sigma^2(z_1^{(\ell)}(x)) \mid \mathcal{F}_{\ell-1}] - \mathbb{E}[\sigma^2(G_1^{(\ell)}(x))]\right|^{2p}\right]
\]
for every $p \in \mathbb{N}$. 

In the end, using the forthcoming inequality~\eqref{bound_iter} and applying an iterative procedure over $\ell$ and $p$, one obtains a bound on $Q_{2p}^{(\ell)}(x)$, stated in Lemma~\ref{iteration_Q2}. This bound is not fully explicit in terms of constants, as it still depends on the moments of the neural network. The final result is given in Lemma~\ref{cost_schif}, which makes use of Proposition~\ref{mom_z_prop} and holds under the additional assumption that the variance of the bias, namely $C_b$, is non-zero.

In particular, Lemma~\ref{cost_schif} implies that there exists an explicit constant
\[
C_3 := C_3\left(C_b, C_W, |\sigma(0)|, \|\sigma\|_{\text{Lip}}, \mathbb{E}[|W|^{5 \cdot 2^{L+1}}], n_0, x, L, (K^{(r)})_{r=1,\dots,L}\right)
\]
such that
\begin{equation}\label{Q_2_non_expl}
Q_2^{(L-1)}(x) \leq C_3 \sum_{j=1}^{L} \frac{1}{n_j}.
\end{equation}

Substituting inequality~\eqref{Q_2_non_expl} into the bound~\eqref{bound_dist_Q_2_noC}, we obtain
\[
\max\left\{d_K(z_1^{(L+1)}(x), G_1^{(L+1)}(x)), W_1(z_1^{(L+1)}(x), G_1^{(L+1)}(x))\right\} \leq C_4 \sqrt{\sum_{j=1}^{L} \frac{1}{n_j}},
\]
where $C_4$ is an explicit constant that does not depend on the internal widths $n_1, \dots, n_L$, but only on the parameters
\[
\left\{C_b, C_W, L, \{K^{(\ell)}\}_{\ell=1}^{L+1}, \mathbb{E}[|W|^{5 \cdot 2^{L+1}}], \|\sigma\|_{\text{Lip}}, |\sigma(0)|, \|x\|\right\}.
\]

\subsection{Strategy for the multi-dimensional problem}
In this section, we outline the main steps of the proof of Theorem~\ref{pres_sec_prob}, which concerns the convex distance (see Definition~\ref{d_c_def}) between a fully connected neural network (as defined in Definition~\ref{def_NN}) evaluated at a finite set of distinct inputs \( \mathcal{X} = \{x^{(1)}, \dots, x^{(d)}\} \), and the corresponding Gaussian limit (defined in Theorem~\ref{Hanin}).

Note that for $X=(X_1,\dots,X_d)\in\mathbb{R}^d$ we use the shorthand notation 
\[
\sigma(X):=(\sigma(X_1),\dots,\sigma(X_d)).
\]

\,

As in the one-dimensional case, the assumption that the weights are i.i.d.\ is crucial to the proof. The argument is based on Theorem~2.2 from \cite{KG22} (restated in this paper as Theorem~\ref{KG22th}), which generalizes techniques developed in the previously mentioned Theorems~4.2 of \cite{LG17} and~2.2 of \cite{C08}. As before, the presence of the Gaussian random variable \( b_1^{(L+1)} \) in the definition of the neural network requires a slight modification of the above mentioned theorems, presented in Theorem~\ref{th_mult_fin}, using the following multidimensional Gaussian integration by parts formula.

\begin{lemma}[Lemma 4.1.3 in \cite{NP12}]\label{Gauss_integ_multdim}
Let \( \Sigma \in \mathbb{R}^{d \times d} \) be a non-negative definite matrix, and let \( N \sim \mathcal{N}_d(0, \Sigma) \) be a Gaussian random vector in \( \mathbb{R}^d \). Then, for every function \( f : \mathbb{R}^d \to \mathbb{R} \) with bounded first and second derivatives,
\[
\mathbb{E}[\langle N, \nabla f(N) \rangle] = \mathbb{E}[\langle \Sigma, \operatorname{Hess} f(N) \rangle_{\mathrm{HS}}].
\]
\end{lemma}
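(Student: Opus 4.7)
The plan is to reduce the multidimensional identity to the scalar Gaussian integration by parts formula (Lemma~\ref{int_gauss_uno}) via a change of variables that decouples the coordinates. Concretely, I would factor the covariance as $\Sigma = AA^\top$ with $A\in\mathbb{R}^{d\times d}$ (possible for every symmetric non-negative definite matrix, e.g.\ $A = \Sigma^{1/2}$), and realize $N$ in law as $N = AZ$, where $Z = (Z_1,\dots,Z_d)$ has i.i.d.\ $\mathcal{N}_1(0,1)$ components. The crucial advantage is that the coordinates of $Z$ are independent, so the one-dimensional formula can be applied separately in each $Z_j$, bypassing the need to invert $\Sigma$.

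Using the chain rule I would expand
\[
\mathbb{E}\bigl[\langle N,\nabla f(N)\rangle\bigr]
= \sum_{i,j=1}^d A_{ij}\,\mathbb{E}\bigl[Z_j\,(\partial_i f)(AZ)\bigr].
\]
Setting $g_i(z) := (\partial_i f)(Az)$ and applying Lemma~\ref{int_gauss_uno} to $Z_j$ conditionally on $\{Z_k : k\ne j\}$ (which are independent of $Z_j$) yields, for every pair $(i,j)$,
\[
\mathbb{E}\bigl[Z_j\,g_i(Z)\bigr]
= \mathbb{E}\bigl[\partial_{z_j} g_i(Z)\bigr]
= \sum_{k=1}^d A_{kj}\,\mathbb{E}\bigl[(\partial_k\partial_i f)(AZ)\bigr].
\]
Plugging this back, exchanging the finite sums, and using the identity $\sum_{j=1}^d A_{ij}A_{kj} = (AA^\top)_{ik} = \Sigma_{ik}$, one obtains
\[
\mathbb{E}\bigl[\langle N,\nabla f(N)\rangle\bigr]
= \sum_{i,k=1}^d \Sigma_{ik}\,\mathbb{E}\bigl[(\operatorname{Hess} f(N))_{ik}\bigr]
= \mathbb{E}\bigl[\langle \Sigma,\operatorname{Hess} f(N)\rangle_{HS}\bigr],
\]
which is the claim.

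The only delicate point is the justification of the term-by-term use of Lemma~\ref{int_gauss_uno}: one needs Fubini to separate the integration in $Z_j$ from that in the remaining coordinates, and uniform integrability of both $Z_j\,g_i(Z)$ and $\partial_{z_j}g_i(Z)$. Both are guaranteed by the boundedness of $\nabla f$ and $\operatorname{Hess} f$ assumed in the statement, which respectively dominate $g_i$ (whence $\mathbb{E}[|Z_j g_i(Z)|] < \infty$ by the finite first moment of $|Z_j|$) and $\partial_{z_j}g_i$ pointwise. Since $\Sigma$ is only assumed non-negative definite—not invertible—the factorization route is preferable to a direct density-based integration by parts, which would require $\Sigma^{-1}$; if one insisted on the density approach, a standard workaround would be to first treat $\Sigma+\varepsilon I_d$ for $\varepsilon>0$, where the inverse exists, and then let $\varepsilon\to 0^+$ using dominated convergence. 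I would expect this integrability bookkeeping, rather than any genuinely hard algebra, to be the main obstacle of the proof.
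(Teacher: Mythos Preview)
Your proof is correct and follows a standard route: factor $\Sigma = AA^\top$, write $N = AZ$ with $Z$ having i.i.d.\ standard Gaussian components, apply the one-dimensional Stein identity (Lemma~\ref{int_gauss_uno}) coordinate-wise, and reassemble via the chain rule. The integrability justifications you flag are handled by the boundedness assumptions on $\nabla f$ and $\operatorname{Hess} f$.

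Note, however, that the paper does not actually prove this lemma: it is quoted as Lemma~4.1.3 from \cite{NP12} and used as a black box. So there is no ``paper's own proof'' to compare against. Your argument is essentially the standard one (and is close to how \cite{NP12} proceeds), so there is nothing further to add.
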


By applying Lemma~\ref{Gauss_integ_multdim} to the definition of the convex distance (after conditioning with respect to the $\sigma$-field generated by the weights), and subsequently using Theorem~\ref{th_mult_fin}, we obtain Theorem~\ref{mod_KG}, which states that there exists an explicit constant
\[
C_5 := C_5\Big(d, \|(K^{(L+1)})^{-1}\|_{\mathrm{op}}, \mathbb{E}[|W|^6], C_W, \mathbb{E}[\|\sigma(z_1^{(L)}(\mathcal{X}))\|^6], \mathbb{E}[\|\sigma(G_1^{(L)}(\mathcal{X}))\|^4] \Big)
\]
such that
\begin{multline*}
d\big(z_1^{(L+1)}(\mathcal{X}), G_1^{(L+1)}(\mathcal{X})\big) \\
\le C_5 \Bigg( \frac{1}{\sqrt{n_{L-1}}} + \left( \sum_{i=1}^d Q_2^{(L)}(x^{(i)}) \right)^{1/2} 
+ \left( \sum_{\substack{i,j=1\\i\ne j}}^d \mathbb{E}\left[(B_L(x^{(i)}, x^{(j)}))^2\right] \right)^{1/2} \Bigg),
\end{multline*}
where \( Q_2^{(L)} \) is defined in~\eqref{Q_k} and, for every \( i \ne j \) and \( \ell \ge 2 \),
\[
B_\ell(x^{(i)}, x^{(j)}) := \mathbb{E}\left[ \sigma(z_1^{(\ell)}(x^{(i)})) \sigma(z_1^{(\ell)}(x^{(j)})) \,\big|\, \mathcal{F}_{\ell-1} \right]
- \mathbb{E}\left[ \sigma(G_1^{(\ell)}(x^{(i)})) \sigma(G_1^{(\ell)}(x^{(j)})) \right],
\]
with \( \mathcal{F}_{\ell-1} \) denoting the $\sigma$-field generated by the weights and biases up to layer \( \ell - 1 \).

A bound for
\[
\mathbb{E}\left[\left(B_\ell(x^{(i)}, x^{(j)})\right)^2\right]
\]
is provided by Lemma~\ref{somma_schifa}, which relies on Stein's method. In this setting, the method consists in analyzing the properties of the solution \( f^{(\ell)}_{x^{(i)}, x^{(j)}} \) to the differential equation
\begin{multline*}
\left\langle K^{(\ell)}, \operatorname{Hess} f^{(\ell)}_{x^{(i)}, x^{(j)}}(y) \right\rangle_{\mathrm{HS}} 
- \left\langle y, \nabla f^{(\ell)}_{x^{(i)}, x^{(j)}}(y) \right\rangle \\
= \sigma(y_i)\sigma(y_j) 
- \mathbb{E}\left[\sigma(G_1^{(\ell)}(x^{(i)}))\sigma(G_1^{(\ell)}(x^{(j)}))\right],
\end{multline*}
which is studied recursively, together with the following Lemma, with an approach similar to that used to bound \( Q_2^{(\ell)}(x) \) for any $x\in\mathbb{R}^{n_0}$.
\begin{lemma}[Lemma 2.3 in \cite{C08}]\label{cov_ch}
Let $f,g:\mathbb{R}^k \to \mathbb{R}$ be measurable functions such that $\mathbb{E}[f(X)^2] < \infty$ and $\mathbb{E}[g(X)^2] < \infty$. Then,
\[
\operatorname{Cov}(f(X), g(X)) = \frac{1}{2} \sum_{A \subsetneq [k]} \frac{1}{\binom{k}{|A|}(k - |A|)} \sum_{j \notin A} \mathbb{E}\left[\Delta_j g(X) \Delta_j f(X^A)\right],
\]
where 
\[
[k] := \{1, 2, \dots, k\},
\]
\[
\Delta_j f(X) := f(X) - f(X^j),
\]
with $X^j := (X_1, \dots, X_{j-1}, X'_j, X_{j+1}, \dots, X_k)$, defining $X'$ as an independent copy of $X$, and
\[
X^A_j := \begin{cases}
X_j & \text{if } j \notin A, \\
X'_j & \text{if } j \in A.
\end{cases}
\]

\end{lemma}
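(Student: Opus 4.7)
\textbf{Proof proposal for Lemma \ref{cov_ch}.} The plan is to combine a telescoping expansion of $f(X)-f(X')$ along a uniformly random permutation with a symmetrization argument that exchanges $X_j$ and $X'_j$. I would introduce $X'=(X'_1,\dots,X'_k)$ as an independent copy of $X$ and use the elementary identity
\[
\operatorname{Cov}(f(X),g(X))=\mathbb{E}\bigl[f(X)g(X)\bigr]-\mathbb{E}\bigl[f(X')g(X)\bigr]=\mathbb{E}\bigl[(f(X)-f(X'))\,g(X)\bigr],
\]
which is valid since $X'$ is independent of $X$ and $\mathbb{E}[f(X')]=\mathbb{E}[f(X)]$.

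Next I would let $\pi$ be a uniformly random permutation of $[k]$, independent of $(X,X')$, and set $A_0:=\emptyset$ and $A_j:=\{\pi(1),\dots,\pi(j)\}$ for $j=1,\dots,k$. Since $X^{A_0}=X$ and $X^{A_k}=X'$, telescoping yields
\[
f(X)-f(X')=\sum_{j=1}^{k}\bigl[f(X^{A_{j-1}})-f(X^{A_{j-1}\cup\{\pi(j)\}})\bigr]=\sum_{j=1}^{k}\Delta_{\pi(j)}f(X^{A_{j-1}}).
\]
Taking expectations and using that, conditionally on $|A_{j-1}|=j-1$, the set $A_{j-1}$ is uniform among subsets of $[k]$ of size $j-1$ while $\pi(j)$ is uniform on $[k]\setminus A_{j-1}$, I would obtain
\[
\mathbb{E}\bigl[(f(X)-f(X'))g(X)\bigr]=\sum_{A\subsetneq[k]}\frac{1}{\binom{k}{|A|}(k-|A|)}\sum_{j\notin A}\mathbb{E}\bigl[\Delta_j f(X^A)\,g(X)\bigr].
\]

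Finally I would apply the swap symmetry: for a fixed $A\subsetneq[k]$ and $j\notin A$, exchanging $X_j$ with $X'_j$ leaves the joint distribution of $(X,X')$ invariant and sends $X$ to $X^j$ and $X^A$ to $X^{A\cup\{j\}}$. Hence $\Delta_j f(X^A)$ changes sign while $g(X)$ becomes $g(X^j)$, so
\[
\mathbb{E}\bigl[\Delta_j f(X^A)\,g(X)\bigr]=-\mathbb{E}\bigl[\Delta_j f(X^A)\,g(X^j)\bigr]=\tfrac{1}{2}\mathbb{E}\bigl[\Delta_j f(X^A)\,\Delta_j g(X)\bigr],
\]
which plugged into the previous display produces the claimed identity. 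The main obstacle is bookkeeping: verifying carefully that the distribution of $(A_{j-1},\pi(j))$ produces exactly the weights $\binom{k}{|A|}^{-1}(k-|A|)^{-1}$, and checking that the symmetrization step is justified (i.e.\ that all relevant expectations are finite, which follows from $f,g\in L^2$ and the Cauchy--Schwarz inequality applied to $\Delta_j f(X^A)$ and $\Delta_j g(X)$).
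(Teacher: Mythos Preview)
Your proof is correct and is exactly the argument given in the original reference \cite{C08} (Chatterjee, 2008), from which the paper quotes this lemma without proof. The random-permutation telescoping combined with the coordinate-swap symmetrization is precisely Chatterjee's approach, and your bookkeeping of the weights $\binom{k}{|A|}^{-1}(k-|A|)^{-1}$ and of the sign change under the swap $X_j\leftrightarrow X'_j$ is accurate.
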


In particular, the result is that there exist two explicit positive constants:
\[
C_6 := C_6(C_W, \|\sigma\|_{\mathrm{Lip}})
\]
and
\[
\begin{aligned}
C_{7,u} := C_{7,u}\Big(&d, \|\sigma\|_{\mathrm{Lip}}, C_W, \| (K^{(\ell-u)})^{-1} \|_{\mathrm{HS}}, \operatorname{tr} K^{(\ell-u)}, \mathbb{E}\left[\|\sigma(G_1^{(\ell-u-1)}(\mathcal{X}))\|^4\right],\\
&
\mathbb{E}\left[\|\sigma(z_1^{(\ell-u-1)}(\mathcal{X}))\|^{12}\right],\;
\mathbb{E}\left[\|z_1^{(\ell-u-1)}(\mathcal{X})\|^4\right],\;
\mathbb{E}[W^6]
\Big),
\end{aligned}
\]
such that the following bound holds:
\begin{equation} \label{bound_bl_intr}
\mathbb{E}\left[\left(B_\ell(x_\alpha, x_\beta)\right)^2\right] 
\leq \sum_{u=2}^{\ell} C_6^{\ell - u} \left( C_{7,u} \left( \sum_{i=1}^d \sqrt{Q_4^{(\ell-1)}(x^{(i)})} + \frac{1}{n_{\ell-1}} \right) \right)^u.
\end{equation}

A bound on \( Q_4^{(\ell)}(x^{(i)}) \), for every \( \ell = 1, \dots, L \) and $i=1,\dots,d$, is provided by Lemma~\ref{cost_schif}, which was previously used to bound \( Q_2^{(\ell)}(x^{(i)}) \) in the one-dimensional case when \( C_b \neq 0 \).

Therefore, the only term that does not explicitly depend on the hyperparameters of the neural network is the minimum eigenvalue of the limiting covariance matrix \( K^{(L+1)} \), denoted by \( \lambda(K^{(L+1)}) \), which is assumed to be positive. A lower bound on this eigenvalue is then used to control the norm \( \| (K^{(\ell)})^{-1} \|_{\mathrm{HS}} \) appearing in the upper bound~\eqref{bound_bl_intr}.

As shown in more detail in Remark~\ref{HS_bondK}, letting \( \mathbf{1}_{d \times d} \) denote the \( d \times d \) matrix with all entries equal to one, and defining
\[
\hat{K}^{(L+1)} := K^{(L+1)} - C_b \mathbf{1}_{d \times d},
\]
one obtains the following inequality:
\[
\lambda(K^{(L+1)}) \geq \lambda(\hat{K}^{(L+1)}) \geq \frac{\det(\hat{K}^{(L+1)})}{\left(\operatorname{tr}(\hat{K}^{(L+1)})\right)^{d-1}}.
\]
For this reason, we focus on deriving a lower bound for the determinant of the matrix \( \hat{K}^{(L+1)} \) (see Theorem~\ref{bound_K_gauss}).

The strategy of the theorem is to analyze \( \det(\hat{K}^{(L+1)}) \) by introducing independent copies of the Gaussian limit \( G_{1}^{(L+1)}(\mathcal{X}) \). This allows one to reinterpret the determinant of \( \hat{K}^{(L+1)} \) as the variance of a certain function evaluated on a Gaussian vector. With this formulation, one can apply Proposition~3.7 in~\cite{Ca82} (stated as Proposition~\ref{Caco} in the present paper) together with an inductive argument to obtain
\[
\det(\hat{K}^{(L+1)}) \geq \frac{C_W^{d(L-1)}}{d!} \, \lambda(K^{(2)})^d \prod_{i=1}^d \left( \sum_{\substack{k_i = 1 \\ k_i \neq k_s\, \forall s = 1,\dots,i-1}}^d \prod_{r_i = 1}^{L-1} \mathbb{E}\left[ \sigma'\left(G_1^{(\ell - r_i)}(x^{(k_i)}) \right) \right]^2 \right),
\]
which in some cases has an explicit dependence on the constant \( L \), once the activation function \( \sigma \) is fixed.

By combining all the previous results, one obtains the final bound stated in Theorem~\ref{pres_sec_prob}. Specifically, there exists an explicit constant \( C_8 > 0 \), depending on
\[
\left\{ d, C_b, C_W, L, \left\{ \mathbb{E}[\sigma'(G_{1}^{(\ell)}(x^{(i)}))] \right\}_{\substack{i = 1, \dots, d-1 \\ \ell = 2, \dots, L}}, \det(K^{(2)}), \|\sigma\|_{\mathrm{Lip}}, \mathbb{E}\left[ W^{5 \cdot 2^L} \right], |\sigma(0)|, \mathcal{X}, n_0 \right\}
\]
such that
\[
d_C\left(z_1^{(L+1)}(\mathcal{X}), G_1^{(L+1)}(\mathcal{X})\right) \leq C_8 \sqrt{\sum_{j=1}^{L} \frac{1}{n_j}}.
\]

\section{One-Dimensional Problem}\label{uno_d}

In this section, we fix an input $x^{(1)} := x \in \mathbb{R}^{n_0}$ and denote the limiting covariance matrix of $G_{1}^{(L+1)}(x)$, as defined in Theorem~\ref{Hanin}, by
\begin{equation}\label{not_K_1dim}
K^{(L+1)} := K_{1,1}^{(L+1)},
\end{equation}
in order to simplify the notation.

\subsection{Kolmogorov Distance}

Recall Definition~\ref{def_mK} for the Kolmogorov distance, and Definition~\ref{def_NN} for the neural network output $z_{1}^{(L+1)}(x)$ evaluated at the input $x \in \mathbb{R}^{n_0}$.

\begin{theorem}\label{Kdist}
Fix $L \ge 1$.  
If $\mathbb{E}[(W_{1,1}^{(1)})^6] < \infty$, $\mathbb{E}\big[|\sigma(z_{1}^{(L)}(x))|^6\big] < \infty$, and $K^{(L+1)} \ne 0$, then:
\begin{multline*}
d_K(z_{1}^{(L+1)}(x), G_{1}^{(L+1)}(x)) \le \frac{1}{K^{(L+1)}}\sqrt{Q_2^{(L)}(x)} + \frac{C_W}{K^{(L+1)}\sqrt{n_L}}\mathbb{E}\big[(W_{1,1}^{(1)})^6\big]^{1/2} \cdot \\
\cdot \left(1 + \mathbb{E}\big[|\sigma(z_{1}^{(L)}(x))|^6\big]^{1/2}\right) \left(2\sqrt{\frac{C_W}{K^{(L+1)}}} + 2\sqrt{C_W} \left(\sqrt{\frac{C_b}{K^{(L+1)}}} + \frac{\sqrt{2\pi}}{4}\right) + \frac{5}{2}\right),
\end{multline*}
where $G_{1}^{(L+1)}(x) \sim \mathcal{N}_1(0, K^{(L+1)})$ and $Q_2^{(L)}(x)$ is defined in~\eqref{Q_k}.
\end{theorem}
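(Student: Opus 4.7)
The strategy is to exploit the conditional independence structure of the last layer. Let $\mathcal{F}_L$ denote the $\sigma$-field generated by all weights and biases of the first $L$ layers, and set
\[
Y_k := \frac{\sqrt{C_W}}{\sqrt{n_L}} W_{1,k}^{(L+1)}\sigma(z_k^{(L)}(x)),
\]
so that $z_1^{(L+1)}(x) = b_1^{(L+1)} + \sum_{k=1}^{n_L} Y_k$, where $b_1^{(L+1)}\sim \mathcal{N}_1(0,C_b)$ is independent of the $\{Y_k\}$ and, conditionally on $\mathcal{F}_L$, the $Y_k$ are i.i.d.\ with conditional second moments summing to $V_L := \tfrac{C_W}{n_L}\sum_{k=1}^{n_L}\sigma^2(z_k^{(L)}(x))$. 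By the definition of $Q_2^{(L)}(x)$ in \eqref{Q_k}, $\mathbb{E}[(V_L - K^{(L+1)}+C_b)^2]=Q_2^{(L)}(x)$. For every $t\in\mathbb{R}$, let $f_t$ solve the Stein equation
\[
K^{(L+1)} f_t'(y) - y f_t(y) = \mathbf{1}_{\{y\le t\}} - \mathbb{P}(G_1^{(L+1)}(x)\le t),
\]
so that $d_K(z_1^{(L+1)}(x),G_1^{(L+1)}(x))=\sup_{t\in\mathbb{R}}\bigl|\mathbb{E}[K^{(L+1)} f_t'(z_1^{(L+1)}(x)) - z_1^{(L+1)}(x) f_t(z_1^{(L+1)}(x))]\bigr|$. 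Classical regularity estimates for $f_t$ in terms of $K^{(L+1)}$ (see e.g.\ \cite{CGS10}) account for the explicit multiplicative constants in the final bound.

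Next I would add and subtract $(C_b+V_L) f_t'(z_1^{(L+1)}(x))$ inside the expectation. The first resulting piece, $\mathbb{E}[(K^{(L+1)}-C_b-V_L) f_t'(z_1^{(L+1)}(x))]$, is bounded by Cauchy--Schwarz and the $L^\infty$ estimate on $f_t'$, yielding the announced $\tfrac{1}{K^{(L+1)}}\sqrt{Q_2^{(L)}(x)}$ contribution. For the second piece, split $z_1^{(L+1)}(x)=b_1^{(L+1)}+\sum_k Y_k$: applying Lemma~\ref{int_gauss_uno} to $b_1^{(L+1)}$ conditionally on $\{Y_k\}$ gives $\mathbb{E}[b_1^{(L+1)} f_t(z_1^{(L+1)}(x))]=C_b\,\mathbb{E}[f_t'(z_1^{(L+1)}(x))]$, exactly cancelling the $C_b$ part. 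What remains, namely
\[
\mathbb{E}[V_L f_t'(z_1^{(L+1)}(x))]-\sum_{k=1}^{n_L}\mathbb{E}[Y_k f_t(z_1^{(L+1)}(x))],
\]
is the Stein discrepancy for the conditional central limit theorem of $\sum_k Y_k$ given $\mathcal{F}_L$, to which I would apply the Chatterjee-type difference-operator argument underlying Theorem 4.2 of \cite{LG17}. That argument introduces i.i.d.\ copies $Y_k'$, exploits $V_L=\sum_k\mathbb{E}[Y_k^2\mid\mathcal{F}_L]$, and reduces the estimate to sixth-moment bounds on the $Y_k$; since $\mathbb{E}[|Y_k|^6]=n_L^{-3}C_W^3\,\mathbb{E}[(W_{1,1}^{(1)})^6]\,\mathbb{E}[|\sigma(z_k^{(L)}(x))|^6]$, summing over $n_L$ terms yields the $n_L^{-1/2}$ rate and the explicit sixth-moment factor $\mathbb{E}[(W_{1,1}^{(1)})^6]^{1/2}(1+\mathbb{E}[|\sigma(z_1^{(L)}(x))|^6]^{1/2})$ appearing in the statement.

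The main obstacle is that for Kolmogorov test functions the Stein solution $f_t$ is only Lipschitz, with a jump in $f_t'$ at $t$, so the naive Taylor expansion of $f_t$ around $z_1^{(L+1)}(x)$ is not available. The argument of \cite{LG17} bypasses this by treating the difference $f_t(z_1^{(L+1)}(x))-f_t(z_1^{(L+1),k}(x))$, with $z_1^{(L+1),k}(x)$ the variable obtained by replacing $Y_k$ with $Y_k'$, via the bounded-variation structure of $f_t'$; this produces concentration-type terms accounting for the additive factor $2\sqrt{C_W/K^{(L+1)}}+2\sqrt{C_W}\bigl(\sqrt{C_b/K^{(L+1)}}+\sqrt{2\pi}/4\bigr)+5/2$ in the final estimate. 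Carefully tracking every multiplicative constant through the whole chain---Stein regularity, Gaussian integration by parts for $b_1^{(L+1)}$, and the Lachi\`eze-Rey--Peccati expansion---is the bookkeeping core of the proof and the source of the precise form of the statement.
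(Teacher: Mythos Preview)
Your proposal is correct and follows essentially the same route as the paper: condition on $\mathcal{F}_L$, use Stein's equation for $\mathcal{N}(0,K^{(L+1)})$, handle the Gaussian bias $b_1^{(L+1)}$ via Lemma~\ref{int_gauss_uno}, and control the remaining sum-of-independents part through the difference-operator machinery of \cite{LG17}. The only cosmetic difference is that the paper first packages the bias-handling into a general auxiliary result (Theorem~\ref{Kol_1}, a variant of \cite[Theorem~4.2]{LG17} for random variables of the form $b+f(X)$) and then specializes, whereas you add and subtract $(C_b+V_L)f_t'$ directly; the paper's pivot is the Chatterjee variable $T$ rather than your $V_L$, which produces an extra $O(n_L^{-1/2})$ fourth-moment term in the $Q_2^{(L)}$ piece that is then absorbed into the second summand of the final bound.
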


The idea of the proof is to apply Theorem 4.2 from \cite{LG17} to the output of the random neural network. However, the presence of the Gaussian bias term $b_1^{(L+1)}$ prevents a direct application of that theorem. Therefore, a suitably modified version will be employed.
\smallskip
Let $W := b + f(X)$, where $b \sim \mathcal{N}_1(0, C_b)$ is independent of $X$, the random vector $X := (X_1, \dots, X_k)$ lies in a Polish space $E$ and has independent components, and $f: E^k \to \mathbb{R}$ is a measurable function. Let $N \sim \mathcal{N}_1(0, K)$ with $K \ne 0$. Using the notation from Lemma~\ref{cov_ch}, define:
\begin{equation}\label{T_def}
T := \frac{1}{2} \sum_{A \subsetneq [k]} \frac{1}{\binom{k}{|A|}(k - |A|)} \sum_{j \notin A} \Delta_j f(X, X') \Delta_j f(X^A, X'),
\end{equation}
\[
T' := \frac{1}{2} \sum_{A \subsetneq [k]} \frac{1}{\binom{k}{|A|}(k - |A|)} \sum_{j \notin A} \Delta_j f(X, X') \left|\Delta_j f(X^A, X')\right|.
\]

\begin{theorem}\label{Kol_1}
Under the above assumptions, if moreover $\mathbb{E}[f(X)] = 0$ and $\mathbb{E}[f(X)^6] < \infty$, then:
\begin{multline*}
d_K(W, N) \le \frac{1}{K} \mathbb{E}\Big[\mathbb{E}[K - C_b - T \mid X]^2\Big]^{1/2}
+ \frac{1}{K} \Big(\mathrm{\operatorname{Var}} \, \mathbb{E}[T' \mid X]\Big)^{1/2} \\
+ \frac{1}{4} \left(\sqrt{\frac{C_b}{K}} + \frac{\sqrt{2\pi}}{4}\right) \sum_{j=1}^{k} \frac{1}{K} \mathbb{E}\big[|\Delta_j f(X)|^3\big] 
+ \frac{1}{4K\sqrt{K}} \sum_{j=1}^k \mathbb{E}\big[|\Delta_j f(X)|^6\big]^{1/2}.
\end{multline*}
\end{theorem}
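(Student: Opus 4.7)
My plan is to adapt the proof of Theorem 4.2 in \cite{LG17} to accommodate the independent Gaussian bias $b$, by first peeling off $b$ via Gaussian integration by parts and then following the Chatterjee-based argument on the remaining functional of $X$. I will fix $z \in \mathbb{R}$ and let $f_z$ solve the Stein equation $K f_z'(x) - x f_z(x) = \mathbf{1}_{(-\infty,z]}(x) - \mathbb{P}(N \le z)$, recalling the classical bounds $\|f_z\|_\infty \le \sqrt{2\pi K}/4$, $\|f_z'\|_\infty \le 1/K$, and the Lipschitz estimate $\|(x f_z)'\|_\infty \le 1/\sqrt{K}$ (see, e.g., \cite{CGS10}). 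Since $d_K(W, N) = \sup_z |\mathbb{E}[K f_z'(W) - W f_z(W)]|$ and $W = b + f(X)$, Lemma \ref{int_gauss_uno} applied conditionally on $X$ will give $\mathbb{E}[b f_z(W) \mid X] = C_b \mathbb{E}[f_z'(W) \mid X]$, so that
\[
\mathbb{E}[K f_z'(W) - W f_z(W)] = \mathbb{E}[(K - C_b) f_z'(W)] - \mathbb{E}[f(X) f_z(W)].
\]

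Next, since $\mathbb{E}[f(X)] = 0$, the last term equals $\mathrm{Cov}(f(X), \tilde g(X))$ with $\tilde g(X) := \mathbb{E}[f_z(b + f(X)) \mid X]$. I will apply Chatterjee's identity (Lemma \ref{cov_ch}) and expand $\Delta_j \tilde g(X)$ via the fundamental theorem of calculus to obtain
\[
\mathbb{E}[f(X) f_z(W)] = \tfrac{1}{2}\sum_{A \subsetneq [k]}\tfrac{1}{\binom{k}{|A|}(k-|A|)}\sum_{j \notin A}\mathbb{E}\!\left[\Delta_j f(X,X')\,\Delta_j f(X^A,X')\int_0^1 f_z'(W_{j,t})\,dt\right],
\]
where $W_{j,t} := b + f(X^j) + t\,\Delta_j f(X, X')$ interpolates between $b + f(X^j)$ and $W$. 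Replacing $f_z'(W_{j,t})$ by $f_z'(W)$ will isolate the main Stein term $\mathbb{E}[f_z'(W)\,T]$; exploiting that $T$ depends only on $(X, X')$ and that $W$ is conditionally independent of $X'$ given $X$, this equals $\mathbb{E}[\mathbb{E}[f_z'(W)\mid X]\,\mathbb{E}[T \mid X]]$. Combining with the analogous factorization of $(K - C_b)\mathbb{E}[f_z'(W)]$, using $\|f_z'\|_\infty \le 1/K$ and Cauchy--Schwarz will produce the first term $\tfrac{1}{K}\mathbb{E}[\mathbb{E}[K - C_b - T \mid X]^2]^{1/2}$ of the stated inequality.

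It then remains to bound the remainder $R_j := \int_0^1 (f_z'(W_{j,t}) - f_z'(W))\,dt$. Using Stein's equation to write $K R_j = \int_0^1 [(W_{j,t} f_z(W_{j,t}) - W f_z(W)) + (h_z(W_{j,t}) - h_z(W))]\,dt$, the smooth part is controlled by $\|(x f_z)'\|_\infty \le 1/\sqrt{K}$ and will contribute, after Cauchy--Schwarz, the $\tfrac{1}{4 K\sqrt{K}}\sum_j \mathbb{E}[|\Delta_j f|^6]^{1/2}$ term. The hard part will be the discontinuous contribution $h_z(W_{j,t}) - h_z(W)$, because $f_z'$ jumps at $z$: conditionally on $(X, X')$, $W$ is Gaussian with density bounded by $1/\sqrt{2\pi C_b}$, which after integrating in $t$ and tracking the cubic moment $\mathbb{E}[|\Delta_j f|^3]$ accounts for the $\sqrt{C_b/K}$ contribution, while the classical self-bounding estimate $\mathbb{P}(W \in [z-\delta, z+\delta]) \le 2\delta/\sqrt{2\pi K} + 2\,d_K(W, N)$ produces the $\sqrt{2\pi}/4$ term (after absorbing the extra $d_K(W, N)$ into the left-hand side). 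A parallel refinement of Chatterjee's identity, this time retaining absolute values on one of the two factors so that the corresponding conditional expectation involves $\mathbb{E}[T' \mid X]$, will then account for the $\tfrac{1}{K}(\mathrm{Var}\,\mathbb{E}[T' \mid X])^{1/2}$ term. Taking the supremum over $z$ and combining all pieces will yield the stated bound; the main difficulty throughout is the delicate interplay between the combinatorial weights of Chatterjee's identity and the Gaussian/self-bounded smoothing required by the jump of $h_z$.
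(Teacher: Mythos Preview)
Your proposal is correct and follows essentially the same route as the paper: peel off the Gaussian bias $b$ via Lemma~\ref{int_gauss_uno}, then apply the Chatterjee--Lachi\`eze-Rey--Peccati machinery of \cite{LG17} to the remaining functional of $X$. The only cosmetic difference is that the paper first normalizes, writing $d_K(W,N)=d_K(W/\sqrt{K},N/\sqrt{K})$ and working with the standard Gaussian Stein solution $g_t$, whereas you work directly with the variance-$K$ Stein equation; note that under that scaling the correct bound is $\|f_z\|_\infty\le \sqrt{2\pi}/(4\sqrt{K})$, not $\sqrt{2\pi K}/4$, so your constants will only match the statement after this correction.
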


\begin{proof}
We begin by observing:
\begin{align*}
d_K(W, N) &= d_K\left(\frac{W}{\sqrt{K}}, \frac{N}{\sqrt{K}}\right)
= \sup_{t \in \mathbb{R}} \left|\mathbb{P}\left(\frac{W}{\sqrt{K}} \le t\right) - \mathbb{P}\left(\frac{N}{\sqrt{K}} \le t\right)\right| \\
&= \sup_{t \in \mathbb{R}} \left|\mathbb{E}\left[g_t'\left(\frac{W}{\sqrt{K}}\right) - \frac{W}{\sqrt{K}} g_t\left(\frac{W}{\sqrt{K}}\right)\right]\right|,
\end{align*}
where $g_t$ is the solution to Stein's equation:
\begin{equation*}
g_t'(w) - w g_t(w) = {\mathbf{1}}_{\{w \le t\}} - \mathbb{P}\left(\frac{N}{\sqrt{K}} \le t\right).
\end{equation*}

Now, using Gaussian integration by parts for the Gaussian random variable $b$ and the independence between $b$ and $X$, we obtain:
\begin{align*}
&\left|\mathbb{E}\left[g_t'\left(\frac{W}{\sqrt{K}}\right) - \frac{W}{\sqrt{K}} g_t\left(\frac{W}{\sqrt{K}}\right)\right]\right| \\
&= \left|\mathbb{E}\left[g_t'\left(\frac{W}{\sqrt{K}}\right) - \frac{C_b}{K} g_t'\left(\frac{W}{\sqrt{K}}\right) - \frac{f(X)}{\sqrt{K}} g_t\left(\frac{W}{\sqrt{K}}\right)\right]\right| \\
&= \left|\mathbb{E}\left[\frac{K - C_b}{K} g_t'\left(\frac{W}{\sqrt{K}}\right) - \frac{T}{K} g_t'\left(\frac{W}{\sqrt{K}}\right)\right] 
+ \mathbb{E}\left[\frac{T}{K} g_t'\left(\frac{W}{\sqrt{K}}\right) - \frac{f(X)}{\sqrt{K}} g_t\left(\frac{W}{\sqrt{K}}\right)\right]\right| \\
&\le \left|\mathbb{E}\left[\frac{K - C_b - T}{K} g_t'\left(\frac{W}{\sqrt{K}}\right)\right]\right| 
+ \left|\mathbb{E}\left[\frac{T}{K} g_t'\left(\frac{W}{\sqrt{K}}\right) - \frac{f(X)}{\sqrt{K}} g_t\left(\frac{W}{\sqrt{K}}\right)\right]\right|.
\end{align*}

Each of the two terms is then bounded following the approach developed in \cite{LG17}.
\end{proof}

\begin{proof}[Proof of Theorem \ref{Kdist}]
Conditioning with respect to the $\sigma$-field $\mathcal{F}_{L}$ generated by the weights and biases $\{W_{i,j}^{(r)}, b_i^{(r)}\}^{r=1,\dots,L}_{i=1,\dots,n_r;\, j=1,\dots,n_{r-1}}$, we have:
\[
d_K(z_{1}^{(L+1)}(x), G_{1}^{(L+1)}(x)) = \sup_{t\in\mathbb{R}} \Big| \mathbb{P}(z_{1}^{(L+1)}(x) \le t) - \mathbb{P}(G_{1}^{(L+1)}(x) \le t) \Big|
\]
\begin{equation}\label{d_K_exp_cond}
\le \mathbb{E}\left[ \sup_{t\in\mathbb{R}} \left| \mathbb{E}[1_{(-\infty, t)}(z_{1}^{(L+1)}(x)) \mid \mathcal{F}_L] - \mathbb{E}[1_{(-\infty, t)}(G_{1}^{(L+1)}(x))] \right| \right].
\end{equation}

{
After conditioning with respect to the $\sigma$-field $\mathcal{F}_{L}$, the first component of the neural network output, $ z_1^{(L+1)}(x)$, can be written as a linear combination of independent and identically distributed random variables,
\[
z_1^{(L+1)}(x)
= \sum_{j=1}^{n_L} W^{(L+1)}_{1,j}\, a^{(L)}_j(x),
\]
where the coefficients $\{a^{(L)}_j(x)\}_{j=1}^{n_L}$ are $\mathcal{F}_L$--measurable and the weights
$\{W^{(L+1)}_{1,j}\}_{j=1}^{n_L}$ are i.i.d. and independent of $\mathcal{F}_L$.
This follows from the initialization assumption in Definition~\ref{def_NN} together with the freezing lemma
(Lemma~4.1 in \cite{Baldi17}).

Consequently, the random variable $z_1^{(L+1)}(x)$, conditionally on $\mathcal{F}_L$, is a sum of independent centered random variables.
Moreover, the quantity inside the expectation in \eqref{d_K_exp_cond} is the Kolmogorov distance between this conditional distribution  and the Gaussian random variable $G_1^{(L+1)}(x)$.
Hence, we apply Theorem~\ref{Kol_1} to this distance and obtain

}
\begin{multline*}
d_K(z_{1}^{(L+1)}(x), G_{1}^{(L+1)}(x)) \le \frac{1}{K^{(L+1)}} \mathbb{E}\left[ \left(\mathbb{E}[T - K^{(L+1)} + C_b \mid \mathcal{F}_{L}]\right)^2 \right]^{1/2} \\
+ \frac{1}{K^{(L+1)}} \mathbb{E}\left[ \left(\mathbb{E}[T' \mid \mathcal{F}_{L}]\right)^2 \right]^{1/2} \\
+ \frac{1}{4K^{(L+1)}\sqrt{K^{(L+1)}}} \sum_{j=1}^{n_L} \mathbb{E}\left[ \frac{C_W^3}{n_L^3} (W_{1,j}^{(L+1)} - {W'}_{1,j}^{(L+1)})^6 \sigma^6(z_j^{(L)}(x)) \right]^{1/2} \\
+ \frac{1}{4K^{(L+1)}}\left( \sqrt{\frac{C_b}{K^{(L+1)}}} + \frac{\sqrt{2\pi}}{4} \right) \sum_{j=1}^{n_L} \mathbb{E}\left[ \frac{C_W^{3/2}}{n_L^{3/2}} |W_{1,j}^{(L+1)} - {W'}_{1,j}^{(L+1)}|^3 |\sigma^3(z_j^{(L)}(x))| \right],
\end{multline*}
where
\begin{equation}\label{T}
T := \frac{1}{2} \sum_{j=1}^{n_L} \frac{C_W}{n_L} (W_{1,j}^{(L+1)} - {W'}_{1,j}^{(L+1)})^2 \sigma^2(z_j^{(L)}(x)),
\end{equation}
and
\[
T' := \frac{1}{2} \sum_{j=1}^{n_L} \frac{C_W}{n_L} (W_{1,j}^{(L+1)} - {W'}_{1,j}^{(L+1)}) \left| W_{1,j}^{(L+1)} - {W'}_{1,j}^{(L+1)} \right| \cdot |\sigma(z_j^{(L)}(x))|^2.
\]
In particular,
\begin{multline*}
\mathbb{E}\left[\left(\mathbb{E}[T - K^{(L+1)} + C_b \mid \mathcal{F}_{L}]\right)^2\right] \\
= \mathbb{E}\Bigg[\Bigg( \frac{1}{2} \sum_{j=1}^{n_L} \frac{C_W}{n_L} \big((W_{1,j}^{(L+1)})^2 - 1\big) \sigma^2(z_j^{(L)}(x)) + \sum_{j=1}^{n_L} \frac{C_W}{n_L} \big( \sigma^2(z_j^{(L)}(x)) - \mathbb{E}[\sigma^2(G_j^{(L)}(x))] \big) \Bigg)^2 \Bigg] \\
= \frac{C_W^2}{4n_L} \big( \mathbb{E}[(W_{1,1}^{(1)})^4] - 1 \big) \mathbb{E}[\sigma^4(z_1^{(L)}(x))] + \mathbb{E}\left[ \left( \sum_{j=1}^{n_L} \frac{C_W}{n_L} \big( \sigma^2(z_j^{(L)}(x)) - \mathbb{E}[\sigma^2(G_j^{(L)}(x))] \big) \right)^2 \right]
\end{multline*}
\begin{equation}\label{stima_T}
= \frac{C_W^2}{4n_L} \big( \mathbb{E}[(W_{1,1}^{(1)})^4] - 1 \big) \mathbb{E}[\sigma^4(z_1^{(L)}(x))] + Q_2^{(L)}(x),
\end{equation}
recalling the definition in (\ref{Q_k}).

Instead,
\begin{multline*}
\mathbb{E}\left[\left(\mathbb{E}[T' \mid \mathcal{F}_{L}]\right)^2\right] \le \mathbb{E}[(T')^2] = \frac{C_W^2}{4n_L^2} \sum_{j=1}^{n_L} \mathbb{E}\left[(W_{1,j}^{(1)} - {W'}_{1,j}^{(1)})^4\right] \mathbb{E}[\sigma^4(z_j^{(L)}(x))] \\
+ \frac{C_W^2}{4n_L^2} \sum_{j=1}^{n_L} \sum_{\substack{k=1 \\ k \ne j}}^{n_L} \mathbb{E}\left[(W_{1,j}^{(1)} - {W'}_{1,j}^{(1)}) |W_{1,j}^{(1)} - {W'}_{1,j}^{(1)}|\right] \mathbb{E}\left[(W_{1,k}^{(1)} - {W'}_{1,k}^{(1)}) |W_{1,k}^{(1)} - {W'}_{1,k}^{(1)}|\right] \cdot \\
\cdot \mathbb{E}\left[ \sigma(z_j^{(L)}(x)) |\sigma(z_j^{(L)}(x))| \cdot \sigma(z_k^{(L)}(x)) |\sigma(z_k^{(L)}(x))| \right]
\end{multline*}
\[
= \frac{C_W^2}{4n_L^2} \sum_{j=1}^{n_L} \mathbb{E}\left[(W_{1,j}^{(1)} - {W'}_{1,j}^{(1)})^4\right] \mathbb{E}[\sigma^4(z_j^{(L)}(x))],
\]
since \( \mathbb{E}[(W_{1,j}^{(1)} - {W'}_{1,j}^{(1)}) |W_{1,j}^{(1)} - {W'}_{1,j}^{(1)}|] = 0 \) for every \( j = 1, \dots, n_L \).

\medskip

In conclusion,
\begin{multline}\label{tv_1dim_primaTH}
d_K(z_1^{(L+1)}(x), G_1^{(L+1)}(x)) \le
\frac{1}{K^{(L+1)}} \cdot \frac{C_W}{2\sqrt{n_L}} \left( \mathbb{E}[(W_{1,1}^{(1)})^4] - 1 \right)^{1/2} \mathbb{E}[\sigma^4(z_1^{(L)}(x))]^{1/2} \\
+ \frac{1}{K^{(L+1)}} \sqrt{Q_2^{(L)}(x)}
+ \frac{1}{K^{(L+1)}} \cdot \frac{C_W}{2\sqrt{n_L}} \mathbb{E}[(W_{1,1}^{(1)} - {W'}_{1,1}^{(1)})^4]^{1/2} \mathbb{E}[\sigma^4(z_1^{(L)}(x))]^{1/2} \\
+ \frac{1}{4K^{(L+1)} \sqrt{K^{(L+1)}}} \cdot \frac{C_W^{3/2}}{\sqrt{n_L}} \mathbb{E}[(W_{1,1}^{(L+1)} - {W'}_{1,1}^{(L+1)})^6]^{1/2} \mathbb{E}[\sigma^6(z_1^{(L)}(x))]^{1/2} \\
+ \frac{1}{4K^{(L+1)}} \cdot \frac{C_W^{3/2}}{\sqrt{n_L}} \left( \sqrt{\frac{C_b}{K^{(L+1)}}} + \frac{\sqrt{2\pi}}{4} \right) \mathbb{E}\left[ |W_{1,1}^{(1)} - {W'}_{1,1}^{(1)}|^3 \right] \mathbb{E}\left[ |\sigma^3(z_1^{(L)}(x))| \right]
\end{multline}

\begin{multline*}
\le \frac{1}{K^{(L+1)}} \sqrt{Q_2^{(L)}(x)} + \frac{1}{K^{(L+1)}} \cdot \frac{C_W}{\sqrt{n_L}} \mathbb{E}[(W_{1,1}^{(1)})^6]^{1/2} \left( 1 + \mathbb{E}[|\sigma(z_1^{(L)}(x))|^6]^{1/2} \right) \cdot \\
\cdot \left( 2\sqrt{\frac{C_W}{K^{(L+1)}}} + 2\sqrt{C_W} \left( \sqrt{\frac{C_b}{K^{(L+1)}}} + \frac{\sqrt{2\pi}}{4} \right) + \frac{5}{2} \right).
\end{multline*}
\end{proof}

\subsection{Wasserstein Distance}

Recall Definition~\ref{d_w_def} for the definition of the Wasserstein distance, and Definition~\ref{def_NN} for the definition of the neural network output $z_{1}^{(L+1)}(x)$ evaluated at the input $x \in \mathbb{R}^{n_0}$.

\begin{theorem}\label{Wdist}
Let $L \ge 1$. Suppose that $\mathbb{E}[(W_{1,1}^{(1)})^4] < \infty$, $\mathbb{E}[|\sigma(z_{1}^{(L)}(x))|^4] < \infty$, and $K^{(L+1)} \ne 0$. Then the Wasserstein distance between $z_{1}^{(L+1)}(x)$ and the Gaussian random variable $G_{1}^{(L+1)}(x) \sim \mathcal{N}(0, K^{(L+1)})$, as defined in Theorem~\ref{Hanin}, satisfies
\begin{multline*}
W_1\big(z_{1}^{(L+1)}(x), G_{1}^{(L+1)}(x)\big)
\le \frac{C_W}{\sqrt{n_L} \sqrt{K^{(L+1)}}} \, \mathbb{E}[(W_{1,1}^{(1)})^4]^{3/4} \left( \mathbb{E}[|\sigma(z_{1}^{(L)}(x))|^4]^{3/4} + 1 \right) \cdot \\
\cdot \left( \frac{4\sqrt{C_W}}{\sqrt{K^{(L+1)}}} + \frac{1}{\sqrt{2\pi}} \right)
+ \sqrt{ \frac{2}{K^{(L+1)} \pi} } \, \sqrt{Q_2^{(L)}(x)},
\end{multline*}
where $Q_2^{(L)}(x)$ is defined in equation~\eqref{Q_k}.
\end{theorem}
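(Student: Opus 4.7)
The proof follows the same blueprint as that of Theorem~\ref{Kdist}, substituting the Wasserstein Stein machinery for its Kolmogorov counterpart and relaxing the moment hypotheses from six to four. The starting point is the Kantorovich--Rubinstein duality (Proposition~\ref{form_dual_wass}): writing
\[
W_1(z_{1}^{(L+1)}(x),G_{1}^{(L+1)}(x)) = \sup_{h\in\mathrm{Lip}_1(1)}\bigl|\mathbb{E}[h(z_{1}^{(L+1)}(x))]-\mathbb{E}[h(G_{1}^{(L+1)}(x))]\bigr|,
\]
I would then condition on the $\sigma$-field $\mathcal{F}_L$ generated by the weights and biases up to layer $L$. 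Conditionally on $\mathcal{F}_L$ the output decomposes as the independent sum $b_1^{(L+1)} + \sqrt{C_W/n_L}\sum_{j=1}^{n_L}W_{1,j}^{(L+1)}\sigma(z_j^{(L)}(x))$, with the weights i.i.d., so the inner quantity is exactly the Wasserstein distance between a Gaussian-bias-plus-i.i.d.-sum and a centered Gaussian.

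The key technical step is to prove a Wasserstein analogue of Theorem~\ref{Kol_1}: for $W=b+f(X)$ with $b\sim\mathcal{N}_1(0,C_b)$ independent of $X=(X_1,\dots,X_k)$, $\mathbb{E}[f(X)]=0$, $\mathbb{E}[f(X)^4]<\infty$, and $N\sim\mathcal{N}_1(0,K)$ with $K\neq 0$, one obtains an inequality of the form
\[
W_1(W,N)\le \sqrt{\tfrac{2}{\pi K}}\,\mathbb{E}\!\left[\bigl(\mathbb{E}[K-C_b-T\mid X]\bigr)^2\right]^{1/2} + \tfrac{1}{K}\sum_{j=1}^k\mathbb{E}\bigl[|\Delta_j f(X)|^3\bigr],
\]
with $T$ as in \eqref{T_def}. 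The argument is the verbatim Wasserstein transcription of the proof of Theorem~\ref{Kol_1}: start from the Stein identity $\mathbb{E}[h(W)]-\mathbb{E}[h(N)]=\mathbb{E}[K f_h'(W)-W f_h(W)]$; use Lemma~\ref{int_gauss_uno} on the bias term to obtain $\mathbb{E}[b f_h(W)]=C_b\,\mathbb{E}[f_h'(W)]$; then apply the exchangeable-coupling identity of Chatterjee~\cite{C08} to rewrite $\mathbb{E}[f(X) f_h(W)]$ in terms of $T$ and a first-order Taylor remainder. The crucial simplification compared with the Kolmogorov case is that for $h$ 1-Lipschitz the Stein solution $f_h$ has uniformly bounded first and second derivatives, with $\|f_h'\|_\infty\le \sqrt{2/(\pi K)}$ and $\|f_h''\|_\infty\le 2/K$, so fourth moments suffice, and the factor $\bigl(\sqrt{C_b/K}+\sqrt{2\pi}/4\bigr)$ present in Theorem~\ref{Kol_1} (which compensated for the discontinuity of the indicator test function) does not arise.

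Applying this Wasserstein bound conditionally on $\mathcal{F}_L$ and then taking the outer expectation, the same algebra that produced \eqref{stima_T} gives
\[
\mathbb{E}\!\left[\bigl(\mathbb{E}[K^{(L+1)}-C_b-T\mid\mathcal{F}_L]\bigr)^2\right] = Q_2^{(L)}(x) + \tfrac{C_W^2}{4n_L}\bigl(\mathbb{E}[(W_{1,1}^{(1)})^4]-1\bigr)\mathbb{E}[\sigma^4(z_1^{(L)}(x))],
\]
which yields both the $\sqrt{2/(\pi K^{(L+1)})}\sqrt{Q_2^{(L)}(x)}$ contribution and a $1/\sqrt{n_L}$ fluctuation piece. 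The third-moment sum telescopes to $\tfrac{C_W^{3/2}}{\sqrt{n_L}}\mathbb{E}\bigl[|W_{1,1}^{(L+1)}-W'^{(L+1)}_{1,1}|^3\bigr]\,\mathbb{E}\bigl[|\sigma(z_1^{(L)}(x))|^3\bigr]$, and the bounds $\mathbb{E}[|W|^3]\le\mathbb{E}[W^4]^{3/4}$, $\mathbb{E}[|\sigma|^3]\le\mathbb{E}[|\sigma|^4]^{3/4}$, together with $\mathbb{E}[|W-W'|^r]\le 2^r\mathbb{E}[|W|^r]$, consolidate the numerical constants into the prefactor $\bigl(4\sqrt{C_W/K^{(L+1)}}+1/\sqrt{2\pi}\bigr)$ displayed in the statement. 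The main obstacle is the careful derivation of the Wasserstein analogue of Theorem~\ref{Kol_1} with the correct $K$-dependence in the Stein constants; once that is in place, the remainder is a direct adaptation of the computations \eqref{stima_T}--\eqref{tv_1dim_primaTH} with fourth moments replacing sixth moments throughout.
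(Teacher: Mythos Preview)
Your proposal is correct and follows essentially the same approach as the paper: condition on $\mathcal{F}_L$, invoke a Wasserstein analogue of Theorem~\ref{Kol_1} (stated in the paper as Theorem~\ref{th_wass_orig}, derived exactly as you describe via Gaussian integration by parts on the bias and Chatterjee's coupling identity for the weight sum), then plug in the computation~\eqref{stima_T} and simplify via H\"older. The only discrepancy is numerical: the paper's auxiliary lemma carries the third-moment term with coefficient $\tfrac{1}{2K}$ rather than your $\tfrac{1}{K}$, which is what produces the exact prefactor $4\sqrt{C_W}/\sqrt{K^{(L+1)}}$ in the statement; your version would give $8$ instead of $4$, still a valid upper bound but not the one claimed.
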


As in the previous section, it is not possible to directly apply Theorem~2.2 from \cite{C08}. Instead, we make use of the following adapted version:

\begin{theorem}\label{th_wass_orig}
Under the assumptions of Theorem~\ref{Kol_1}, and with $T$ defined as in equation~\eqref{T}, the following inequality holds:
\[
W_1(b + f(X), N) \le \sqrt{\frac{2}{K\pi}} \, \mathbb{E}\left[ \mathbb{E}[K - C_b - T \mid W]^2 \right]^{1/2}
+ \frac{1}{2K} \sum_{j=1}^{k} \mathbb{E}\left[ |\Delta_j f(X)|^3 \right],
\]
where $N \sim \mathcal{N}(0,K)$ and $f(X)$ is a function of independent random variables as in Theorem~\ref{Kol_1}.
\end{theorem}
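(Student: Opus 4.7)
The argument parallels that of Theorem~\ref{Kol_1} and mimics the proof of Theorem~2.2 in \cite{C08}, with the Gaussian integration-by-parts trick used to handle the bias contribution $b$. First I would appeal to Kantorovich--Rubinstein duality (Proposition~\ref{form_dual_wass}) to reduce the problem to bounding $|\mathbb{E}[h(W)] - \mathbb{E}[h(N)]|$ uniformly over $1$-Lipschitz functions $h : \mathbb{R} \to \mathbb{R}$. For each such $h$, let $g_h$ denote the canonical solution of the Stein equation $K g_h'(x) - x g_h(x) = h(x) - \mathbb{E}[h(N)]$; rescaling the classical estimates for the standard Gaussian case yields $\|g_h'\|_\infty \le \sqrt{2/(K\pi)}$ and $\|g_h''\|_\infty \le 2/K$. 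The task is thus to control $|\mathbb{E}[K g_h'(W) - W g_h(W)]|$.

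Next, exactly as in the proof of Theorem~\ref{Kol_1}, I would split $W = b + f(X)$ and apply the Gaussian integration-by-parts formula of Lemma~\ref{int_gauss_uno} conditionally on $X$ (using the independence of $b$ from $X$) to obtain $\mathbb{E}[b\, g_h(W)] = C_b\, \mathbb{E}[g_h'(W)]$. This gives the decomposition
\[
\mathbb{E}[K g_h'(W) - W g_h(W)] = \mathbb{E}[(K - C_b) g_h'(W)] - \mathbb{E}[f(X) g_h(W)].
\]
Since $\mathbb{E}[f(X)] = 0$, the quantity $\mathbb{E}[f(X) g_h(W)]$ is a genuine covariance; applying Lemma~\ref{cov_ch} conditionally on $b$ to the pair of $X$-measurable functions $f(X)$ and $g_h(b + f(X))$ rewrites it as
\[
\mathbb{E}[f(X) g_h(W)] = \tfrac{1}{2} \sum_{A \subsetneq [k]} \tfrac{1}{\binom{k}{|A|}(k - |A|)} \sum_{j \notin A} \mathbb{E}[\Delta_j f(X)\, \Delta_j g_h(W^A)],
\]
with $W^A := b + f(X^A)$.

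The key technical step is then a Taylor expansion of $\Delta_j g_h(W^A) = g_h(W^A) - g_h(W^{A \cup \{j\}})$ in the variable $\Delta_j f(X^A)$, extracting the leading term $g_h'(W)\, \Delta_j f(X^A)$ and a remainder controlled via $\|g_h''\|_\infty \le 2/K$. By the very definition of $T$ in \eqref{T_def}, the leading sum collapses to $\mathbb{E}[T g_h'(W)]$, and a symmetry/bookkeeping argument over the combinatorial sum (identical to the one in \cite{C08}) yields a cubic bound $|\mathcal{R}| \le \frac{1}{2K} \sum_{j=1}^{k} \mathbb{E}[|\Delta_j f(X)|^3]$ for the residual. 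Combining everything produces
\[
\mathbb{E}[K g_h'(W) - W g_h(W)] = \mathbb{E}[(K - C_b - T)\, g_h'(W)] - \mathcal{R},
\]
and conditioning on $W$ together with the Cauchy--Schwarz inequality gives $|\mathbb{E}[(K - C_b - T) g_h'(W)]| \le \|g_h'\|_\infty\, \mathbb{E}[\mathbb{E}[K - C_b - T \mid W]^2]^{1/2} \le \sqrt{2/(K\pi)}\, \mathbb{E}[\mathbb{E}[K - C_b - T \mid W]^2]^{1/2}$. Taking the supremum over $h$ finishes the proof.

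The hard part, as in Chatterjee's original argument, is the Taylor expansion together with the combinatorial identification of the leading term with $\mathbb{E}[T g_h'(W)]$: the naive mean-value expansion produces a derivative of $g_h$ evaluated at a point lying between $W^A$ and $W^{A \cup \{j\}}$ rather than at $W$, so one must further expand and exploit both the symmetry of the weights $1/(\binom{k}{|A|}(k-|A|))$ and the independence of the components of $X$ to collapse everything to $g_h'(W)$ modulo a remainder of cubic order in the discrete differences $\Delta_j f$. This is precisely where the explicit constant $\frac{1}{2K}$ in the second summand originates.
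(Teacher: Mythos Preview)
Your proposal is correct and follows exactly the approach the paper intends: the paper's own proof is the single sentence ``The argument follows similarly to the proof of Theorem~\ref{Kol_1},'' and what you have written is precisely that argument, namely Kantorovich--Rubinstein duality, the Stein equation with the bounds $\|g_h'\|_\infty\le\sqrt{2/(K\pi)}$ and $\|g_h''\|_\infty\le 2/K$, Gaussian integration by parts for the bias $b$, and then Chatterjee's covariance identity (Lemma~\ref{cov_ch}) plus Taylor expansion to extract the $T$-term and the cubic remainder. Your identification of the ``hard part'' (collapsing the combinatorial sum to $\mathbb{E}[T g_h'(W)]$ modulo a cubic remainder) is exactly the step borrowed from \cite{C08}, so there is nothing to add.
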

\begin{proof}
    The argument follows similarly to the proof of Theorem~\ref{Kol_1}.
\end{proof}

\begin{proof}[Proof of Theorem~\ref{Wdist}]
Condition on the $\sigma$-field $\mathcal{F}_{L}$ generated by the weights and biases up to layer $L$, that is,
\[
\mathcal{F}_{L} := \sigma\left( \left\{ W_{i,j}^{(r)},\, b_i^{(r)} \right\}_{i=1,\dots,n_r;\, j=1,\dots,n_{r-1}}^{r=1,\dots,L} \right).
\]
Using Definition~\ref{d_w_def}, Proposition \ref{form_dual_wass} and Theorem~\ref{th_wass_orig}, we obtain:
\begin{multline*}
W_1\big(z_{1}^{(L+1)}(x), G_{1}^{(L+1)}(x)\big)
\le \mathbb{E}\left[\sup_{h \in \mathrm{Lip}_1(1)} \left| \mathbb{E}\left[h\big(z_{1}^{(L+1)}(x)\big) \mid \mathcal{F}_{L} \right] - \mathbb{E}\left[ h\big(G_{1}^{(L+1)}(x)\big) \right] \right| \right] \\
\le \frac{1}{2K^{(L+1)}} \sum_{j=1}^{n_L} \mathbb{E}\left[ \left| \frac{\sqrt{C_W}}{\sqrt{n_L}} \left(W_{1,j}^{(L+1)} - {W'}_{1,j}^{(L+1)}\right) \sigma\left(z_{j}^{(L)}(x)\right) \right|^3 \right] \\
+ \sqrt{ \frac{2}{K^{(L+1)} \pi} } \cdot \sqrt{ \mathbb{E} \left[ \mathbb{E}\left[ T - K^{(L+1)} + C_b \mid \mathcal{F}_{L+1} \right]^2 \right] }.
\end{multline*}

Now, using the estimate in equation~\eqref{stima_T} and the definition of $Q_2^{(L)}(x)$ from equation~\eqref{Q_k}, we get:
\begin{multline}\label{W1_1dim_primaTH}
W_1\big(z_{1}^{(L+1)}(x), G_{1}^{(L+1)}(x)\big)
\le \frac{1}{2K^{(L+1)}} \cdot \frac{C_W^{3/2}}{\sqrt{n_L}} \, \mathbb{E}\left[ |W_{1,1}^{(1)} - {W'}_{1,1}^{(1)}|^3 \right] \, \mathbb{E}\left[ |\sigma(z_{j}^{(L)}(x))|^3 \right] \\
+ \frac{C_W}{\sqrt{2K^{(L+1)} \pi n_L}} \left( \mathbb{E}\left[(W_{1,1}^{(1)})^4\right] - 1 \right)^{1/2} \mathbb{E}\left[ |\sigma(z_{1}^{(L)}(x))|^4 \right]^{1/2} 
+ \sqrt{ \frac{2}{K^{(L+1)} \pi} } \cdot \sqrt{ Q_2^{(L)}(x) }.
\end{multline}

Finally, applying Hölder's inequality and simplifying:
\begin{multline*}
W_1\big(z_{1}^{(L+1)}(x), G_{1}^{(L+1)}(x)\big)
\le \frac{C_W}{\sqrt{n_L} \sqrt{K^{(L+1)}}} \, \mathbb{E}[(W_{1,1}^{(1)})^4]^{3/4} 
\left( \mathbb{E}[|\sigma(z_{1}^{(L)}(x))|^4]^{3/4} + 1 \right)\cdot  \\
\cdot\left( \frac{4\sqrt{C_W}}{\sqrt{K^{(L+1)}}} + \frac{1}{\sqrt{2\pi}} \right)
+ \sqrt{ \frac{2}{K^{(L+1)} \pi} } \cdot \sqrt{ Q_2^{(L)}(x) }.
\end{multline*}
\end{proof}

\subsection{Estimate of the Moments of the Random Neural Network}\label{mom_z_sec}

To apply Theorems~\ref{Kdist} and~\ref{Wdist}, we require that $\mathbb{E}[\sigma^6(z_1^{(L)}(x))] < \infty$ and $\mathbb{E}[\sigma^4(z_1^{(L)}(x))] < \infty$, respectively. Since the activation function $\sigma$ is Lipschitz continuous, it suffices to study the conditions under which the moments of the network's output evaluated at a given input $x \in \mathbb{R}^{n_0}$ are finite.

\begin{prop}\label{mom_z_prop}
Let $L+1 \ge \ell \ge 2$ and $p \ge 2$ even integer. Then
\begin{multline}\label{moment_meglio}
\mathbb{E}[(z_{1}^{(\ell)}(x))^p]^{1/p} \le 
\Bigg(\Big(
 \sqrt{C_b} \big((p-1)!! \big)^{1/p}
+ K\sqrt{C_W}\frac{p}{\log(p)} |\sigma(0)| \mathbb{E}[(W_{1,1}^{(1)})^p]^{1/p}
\Big)(\ell - 1)\\
+ \mathbb{E}[(z_{1}^{(1)}(x))^p]^{1/p}
\Bigg)  \Bigg(
1 + 
\Big( K\frac{p}{\log(p)}  \sqrt{C_W} \|\sigma\|_{\mathrm{Lip}} \mathbb{E}[(W_{1,1}^{(1)})^p]^{1/p} \Big)^{\ell - 1}
\Bigg),
\end{multline}
where $K>0$ is a numerical constant and the term $\mathbb{E}[(z_{1}^{(1)}(x))^p]$ does not depend on the layer widths $n_i$ for $i = 1, \dots, L+1$.

More precisely, we have:
\[
\mathbb{E}[(z_{1}^{(1)}(x))^p]^{1/p} \le 
 K\sqrt{C_b} \big((p-1)!! \big)^{1/p}
+ \sqrt{\frac{C_W}{n_0}} \frac{p}{\log(p)} \max\{\|x\|,\|x\|_p\} \mathbb{E}[(W_{1,1}^{(1)})^p]^{1/p}.
\]
\end{prop}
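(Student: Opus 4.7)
The plan is to prove \eqref{moment_meglio} by induction on the layer index $\ell$, combining the recursive definition of the network with Lemma \ref{mom_p_iid0}, which provides a Rosenthal/Nemirovski-type bound on the $L^p$ norms of linear combinations of i.i.d.\ centered random variables with the characteristic $K\,p/\log p$ factor. For the base case $\ell=1$, I apply Minkowski's inequality to split $\mathbb{E}[(z_1^{(1)}(x))^p]^{1/p}$ into the contribution of the Gaussian bias $b_1^{(1)}\sim\mathcal{N}(0,C_b)$, whose $p$-th moment equals $C_b^{p/2}(p-1)!!$ by the standard formula, and the contribution of $\frac{\sqrt{C_W}}{\sqrt{n_0}}\sum_{k=1}^{n_0} W_{1,k}^{(1)} x_k$. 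Applying Lemma \ref{mom_p_iid0} to this linear combination with deterministic coefficients $x_k$ produces the factor $K\,\frac{p}{\log p}\,\max\{\|x\|,\|x\|_p\}\,\mathbb{E}[(W_{1,1}^{(1)})^p]^{1/p}$, the $\max$ reflecting the form of the constant stated there.

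For the inductive step $\ell\ge 2$, I condition on the $\sigma$-field $\mathcal{F}_{\ell-1}$ generated by all weights and biases up to layer $\ell-1$. Given $\mathcal{F}_{\ell-1}$, the quantities $\sigma(z_k^{(\ell-1)}(x))$ are deterministic, while the $\{W_{1,k}^{(\ell)}\}_k$ are i.i.d.\ centered and independent of $\mathcal{F}_{\ell-1}$, so Lemma \ref{mom_p_iid0} applies conditionally to yield
\[
\mathbb{E}\bigg[\Big|\tfrac{\sqrt{C_W}}{\sqrt{n_{\ell-1}}}\sum_{k=1}^{n_{\ell-1}} W_{1,k}^{(\ell)} \sigma(z_k^{(\ell-1)}(x))\Big|^p \,\Big|\, \mathcal{F}_{\ell-1}\bigg]^{1/p} \le K\,\tfrac{p}{\log p}\,\sqrt{C_W}\,\mathbb{E}[W^p]^{1/p}\bigg(\tfrac{1}{n_{\ell-1}}\sum_{k=1}^{n_{\ell-1}}\sigma^2(z_k^{(\ell-1)}(x))\bigg)^{1/2}.
\]
Taking $L^p$ norms and using Minkowski's inequality in $L^{p/2}$ (valid since $p\ge 2$), I estimate $\mathbb{E}[(\sum_k\sigma^2(z_k^{(\ell-1)}(x)))^{p/2}]^{2/p} \le n_{\ell-1}\,\mathbb{E}[|\sigma(z_1^{(\ell-1)}(x))|^p]^{2/p}$, exploiting that the $z_k^{(\ell-1)}(x)$ are identically distributed across $k$. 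The $1/\sqrt{n_{\ell-1}}$ prefactor then cancels exactly against $\sqrt{n_{\ell-1}}$, which is the crucial mechanism producing a bound independent of the hidden-layer widths. The Lipschitz estimate $|\sigma(y)|\le |\sigma(0)|+\|\sigma\|_{\mathrm{Lip}}|y|$ combined with Minkowski then yields the linear recursion $a_\ell \le A + B\, a_{\ell-1}$ for $a_\ell:=\mathbb{E}[(z_1^{(\ell)}(x))^p]^{1/p}$, with $A:=\sqrt{C_b}\big((p-1)!!\big)^{1/p}+K\sqrt{C_W}\tfrac{p}{\log p}|\sigma(0)|\mathbb{E}[W^p]^{1/p}$ and $B:=K\sqrt{C_W}\tfrac{p}{\log p}\|\sigma\|_{\mathrm{Lip}}\mathbb{E}[W^p]^{1/p}$.

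Unrolling gives $a_\ell\le A\sum_{i=0}^{\ell-2}B^i + B^{\ell-1}a_1$; estimating the geometric sum by $(\ell-1)(1+B^{\ell-1})$ (splitting into the cases $B\le 1$ and $B>1$) and absorbing $B^{\ell-1}a_1\le a_1(1+B^{\ell-1})$ produces exactly the form of \eqref{moment_meglio}. The main technical point is the correct invocation of Lemma \ref{mom_p_iid0}---both its explicit $K\,p/\log p$ dependence on $p$ and its applicability conditionally on $\mathcal{F}_{\ell-1}$---while the remaining steps reduce to routine Minkowski/H\"older manipulations together with the pivotal $n_{\ell-1}^{\pm 1/2}$ cancellation that makes the bound width-independent.
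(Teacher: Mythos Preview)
Your proposal is correct and follows essentially the same approach as the paper: apply Minkowski and Lemma~\ref{mom_p_iid0} (conditionally on $\mathcal{F}_{\ell-1}$, via the tower property) to obtain the linear recursion $a_\ell \le A + B\,a_{\ell-1}$, then iterate. Your write-up is in fact more explicit than the paper's on the conditioning step, the $L^{p/2}$-Minkowski estimate producing the $n_{\ell-1}^{\pm 1/2}$ cancellation, and the geometric-sum unrolling that yields the factored form $(A(\ell-1)+a_1)(1+B^{\ell-1})$.
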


The proof of Proposition~\ref{mom_z_prop} follows from the next Lemma, which is a particular case of Theorem 1.5.11 in \cite{decoupling}.

\begin{lemma}\label{mom_p_iid0}
Let $X_1,\dots,X_n$ be independent and identically distributed random variables with $\mathbb{E}[X_i]=0$ for every $i=1,\dots,n$, and let $p \ge 2$ be an even integer. Then, for every $a_1,\dots,a_n\in\mathbb{R}$,
\[ 
\mathbb{E}\left[\left(\sum_{j=1}^{n} a_j X_j\right)^p\right]^{1/p} \le K\frac{p}{\log(p)}\mathbb{E}[X_1^p]^{1/p}\max\left\{\left(\sum_{i=1}^n a_i^p\right)^{1/p},\left(\sum_{i=1}^n a_i^2\right)^{1/2}\right\},
\]
where $K>0$ is a numeric constant that does not depend on $p$.
\end{lemma}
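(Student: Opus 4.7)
My plan is to deduce Lemma~\ref{mom_p_iid0} as a direct specialization of the sharp-constant Rosenthal-type inequality quoted in the paper as Theorem~1.5.11 of [decoupling]. That general statement asserts that for any independent centered random variables $Y_1,\dots,Y_n \in L^p$ and every $p \ge 2$,
\[
\Big\|\sum_{i=1}^n Y_i\Big\|_p \;\le\; C\,\frac{p}{\log p}\,\max\Big\{\Big(\sum_{i=1}^n \|Y_i\|_p^p\Big)^{1/p},\;\Big(\sum_{i=1}^n \|Y_i\|_2^2\Big)^{1/2}\Big\},
\]
for an absolute constant $C>0$. The reduction to Lemma~\ref{mom_p_iid0} is obtained by choosing $Y_i := a_i X_i$, which preserves independence and centering, and by exploiting the i.i.d.\ structure of the $X_i$ together with the evenness of $p$.

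Concretely, the first step is to observe that the i.i.d.\ assumption yields $\|Y_i\|_p = |a_i|\,\|X_1\|_p$ and $\|Y_i\|_2 = |a_i|\,\|X_1\|_2$, so the two branches inside the maximum become
\[
\|X_1\|_p\,\Big(\sum_{i=1}^n |a_i|^p\Big)^{1/p} \qquad \text{and} \qquad \|X_1\|_2\,\Big(\sum_{i=1}^n a_i^2\Big)^{1/2}.
\]
The second step is to apply Lyapunov's (equivalently Jensen's) inequality, which for $p \ge 2$ gives $\|X_1\|_2 \le \|X_1\|_p$; this allows me to replace $\|X_1\|_2$ by $\|X_1\|_p$ in the second branch and then pull the common factor $\|X_1\|_p$ out of the maximum. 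Finally, since $p$ is even, $\|X_1\|_p^p = \mathbb{E}[|X_1|^p] = \mathbb{E}[X_1^p]$ and $|a_i|^p = a_i^p$, yielding exactly the bound stated in Lemma~\ref{mom_p_iid0}.

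The only substantive content here is the sharp Rosenthal inequality itself, whose proof is nontrivial and is not the goal of the present paper: the optimal growth $C_p \asymp p/\log p$ of the Rosenthal constant is classical (attributed to Johnson--Schechtman--Zinn and Pinelis) and is established via refined Burkholder-type martingale arguments or, as in the cited reference, via decoupling techniques. Granted that black-box result, Lemma~\ref{mom_p_iid0} follows by the algebraic reduction above; there is no real obstacle beyond correctly matching the two sides of the general inequality to the i.i.d., scalar-weighted setting of the lemma.
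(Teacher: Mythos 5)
Your reduction is correct and is precisely what the paper intends when it says Lemma~\ref{mom_p_iid0} ``is a particular case of Theorem 1.5.11 in \cite{decoupling}'': set $Y_i = a_i X_i$, factor out $\|X_1\|_p$ and $\|X_1\|_2$ using the i.i.d.\ structure, apply Lyapunov to replace $\|X_1\|_2$ by $\|X_1\|_p$, and use the evenness of $p$ to write $|a_i|^p = a_i^p$ and $\mathbb{E}[|X_1|^p] = \mathbb{E}[X_1^p]$. The paper does not spell this out, but your argument is the same (and the only) route.
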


\noindent

\begin{proof}[Proof of Proposition~\ref{mom_z_prop}]
From Lemma~\ref{mom_p_iid0} and the tower property, it follows that if $p \ge 2$ is an even integer and $\ell \ge 2$, then:
\begin{multline*}
\mathbb{E}[(z_{1}^{(\ell)}(x))^p]^{1/p} \le \mathbb{E}[(b_1^{(\ell)})^p]^{1/p}
+  \mathbb{E}\left[\left(\sum_{j=1}^{n_{\ell-1}} \sqrt{\frac{C_W}{n_{\ell-1}}} W_{1,j}^{(\ell)} \sigma(z_{j}^{(\ell-1)}(x))\right)^p\right]^{1/p} \\
\le \sqrt{C_b} \big((p-1)!!\big)^{1/p}
+ K \frac{p}{\log(p)} \sqrt{{C_W}}
\mathbb{E}[\sigma^p(z_{1}^{(\ell-1)}(x))]^{1/p} \mathbb{E}[(W_{1,1}^{(\ell)})^p]^{1/p}.
\end{multline*}

Using the Lipschitz continuity of $\sigma$, we further estimate:
\begin{multline*}
\mathbb{E}[(z_{1}^{(\ell)}(x))^p] \le 
 \sqrt{C_b}\big((p-1)!! \big)^{1/p}
+  K\|\sigma\|_{\text{Lip}} \frac{p}{\log(p)} \sqrt{{C_W}}
\mathbb{E}[(z_{1}^{(\ell-1)}(x))^p]^{1/p} \mathbb{E}[(W_{1,1}^{(\ell)})^p]^{1/p} \\
+ K \frac{p}{\log(p)} \sqrt{{C_W}}
|\sigma(0)| \mathbb{E}[(W_{1,1}^{(\ell)})^p]^{1/p}.
\end{multline*}

Iterating the inequality over $\ell$ then yields the desired bound stated in Proposition~\ref{mom_z_prop}.
\end{proof}

\begin{remark}\label{fin_mom_z}
Proposition~\ref{mom_z_prop} shows, in particular, that if $\mathbb{E}[W^p] < \infty$ for some even integer $p \ge 2$, then:
\[
\sup_{n_1, \dots, n_L \in \mathbb{N}} \mathbb{E}[(z_{1}^{(L)}(x))^p] < \infty.
\]
\end{remark}

Therefore, the moment assumptions on $\sigma(z_{1}^{(L)}(x))$ in Theorems~\ref{Kdist} and~\ref{Wdist} can be dropped if the activation function $\sigma$ is Lipschitz continuous.

\subsection{A bound on $Q_{2p}^{(\ell)}(x)$ for $\ell,p \ge 1$}

In Theorems~\ref{Kdist} and~\ref{Wdist}, the only non-explicit term is $Q_2^{(L)}(x)$, so in this section we present an upper bound that resolves this issue.

Recall that, for $\ell \ge 1$ and $x \in \mathbb{R}^{n_0}$, the quantity $Q_2^{(\ell)}(x)$ is defined as:
\[
Q_{2}^{(\ell)}(x) := \mathbb{E}\left[\left|\sum_{j=1}^{n_{\ell}}\frac{C_W}{n_{\ell}}\sigma^2(z_{j}^{(\ell)}(x)) - K^{(\ell+1)} + C_b\right|^{2}\right],
\]
where
\[
K^{(\ell+1)} = C_b + C_W \, \mathbb{E}[\sigma^2(G_{1}^{(\ell)}(x))],
\]
and $G_{1}^{(\ell)}(x) \sim \mathcal{N}(0, K^{(\ell)})$, as defined in Theorem~\ref{Hanin}. By assumption, we have $K^{(\ell)} \ne 0$ for every $\ell = 1, \dots, L+1$.

Also recall that $\mathcal{F}_{\ell-1}$ denotes the $\sigma$-field generated by
\[
\left\{ W_{i,j}^{(r)},\, b_i^{(r)} \right\}_{\substack{r=1,\dots,\ell-1; \\ i=1,\dots,n_r; \\ j=1,\dots,n_{r-1}}}.
\]

\vspace{0.3em}

Consider now the quantity, for $\ell \ge 2$,
\[
\left| \mathbb{E}\left[\sigma^2(z_{1}^{(\ell)}(x)) \mid \mathcal{F}_{\ell-1} \right] - \mathbb{E}\left[ \sigma^2(G_{1}^{(\ell)}(x)) \right] \right|.
\]

Following the strategy used in~\cite{KG22} to prove their Theorem~2.2 (stated here as Theorem~\ref{KG22th}), we consider the solution $f_{\sigma^2}$ to the Stein's equation associated with the function $\sigma^2$. This solution satisfies:
\begin{equation}\label{stein_uno}
K^{(\ell)} f'_{\sigma^2}(z) - z f_{\sigma^2}(z) = \sigma^2(z) - \mathbb{E}[\sigma^2(G_{1}^{(\ell)}(x))].
\end{equation}

An explicit expression for the solution $f_{\sigma^2}$ is given by:
\begin{equation}\label{expr_f_s2_uni}
f_{\sigma^2}(z) = \frac{e^{\frac{z^2}{2K^{(\ell)}}}}{K^{(\ell)}} \int_{-\infty}^{z} e^{-\frac{y^2}{2K^{(\ell)}}} \left( \sigma^2(y) - \mathbb{E}[\sigma^2(G_{1}^{(\ell)}(x))] \right) \, dy.
\end{equation}

Then, using equation~\eqref{stein_uno} and the Gaussian integration by parts formula from Lemma~\ref{int_gauss_uno}, we obtain:
\[
\left|\mathbb{E}[\sigma^2(z_{1}^{(\ell)}(x)) \mid \mathcal{F}_{\ell-1}] - \mathbb{E}[\sigma^2(G_{1}^{(\ell)}(x))]\right|
= \left| \mathbb{E}\left[ K^{(\ell)} f'_{\sigma^2}(z_{1}^{(\ell)}(x)) - z_{1}^{(\ell)}(x) f_{\sigma^2}(z_{1}^{(\ell)}(x)) \mid \mathcal{F}_{\ell-1} \right] \right|.
\]
\begin{multline*}
\le \left| \mathbb{E}\left[ C_b f'_{\sigma^2}(z_{1}^{(\ell)}(x)) - b_1^{(\ell)} f_{\sigma^2}(z_{1}^{(\ell)}(x)) \mid \mathcal{F}_{\ell-1} \right] \right| \\
+ \left| \mathbb{E}\left[ (K^{(\ell)} - C_b) f'_{\sigma^2}(z_{1}^{(\ell)}(x)) - (z_{1}^{(\ell)}(x) - b_1^{(\ell)}) f_{\sigma^2}(z_{1}^{(\ell)}(x)) \mid \mathcal{F}_{\ell-1} \right] \right|.
\end{multline*}
\begin{equation}\label{eq_in_Q2}
= \left| \mathbb{E}\left[ (K^{(\ell)} - C_b) f'_{\sigma^2}(z_{1}^{(\ell)}(x)) - (z_{1}^{(\ell)}(x) - b_1^{(\ell)}) f_{\sigma^2}(z_{1}^{(\ell)}(x)) \mid \mathcal{F}_{\ell-1} \right] \right|,
\end{equation}
since $b_1^{(\ell)} \sim \mathcal{N}(0, C_b)$.

\smallskip

Now define a copy in law of $(W_{1,1}^{(\ell)}, \dots, W_{1,n_{\ell-1}}^{(\ell)})$ that is independent of $\mathcal{F}_{\ell}$ and denote it by $({W'}_{1,1}^{(\ell)}, \dots, {W'}_{1,n_{\ell-1}}^{(\ell)})$. Define also $z_{1}^{(\ell)(j)}(x)$ as the version of $z_{1}^{(\ell)}(x)$ in which $W_{1,j}^{(\ell)}$ is replaced by ${W'}_{1,j}^{(\ell)}$.

Using Lemma~\ref{cov_ch}, we obtain:
\begin{multline*}
\mathbb{E}\left[(z_{1}^{(\ell)}(x) - b_1^{(\ell)}) f_{\sigma^2}(z_{1}^{(\ell)}(x)) \mid \mathcal{F}_{\ell-1} \right] \\
= \frac{1}{2} \sum_{A \subset [n_{\ell-1}]} \frac{1}{\binom{n_{\ell-1}}{|A|}(n_{\ell-1} - |A|)} \sum_{j \notin A} \mathbb{E}\Bigg[ \frac{\sqrt{C_W}}{\sqrt{n_{\ell-1}}}(W_{1,j}^{(\ell)} - {W'}_{1,j}^{(\ell)}) \sigma(z_{j}^{(\ell-1)}(x)) \cdot \\
\cdot \left( f_{\sigma^2}(z_{1}^{(\ell)}(x)) - f_{\sigma^2}(z_{1}^{(\ell)(j)}(x)) \right) \mid \mathcal{F}_{\ell-1} \Bigg].
\end{multline*}
By applying the second-order expansion, this becomes:
\begin{multline}\label{eq_sec_Q2}
= \frac{1}{2} \sum_{j=1}^{n_{\ell-1}} \mathbb{E}\left[ \frac{C_W}{n_{\ell-1}} (W_{1,j}^{(\ell)} - {W'}_{1,j}^{(\ell)})^2 \sigma^2(z_{j}^{(\ell-1)}(x)) f'_{\sigma^2}(z_{1}^{(\ell)}(x)) \mid \mathcal{F}_{\ell-1} \right] \\
+ \frac{1}{2} \sum_{j=1}^{n_{\ell-1}} \mathbb{E}\left[ \frac{\sqrt{C_W}}{\sqrt{n_{\ell-1}}}(W_{1,j}^{(\ell)} - {W'}_{1,j}^{(\ell)}) \sigma(z_{j}^{(\ell-1)}(x)) R \mid \mathcal{F}_{\ell-1} \right],
\end{multline}
where
\[
R = f_{\sigma^2}(z_{1}^{(\ell)}(x)) - f_{\sigma^2}(z_{1}^{(\ell)(j)}(x)) + f'_{\sigma^2}(z_{1}^{(\ell)}(x)) \cdot \frac{\sqrt{C_W}}{\sqrt{n_{\ell-1}}}({W'}_{1,j}^{(\ell)} - W_{1,j}^{(\ell)}) \sigma(z_{j}^{(\ell-1)}(x)).
\]

Let us define \( h := z_{1}^{(\ell)(j)}(x) - z_{1}^{(\ell)}(x) \). Using the identity~\eqref{stein_uno}, we find that
\[
|R| = \left| \int_{0}^{h} \left( f'_{\sigma^2}(z_{1}^{(\ell)}(x) + t) - f'_{\sigma^2}(z_{1}^{(\ell)}(x)) \right) \, dt \right|.
\]
\begin{multline*}
= \frac{1}{K^{(\ell)}} \Bigg| \int_0^h \Big( (z_{1}^{(\ell)}(x)+t) f_{\sigma^2}(z_{1}^{(\ell)}(x)+t)
+ \sigma^2(z_{1}^{(\ell)}(x)+t) \\
- z_{1}^{(\ell)}(x) f_{\sigma^2}(z_{1}^{(\ell)}(x)) - \sigma^2(z_{1}^{(\ell)}(x)) \Big) dt \Bigg|
\end{multline*}
\begin{multline}\label{bound_R_I123}
\le \frac{1}{K^{(\ell)}} \left| \int_0^h z_{1}^{(\ell)}(x) \left( f_{\sigma^2}(z_{1}^{(\ell)}(x)+t) - f_{\sigma^2}(z_{1}^{(\ell)}(x)) \right) dt \right| \\
+ \frac{1}{K^{(\ell)}} \left| \int_0^h t f_{\sigma^2}(z_{1}^{(\ell)}(x)+t) \, dt \right|
+ \frac{1}{K^{(\ell)}} \left| \int_0^h \left( \sigma^2(z_{1}^{(\ell)}(x)+t) - \sigma^2(z_{1}^{(\ell)}(x)) \right) dt \right| \\
=: I_1 + I_2 + I_3.
\end{multline}

The following lemma provides bounds on the terms \( I_1 \), \( I_2 \), and \( I_3 \), based on the regularity properties of \( f_{\sigma^2} \).

\begin{lemma}\label{I_1,2,3}
Under the assumptions of the present section, we have the following bounds:
\begin{multline*}
I_1 \le \frac{|z_{1}^{(\ell)}(x)|}{K^{(\ell)}} \Bigg[
\left( D_\ell(x) + \frac{4 \|\sigma\|_{\mathrm{Lip}}^2}{K^{(\ell)}} |z_{1}^{(\ell)}(x)|^2 + 2 \|\sigma\|_{\mathrm{Lip}}^2 |z_{1}^{(\ell)}(x)| \right) \frac{h^2}{2} \\
+ \left( D_\ell(x) + 6 \|\sigma\|_{\mathrm{Lip}}^2 |z_{1}^{(\ell)}(x)| \right) \frac{|h|^3}{3}
+ \|\sigma\|_{\mathrm{Lip}}^2 \left( \frac{4}{K^{(\ell)}} + 2 \right) \frac{h^4}{4}
\Bigg],
\end{multline*}
where
\begin{equation}\label{const_D}
D_\ell(x) := 2\left( 2\sqrt{\frac{\pi K^{(\ell)}}{2}} + 1 \right)
\left( \frac{2 |\sigma(0)|^2}{K^{(\ell)}} + \|\sigma\|_{\mathrm{Lip}}^2 \right).
\end{equation}

Moreover,
\[
I_2 \le \sqrt{\frac{\pi}{2K^{(\ell)}}} \left( \frac{|\sigma(0)|^2}{K^{(\ell)}} + \|\sigma\|_{\mathrm{Lip}}^2 + \frac{2\|\sigma\|_{\mathrm{Lip}}^2}{K^{(\ell)}} \right) \frac{h^2}{2}
+ \frac{2 \|\sigma\|_{\mathrm{Lip}}^2}{K^{(\ell)}} \frac{|h|^3}{3},
\]
\[
I_3 \le \frac{\|\sigma\|_{\mathrm{Lip}}^2}{K^{(\ell)}} \frac{|h|^3}{3}
+ \frac{\|\sigma\|_{\mathrm{Lip}}}{K^{(\ell)}} |\sigma(z_{1}^{(\ell)}(x))| h^2.
\]
\end{lemma}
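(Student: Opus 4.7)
The plan is to reduce each of $I_1, I_2, I_3$ to the integral of a polynomial in $t$ whose coefficients depend on $z:=z_{1}^{(\ell)}(x)$ in the structure prescribed by the statement, after which integration over $t\in[0,h]$ yields the explicit $h^2/2$, $|h|^3/3$ and $h^4/4$ factors appearing in the bounds.

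For $I_3$, the argument is completely elementary and I would handle it first. Using the factorisation
\[
\sigma^2(z+t)-\sigma^2(z)=\bigl(\sigma(z+t)-\sigma(z)\bigr)\bigl(\sigma(z+t)+\sigma(z)\bigr),
\]
the Lipschitz property bounds the first factor by $\|\sigma\|_{\mathrm{Lip}}|t|$, and writing the second factor as $2\sigma(z)+(\sigma(z+t)-\sigma(z))$ produces $|\sigma^2(z+t)-\sigma^2(z)|\le 2|\sigma(z)|\|\sigma\|_{\mathrm{Lip}}|t|+\|\sigma\|_{\mathrm{Lip}}^2 t^2$. Integrating $t$ over $[0,|h|]$ and dividing by $K^{(\ell)}$ reproduces exactly the two terms claimed for $I_3$.

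For $I_1$ and $I_2$, I would rely on the regularity estimates for the Stein solution $f_{\sigma^2}$ developed in Lemmas~\ref{fs} and~\ref{lipf} of Appendix~A, applied to the function $\sigma^2$ together with its quadratic majorant $\sigma^2(y)\le 2|\sigma(0)|^2+2\|\sigma\|_{\mathrm{Lip}}^2 y^2$. Those lemmas (combined with the standard Gaussian estimate $\int_{-\infty}^{z}e^{-y^2/(2K^{(\ell)})}\,dy\le \sqrt{2\pi K^{(\ell)}}$ applied to the explicit representation~\eqref{expr_f_s2_uni}) give pointwise bounds of the form $|f_{\sigma^2}(z+t)|\le A_0+A_1|t|$ (polynomial of degree one in $|t|$ with the prefactor $\sqrt{\pi/(2K^{(\ell)})}$ and the coefficients $|\sigma(0)|^2/K^{(\ell)}+\|\sigma\|_{\mathrm{Lip}}^2$ and $2\|\sigma\|_{\mathrm{Lip}}^2/K^{(\ell)}$ one reads off from the statement) and $|f_{\sigma^2}(z+t)-f_{\sigma^2}(z)|\le \alpha(z)|t|+\beta(z)t^2+\gamma(z)|t|^3$. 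Substituting the first bound into $\int_0^h t f_{\sigma^2}(z+t)\,dt$ and the second into $\int_0^h(f_{\sigma^2}(z+t)-f_{\sigma^2}(z))\,dt$, then multiplying the latter by $|z|/K^{(\ell)}$, collects the powers of $|h|$ in the exact form asserted by the lemma.

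The main obstacle will be the careful book-keeping of the $|z|$-dependent prefactors entering $\alpha(z),\beta(z),\gamma(z)$. To get the third-order-in-$t$ estimate one has to differentiate the Stein equation~\eqref{stein_uno} twice, which introduces both a $zf_{\sigma^2}(z)$ contribution and terms proportional to $\sigma(z)\sigma'(z)$, i.e.\ to $|\sigma(0)|\|\sigma\|_{\mathrm{Lip}}+\|\sigma\|_{\mathrm{Lip}}^2|z|$. Collecting these against the universal Stein factor $\sqrt{2\pi K^{(\ell)}}+1$ is precisely what produces the constant
\[
D_\ell(x)=2\Bigl(2\sqrt{\tfrac{\pi K^{(\ell)}}{2}}+1\Bigr)\Bigl(\tfrac{2|\sigma(0)|^2}{K^{(\ell)}}+\|\sigma\|_{\mathrm{Lip}}^2\Bigr)
\]
in the $t$ and $t^2$ coefficients, whereas the $|z|^2$ and $|z|$ corrections $4\|\sigma\|_{\mathrm{Lip}}^2|z|^2/K^{(\ell)}$, $2\|\sigma\|_{\mathrm{Lip}}^2|z|$ and $6\|\sigma\|_{\mathrm{Lip}}^2|z|$ come from the cross-terms $zf_{\sigma^2}(z)$ and $\sigma(z)\sigma'(z)$ respectively, and the $|z|$-free coefficient $\|\sigma\|_{\mathrm{Lip}}^2(4/K^{(\ell)}+2)$ of $h^4$ is the residue of the purely Lipschitz contribution of $\sigma^2$. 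Once these prefactors are accounted for, the integration over $t$ is mechanical and the three claimed estimates follow.
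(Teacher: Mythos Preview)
Your overall plan coincides with the paper's proof: $I_3$ via the elementary factorisation of $\sigma^2(z+t)-\sigma^2(z)$, $I_2$ via the pointwise bound on $f_{\sigma^2}$ from Lemma~\ref{fs}, and $I_1$ via the increment bound from Lemma~\ref{lipf}, each followed by integration in $t$. Once Lemma~\ref{lipf} is expanded as a polynomial $\alpha(z)|t|+\beta(z)t^2+\gamma(z)|t|^3$ (exactly as the paper does in its proof), the three displayed inequalities drop out.

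One caveat: your final paragraph misdescribes the source of the coefficients. The paper never differentiates the Stein equation \eqref{stein_uno}, and no term of the form $\sigma(z)\sigma'(z)$ ever enters; under a bare Lipschitz assumption on $\sigma$ that route would in fact be delicate. Instead, Lemma~\ref{lipf} works directly with the explicit integral representation \eqref{expr_f_s2_uni}, splitting $f_{\sigma^2}(z+t)-f_{\sigma^2}(z)$ into (i) the change of the outer factor $e^{(\cdot)^2/2K^{(\ell)}}$, controlled via $1-e^{-u}\le u$, and (ii) the remaining short integral $\int_z^{z+t}$, controlled via Lemma~\ref{lips}. Expanding $|(z+t)^2-z^2|$ and $\max\{z^2,(z+t)^2\}$ in powers of $|t|$ is what produces the terms $4\|\sigma\|_{\mathrm{Lip}}^2|z|^2/K^{(\ell)}$, $2\|\sigma\|_{\mathrm{Lip}}^2|z|$, $6\|\sigma\|_{\mathrm{Lip}}^2|z|$ and $\|\sigma\|_{\mathrm{Lip}}^2(4/K^{(\ell)}+2)$, and what makes the constant $D_\ell(x)$ appear simultaneously in the $|t|$- and $|t|^2$-coefficients. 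Since you already invoke Lemmas~\ref{fs} and~\ref{lipf} as given, this does not affect the validity of your argument, but the heuristic about ``differentiating twice'' should be dropped.
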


\begin{proof}
See Appendix~A.
\end{proof}

Hence, applying Lemma~\ref{I_1,2,3} to inequality~\eqref{bound_R_I123}, we obtain that
\begin{multline*}
|R| \le 
\Bigg[
\frac{|z_{1}^{(\ell)}(x)|}{K^{(\ell)}}\left(D_\ell(x) + \frac{4\|\sigma\|_{\text{Lip}}^2}{K^{(\ell)}} |z_{1}^{(\ell)}(x)|^2 + 2\|\sigma\|_{\text{Lip}}^2 |z_{1}^{(\ell)}(x)|\right) \\
+ \sqrt{\frac{\pi}{2K^{(\ell)}}} \left( \frac{|\sigma(0)|^2}{K^{(\ell)}} + \|\sigma\|_{\text{Lip}}^2 + \frac{2\|\sigma\|_{\text{Lip}}^2}{K^{(\ell)}} \right) 
+ \frac{2\|\sigma\|_{\text{Lip}}}{K^{(\ell)}} \left( |\sigma(0)| + \|\sigma\|_{\text{Lip}} |z_{1}^{(\ell)}(x)| \right)
\Bigg] \frac{h^2}{2} \\
+ \Bigg[
\frac{|z_{1}^{(\ell)}(x)|}{K^{(\ell)}}\left( D_\ell(x) + 6\|\sigma\|_{\text{Lip}}^2 |z_{1}^{(\ell)}(x)| \right) + \frac{3\|\sigma\|_{\text{Lip}}^2}{K^{(\ell)}}
\Bigg] \frac{|h|^3}{3} \\
+ \frac{|z_{1}^{(\ell)}(x)|}{K^{(\ell)}} \|\sigma\|_{\text{Lip}}^2 \left( \frac{4}{K^{(\ell)}} + 2 \right) \frac{h^4}{4}.
\end{multline*}

Recalling that
\[
h = \sum_{j=1}^{n_{\ell-1}} \frac{\sqrt{C_W}}{\sqrt{n_{\ell-1}}}(W_{1,j}^{(\ell)} - {W'}_{1,j}^{(\ell)})\sigma(z_{j}^{(\ell-1)}(x)),
\]
we derive the subsequent bound:
\begin{multline}\label{term_R_Q2}
\left| 
\sum_{j=1}^{n_{\ell-1}} \mathbb{E} \left[
\frac{\sqrt{C_W}}{\sqrt{n_{\ell-1}}}(W_{1,j}^{(\ell)} - {W'}_{1,j}^{(\ell)}) \sigma(z_{j}^{(\ell-1)}(x)) R \bigg| \mathcal{F}_{\ell-1}
\right]
\right| \\
\le \frac{C_W^{3/2}}{2n_{\ell-1}^{3/2}} \sum_{j=1}^{n_{\ell-1}} \mathbb{E} \Bigg[
|W_{1,j}^{(\ell)} - {W'}_{1,j}^{(\ell)}|^3 |\sigma(z_{j}^{(\ell-1)}(x))|^3 \cdot\\
\cdot \Bigg(
\frac{|z_{1}^{(\ell)}(x)|}{K^{(\ell)}}\left(D_\ell(x) + \frac{4\|\sigma\|_{\text{Lip}}^2}{K^{(\ell)}} |z_{1}^{(\ell)}(x)|^2 + 2\|\sigma\|_{\text{Lip}}^2 |z_{1}^{(\ell)}(x)|\right) \\
+ \sqrt{\frac{\pi}{2K^{(\ell)}}} \left( \frac{|\sigma(0)|^2}{K^{(\ell)}} + \|\sigma\|_{\text{Lip}}^2 + \frac{2\|\sigma\|_{\text{Lip}}^2}{K^{(\ell)}} \right)
+ \frac{2\|\sigma\|_{\text{Lip}}}{K^{(\ell)}}\left( |\sigma(0)| + \|\sigma\|_{\text{Lip}} |z_{1}^{(\ell)}(x)| \right)
\Bigg) \Bigg| \mathcal{F}_{\ell-1} \Bigg] \\
+ \frac{C_W^2}{3n_{\ell-1}^2} \sum_{j=1}^{n_{\ell-1}} \mathbb{E} \Bigg[
|W_{1,j}^{(\ell)} - {W'}_{1,j}^{(\ell)}|^4 \sigma^4(z_{j}^{(\ell-1)}(x)) \cdot \\
\cdot  \left( \frac{|z_{1}^{(\ell)}(x)|}{K^{(\ell)}} \left( D_\ell(x) + 6\|\sigma\|_{\text{Lip}}^2 |z_{1}^{(\ell)}(x)| \right) + \frac{3\|\sigma\|_{\text{Lip}}^2}{K^{(\ell)}} \right)
\Bigg| \mathcal{F}_{\ell-1} \Bigg] \\
+ \frac{C_W^{5/2}}{4n_{\ell-1}^{5/2}} \|\sigma\|_{\text{Lip}}^2 \left( \frac{4}{K^{(\ell)}} + 2 \right)
\sum_{j=1}^{n_{\ell-1}} \mathbb{E} \left[
|W_{1,j}^{(\ell)} - {W'}_{1,j}^{(\ell)}|^5 |\sigma(z_{j}^{(\ell-1)}(x))|^5 \frac{|z_{1}^{(\ell)}(x)|}{K^{(\ell)}}
\bigg| \mathcal{F}_{\ell-1}
\right].
\end{multline}

We now consider the term
\begin{multline*}
\left|\mathbb{E}\left[\left(\sum_{j=1}^{n_{\ell-1}} \frac{C_W}{2n_{\ell-1}} (W_{1,j}^{(\ell)} - {W'}_{1,j}^{(\ell)})^2 \sigma^2(z_{j}^{(\ell-1)}(x)) - K^{(\ell)} + C_b\right) 
f'_{\sigma^2}(z_{1}^{(\ell)}(x)) \,\Big|\, \mathcal{F}_{\ell-1} \right]\right|.
\end{multline*}

We split it as follows:
\begin{multline*}
\left|\mathbb{E}\left[\left(\sum_{j=1}^{n_{\ell-1}} \frac{C_W}{2n_{\ell-1}} (W_{1,j}^{(\ell)} - {W'}_{1,j}^{(\ell)})^2 \sigma^2(z_{j}^{(\ell-1)}(x)) - K^{(\ell)} + C_b\right) 
f'_{\sigma^2}(z_{1}^{(\ell)}(x)) \,\Big|\, \mathcal{F}_{\ell-1} \right]\right|\\
\le 
\left|\mathbb{E}\left[\left(\sum_{j=1}^{n_{\ell-1}} \frac{C_W}{n_{\ell-1}} \sigma^2(z_{j}^{(\ell-1)}(x)) - K^{(\ell)} + C_b\right) 
f'_{\sigma^2}(z_{1}^{(\ell)}(x)) \,\Big|\, \mathcal{F}_{\ell-1} \right]\right| \\
+ \left|\mathbb{E}\left[\left(\sum_{j=1}^{n_{\ell-1}} \frac{C_W}{2n_{\ell-1}} \left((W_{1,j}^{(\ell)} - {W'}_{1,j}^{(\ell)})^2 - 2\right) \sigma^2(z_{j}^{(\ell-1)}(x))\right) 
f'_{\sigma^2}(z_{1}^{(\ell)}(x)) \,\Big|\, \mathcal{F}_{\ell-1} \right]\right|.
\end{multline*}

By integrating with respect to $W'$ the second term,
\begin{multline}\label{boundbello}
\left|\mathbb{E}\left[\left(\sum_{j=1}^{n_{\ell-1}} \frac{C_W}{2n_{\ell-1}} (W_{1,j}^{(\ell)} - {W'}_{1,j}^{(\ell)})^2 \sigma^2(z_{j}^{(\ell-1)}(x)) - K^{(\ell)} + C_b\right) 
f'_{\sigma^2}(z_{1}^{(\ell)}(x)) \,\Big|\, \mathcal{F}_{\ell-1} \right]\right|\\
\le
\left|\mathbb{E}\left[\left(\sum_{j=1}^{n_{\ell-1}} \frac{C_W}{n_{\ell-1}} \sigma^2(z_{j}^{(\ell-1)}(x)) - K^{(\ell)} + C_b\right) 
f'_{\sigma^2}(z_{1}^{(\ell)}(x)) \,\Big|\, \mathcal{F}_{\ell-1} \right]\right| \\
+ \left|\mathbb{E}\left[\left(\sum_{j=1}^{n_{\ell-1}} \frac{C_W}{2n_{\ell-1}} \left((W_{1,j}^{(\ell)})^2 - 1\right) \sigma^2(z_{j}^{(\ell-1)}(x))\right) 
f'_{\sigma^2}(z_{1}^{(\ell)}(x)) \,\Big|\, \mathcal{F}_{\ell-1} \right]\right|.
\end{multline}

We will now use the following lemma, proved in Appendix~A:
\begin{lemma}\label{derivative}
For every \( z \in \mathbb{R} \), with $f_{\sigma^2}$ defined in \eqref{expr_f_s2_uni}, we have:
\begin{equation}\label{deriv_f}
|f'_{\sigma^2}(z)| \le  
\left(
\frac{4\|\sigma\|_{\mathrm{Lip}}^2}{K^{(\ell)}} + 4\sqrt{\frac{\pi}{2K^{(\ell)}}} \left(\frac{|\sigma(0)|^2}{K^{(\ell)}} + \|\sigma\|_{\mathrm{Lip}}^2\right)
\right) |z|^2 + D_\ell(x),
\end{equation}
where \( D_\ell(x) \) is defined in~\eqref{const_D}.
\end{lemma}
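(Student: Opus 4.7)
My plan is to exploit the Stein equation \eqref{stein_uno} in the rearranged form
\[
K^{(\ell)} f'_{\sigma^2}(z) = z\, f_{\sigma^2}(z) + \sigma^2(z) - \mathbb{E}[\sigma^2(G_1^{(\ell)}(x))],
\]
which reduces the task of bounding $|f'_{\sigma^2}(z)|$ to estimating the two summands on the right-hand side separately and dividing by $K^{(\ell)}$. For the term $\sigma^2(z)-\mathbb{E}[\sigma^2(G_1^{(\ell)}(x))]$ I would use the Lipschitz estimate $\sigma^2(y) \le 2|\sigma(0)|^2 + 2\|\sigma\|_{\mathrm{Lip}}^2 y^2$ applied pointwise at $y=z$ and in expectation (giving $\mathbb{E}[\sigma^2(G_1^{(\ell)}(x))]\le 2|\sigma(0)|^2 + 2\|\sigma\|_{\mathrm{Lip}}^2 K^{(\ell)}$ because $\mathrm{Var}(G_1^{(\ell)}(x)) = K^{(\ell)}$), producing an upper bound of the form $\alpha + \beta z^2$ with explicit constants.

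The more delicate summand is $z\,f_{\sigma^2}(z)$. Here the idea is to exploit the centering identity $\int_{\mathbb{R}} e^{-y^2/(2K^{(\ell)})}\bigl(\sigma^2(y)-\mathbb{E}[\sigma^2(G_1^{(\ell)}(x))]\bigr)dy = 0$ to rewrite the explicit formula \eqref{expr_f_s2_uni} as the right-tail integral
\[
f_{\sigma^2}(z) = -\frac{e^{z^2/(2K^{(\ell)})}}{K^{(\ell)}}\int_z^{\infty} e^{-y^2/(2K^{(\ell)})}\bigl(\sigma^2(y)-\mathbb{E}[\sigma^2(G_1^{(\ell)}(x))]\bigr)\,dy
\]
when $z\ge 0$, and to keep the original left-tail representation when $z\le 0$. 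In both cases the pointwise bound $|\sigma^2(y)-\mathbb{E}[\sigma^2(G_1^{(\ell)}(x))]| \le A + B y^2$, combined with the explicit integration-by-parts identities $\int_z^{\infty} y\,e^{-y^2/(2K^{(\ell)})}dy = K^{(\ell)} e^{-z^2/(2K^{(\ell)})}$ and $\int_z^{\infty} y^3 e^{-y^2/(2K^{(\ell)})}dy = K^{(\ell)}(z^2+2K^{(\ell)})e^{-z^2/(2K^{(\ell)})}$, together with the uniform tail bound $\int_z^{\infty}e^{-y^2/(2K^{(\ell)})}dy \le \sqrt{\pi K^{(\ell)}/2}$, should yield an estimate of the form $|z\,f_{\sigma^2}(z)|/K^{(\ell)} \le \gamma + \delta z^2$. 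Summing this with the bound on $|\sigma^2(z)-\mathbb{E}[\sigma^2(G_1^{(\ell)}(x))]|/K^{(\ell)}$ delivers the desired inequality.

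The main obstacle I anticipate is the regime $|z|$ near zero: there the sharper Mills-type identity $|z|\int_z^{\infty}e^{-y^2/(2K^{(\ell)})}dy \le K^{(\ell)} e^{-z^2/(2K^{(\ell)})}$ degenerates, and one is forced to fall back on the weaker uniform bound $\sqrt{\pi K^{(\ell)}/2}$ on the $A$-part of the integral. This is precisely what produces the factor $\sqrt{\pi K^{(\ell)}/2}$ inside the definition \eqref{const_D} of $D_\ell(x)$ and the factor $\sqrt{\pi/(2K^{(\ell)})}$ in the coefficient of $|z|^2$. Careful algebraic bookkeeping is needed to combine the regimes $|z|$ small and $|z|$ large into a single uniform inequality of exactly the stated form, rather than a case-by-case bound, but no genuinely new technology beyond standard Stein's-method estimates should be required.
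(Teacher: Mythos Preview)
Your approach is correct and starts from the same decomposition as the paper: use the Stein equation \eqref{stein_uno} to write
\[
|f'_{\sigma^2}(z)|\le \frac{1}{K^{(\ell)}}\,|z\,f_{\sigma^2}(z)|+\frac{1}{K^{(\ell)}}\,\big|\sigma^2(z)-\mathbb{E}[\sigma^2(G_1^{(\ell)}(x))]\big|,
\]
and then bound the two pieces separately. The difference lies in how you handle the first piece. You propose to re-enter the explicit integral representation \eqref{expr_f_s2_uni}, pass to the tail integral via the centering identity, and compute Gaussian moment integrals (including the $y^3$ identity) by hand. The paper instead simply quotes Lemma~\ref{fs} (inequality \eqref{bound_f_2}), which has already packaged exactly that tail computation into the uniform bound $|f_{\sigma^2}(z)|\le 2\|\sigma\|_{\mathrm{Lip}}^2|z|+4\sqrt{\pi K^{(\ell)}/2}\,(|\sigma(0)|^2/K^{(\ell)}+\|\sigma\|_{\mathrm{Lip}}^2)$, and multiplies by $|z|/K^{(\ell)}$. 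Combined with inequality~\eqref{lips2} for the second piece, this immediately gives a bound of the form $a|z|^2+b|z|+c$, and the stated quadratic-plus-constant form follows from $|z|\le |z|^2+1$.

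What this buys: the paper's route is a two-line citation of Lemmas~\ref{lips} and~\ref{fs}, with no case split on the size of $|z|$ and no need for the $y^3$ integral. Your ``main obstacle'' (merging the small-$|z|$ and large-$|z|$ regimes) never actually arises, because Lemma~\ref{fs} already provides a single uniform bound on $|f_{\sigma^2}(z)|$ valid for all $z$. So your plan would work, but it duplicates the proof of Lemma~\ref{fs} inside the proof of Lemma~\ref{derivative}; it is cleaner to just invoke \eqref{bound_f_2}.
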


Define
\[
A_{\ell}(x) := \sum_{j=1}^{n_{\ell-1}} \frac{C_W}{n_{\ell-1}} \sigma^2(z_{j}^{(\ell-1)}(x)) - K^{(\ell)} + C_b.
\]
Then, we can bound the following term as
\begin{multline*}
\Bigg|\mathbb{E}\Bigg[\Bigg(\sum_{j=1}^{n_{\ell-1}} \frac{C_W}{2n_{\ell-1}} (W_{1,j}^{(\ell)} - {W'}_{1,j}^{(\ell)})^2 \sigma^2(z_{j}^{(\ell-1)}(x)) - K^{(\ell)} + C_b\Bigg) f'_{\sigma^2}(z_{1}^{(\ell)}(x)) \,\Big|\, \mathcal{F}_{\ell-1} \Bigg]\Bigg| \\
\leq \mathbb{E}\Bigg[\Bigg(\sum_{j=1}^{n_{\ell-1}} \frac{C_W}{2n_{\ell-1}} \big((W_{1,j}^{(\ell)})^2 - 1\big) \sigma^2(z_{j}^{(\ell-1)}(x))\Bigg)^2 \,\Big|\, \mathcal{F}_{\ell-1} \Bigg]^{1/2} 
\cdot \mathbb{E}\Big[ \big(f'_{\sigma^2}(z_{1}^{(\ell)}(x))\big)^2 \,\big|\, \mathcal{F}_{\ell-1} \Big]^{1/2} \\
+ |A_{\ell}(x)| \cdot \mathbb{E}\Big[ \big|f'_{\sigma^2}(z_{1}^{(\ell)}(x))\big| \,\big|\, \mathcal{F}_{\ell-1} \Big].
\end{multline*}

Using Lemma~\ref{derivative}, we further bound the expression as
\begin{multline*}
\Bigg|\mathbb{E}\Bigg[\Bigg(\sum_{j=1}^{n_{\ell-1}} \frac{C_W}{2n_{\ell-1}} (W_{1,j}^{(\ell)} - {W'}_{1,j}^{(\ell)})^2 \sigma^2(z_{j}^{(\ell-1)}(x)) - K^{(\ell)} + C_b\Bigg) f'_{\sigma^2}(z_{1}^{(\ell)}(x)) \,\Big|\, \mathcal{F}_{\ell-1} \Bigg]\Bigg| \\
\leq |A_{\ell}(x)| \cdot \Bigg( 
\left( \frac{4\|\sigma\|_{\text{Lip}}^2}{K^{(\ell)}} 
+ 4\sqrt{\frac{\pi}{2K^{(\ell)}}} \left( \frac{|\sigma(0)|^2}{K^{(\ell)}} + \|\sigma\|_{\text{Lip}}^2 \right) \right) 
\mathbb{E}\Big[ |z_{1}^{(\ell)}(x)|^2 \,\big|\, \mathcal{F}_{\ell-1} \Big] + D_\ell(x) \Bigg) \\
+ \mathbb{E}\Big[ (W_{1,1}^{(\ell)})^4 \Big]^{1/2} \cdot \frac{C_W}{2n_{\ell-1}} 
\Bigg( \sum_{j=1}^{n_{\ell-1}} \sigma^4(z_{j}^{(\ell-1)}(x)) \Bigg)^{1/2} \cdot \\
\cdot \Bigg(
\left( \frac{4\|\sigma\|_{\text{Lip}}^2}{K^{(\ell)}} 
+ 4\sqrt{\frac{\pi}{2K^{(\ell)}}} \left( \frac{|\sigma(0)|^2}{K^{(\ell)}} + \|\sigma\|_{\text{Lip}}^2 \right) \right) 
\mathbb{E}\Big[ |z_{1}^{(\ell)}(x)|^4 \,\big|\, \mathcal{F}_{\ell-1} \Big]^{1/2} + D_\ell(x)
\Bigg).
\end{multline*}

\begin{multline*}
\le  |A_{\ell}(x)|\Bigg(\Big(\frac{4\|\sigma\|_{\text{Lip}}^2}{K^{(\ell)}}+4\sqrt{\frac{\pi  }{2K^{(\ell)}}}\Big(\frac{|\sigma(0)|^2}{K^{(\ell)}}+\|\sigma\|_{\text{Lip}}^2\Big)\Big)\mathbb{E}\Big[|z_{1}^{(\ell)}(x)|^2\Big|\mathcal{F}_{\ell-1}\Big]+D_\ell(x)\Bigg)\\
+\mathbb{E}\Big[(W_{1,1}^{(\ell)})^4\Big]^{1/2}\frac{{C_W}}{2{n_{\ell-1}}} \Big(\sum_{j=1}^{n_{\ell-1}}\sigma^4(z_{j}^{(\ell-1)}(x))\Big)^{1/2}\cdot\\
\cdot \Bigg(\Big(\frac{4\|\sigma\|_{\text{Lip}}^2}{K^{(\ell)}}+4\sqrt{\frac{\pi  }{2K^{(\ell)}}}\Big(\frac{|\sigma(0)|^2}{K^{(\ell)}}+\|\sigma\|_{\text{Lip}}^2\Big)\Big)\mathbb{E}\Big[|z_{1}^{(\ell)}(x)|^4|\mathcal{F}_{\ell-1}\Big]^{1/2}+D_\ell(x)\Bigg)
\end{multline*}
\begin{multline}\label{bound_der_Q2}
\le  |A_{\ell}(x)|\Bigg(\Big(\frac{4\|\sigma\|_{\text{Lip}}^2}{K^{(\ell)}}+4\sqrt{\frac{\pi  }{2K^{(\ell)}}}\Big(\frac{|\sigma(0)|^2}{K^{(\ell)}}+\|\sigma\|_{\text{Lip}}^2\Big)\Big)\mathbb{E}\Big[|z_{1}^{(\ell)}(x)|^2\Big|\mathcal{F}_{\ell-1}\Big]+D_\ell(x)\Bigg)\\
+\mathbb{E}\Big[(W_{1,1}^{(\ell)})^4\Big]^{1/2}\frac{{C_W}}{2{n_{\ell-1}}} \Big(\sum_{j=1}^{n_{\ell-1}}\sigma^4(z_{j}^{(\ell-1)}(x))\Big)^{1/2}\cdot\\
\cdot \Bigg(\Big(\frac{4\|\sigma\|_{\text{Lip}}^2}{K^{(\ell)}}+4\sqrt{\frac{\pi  }{2K^{(\ell)}}}\Big(\frac{|\sigma(0)|^2}{K^{(\ell)}}+\|\sigma\|_{\text{Lip}}^2\Big)\Big)\mathbb{E}\Big[|z_{1}^{(\ell)}(x)|^4|\mathcal{F}_{\ell-1}\Big]^{1/2}+D_\ell(x)\Bigg)
\end{multline}

Hence, applying (\ref{eq_in_Q2}), (\ref{eq_sec_Q2}), (\ref{term_R_Q2}) and (\ref{bound_der_Q2}) it can be shown the following Lemma, which is proved in Appendix A.
\begin{lemma}\label{lemma_prim_Q2}
    \[
    \mathbb{E}\Big[|\mathbb{E}[\sigma^2(z_{1}^{(\ell)}(x))|\mathcal{F}_{\ell-1}]-\mathbb{E}[\sigma^2 (G_{1}^{(\ell)}(x))]|^{2p}\Big]
\le\frac{L_1^{(\ell)(p)}(x)}{n_{\ell-1}^p}+L_2^{(\ell)(p)}(x)\sqrt{Q_{4p}^{(\ell-1)}(x)},
\]
where
\begin{multline}\label{L_1_def}
L_1^{(\ell)(p)}(x)\le 
    {5^{2p-1}2^{10p+2}} C_W^{2p}(C_W+1)^{3p}\Big(1+\mathbb{E}[\sigma^{20p}(z_1^{(\ell-1)})]^{1/2}\Big)\Big(1+\mathbb{E}[|z_{1}^{(\ell)}(x)|^{12p}]^{1/2}\Big)\cdot\\
\cdot \Big(1+\mathbb{E}[(W_{1,1}^{(\ell)})^{20p}]^{1/2}\Big)\Big(|\sigma(0)|^2+\|\sigma\|_{\text{Lip}}^2\Big)^{2p}\Bigg({12}+{2K^{(\ell)}}+\frac{20}{K^{(\ell)}}+\frac{15}{(K^{(\ell)})^2}
\Bigg)^{2p},
\end{multline}
and 
\begin{multline}\label{L_2_def}
L_2^{(\ell)(p)}(x)\le5^{2p-1}2^{6p-1}\Big(1+\mathbb{E}\Big[|z_{1}^{(\ell)}(x)|^{8p}\Big]^{1/2}\Big)\Big(\|\sigma\|_{\text{Lip}}^{2}+|\sigma(0)|^2\Big)^{2p}\cdot\\
\cdot \Big(6+K^{(\ell)}+\frac{9}{K^{(\ell)}}+\frac{3}{(K^{(\ell)})^2}\Big)^{2p}.
\end{multline}
\end{lemma}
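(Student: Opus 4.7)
The strategy is to combine the pointwise conditional estimates already assembled in \eqref{eq_in_Q2}, \eqref{eq_sec_Q2}, \eqref{boundbello}, \eqref{term_R_Q2} and \eqref{bound_der_Q2} into a single almost-sure inequality of the schematic form
\[
\Bigl|\mathbb{E}[\sigma^2(z_{1}^{(\ell)}(x))\mid\mathcal{F}_{\ell-1}]-\mathbb{E}[\sigma^2(G_{1}^{(\ell)}(x))]\Bigr|\le |A_\ell(x)|\,\mathcal{M}_\ell(x)+\mathcal{R}_\ell(x),
\]
where $A_\ell(x):=\sum_{j=1}^{n_{\ell-1}}\tfrac{C_W}{n_{\ell-1}}\sigma^2(z_{j}^{(\ell-1)}(x))-K^{(\ell)}+C_b$ (so that $\mathbb{E}[|A_\ell(x)|^{4p}]=Q_{4p}^{(\ell-1)}(x)$ by \eqref{Q_k}), $\mathcal{M}_\ell(x)$ is the polynomial-in-conditional-moments factor multiplying $|A_\ell(x)|$ on the right-hand side of \eqref{bound_der_Q2}, and $\mathcal{R}_\ell(x)$ collects the Taylor-remainder contributions \eqref{term_R_Q2} (bounded via Lemma \ref{I_1,2,3}) together with the symmetrization piece $\tfrac{C_W}{2n_{\ell-1}}\sum_j\bigl((W_{1,j}^{(\ell)})^2-1\bigr)\sigma^2(z_j^{(\ell-1)}(x))\,f'_{\sigma^2}(z_1^{(\ell)}(x))$ left over after integrating out $W'$ in \eqref{boundbello}, as well as the second summand of \eqref{bound_der_Q2}, which carries a $n_{\ell-1}^{-1}\sum_j\sigma^4(z_j^{(\ell-1)}(x))^{1/2}\sim n_{\ell-1}^{-1/2}$ prefactor.

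I then raise both sides to the $2p$-th power via $(a+b)^{2p}\le 2^{2p-1}(a^{2p}+b^{2p})$ and take the full expectation. Cauchy--Schwarz applied to the main term yields
\[
\mathbb{E}\bigl[|A_\ell(x)|^{2p}\mathcal{M}_\ell(x)^{2p}\bigr]\le \sqrt{Q_{4p}^{(\ell-1)}(x)}\,\sqrt{\mathbb{E}[\mathcal{M}_\ell(x)^{4p}]},
\]
and expanding $\mathcal{M}_\ell(x)$ reduces $\mathbb{E}[\mathcal{M}_\ell(x)^{4p}]$ to a polynomial in $K^{(\ell)}$, $\|\sigma\|_{\mathrm{Lip}}$, $|\sigma(0)|$ and $\mathbb{E}[|z_{1}^{(\ell)}(x)|^{8p}]$ (the latter finite by Proposition \ref{mom_z_prop}); regrouping via $(\sum_i a_i)^{4p}\le k^{4p-1}\sum_i a_i^{4p}$ and absorbing constants produces the explicit form \eqref{L_2_def}. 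For the remainder $\mathcal{R}_\ell(x)^{2p}$, each of the three sums in \eqref{term_R_Q2} has the shape $n_{\ell-1}^{-(s+1/2)}\sum_{j=1}^{n_{\ell-1}}U_{\ell,j}^{(s)}(x)$ with $s\in\{1,3/2,2\}$; rewriting this as $n_{\ell-1}^{-(s-1/2)}\cdot\tfrac{1}{n_{\ell-1}}\sum_j U_{\ell,j}^{(s)}(x)$, applying Jensen's inequality $\bigl(\tfrac{1}{n}\sum u_j\bigr)^{2p}\le \tfrac{1}{n}\sum u_j^{2p}$, and then taking expectations yields a prefactor $n_{\ell-1}^{-(2s-1)p}\le n_{\ell-1}^{-p}$, with the slowest-decaying piece $s=1$ dictating the rate. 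The symmetrization term and the second summand of \eqref{bound_der_Q2} contribute an additional $n_{\ell-1}^{-p}$ term each by the same Jensen step applied to the i.i.d.~zero-mean factors $(W_{1,j}^{(\ell)})^2-1$ and to $\sigma^4(z_j^{(\ell-1)}(x))$ respectively.

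Independence of $(W_{1,j}^{(\ell)},{W'}_{1,j}^{(\ell)})$ from $\mathcal{F}_{\ell-1}$ then lets me factor out every $W$-moment up to order $10p$, each finite by assumption, while the remaining $\sigma(z_j^{(\ell-1)}(x))$ and $|z_1^{(\ell)}(x)|$ factors are controlled via $|\sigma(y)|\le|\sigma(0)|+\|\sigma\|_{\mathrm{Lip}}|y|$ combined with Proposition \ref{mom_z_prop}, producing a bound polynomial in $\mathbb{E}[\sigma^{20p}(z_1^{(\ell-1)}(x))]^{1/2}$ and $\mathbb{E}[|z_1^{(\ell)}(x)|^{12p}]^{1/2}$; collecting the five or so resulting summands via $(\sum_{i=1}^{5}a_i)^{2p}\le 5^{2p-1}\sum_{i=1}^{5}a_i^{2p}$ yields the numerical prefactors $5^{2p-1}$ and $2^{10p+2}$ appearing in \eqref{L_1_def}. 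The principal obstacle is purely combinatorial bookkeeping: one must track every factor of $K^{(\ell)}$, $\|\sigma\|_{\mathrm{Lip}}$, $D_\ell(x)$ and each moment $\mathbb{E}[(W_{1,1}^{(\ell)})^{k}]$ through the five summands and compress them into the compact polynomial forms \eqref{L_1_def}--\eqref{L_2_def}; no further analytic ingredient beyond the estimates already built in the excerpt is needed.
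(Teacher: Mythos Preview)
Your proposal is correct and follows essentially the same route as the paper's proof: assemble the five-term pointwise bound from \eqref{eq_in_Q2}, \eqref{eq_sec_Q2}, \eqref{term_R_Q2} and \eqref{bound_der_Q2}, raise to the $2p$-th power using $(\sum_{i=1}^5 a_i)^{2p}\le 5^{2p-1}\sum_i a_i^{2p}$, apply Cauchy--Schwarz to split the $|A_\ell(x)|^{2p}$ factor as $\sqrt{Q_{4p}^{(\ell-1)}(x)}$ times a moment of the $\mathcal{F}_{\ell-1}$-measurable prefactor, and handle each remainder sum via the convexity/Jensen step $\bigl(n_{\ell-1}^{-1}\sum_j u_j\bigr)^{2p}\le n_{\ell-1}^{-1}\sum_j u_j^{2p}$ together with Cauchy--Schwarz to separate the $W$-moments from the $\sigma(z^{(\ell-1)})$ and $|z_1^{(\ell)}|$ moments. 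The paper proceeds in exactly this order with the same tools; the only difference is presentational.
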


We are now going to show how the expression 
\[
\mathbb{E}\Big[|\mathbb{E}[\sigma^2(z_{1}^{(\ell)}(x))|\mathcal{F}_{\ell-1}]-\mathbb{E}[\sigma^2 (G_{1}^{(\ell)}(x))]|^{2p}\Big]
\]
arises from the definition  (\ref{Q_k}) of $Q_2^{(\ell)}(x)$.

It follows from  Lemma \ref{mom_p_iid0} that if $\ell\ge 2$
\begin{multline}\label{bound_Q_2_postlemma}
Q_{2p}^{(\ell)}(x):=\mathbb{E}\Bigg[\Big|\sum_{j=1}^{n_{\ell}}\frac{C_W}{n_{\ell}}\sigma^2(z_{j}^{(\ell)}(x))-K^{(\ell+1)}+C_b\Big|^{2p}\Bigg]\\
\le 2^{2p-1}\mathbb{E}\Bigg[\Big|\sum_{j=1}^{n_{\ell}}\frac{C_W}{n_{\ell}}\Big(\sigma^2(z_{j}^{(\ell)}(x))-\mathbb{E}[\sigma^2(z_{j}^{(\ell)}(x))]\Big)\Big|^{2p}\Bigg]\\
+2^{2p-1}\Big|\sum_{j=1}^{n_{\ell}}\frac{C_W}{n_{\ell}}\Big(\mathbb{E}[\sigma^2(z_{j}^{(\ell)}(x))]-\mathbb{E}[\sigma^2(G_{j}^{(\ell)}(x))]\Big)\Big|^{2p}\\
\le 2^{2p-1}p^{2p} \frac{C_W^{2p}}{n_{\ell}^{p}}\mathbb{E}\Big[\Big(\sigma^2(z_{1}^{(\ell)}(x))-\mathbb{E}[\sigma^2(z_{1}^{(\ell)}(x))]\Big)^{2p}\Big]\\
+2^{2p-1}{C_W}^{2p}\mathbb{E}\Bigg[\Big|\mathbb{E}\Big[\sigma^2(z_{1}^{(\ell)}(x))|\mathcal{F}_{\ell-1}\Big]-\mathbb{E}[\sigma^2(G_{1}^{(\ell)}(x))]\Big|^{2p}\Bigg]
\end{multline}
\begin{multline*}
\le 
2^{4p-1}p^{2p} \frac{C_W^{2p}}{n_{\ell}^{p}}\mathbb{E}\Big[\sigma^{4p}(z_{1}^{(\ell)}(x))\Big]
+2^{2p-1}{C_W}^{2p}\cdot\\
\cdot\mathbb{E}\Bigg[\Big|\mathbb{E}\Big[\sigma^2(z_{1}^{(\ell)}(x))|\mathcal{F}_{\ell-1}\Big]-\mathbb{E}[\sigma^2(G_{1}^{(\ell)}(x))]\Big|^{2p}\Bigg]
\end{multline*}

\[
=\frac{L_4^{(\ell)(p)}(x)}{n_{\ell}^p}+L_5^{(p)}\mathbb{E}\Big[\Big(\mathbb{E}\Big[\sigma^2(z_{1}^{(\ell)}(x))\Big|\mathcal{F}_{\ell-1}\Big]-\mathbb{E}\Big[\sigma^2(G_{1}^{(\ell)}(x))\Big]\Big)^{2p}\Big]
\]
\[
\le  \frac{L_4^{(\ell)(p)}(x)}{n_{\ell}^p}+L_5^{(p)}\Big(\frac{L_1^{(\ell)(p)}(x)}{n_{\ell-1}^p}+L_2^{(\ell)(p)}(x)\sqrt{Q_{4p}^{(\ell-1)}(x)}\Big),
\]
using Lemma \ref{lemma_prim_Q2} in the last inequality and defining

\begin{equation}\label{L_4}
{L_4^{(\ell)(p)}}(x):=2^{4p-1}p^{2p} {C_W^{2p}}\mathbb{E}\Big[\sigma^{4p}(z_{1}^{(\ell)}(x))\Big]
\end{equation}
and
\begin{equation}\label{L_5}
L_5^{(p)}:=2^{2p-1}C_W^{2p}.
\end{equation}

Define now
\[
L_6^{(\ell)(p)(n)}(x):=\frac{L_4^{(\ell)(p)}(x)}{n_{\ell}^p}+L_5^{(p)}\frac{L_1^{(\ell)(p)}(x)}{n_{\ell-1}^p}
\]
and
\[
L_7^{(\ell)(p)}(x):=L_5^{(p)}L_2^{(\ell)(p)}(x).
\]

One has that for $p\ge 2$ even integer
\begin{equation}\label{bound_iter}
Q_{2p}^{(\ell)}(x)\le 
\begin{cases}
L_6^{(\ell)(p)(n)}(x)+L_7^{(\ell)(p)}(x) \sqrt{Q_{4p}^{(\ell-1)}(x)} &\text{if $\ell\ge 2$}\\
\frac{L_4^{(1)(p)}(x)}{n_{1}^p}& \text{if $\ell=1$}.
\end{cases}
\end{equation}

The following Lemma can be proved by induction on $\ell\ge 1$ thanks to inequality (\ref{bound_iter}).
\begin{lemma}\label{iteration_Q2}
For $\ell\ge 1$ and $p\ge 1$ integers,
\begin{multline*}
        Q_{2p}^{(\ell)}(x)\le 
        \Big\{\max_{k=2,\dots,\ell;j=0,\dots,\ell-1}\Big(L_{7}^{(k)(2^jp)}(x)\Big)^{\frac{1}{2^j}}\Big\}^{\ell-1}\Bigg[ \Bigg(\frac{L_{4}^{(1)(2^\ell p)}(x)}{n_1^{2^{\ell-1} p}}\Bigg)^{\frac{1}{2^{\ell-1}}}\\
        + \sum_{k=0}^{\ell-2}\Big(L_{6}^{(\ell-k)(2^k p)(n)}(x)\Big)^{\frac{1}{2^k}}\Bigg]
    \end{multline*}
\end{lemma}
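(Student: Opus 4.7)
The plan is to prove Lemma~\ref{iteration_Q2} by induction on $\ell\ge 1$, using the one-step recursion~(\ref{bound_iter}) together with the elementary sub-additivity inequality
\[
(a+b)^{\alpha} \le a^{\alpha} + b^{\alpha},\qquad a,b\ge 0,\ \alpha\in (0,1],
\]
which follows from the concavity of $t\mapsto t^{\alpha}$. The base case $\ell=1$ is immediate from the second line of~(\ref{bound_iter}): the empty sum and the factor $\{\max\cdots\}^{0}=1$ collapse the right-hand side to the bound $L_{4}^{(1)(p)}(x)/n_{1}^{p}$, up to trivial bookkeeping on the parameter inside $L_{4}$.

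For the inductive step I unfold (\ref{bound_iter}) layer by layer, each time halving the outer root via sub-additivity. Setting $R_{k}:=(Q_{2^{k+1}p}^{(\ell-k)}(x))^{1/2^{k}}$, so that $R_{0}=Q_{2p}^{(\ell)}(x)$, one application of~(\ref{bound_iter}) at level $\ell-k$ with parameter $2^{k}p$, followed by the sub-additivity inequality with exponent $\alpha=1/2^{k}$, produces for every $k=0,1,\dots,\ell-2$
\[
R_{k} \le \bigl(L_{6}^{(\ell-k)(2^{k}p)(n)}(x)\bigr)^{1/2^{k}} + \bigl(L_{7}^{(\ell-k)(2^{k}p)}(x)\bigr)^{1/2^{k}} R_{k+1}.
\]
The chain terminates at $k=\ell-1$, where the base case of~(\ref{bound_iter}) applied with moment exponent $2^{\ell}p$ gives
\[
R_{\ell-1}=\bigl(Q_{2^{\ell}p}^{(1)}(x)\bigr)^{1/2^{\ell-1}}\le \left(\frac{L_{4}^{(1)(2^{\ell-1}p)}(x)}{n_{1}^{2^{\ell-1}p}}\right)^{1/2^{\ell-1}}.
\]

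Composing the above inequalities telescopically yields
\[
Q_{2p}^{(\ell)}(x)\le \sum_{k=0}^{\ell-2}\Bigl(\prod_{j=0}^{k-1}(L_{7}^{(\ell-j)(2^{j}p)}(x))^{1/2^{j}}\Bigr)(L_{6}^{(\ell-k)(2^{k}p)(n)}(x))^{1/2^{k}} + \Bigl(\prod_{j=0}^{\ell-2}(L_{7}^{(\ell-j)(2^{j}p)}(x))^{1/2^{j}}\Bigr)R_{\ell-1}.
\]
To obtain the compact form in the statement, I bound each of the two products of $L_{7}$-factors by $\bigl\{\max_{k=2,\dots,\ell;\,j=0,\dots,\ell-1}(L_{7}^{(k)(2^{j}p)}(x))^{1/2^{j}}\bigr\}^{\ell-1}$ (the product contains at most $\ell-1$ factors whose indices $(\ell-j,2^{j}p)$ all lie inside the index set over which the maximum is taken), and pull this common factor out of both the sum and the trailing term.

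The only substantive work in the proof is combinatorial bookkeeping: tracking how the pair $(\ell,p)$ transforms at each step (the level drops by one while the moment exponent doubles) and checking that the indices of the factors appearing in the expansion lie within the range of the maximum. The bound $\{\max\cdots\}^{\ell-1}$ implicitly relies on the maximum being at least $1$, which is transparent from the explicit expressions~(\ref{L_2_def}) and~(\ref{L_5}) for $L_{2}$ and $L_{5}$; otherwise the same argument produces a (strictly stronger) bound in which the telescoping factor in the $k$-th summand is $\{\max\cdots\}^{k}$ rather than $\{\max\cdots\}^{\ell-1}$.
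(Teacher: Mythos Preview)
Your proof is correct and follows exactly the approach the paper indicates (``by induction on $\ell\ge 1$ thanks to inequality~(\ref{bound_iter})''), supplying the details the paper omits: the telescoping via the substitution $R_{k}=(Q_{2^{k+1}p}^{(\ell-k)})^{1/2^{k}}$, the use of sub-additivity of $t\mapsto t^{\alpha}$, and the final majorization of each product of $L_{7}$-factors by $\{\max\cdots\}^{\ell-1}$. Your observations about the index discrepancy in $L_{4}^{(1)(\cdot)}$ (your derivation naturally gives $2^{\ell-1}p$ rather than the paper's $2^{\ell}p$) and about the implicit assumption $\max\ge 1$ are accurate and well noted; neither affects the downstream use of the lemma.
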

Using Lemma \ref{iteration_Q2} and the bound (\ref{moment_meglio}) { together with some routine estimates (omitted for the sake of brevity)}, one obtains the following Lemma.
\begin{lemma}\label{cost_schif}
Fix $p\ge 1$ integer and suppose that $C_b\ne 0$. Then
\begin{multline}\label{Q_2}
 Q_{2p}^{(\ell)}(x)
    \le M^{(\ell)}_p(x)^{\ell}\Big(\sum_{j=1}^{\ell}\frac{1}{n_j^p}\Big) \Big(1+(2C_W\|\sigma\|_{Lip}^2)^{\ell}\Big)^{2\ell p}
      (\ell + 1)^{6\ell p+16p}\cdot\\
       \Big( 4\sqrt{5\cdot 2^\ell p}+ K\frac{5\cdot 2^{\ell+1}p}{\log(5\cdot 2^{\ell+1}p)}
\Big)^{4\ell p+16p} \cdot\Bigg(9+\Big( 3\cdot 2^{4{(\ell-1)}p}p^{4p} 
+2^{12p+3}\Big)^2\Bigg)
 \cdot\\
\cdot\Bigg(41+ \Big( K\frac{5\cdot 2^{\ell+1}p}{\log(5\cdot 2^{\ell+1}p)}  \sqrt{C_W} \|\sigma\|_{\mathrm{Lip}} \mathbb{E}[(W_{1,1}^{(1)})^{5\cdot 2^{\ell+1}p}]^{1/(5\cdot 2^{\ell+1}p)} \Big)^{2(\ell - 1)}\Bigg)^{\ell}
    \end{multline}
    where
    \begin{multline*}
        M^{(\ell)}_p(x):={5^{2p}}2^{26p}(1+C_W)^{19p}\Big(\|\sigma\|_{Lip}+|\sigma(0)|+1\Big)^{36p} \left(1+\frac{\|x\|}{\sqrt{n_0}}\right)^{24p}\cdot\\
    \cdot \Big(2+C_b+\frac{1}{C_b}+\frac{1}{C_b^2}\Big)^{12p}\Big(1+E[(W_{1,1}^{(1)})^{5p\cdot 2^{\ell+1}}]^{\frac{1}{2^\ell}}\Big)^3
    \end{multline*} .
\end{lemma}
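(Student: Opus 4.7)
The plan is to combine the iterated bound from Lemma~\ref{iteration_Q2} with explicit estimates for each of the quantities $L_1^{(\ell)(p)}(x)$, $L_2^{(\ell)(p)}(x)$, $L_4^{(\ell)(p)}(x)$, and $L_5^{(p)}$ that appear inside $L_6^{(\ell)(p)(n)}(x)$ and $L_7^{(\ell)(p)}(x)$. First, I would substitute the formulas~\eqref{L_1_def}, \eqref{L_2_def}, \eqref{L_4}, and~\eqref{L_5} into the right-hand side of Lemma~\ref{iteration_Q2}, and then use the Lipschitz property of $\sigma$ (together with the trivial inequality $|\sigma(z)|\le |\sigma(0)|+\|\sigma\|_{\mathrm{Lip}}|z|$) to reduce every moment of the form $\mathbb{E}[\sigma^{q}(z_1^{(\ell)}(x))]$ to a linear combination of $|\sigma(0)|^{q}$ and $\|\sigma\|_{\mathrm{Lip}}^{q}\mathbb{E}[|z_1^{(\ell)}(x)|^{q}]$.

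Next, to obtain a control of $\mathbb{E}[|z_1^{(\ell)}(x)|^q]$ that is uniform in the widths $n_1,\dots,n_L$ and explicit in all hyper-parameters, I would invoke Proposition~\ref{mom_z_prop} with the exponent taken to be the largest power that the iteration can produce at depth $\ell$, namely $q=5\cdot 2^{\ell+1} p$. Since the assumption $C_b\neq 0$ yields $K^{(\ell)}\ge C_b>0$ for every $\ell\ge 2$ (as $K^{(\ell)} = C_b + C_W\mathbb{E}[\sigma^2(G_1^{(\ell-1)}(x))]$ by Theorem~\ref{Hanin}), each occurrence of $1/K^{(\ell)}$ or $1/(K^{(\ell)})^2$ inside~\eqref{L_1_def} and~\eqref{L_2_def} can be bounded by a constant depending only on $C_b$; this is precisely what produces the factor $(2+C_b+1/C_b+1/C_b^2)^{12p}$ in $M_p^{(\ell)}(x)$. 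Similarly, the dependence on $W$ through the terms $\mathbb{E}[(W_{1,1}^{(\ell)})^{20p}]$ inside $L_1^{(\ell)(p)}(x)$ and on $\mathbb{E}[(W_{1,1}^{(1)})^q]$ in the moment bound of Proposition~\ref{mom_z_prop} is tracked through the iteration; after repeatedly taking $2^j$-th roots as in Lemma~\ref{iteration_Q2}, these moments consolidate into the single factor $(1+\mathbb{E}[(W_{1,1}^{(1)})^{5p\cdot 2^{\ell+1}}]^{1/2^\ell})^3$ appearing in $M_p^{(\ell)}(x)$.

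With all moments bounded uniformly in the widths, the iterated bound of Lemma~\ref{iteration_Q2} becomes a sum of explicit quantities, each weighted by some inverse power of an $n_j$. Grouping these terms and using $(1/n_j^{2^kp})^{1/2^{k-1}}=1/n_j^{2p}\le 1/n_j^p$ (together with monotonicity in $\ell$), one recovers the factor $\sum_{j=1}^\ell 1/n_j^p$ in the statement. The remaining multiplicative factors then follow from routine bookkeeping: the factor $(\ell+1)^{6\ell p+16p}$ absorbs the polynomial prefactors in $\ell$ and $p$ coming from~\eqref{moment_meglio} and from the Rosenthal-type constant $p/\log p$ in Lemma~\ref{mom_p_iid0}, while the factor $(1+(2C_W\|\sigma\|_{\mathrm{Lip}}^2)^\ell)^{2\ell p}$ captures the geometric growth of the iterated moments across the $\ell$ layers.

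The main obstacle is the bookkeeping: the exponents on $W$ double at each iteration step in Lemma~\ref{iteration_Q2}, so care is required to ensure that every contribution to the bound is uniformly dominated by the single expression for $M_p^{(\ell)}(x)$ stated in the lemma. In particular, one must verify that the various pieces produced by the maximum $\max_{k,j}(L_7^{(k)(2^jp)}(x))^{1/2^j}$ and the sum over $k$ in Lemma~\ref{iteration_Q2} can all be majorized by a common product of explicit powers of $C_W$, $C_b$, $\|\sigma\|_{\mathrm{Lip}}$, $|\sigma(0)|$, $\|x\|/\sqrt{n_0}$, and moments of $W$. Beyond that, the remainder of the argument consists of combining polynomial and exponential factors in $p$ and $\ell$ using standard Young- and Jensen-type inequalities, which I would only sketch in the final write-up.
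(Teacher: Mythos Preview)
Your proposal is correct and follows essentially the same approach as the paper: the paper simply states that the lemma follows from Lemma~\ref{iteration_Q2} and the moment bound~\eqref{moment_meglio} ``together with some routine estimates (omitted for the sake of brevity)'', and your outline is precisely a sketch of what those routine estimates consist of. The only minor imprecision is the line $(1/n_j^{2^kp})^{1/2^{k-1}}=1/n_j^{2p}$: in Lemma~\ref{iteration_Q2} the outer root for the $L_6$ term indexed by $k$ is $1/2^k$, not $1/2^{k-1}$, so one obtains $1/n_j^{p}$ directly rather than $1/n_j^{2p}$, but this does not affect the argument.
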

\begin{remark}
In the bound of the previous lemma one could obtain much sharper constants but with longer calculations. The purpose of this result is just to show how the parameters  $\{\ell,n_0,n_1,\dots,n_\ell\}$ and the choice of the data $x$ can affect the convergence to the Gaussian law.
\end{remark}

\subsection{Final result}
Thanks to Lemma \ref{cost_schif} and to the bound (\ref{moment_meglio}), Theorem \ref{Kdist} and Theorem \ref{Wdist} read respectively as follows.
\begin{theorem}\label{finale_uno}
Let \( L \ge 1 \) be an integer, and suppose \( C_b, C_W \neq 0 \), and \( x \in \mathbb{R}^{n_0} \). Let \( z_1^{(L+1)}(x) \), \( \sigma \), and \( W \) be defined as in Definition~\ref{def_NN}. If \( \mathbb{E}[W^{5\cdot 2^{L+1}}] < \infty \), then the following bound holds:
\begin{multline*}
    \max\Big\{d_K(z_{1}^{(L+1)}(x),G_{1}^{(L+1)}(x)),W_1(z_{1}^{(L+1)}(x),G_{1}^{(L+1)}(x))\Big\}
    \le 7\cdot 2^{4L+16}\Big(1+\frac{1}{C_b}\Big)\cdot\\
\cdot
\left({M^{(L)}_1(x)}\right)^{\frac{L+1}{2}} \Big(\sum_{j=1}^{L}\frac{1}{n_j}\Big)^{1/2} \Big(1+(2C_W\|\sigma\|_{Lip}^2)^{L}\Big)^{L+1}
      \Big( 4\sqrt{5\cdot 2^{L} }+ K\frac{5\cdot 2^{L+1}}{\log(5\cdot 2^{L+1})}\Big)^{2L+10}\cdot\\
        \cdot 
\ell^{3L+11}\Bigg(8+ \Big( 6K\frac{5\cdot 2^{L+1}}{\log(5\cdot 2^{L+1})}  \sqrt{C_W} \|\sigma\|_{\mathrm{Lip}} \mathbb{E}[(W_{1,1}^{(1)})^{5\cdot 2^{L+1}}]^{1/(5\cdot 2^{L+1})} \Big)^{L}\Bigg)^{\frac{L}{2}+3} ,
\end{multline*}
where the constant $K>0$ is defined in Lemma \ref{mom_p_iid0} and the quantity $M^{(L)}_1(x)$ is defined in Lemma \ref{cost_schif} and depends on \[
C_W,C_b,\|\sigma\|_{\text{Lip}},|\sigma(0)|,\|x\|, n_0, \mathbb{E}[(W_{1,1}^{(1)})^{5\cdot 2^{L+1}}]^{\frac{1}{2^{L}}}.
\]

\end{theorem}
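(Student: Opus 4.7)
The theorem is a clean packaging of three ingredients already established in the paper: the distance bounds of Theorem~\ref{Kdist} (for $d_K$) and Theorem~\ref{Wdist} (for $W_1$), the moment estimate of Proposition~\ref{mom_z_prop}, and the estimate on $Q_{2p}^{(\ell)}(x)$ of Lemma~\ref{cost_schif}. The plan is to substitute the bound of Lemma~\ref{cost_schif} (with $p=1$, $\ell=L$) into both Theorem~\ref{Kdist} and Theorem~\ref{Wdist}, control the residual moment terms $\mathbb{E}[|\sigma(z_1^{(L)}(x))|^{q}]$ for $q\in\{3,4,6\}$ via Proposition~\ref{mom_z_prop}, and collect all the resulting constants into the single numerical prefactor stated in the theorem. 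Both distance bounds have the same qualitative form $A/\sqrt{n_L}+B\sqrt{Q_2^{(L)}(x)}$ with $A,B$ explicit in $C_b,C_W,K^{(L+1)}$ and in the moments of $\sigma(z_1^{(L)}(x))$, so taking the maximum over $d_K$ and $W_1$ is immediate.

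To handle the moment term I would use the Lipschitz bound $|\sigma(y)|\le|\sigma(0)|+\|\sigma\|_{\mathrm{Lip}}|y|$ together with Proposition~\ref{mom_z_prop} to estimate $\mathbb{E}[|\sigma(z_1^{(L)}(x))|^{6}]$ by an explicit function of $\{C_W,C_b,\|\sigma\|_{\mathrm{Lip}},|\sigma(0)|,\|x\|,n_0,\mathbb{E}[W^{6}],L\}$ that is independent of the inner widths $n_1,\dots,n_L$. The assumption $\mathbb{E}[W^{5\cdot 2^{L+1}}]<\infty$ is clearly strong enough. The constants $A,B$ in Theorems~\ref{Kdist}--\ref{Wdist} contain inverse powers of $K^{(L+1)}$, but the recursion in Theorem~\ref{Hanin} gives $K^{(L+1)}\ge C_b>0$, so one can bound $1/K^{(L+1)}\le 1/C_b$. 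This is the only step where the hypothesis $C_b\ne 0$ enters, and it is precisely what produces the prefactor $(1+1/C_b)$ in the final statement.

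For the $\sqrt{Q_2^{(L)}(x)}$ contribution I would apply Lemma~\ref{cost_schif} with $p=1$, $\ell=L$ and take the square root of~\eqref{Q_2}. This yields a bound of the form $M^{(L)}_1(x)^{L/2}\bigl(\sum_{j=1}^{L}1/n_j\bigr)^{1/2}$ multiplied by the $L$-dependent factors in~\eqref{Q_2} with exponents halved, which matches exactly the factors in the theorem statement. The residual $1/\sqrt{n_L}$ contribution from Theorems~\ref{Kdist}--\ref{Wdist} is then dominated by $\bigl(\sum_{j=1}^{L}1/n_j\bigr)^{1/2}$, and the moment factor attached to it is absorbed into an extra $M^{(L)}_1(x)^{1/2}$, which accounts for the exponent $(L+1)/2$ (rather than $L/2$) of $M^{(L)}_1(x)$ in the statement.

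The hard part is not conceptual but bookkeeping: one must verify that the collection of multiplicative constants from Theorems~\ref{Kdist} and~\ref{Wdist} (involving $\sqrt{C_W/K^{(L+1)}}$, $\sqrt{C_b/K^{(L+1)}}$, the factors $\sqrt{2\pi}/4$ and $5/2$, etc.) together with those from Lemma~\ref{cost_schif} can be uniformly dominated by the single numerical factor $7\cdot 2^{4L+16}$ and by the $L$-polynomial factor $L^{3L+11}$ in the statement. This requires taking a moderate number of maxima, using $\mathbb{E}[|W|^{q}]\le\mathbb{E}[W^{5\cdot 2^{L+1}}]^{q/(5\cdot 2^{L+1})}$ for $q\in\{3,4,6\}$ to re-express every lower-order moment of $W$ in terms of $\mathbb{E}[W^{5\cdot 2^{L+1}}]^{1/(5\cdot 2^{L+1})}$, and absorbing subdominant contributions into the numerical constant. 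No new mathematical idea is required beyond the estimates already proved.
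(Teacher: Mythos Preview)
Your proposal is correct and follows exactly the same route as the paper: the paper introduces Theorem~\ref{finale_uno} with the sentence ``Thanks to Lemma~\ref{cost_schif} and to the bound~(\ref{moment_meglio}), Theorem~\ref{Kdist} and Theorem~\ref{Wdist} read respectively as follows,'' so the proof is precisely the assembly of Theorems~\ref{Kdist}--\ref{Wdist}, Proposition~\ref{mom_z_prop}, and Lemma~\ref{cost_schif} together with $K^{(L+1)}\ge C_b$, just as you describe.
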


\subsection{Examples with Gaussian initialization}\label{Gaus_example}
{
In this section we study the special case of {Gaussian initialization} under specific activation functions.  
In this setting the computations simplify substantially, which allows us to obtain sharper bounds on the dependence on the depth $L$ and on the inner widths $n_1,\dots,n_L$.  
Moreover, the Gaussian assumption on the weights allows one to deduce a direct comparison with the optimal results of \cite{FHMNP} for the Wasserstein and total variation distances in the single–input case, and with \cite{CP25} for the total variation distance in the multi–input case.  
As will be shown in the following examples, although we lose optimality with respect to the inner width $n$, we gain fully explicit constants and the ability to analyze the joint asymptotic regime
\[
L,n \to \infty, \qquad \frac{L}{n} \to 0 .
\]

}
\begin{example}{\bf (Perceptron activation function)}\label{es_1_g_RL}
Assume that the random variable $W$ introduced in Definition~\ref{def_NN} is distributed as $W \sim \mathcal{N}_1(0,1)$ and that the activation function is the indicator $\sigma(x) := \mathbf{1}_{\{x \ge 0\}}$. 
In this case the limiting covariance matrix in $K^{(L+1)}=C_b+\frac{C_W}{2}\ne 0$ for every $L\ge 1$ since $C_W$ must be always not zero. Because of this observation, we know that the neural network is not going to converge to a degenerate Gaussian law (i.e. $G^{(L+1)}=0$) and hence we do not have to worry about normalizing the neural network to compute the Wasserstein distance (see Remark \ref{rem_Kne0imp}).
For every $\ell=1,\dots,L$ and $x\in\mathbb{R}^{n_0}$ one obtains the bound
\[
Q_2^{(\ell)}(x) \le 11\left(\frac{C_W}{2}\right)^2 \frac{1}{n_\ell}.
\]
This can be seen by starting from the bound~\eqref{bound_Q_2_postlemma} with $p=1$ and observing that
\[
\mathbb{E}\left[\left(\sigma^2(z_1^{(\ell)}(x)) - \mathbb{E}[\sigma^2(G_1^{(\ell)}(x))]\right)^2\right] = \frac{1}{4},
\]
and
\[
\mathbb{E}\left[\prod_{j=1}^{2}\left(\mathbb{E}[\sigma^2(z_j^{(\ell)}(x)) \mid \mathcal{F}_{\ell-1}] - \mathbb{E}[\sigma^2(G_j^{(\ell)}(x))]\right)\right] = 0,
\]
since $z_1^{(\ell)}(x)$ is conditionally Gaussian with zero mean and independent components (see, e.g., Lemma 7.1 in~\cite{Han_Gas}).
Thanks to Theorems~\ref{Kdist} and~\ref{Wdist}, we then obtain the following explicit bounds:
\begin{multline*}
d_K(z_1^{(L+1)}(x), G_1^{(L+1)}(x)) \\
\le \frac{8C_W}{\sqrt{n_{L}}(C_b + \frac{C_W}{2})} \left( \frac{2\sqrt{C_W}}{\sqrt{C_b + \frac{C_W}{2}}} + 2\sqrt{C_W}\left(\frac{\sqrt{C_W}}{\sqrt{C_b + \frac{C_W}{2}}} + \frac{\sqrt{2\pi}}{4}\right) + \frac{7}{2} \right),
\end{multline*}
and
\[
W_1(z_1^{(L+1)}(x), G_1^{(L+1)}(x)) \le \frac{6C_W}{\sqrt{n_{L}}\sqrt{C_b + \frac{C_W}{2}}} \left( \frac{3\sqrt{C_W}}{\sqrt{C_b + \frac{C_W}{2}}} + \frac{1}{\sqrt{2\pi}} \right).
\]
It follows that the convergence in law of the neural network output to a Gaussian limit holds as $n_{L} \to \infty$, even when $L \to \infty$.
\end{example}

\begin{example}{\bf (Linear activation function)}\label{ex_id_G_1d}
Assume that the random variable $W$ introduced in Definition~\ref{def_NN} is distributed as $W \sim \mathcal{N}_1(0,1)$ and that the activation function is the identity, i.e., $\sigma(x) := x$.
In this case for any $\ell\ge 1$
\[
K^{(\ell)}=C_b\sum_{k=0}^{\ell-1}C_W^k+\frac{C_W^{\ell}}{n_0}\|x\|^2
\]
and therefore we have that if $C_W<1$ then $K^{(\ell)}\to 0$ when $C_b=0$.
Using the definition~\eqref{Q_k} with $p=2$ and a recursive argument, one can show that for every $\ell = 1, \dots, L$ and $x \in \mathbb{R}^{n_0}$,
\[
Q_2^{(\ell)}(x) \le \sum_{k=2}^{\ell} \frac{C^{(k)}}{n_k} C_W^{2(\ell+1-k)} + C_W^{2(\ell-1)} Q_2^{(1)},
\]
where
\begin{multline*}
C^{(k)} = \mathbb{E}\left[\left((z_1^{(k)}(x))^2 - \mathbb{E}[(G_1^{(k)}(x))^2]\right)^2\right]
\le 2\mathbb{E}[(z_1^{(k)}(x))^4] + 2\mathbb{E}[(G_1^{(k)}(x))^2]^2 \\
= 6\,\mathbb{E}[(z_1^{(k)}(x))^2]^2 + 2\,\mathbb{E}[(G_1^{(k)}(x))^2]^2 
= 8\left(C_b \sum_{r=0}^{k-2} C_W^r + C_W^{k-1} \mathbb{E}[(z_1^{(1)}(x))^2]\right)^2,
\end{multline*}
where we used the conditional Gaussianity of the neural network as in Example~\ref{es_1_g_RL}, along with another inductive argument.
Assuming now the critical initialization $C_b = 0$ and $C_W = 1$  (see Section \ref{subsec_crit}), we do not have a degenerate limit for $L\to\infty$ and hence we do not need to normalize by the standard deviation of the Gaussian limit as in the previous example (see Remark \ref{rem_Kne0imp}). Applying Theorems~\ref{Kdist} and~\ref{Wdist}, we obtain the following bounds:
\begin{equation}\label{dKid}
d_K(z_1^{(L+1)}(x), G_1^{(L+1)}(x)) \le 2\sqrt{15} \left( \frac{2n_0}{\|x\|^2} + \frac{5\sqrt{n_0}}{\|x\|} + \sqrt{15}\|x\|^2 + 1 \right)^2 \sqrt{\sum_{\ell=1}^{L} \frac{1}{n_\ell}},
\end{equation}
and
\begin{equation}\label{dWid}
W_1(z_1^{(L+1)}(x), G_1^{(L+1)}(x)) \le 4\left(1 + \frac{\sqrt{n_0}}{\|x\|} + 3\|x\|^2\right) \left(4\frac{\sqrt{n_0}}{\|x\|} + \frac{\|x\|}{\sqrt{n_0}} + 1\right) \sqrt{\sum_{\ell=1}^{L} \frac{1}{n_\ell}}.
\end{equation}
These follow from the estimates:
\[
Q_2^{(1)}(x) \le \frac{3\|x\|^4}{n_1 n_0^2}, \qquad \mathbb{E}[(G_1^{(1)}(x))^2] = \mathbb{E}[(z_1^{(1)}(x))^2] = \frac{\|x\|^2}{n_0}.
\]
From inequalities~\eqref{dKid} and~\eqref{dWid}, it follows that if $n_1 , \dots , n_L \asymp n$ and $\frac{L}{n} \to 0$, then $z_1^{(L+1)}(x)$ converges in law to $G_1^{(L+1)}(x)$, partially recovering Proposition~3.1 in~\cite{BLR24} as a special case.
\end{example}

\begin{example}{\bf (ReLU activation function)}\label{ex_relu_1d}
Assume the random variable $W$ introduced in Definition~\ref{def_NN} is distributed as $W \sim \mathcal{N}_1(0,1)$ and the activation function is the ReLU, i.e., $\sigma(x) := x\mathbf{1}_{\{x \ge 0\}}$. Then, for every integer $p \ge 1$, $\ell=1,\dots,L$ and $x\in\mathbb{R}^{n_0}$ one has
\[
\mathbb{E}[(G_1^{(\ell)}(x))^{2p}] = \frac{1}{2} \mathbb{E}[(G_1^{(\ell)}(x))^{2p}], \qquad 
\mathbb{E}[(z_1^{(\ell)}(x))^{2p}] = \frac{1}{2} \mathbb{E}[(z_1^{(\ell)}(x))^{2p}].
\]
Moreover for every $\ell\ge 2$
\[
K^{(\ell)}=C_b\sum_{k=0}^{\ell-1}\left(\frac{C_W}{2}\right)^k+\left(\frac{C_W}{2}\right)^{\ell}\frac{\|x\|^2}{n_0},
\]
which means that if $C_b=0$ and $C_W<2$ then $K^{(\ell)}\to 0$ as $\ell\to\infty$.
Applying this observation to inequality~\eqref{bound_Q_2_postlemma}, it is possible to show that
\[
Q_2^{(\ell)}(x) \le \frac{96 C_W^2}{n_\ell} \left( \mathbb{E}[(z_1^{(\ell)}(x))^2]^2 + \mathbb{E}[(G_1^{(\ell)}(x))^2]^2 \right) + \frac{C_W^2}{4} Q_2^{(\ell-1)}(x),
\]
and hence, iteratively,
\[
Q_2^{(\ell)}(x) \le 96 C_W^2 \sum_{k=2}^{\ell} \frac{C_W^{2(\ell-k)}}{2^{2(\ell-k)} n_k} \left( \mathbb{E}[(z_1^{(k)}(x))^2]^2 + \mathbb{E}[(G_1^{(k)}(x))^2]^2 \right) + \left( \frac{C_W}{2} \right)^{2(\ell-1)} Q_2^{(1)}(x).
\]
Moreover, by induction, one has
\[
\mathbb{E}[(z_1^{(\ell)}(x))^2],\ \mathbb{E}[(G_1^{(\ell)}(x))^2] = C_b \sum_{k=0}^{\ell-2} \left( \frac{C_W}{2} \right)^k + \left( \frac{C_W}{2} \right)^{\ell-1} \left( C_b + C_W \frac{\|x\|^2}{n_0} \right).
\]
To simplify computations, let us assume $C_b = 0$. Then
\[
Q_2^{(\ell)}(x) \le 192 \left( \frac{C_W}{2} \right)^{2(\ell+1)} \left( 1 + \left( \frac{C_W}{2} \right)^{2(\ell-1)} \right) \frac{\|x\|^4}{n_0^2} \sum_{k=1}^{\ell} \frac{1}{n_k},
\]
using that in general
\[
Q_2^{(1)}(x) \le \frac{3C_W^2}{2n_1} \left( C_b + C_W \frac{\|x\|^2}{n_0} \right)^2.
\]
Now, using inequalities~\eqref{tv_1dim_primaTH} and~\eqref{W1_1dim_primaTH} (since Theorems~\ref{Kdist} and~\ref{Wdist} are suboptimal in this case), and normalizing the random vectors because of Remark \ref{rem_Kne0imp}, we obtain the following bounds for the Kolmogorov and Wasserstein distances:
\begin{multline*}
d_{K}\left(\frac{z_1^{(L+1)}(x)}{\sqrt{K^{(L+1)}}}, \frac{G_1^{(L+1)}(x)}{\sqrt{K^{(L+1)}}}\right)=d_{K}\left(z_1^{(L+1)}(x),G_1^{(L+1)}(x)\right)\\
\le 3\sqrt{5} \sqrt{ \sum_{\ell=1}^{L} \frac{1}{n_\ell} } \left( 7 + 2 \left( \frac{C_W}{2} \right)^{L-1} + \sqrt{5\pi} \frac{\|x\|}{\sqrt{n_0}} \left( \frac{C_W}{2} \right)^{\frac{L+1}{2}} \right),
\end{multline*}
and
\begin{multline*}
W_1\left(\frac{z_1^{(L+1)}(x)}{\sqrt{K^{(L+1)}}}, \frac{G_1^{(L+1)}(x)}{\sqrt{K^{(L+1)}}}\right) =\frac{1}{\sqrt{K^{(L+1)}}}d_{K}\left(z_1^{(L+1)}(x),G_1^{(L+1)}(x)\right)\\
\le 8  \left( 1 + \left( \frac{C_W}{2} \right)^{L-1} \right) \sqrt{ \sum_{\ell=1}^{L} \frac{1}{n_\ell} }.
\end{multline*}
These results allow us to study the convergence in distribution of the neural network output to a Gaussian distribution as both the width and depth of the network increase. For instance, assuming the critical initialization \( C_b=0, C_W = 2 \) (see Section \ref{subsec_crit}) and assuming the hidden layer widths satisfy \( n_1, \dots, n_{L} \asymp n \), convergence holds provided that
\[
\frac{L}{n} \to 0.
\]
In particular, the following bounds hold:
\[
d_{K}\left(\frac{z_1^{(L+1)}(x)}{\sqrt{K^{(L+1)}}}, \frac{G_1^{(L+1)}(x)}{\sqrt{K^{(L+1)}}}\right), W_1\left(\frac{z_1^{(L+1)}(x)}{\sqrt{K^{(L+1)}}}, \frac{G_1^{(L+1)}(x)}{\sqrt{K^{(L+1)}}}\right)= O\left( \sqrt{ \frac{L}{n} } \right).
\]

\end{example}

\subsection{A special case: Lipschitz and bounded activation function}\label{lip_bound_s_sec}
In this section, we assume that the activation function $\sigma$ satisfies the following regularity conditions: for all $x, y \in \mathbb{R}$, there exist constants $\|\sigma\|_{\infty}, \|\sigma\|_{\text{Lip}} < \infty$ such that
\begin{equation} \label{lip_bound_s}
|\sigma(x)| \le \|\sigma\|_{\infty} \quad \text{and} \quad |\sigma(x) - \sigma(y)| \le \|\sigma\|_{\text{Lip}} |x - y|.
\end{equation}
Examples of activation functions satisfying this assumption include the logistic sigmoid and the hyperbolic tangent, which are both commonly used in practice.

\smallskip

Using inequality \eqref{bound_Q_2_postlemma}, we obtain the following bound for every $\ell=1,\dots,L$ and $x\in\mathbb{R}^{n_0}$:
\begin{multline} \label{bound_Q2_lip_bound}
Q_2^{(\ell)}(x) \le \frac{C_W^2}{n_\ell} \mathbb{E} \left[ \left( \sigma^2(z_1^{(\ell)}(x)) - \mathbb{E}[\sigma^2(G_1^{(\ell)}(x))] \right)^2 \right] \\
+ C_W^2 \mathbb{E} \left[ \left( \mathbb{E}[\sigma^2(z_1^{(\ell)}(x)) \mid \mathcal{F}_{\ell-1}] - \mathbb{E}[\sigma^2(G_1^{(\ell)}(x))] \right)^2 \right] \\
\le \frac{4C_W^2 \|\sigma\|_{\infty}^4}{n_\ell} + C_W^2 \mathbb{E} \left[ W_1(z_1^{(\ell)}(x) \mid \mathcal{F}_{\ell-1}, G_1^{(\ell)}(x))^2 \right],
\end{multline}
where $\mathcal{F}_{\ell-1}$ denotes the $\sigma$-algebra generated by the weights and biases up to layer $\ell-1$ and $z_1^{(\ell)}(x) \mid \mathcal{F}_{\ell-1}$ denotes the conditional law of the neuron output. Under assumption \eqref{lip_bound_s}, the function $\sigma^2$ is Lipschitz, which justifies the use of the Wasserstein distance in the last term.

Applying Theorem~\ref{th_wass_orig}, we obtain the bound:
\begin{multline*}
W_1(z_1^{(\ell)}(x) \mid \mathcal{F}_{L}, G_1^{(\ell)}(x)) \\
\le \sqrt{\frac{2}{K^{(\ell)} \pi}} \mathbb{E} \left[ \left( K^{(\ell)} - C_b - \frac{1}{2} \sum_{j=1}^{n_{\ell-1}} \frac{C_W}{n_{\ell-1}} \mathbb{E} \left[ (W_{1,j}^{(\ell)} - W_{1,j}^{\prime(\ell)})^2 \mid W_{1,j}^{(\ell)} \right] \sigma^2(z_j^{(\ell-1)}(x)) \right)^2 \Bigg| \mathcal{F}_{\ell-1} \right]^{1/2} \\
+ \frac{1}{2K^{(\ell)}} \sum_{j=1}^{n_{\ell-1}} \mathbb{E} \left[ \left| \frac{\sqrt{C_W}}{\sqrt{n_{\ell-1}}} (W_{1,j}^{(\ell)} - W_{1,j}^{\prime(\ell)}) \sigma^2(z_j^{(\ell-1)}(x)) \right|^3 \Big| \mathcal{F}_{\ell-1} \right].
\end{multline*}
This can be bounded further using uniform bounds on $\sigma$:
\begin{multline*}
W_1(z_1^{(\ell)}(x) \mid \mathcal{F}_{L}, G_1^{(\ell)}(x))\le \sqrt{\frac{2}{K^{(\ell)} \pi}} \left( K^{(\ell)} - C_b - \frac{1}{2} \sum_{j=1}^{n_{\ell-1}} \frac{C_W}{n_{\ell-1}} \sigma^2(z_j^{(\ell-1)}(x)) \right)\\
+ \frac{4 \|\sigma\|_{\infty}^6 C_W^{3/2}}{K^{(\ell)} \sqrt{n_{\ell-1}}} \mathbb{E}[|W_{1,j}^{(\ell)}|^3] 
+ \sqrt{\frac{2}{K^{(\ell)} \pi}} \cdot \frac{\|\sigma\|_{\infty}^2}{2} \cdot \frac{C_W}{\sqrt{n_{\ell-1}}} \mathbb{E}[(W_{1,1}^{(\ell)})^4]^{1/2}.
\end{multline*}
Taking the square and expectation of both sides, we obtain:
\begin{multline*}
\mathbb{E} \left[ W_1(z_1^{(\ell)}(x) \mid \mathcal{F}_{\ell-1}, G_1^{(\ell)}(x))^2 \right] 
\le \frac{6}{K^{(\ell)} \pi} \mathbb{E} \left[ \left( K^{(\ell)} - C_b - \frac{1}{2} \sum_{j=1}^{n_{\ell-1}} \frac{C_W}{n_{\ell-1}} \sigma^2(z_j^{(\ell-1)}(x)) \right)^2 \right] \\
+ \frac{48 \|\sigma\|_{\infty}^{12} C_W^3}{(K^{(\ell)})^2 n_{\ell-1}} \mathbb{E}[|W_{1,j}^{(\ell)}|^3]^2
+ \frac{3}{K^{(\ell)} \pi} \cdot \frac{\|\sigma\|_{\infty}^4 C_W^2}{2 n_{\ell-1}} \mathbb{E}[(W_{1,1}^{(\ell)})^4].
\end{multline*}
Note that the first expectation on the right-hand side corresponds to $Q_2^{(\ell-1)}(x)$. Hence,
\begin{multline*}
\mathbb{E} \left[ W_1(z_1^{(\ell)}(x) \mid \mathcal{F}_{\ell-1}, G_1^{(\ell)}(x))^2 \right]
\le \frac{6}{K^{(\ell)} \pi} Q_2^{(\ell-1)}(x)
+ \frac{48 \|\sigma\|_{\infty}^{12} C_W^3}{(K^{(\ell)})^2 n_{\ell-1}} \mathbb{E}[|W_{1,j}^{(\ell)}|^3]^2\\
+ \frac{3}{K^{(\ell)} \pi} \cdot \frac{\|\sigma\|_{\infty}^4 C_W^2}{2 n_{\ell-1}} \mathbb{E}[(W_{1,1}^{(\ell)})^4].
\end{multline*}

\smallskip

Inserting this into inequality \eqref{bound_Q2_lip_bound} and iterating, we obtain the recursive bound
\begin{multline}\label{Q_2_lip_bound}
Q_2^{(\ell)}(x) \le \sum_{k=2}^{\ell} \frac{C_W^2 \|\sigma\|_{\infty}^4}{n_{k-1}} \left( 4 + \frac{48 \|\sigma\|_{\infty}^8 C_W^3 \mathbb{E}[(W_{1,1}^{(1)})^3]^2}{(K^{(k)})^2} + \frac{3 C_W^2 \mathbb{E}[(W_{1,1}^{(1)})^4]}{2\pi K^{(k)}} \right) \prod_{j=k+1}^{\ell} \frac{6C_W}{\pi K^{(j)}} \\
+ \frac{4C_W^2 \|\sigma\|_{\infty}^4}{n_1} \prod_{k=2}^{\ell} \frac{6C_W}{\pi K^{(k)}}.
\end{multline}

Finally, applying Theorems~\ref{Kdist} and~\ref{Wdist}, we derive the following bound for the Kolmogorov and Wasserstein distances:
\begin{multline*}
\max \left\{ d_K(z_1^{(L+1)}(x), G_1^{(L+1)}(x)), \, W_1(z_1^{(L+1)}(x), G_1^{(L+1)}(x)) \right\} \le
\frac{C_W}{\sqrt{n_{L}}} \left(1 + \frac{1}{K^{(L+1)}} \right) \cdot \\
\cdot \left( 4 \sqrt{\frac{C_W}{K^{(L+1)}}}
+ 2 \sqrt{C_W} \left( \sqrt{\frac{C_b}{K^{(L+1)}}} + \frac{\sqrt{2\pi}}{4} \right) + \frac{5}{2} \right)
\left( \mathbb{E}[\sigma^6(z_1^{(L)}(x))]^{1/2} + 1 \right)
\mathbb{E}[(W_{1,1}^{(1)})^6]^{1/2} \\
+ \left(1 + \frac{1}{K^{(L+1)}} \right) \sqrt{Q_2^{(L)}(x)}.
\end{multline*}

Using the boundedness of the activation function and inequality \eqref{Q_2_lip_bound}, we further estimate the above as:
\begin{multline*}
\max \left\{ d_K(z_1^{(L+1)}(x), G_1^{(L+1)}(x)), \, W_1(z_1^{(L+1)}(x), G_1^{(L+1)}(x)) \right\}\\
\le
\frac{C_W(1+\sqrt{C_W})}{\sqrt{n_{L}}}\left(2+\frac{1}{K^{(L+1)}}\right)^2\left(8+2\sqrt{{C_b}}\right)\left(\|\sigma\|_{\infty}^3+1\right)\mathbb{E}[(W_{1,1}^{(1)})^6]^{1/2}\\
+\left(1+\frac{1}{K^{(L+1)}}\right)\left(\sum_{k=2}^{L+1}\frac{C_W^2\|\sigma\|_{\infty}^4}{{n_{k-1}}}\left(4+\frac{48\|\sigma\|_{\infty}^{8}C_W^{3}\mathbb{E}[(W_{1,1}^{(1)})^3]^2}{(K^{(k)})^2}+\frac{6\pi C_W^2 \mathbb{E}[(W_{1,1}^{(1)})^4]}{4\pi^2 K^{(k)}}\right)\cdot\right.\\
\left.\cdot \prod_{j=k+1}^{L+1}\frac{6C_W}{\pi K^{(j)}}\right)^{1/2}
+\left(1+\frac{1}{K^{(L+1)}}\right)\frac{2C_W\|\sigma\|_{\infty}^2}{\sqrt{n_1}}\left(\prod_{k=2}^{L+1}\frac{6C_W}{\pi K^{(k)}}\right)^{1/2}.
\end{multline*}

We summarize this estimate in the following proposition:

\begin{prop}
Assume that \( C_b \neq 0 \), \( \mathbb{E}[(W_{1,1}^{(1)})^6] < \infty \), and that the activation function \( \sigma \) satisfies the boundedness condition in~\eqref{lip_bound_s}. Then the following bound holds:
\begin{multline*}
\max\left\{d_K(z_{1}^{(L+1)}(x),G_{1}^{(L+1)}(x)),\, W_1(z_{1}^{(L+1)}(x),G_{1}^{(L+1)}(x))\right\}\\
\le
\frac{C_W(1+\sqrt{C_W})}{\sqrt{n_{L}}}\left(2+\frac{1}{C_b}\right)^2\left(8+2\sqrt{C_b}\right)\left(\|\sigma\|_{\infty}^3+1\right)\mathbb{E}[(W_{1,1}^{(1)})^6]^{1/2}\\
+\left(1+\frac{1}{C_b}\right)\left(2+\frac{4\sqrt{3}\|\sigma\|_{\infty}^{4}C_W^{3/2}\mathbb{E}[(W_{1,1}^{(1)})^3]}{C_b}+\frac{\sqrt{6\pi}C_W\mathbb{E}[(W_{1,1}^{(1)})^4]^{1/2}}{2\pi\sqrt{C_b}}\right) \cdot\\
\cdot 
\left(\sum_{k=2}^{L+1}\frac{C_W^2\|\sigma\|_{\infty}^4}{{n_{k-1}}}\left(\frac{6C_W}{\pi C_b}\right)^{L+1-k}\right)^{1/2}
+\left(1+\frac{1}{C_b}\right)\frac{2C_W\|\sigma\|_{\infty}^2}{\sqrt{n_1}}\left(\frac{6C_W}{\pi C_b}\right)^{\frac{L}{2}}.
\end{multline*}
\end{prop}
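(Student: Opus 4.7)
The proposition is essentially a clean restatement of the intermediate estimate derived just above it, obtained by plugging in a uniform lower bound on the variance sequence $\{K^{(\ell)}\}_{\ell \ge 2}$. The whole analytical content (the recursion on $Q_2^{(\ell)}(x)$, the application of Theorem~\ref{th_wass_orig} conditionally on $\mathcal{F}_{\ell-1}$, and the final appeal to Theorems~\ref{Kdist} and~\ref{Wdist}) has already been carried out in the text leading to the statement. Therefore the goal is to assemble these pieces and replace all the $K^{(\ell)}$'s appearing in denominators by the cleaner constant $C_b$.

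First I would record the elementary observation that, for every $\ell \ge 2$, the recursion in Theorem~\ref{Hanin} gives
\[
K^{(\ell)} \;=\; C_b + C_W\,\mathbb{E}\!\left[\sigma^2\!\left(G_{1}^{(\ell-1)}(x)\right)\right] \;\ge\; C_b,
\]
since $C_W > 0$ and $\sigma^2 \ge 0$; the same inequality holds trivially for $K^{(2)}$. Consequently, $1/K^{(\ell)} \le 1/C_b$ and $\sqrt{C_b/K^{(\ell)}} \le 1$ throughout. Under the boundedness condition~\eqref{lip_bound_s}, I would also use $\mathbb{E}[|\sigma(z_{1}^{(L)}(x))|^6]^{1/2} \le \|\sigma\|_\infty^3$, which in particular drops every reference to the moments of the hidden pre-activations provided by Proposition~\ref{mom_z_prop}.

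Next I would start from the final displayed inequality of Section~\ref{lip_bound_s_sec} (the one just before the proposition), which has three summands: (i) a $1/\sqrt{n_L}$ boundary term coming from Theorems~\ref{Kdist} and~\ref{Wdist}; (ii) a term of the form $(1 + 1/K^{(L+1)})\sqrt{Q_{2}^{(L)}(x)}$ where $Q_{2}^{(L)}(x)$ is estimated via the recursive bound~\eqref{Q_2_lip_bound}; and (iii) the boundary contribution $\propto 1/\sqrt{n_1}$ obtained from iterating the recursion down to $\ell = 1$. Substituting $K^{(\ell)} \ge C_b$ in each occurrence, replacing $(2 + 1/K^{(L+1)})$ by $(2 + 1/C_b)$, $6C_W/(\pi K^{(j)})$ by $6C_W/(\pi C_b)$, and collecting the geometric factors $\prod_{j=k+1}^{L+1} 6C_W/(\pi C_b) = (6C_W/(\pi C_b))^{L+1-k}$ yields exactly the three summands in the proposition. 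The constants $(8 + 2\sqrt{C_b})$, $4\sqrt{3}$, $\sqrt{6\pi}/(2\pi)$ etc.\ are already present in the intermediate expression and simply need to be transcribed.

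\textbf{Main obstacle.} There is no genuine analytical obstacle here: the heavy lifting — Stein's method for $\sigma^2$, the Gaussian integration by parts on the bias, and the recursion on $Q_{2}^{(\ell)}$ — has already been done. The only thing to be careful about is bookkeeping: ensuring that the same Cauchy--Schwarz / $L^p$-triangle step used to pass from the conditional Wasserstein bound to $\mathbb{E}[W_1(\cdot \mid \mathcal{F}_{\ell-1}, \cdot)^2]$ is applied consistently, and that the iteration in~\eqref{Q_2_lip_bound} is indexed correctly so that the geometric factor in the final proposition carries exponent $L+1-k$ (with the boundary case $k=L+1$ contributing $1/n_L$ without any geometric weight and the boundary case $\ell=1$ giving the $(6C_W/(\pi C_b))^{L/2}/\sqrt{n_1}$ term). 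Modulo these routine checks, the proposition follows from the preceding derivation.
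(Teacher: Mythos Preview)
Your proposal is correct and matches the paper's approach exactly: the proposition is obtained from the displayed inequality immediately preceding it by substituting the lower bound $K^{(\ell)}\ge C_b$ (valid for every $\ell\ge 2$ from the recursion in Theorem~\ref{Hanin}) into every denominator, using $\mathbb{E}[|\sigma(z_1^{(L)}(x))|^6]^{1/2}\le \|\sigma\|_\infty^3$ from boundedness, and simplifying the geometric factors $\prod_{j=k+1}^{L+1} 6C_W/(\pi K^{(j)})$ to $(6C_W/(\pi C_b))^{L+1-k}$. The paper does not give a separate proof, treating the proposition as an immediate summary of the preceding derivation, which is precisely what you describe.
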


\begin{remark}\label{rem_impr_lip_bound_prop}
This Proposition highlights how the assumption of a bounded activation function significantly improves the estimates for the Kolmogorov and Wasserstein distances between the neural network output and the Gaussian limit, compared to the bounds obtained in Theorem~\ref{finale_uno}. The key improvements are:
\begin{itemize}
    \item In Theorem~\ref{finale_uno}, we require the finiteness of moments up to order \(5 \cdot 2^{L+1}\), whereas here it suffices to assume that \( \mathbb{E}[(W_{1,1}^{(1)})^6] < \infty \).
    
    \item The bound in the Proposition grows exponentially with the depth \( L \), while the one in the theorem grows exponentially with a polynomial of $L$. This implies that convergence in distribution for both width and depth tending to infinity can be achieved under weaker constraints on the growth of \( L \).
    
    \item The bound in the Proposition is more tractable when analyzing convergence regimes as \( L, n_1, \dots, n_{L} \to \infty \). For instance, if \( n_1 , \dots , n_{L} \asymp n \), then:
    \begin{enumerate}
    \item[(I)] If $\frac{6C_W}{\pi C_b} \le 1$ then
    \[
    \max\left\{d_K(z_{1}^{(L+1)}(x),G_{1}^{(L+1)}(x)),\, W_1(z_{1}^{(L+1)}(x),G_{1}^{(L+1)}(x))\right\} \to 0
    \]
    when $ \frac{L}{n} \to 0$;
    \item[(II)] If $\frac{6C_W}{\pi C_b} > 1$ then
    \[
    \max\left\{d_K(z_{1}^{(L+1)}(x),G_{1}^{(L+1)}(x)),\, W_1(z_{1}^{(L+1)}(x),G_{1}^{(L+1)}(x))\right\} \to 0
    \]
    when $ \frac{1}{L} \ln\left(\frac{L}{n}\right) \to -\infty  $.
    \end{enumerate}
\end{itemize}
\end{remark}

\section{Multi-dimensional problem}\label{multi_dimensional_problem_sec}
\subsection{Convex distance}
Let the notation of Subsection \ref{sec:def_nn} prevail,
fix $d \in \mathbb{N}$ and consider $d$ distinct inputs 
\[
\mathcal{X} := \{x^{(1)}, \dots, x^{(d)}\} \subseteq \mathbb{R}^{n_0}.
\]
We are interested in studying the convex distance (see Definition~\ref{d_c_def}) between the law of
\[
z_1^{(L+1)}(\mathcal{X}) := \big(z_1^{(L+1)}(x^{(1)}), \dots, z_1^{(L+1)}(x^{(d)})\big)
\]
and that of its Gaussian limit
\[
G_1^{(L+1)}(\mathcal{X}) := \big(G_1^{(L+1)}(x^{(1)}), \dots, G_1^{(L+1)}(x^{(d)})\big).
\]
For any vector $X = (X_1, \dots, X_d) \in \mathbb{R}^d$ and given an activation function $\sigma:\mathbb{R}\to\mathbb{R}$, we use the shorthand notation
\[
\sigma(X) := \big(\sigma(X_1), \dots, \sigma(X_d)\big).
\]

\smallskip

Assuming that the limiting covariance matrix $K^{(L)} := K^{(L)}(\mathcal{X})$ (as defined in Theorem \ref{Hanin}) is invertible, our goal is to apply the forthcoming Theorem~\ref{KG22th}, for which some further notation is needed.

Let $X_1, \dots, X_n$ be independent random variables taking values in $\mathbb{R}$, and define the random vector
\[
X := (X_1, \dots, X_n).
\]
Let $\tilde{X} := (\tilde{X}_1, \dots, \tilde{X}_n)$ and $X' := (X_1', \dots, X_n')$ be independent copies of $X$, such that the random vectors $X$, $\tilde{X}$, and $X'$ are mutually independent.
Let $f : \mathbb{R}^n \to \mathbb{R}^d$ be a measurable function such that the random variable
\[
W := f(X_1, \dots, X_n)
\]
satisfies the conditions $\mathbb{E}[W] = 0$ and $\mathbb{E}[\|W\|] < \infty$. Let $\Sigma \in \mathbb{R}^{d \times d}$ be a non-negative definite matrix, and let $N_{\Sigma} \sim \mathcal{N}_d(0, \Sigma)$.

For every $n \in \mathbb{N}$, $j \in \{1, \dots, n\}$, and $A \subseteq \{1, \dots, n\}$ (with $|A|$ denoting the cardinality of the set $A$), define the following quantities:
\[
[n] := \{1, \dots, n\},
\]
\[
k_{n,A} := \frac{1}{\binom{n}{|A|}(n - |A|)},
\]
\[
\Delta_j f(X) := f(X_1, \dots, X_n) - f(X_1, \dots, X_j', \dots, X_n),
\]
\[
\tilde{\Delta}_j g(X, X') := g(X, X') - g((X_1, \dots, \tilde{X}_j, \dots, X_n), X').
\]

\[
(X^A)_j:=\begin{cases}
    X_j\quad\text{if}\quad j\notin A,\\
    X_j'\quad\text{if}\quad j\in A.
\end{cases}
\]
\[
\gamma_1:=\sum_{j=1}^{n}\mathbb{E}\Big[\|\Delta_j f(X)\|^3\Big],
\]
\[
\gamma_2:=\Big(\sum_{j=1}^{n}\mathbb{E}\Big[\|\Delta_j f(X)\|^4\Big]\Big)^{1/2},
\]
\begin{multline*}
\gamma_{p+2}:=\Bigg\{\frac{3}{2}\sum_{i=1}^n \mathbb{E}\Big[\Big(\sum_{A\subsetneq [n]}k_{n,A}\sum_{j\notin A}1_{[\tilde{\Delta}_i\Delta_j f(X)\ne 0]}\sqrt{\|\Delta_j f(X)\|^p+\|\tilde{\Delta}_i\Delta_j f(X)\|^p}\cdot\\
\cdot \|\Delta_j f(X^A)\|\|\Delta_j f(X)\|\Big)^2\Big]
+\Big(\frac{9}{2}+\frac{9}{2p}\Big)\sum_{i=1}^n \mathbb{E}\Big[\Big(\sum_{A\subsetneq [n]}k_{n,A}\sum_{j\notin A}\sqrt{\|\Delta_j f(X)\|^p+\|\tilde{\Delta}_i\Delta_j f(X)\|^p}\cdot\\
\cdot \|\tilde{\Delta}_i\Delta_j f(X^A)\|\|\Delta_j f(X)\|\Big)^2\Big]\\
+\Big(\frac{9}{2}+\frac{9}{2p}\Big)\sum_{i=1}^n \mathbb{E}\Big[\Big(\sum_{A\subsetneq [n]}k_{n,A}\sum_{j\notin A}\sqrt{\|\Delta_j f(X)\|^p+\|\tilde{\Delta}_i\Delta_j f(X)\|^p}\|\tilde{\Delta}_i\Delta_j f(X)\|\|\Delta_j f(X^A)\|\Big)^2\Big]\\
+\Big(\frac{9}{2}+\frac{9}{2p}\Big)\sum_{i=1}^n \mathbb{E}\Big[\Big(\sum_{A\subsetneq [n]}k_{n,A}\sum_{j\notin A}\sqrt{\|\Delta_j f(X)\|^p+\|\tilde{\Delta}_i\Delta_j f(X)\|^p}\cdot\\
\cdot \|\tilde{\Delta}_i\Delta_j f(X^A)\|\|\tilde{\Delta}_i\Delta_j f(X)\|\Big)^2\Big]\Bigg\}^{1/(2+p)}
\end{multline*}
for $p=1,2$.
\begin{theorem}[\cite{KG22}, Theorem 2.2]\label{KG22th}
Suppose that $\Sigma\in\mathbb{R}^{d\times d}$ is positive definite and that $\mathbb{E}[\|\Delta_j f(X)\|^6]<\infty$ for every $j=1,\dots,n$. Define
\[
\gamma:=\max\Big\{\sqrt{\mathbb{E}\Big[\|\mathbb{E}[T-\Sigma|X]\|_{\text{HS}}^2\Big]},\gamma_1,\gamma_2,\gamma_3,\gamma_4\Big\},
\]
with 
\begin{equation}\label{def_T}
T:=\frac{1}{2}\sum_{A\subsetneq [n]}k_{n,A}\sum_{j\notin A}[\Delta_j f(X)][\Delta_j f(X^A)]^{T}\in \mathbb{R}^{d\times d}.
\end{equation}
Then
\[
d_C(W,N_{\Sigma})\le 541 d^4\max\{1,\|\Sigma^{-1}\|_{\mathrm{op}}^2\}\gamma.
\]
\end{theorem}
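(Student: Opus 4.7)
The plan is to adapt the classical Chatterjee-type second-order Stein approach to the multivariate setting and to the convex distance, combining the covariance identity of Lemma~\ref{cov_ch} with a smoothing argument for indicators of convex sets. First I would reduce the convex distance to a supremum over smooth test functions. Concretely, for a convex set $C\subseteq\mathbb{R}^d$ and a smoothing parameter $t>0$, I would let $h_{C,t}$ be the convolution of $\mathbf{1}_C$ with the density of $t N_\Sigma$; standard estimates (in the style of G\"otze or Bentkus) give $|\mathbb{E}[h_{C,t}(W)-\mathbf{1}_C(W)]|\lesssim d^{1/2}\sqrt{t}$, and similarly after replacing $W$ with $N_\Sigma$. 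This localizes the problem to bounding $|\mathbb{E}[h_{C,t}(W)]-\mathbb{E}[h_{C,t}(N_\Sigma)]|$ for a $C^\infty$ test function whose derivatives can be controlled in terms of $t$.

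Next, for each such smooth $h$ I would invoke the multivariate Stein equation
\[
\bigl\langle \Sigma,\operatorname{Hess} f(x)\bigr\rangle_{HS}-\bigl\langle x,\nabla f(x)\bigr\rangle=h(x)-\mathbb{E}[h(N_\Sigma)],
\]
whose canonical solution $f=f_h$ satisfies derivative estimates depending on $\|\Sigma^{-1}\|_{\mathrm{op}}$ and on the corresponding derivatives of $h$ (these in turn are controlled by $t$ after smoothing). Evaluating at $W$ and taking expectation, the bound on $d_C(W,N_\Sigma)$ reduces to estimating
\[
\bigl|\mathbb{E}[\langle \Sigma,\operatorname{Hess} f(W)\rangle_{HS}]-\mathbb{E}[\langle W,\nabla f(W)\rangle]\bigr|.
\]

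The central step is then to rewrite $\mathbb{E}[\langle W,\nabla f(W)\rangle]$ using the centered representation $W=f(X)$ and the covariance formula of Lemma~\ref{cov_ch}: since the coordinates of $X$ are independent and $\mathbb{E}[W]=0$, one obtains
\[
\mathbb{E}[W_i\,\partial_i f(W)]=\tfrac{1}{2}\sum_{A\subsetneq[n]}k_{n,A}\sum_{j\notin A}\mathbb{E}\bigl[\Delta_j f_i(X)\,\Delta_j(\partial_i f(f(X^A)))\bigr].
\]
A second-order Taylor expansion of $\partial_i f$ at $W$ then produces, after summation over $i$, the leading term $\mathbb{E}[\langle T,\operatorname{Hess} f(W)\rangle_{HS}]$ with $T$ as in~\eqref{def_T}, plus a remainder involving third derivatives of $f$ applied to products of the form $\Delta_j f(X)$, $\Delta_j f(X^A)$, and $\widetilde{\Delta}_i\Delta_j f$. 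Combining with Cauchy--Schwarz and the moment identity $\mathbb{E}[\|\mathbb{E}[T-\Sigma\mid X]\|_{HS}^2]^{1/2}$, the main deterministic term becomes
\[
\|\Sigma^{-1}\|_{\mathrm{op}}\,\mathbb{E}[\|\mathbb{E}[T-\Sigma\mid X]\|_{HS}^2]^{1/2}\,\|\operatorname{Hess} h\|_\infty,
\]
while the Taylor remainder is controlled by the four quantities $\gamma_1,\gamma_2,\gamma_3,\gamma_4$, whose somewhat baroque definitions are precisely tailored to absorb the mixed third-order terms generated by the double difference operator.

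The hardest part will be the third-order remainder analysis: because the Stein solution for an indicator of a convex set only gains one degree of smoothness (so its third derivative carries negative powers of the smoothing parameter $t$), one must carefully balance the $t$-dependence of $\|\nabla^3 f\|$ against the smoothing error $d^{1/2}\sqrt{t}$ through a clever split between ``small'' and ``large'' increments $\Delta_j f$. This is where the exponents $1/(2+p)$ for $p=1,2$ in the definition of $\gamma_{p+2}$ originate: they come from H\"older-interpolating between a third-derivative bound on the smoothed indicator and a Lipschitz bound, and they are what ultimately yield the universal factor $541\,d^4$ after optimizing $t$. Once this optimization is carried out one recovers the stated bound $d_C(W,N_\Sigma)\le 541\,d^4\max\{1,\|\Sigma^{-1}\|_{\mathrm{op}}^2\}\gamma$.
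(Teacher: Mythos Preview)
The paper does not prove this statement: Theorem~\ref{KG22th} is quoted verbatim from \cite{KG22} (Theorem~2.2 there) and is used as a black box in the proofs of Theorems~\ref{th_mult_fin} and~\ref{mod_KG}. So there is no ``paper's own proof'' to compare against.

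That said, your sketch is a faithful outline of the argument in \cite{KG22}: Gaussian smoothing of indicators of convex sets, the multivariate Stein equation for the smoothed test function, Chatterjee's covariance identity (Lemma~\ref{cov_ch}) to generate the $T$ term, a second-order Taylor expansion producing the $\langle T,\operatorname{Hess} f\rangle_{HS}$ leading term plus third-order remainders, and finally an optimization in the smoothing parameter $t$ balancing the derivative blow-up against the smoothing error. The exponents $1/(p+2)$ in $\gamma_3,\gamma_4$ indeed arise from this interpolation, and the factor $d^4\max\{1,\|\Sigma^{-1}\|_{\mathrm{op}}^2\}$ comes from the Stein-solution bounds combined with the Ball--Nazarov-type perimeter estimate for Gaussian measures of convex sets. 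One point worth tightening: the remainder is not handled by a single H\"older split but by a further application of the variance-of-conditional-expectation trick (a second use of Lemma~\ref{cov_ch} on the random matrix $T$ itself), which is what introduces the triple-difference terms $\tilde\Delta_i\Delta_j f$ and explains why $\gamma_3,\gamma_4$ involve sums over $i$ of squared inner sums. Without that second layer the remainder would only be controlled by $\gamma_1,\gamma_2$, which is insufficient to close the $t$-optimization at the stated rate.
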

In particular, we will use the following generalization of the previous theorem.

\begin{theorem}\label{th_mult_fin}
Let $f : \mathbb{R}^n \to \mathbb{R}^d$ be a measurable function such that $
\mathbb{E}[f(X)] = 0$ and $
\mathbb{E}\big[\|f(X)\|\big] < \infty .
$
Let $X := (X_1,\dots,X_n)$ be a vector of independent and identically distributed random variables satisfying
$
\mathbb{E}[X_i] = 0$ and
$ 
\mathbb{E}[X_i^2] = 1,$ for $i=1,\dots,n .
$
Let $B := (b,\dots,b) \in \mathbb{R}^d$ be independent of $X$, where $b \sim \mathcal{N}(0,C_b)$.
Define
$
Y := B + f(X).
$

Denote by $\mathbf{1}_{d\times d}$ the $d\times d$ matrix whose entries are all equal to one, and set
\[
\tilde{T} := C_b\, \mathbf{1}_{d\times d} + T,
\]
where $T$ is given in~\eqref{def_T}.  
Moreover define
\[
\tilde{\gamma} 
:= 
\max \Bigg\{
\sqrt{ \mathbb{E}\!\left[ 
\left\| \mathbb{E}\!\left[ \tilde{T} - \Sigma \mid X \right] \right\|_{\mathrm{HS}}^2 
\right] },
\; \gamma_1,\; \gamma_2,\; \gamma_3,\; \gamma_4
\Bigg\}.
\]

Then, if $\Sigma \in \mathbb{R}^{d \times d}$ is positive definite and $\mathbb{E}[\|\Delta_j f(X)\|^6] < \infty$ for every $j = 1, \dots, n$, we have
\[
d_C(Y, N_{\Sigma}) \leq 541\, d^4\, \max\left\{1, \|\Sigma^{-1}\|_{\mathrm{op}}^2 \right\} \tilde{\gamma}.
\]
\end{theorem}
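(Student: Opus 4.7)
The plan is to mimic the proof of Theorem~\ref{KG22th} (Theorem 2.2 of \cite{KG22}) and to absorb the independent Gaussian bias $B=(b,\dots,b)$ by means of the multivariate Gaussian integration by parts formula of Lemma~\ref{Gauss_integ_multdim}. The starting point, taken directly from \cite{KG22}, is the reduction
\[
d_C(Y,N_\Sigma) \leq 541\, d^4\max\{1,\|\Sigma^{-1}\|_{\mathrm{op}}^2\}\sup_{h}\left|\mathbb{E}\bigl[\langle \Sigma,\mathrm{Hess}\,f_h(Y)\rangle_{\mathrm{HS}}\bigr] - \mathbb{E}\bigl[\langle Y,\nabla f_h(Y)\rangle\bigr]\right| + (\text{remainder}),
\]
where $f_h$ is the regularized solution of the multivariate Stein equation associated with a smoothed indicator of a convex set. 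The regularization procedure, the prefactor $541\,d^4\max\{1,\|\Sigma^{-1}\|_{\mathrm{op}}^2\}$, and the bounds on the second and third derivatives of $f_h$ are all imported unchanged from \cite{KG22}; only the inner expectation needs to be re-analyzed when $Y = B + f(X)$ instead of $Y = f(X)$.

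The key new step is to exploit $Y = B + f(X)$ with $B$ independent of $X$ and $\mathrm{Cov}(B) = C_b\,\mathbf{1}_{d\times d}$. Splitting
\[
\mathbb{E}[\langle Y,\nabla f_h(Y)\rangle] = \mathbb{E}[\langle B,\nabla f_h(Y)\rangle] + \mathbb{E}[\langle f(X),\nabla f_h(Y)\rangle],
\]
conditioning on $X$, and invoking Lemma~\ref{Gauss_integ_multdim} for the Gaussian vector $B$ yields
\[
\mathbb{E}[\langle B,\nabla f_h(Y)\rangle \mid X] = C_b\,\mathbb{E}[\langle \mathbf{1}_{d\times d},\mathrm{Hess}\,f_h(Y)\rangle_{\mathrm{HS}} \mid X].
\]
Substituting back produces the identity
\[
\mathbb{E}[\langle \Sigma,\mathrm{Hess}\,f_h(Y)\rangle_{\mathrm{HS}}] - \mathbb{E}[\langle Y,\nabla f_h(Y)\rangle] = \mathbb{E}[\langle \Sigma - C_b\,\mathbf{1}_{d\times d},\mathrm{Hess}\,f_h(Y)\rangle_{\mathrm{HS}}] - \mathbb{E}[\langle f(X),\nabla f_h(Y)\rangle].
\]

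Next I would treat the term $\mathbb{E}[\langle f(X),\nabla f_h(Y)\rangle]$ exactly as in \cite{KG22}: apply the covariance representation of Lemma~\ref{cov_ch} together with a second-order Taylor expansion of $\nabla f_h$ around $Y$. The crucial observation is that, because $B$ does not depend on the index $j$ being resampled and is independent of $X$, the difference operators satisfy $\Delta_j Y = \Delta_j f(X)$ and the exchangeable-pair decomposition of \cite{KG22} carries over verbatim with $Y$ in place of their $W$, giving
\[
\mathbb{E}[\langle f(X),\nabla f_h(Y)\rangle] = \mathbb{E}[\langle T,\mathrm{Hess}\,f_h(Y)\rangle_{\mathrm{HS}}] + R,
\]
with $T$ as in \eqref{def_T} and $R$ a Taylor remainder controlled, through the estimates on the second and third derivatives of $f_h$ from \cite{KG22}, by a constant multiple of $\gamma_1+\gamma_2+\gamma_3+\gamma_4$. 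Combining, and recalling $\tilde T = C_b\,\mathbf{1}_{d\times d} + T$, one obtains
\[
\mathbb{E}[\langle \Sigma,\mathrm{Hess}\,f_h(Y)\rangle_{\mathrm{HS}}] - \mathbb{E}[\langle Y,\nabla f_h(Y)\rangle] = \mathbb{E}[\langle \Sigma - \tilde T,\mathrm{Hess}\,f_h(Y)\rangle_{\mathrm{HS}}] - R,
\]
and the first summand is bounded, after conditioning on $X$ and applying Cauchy-Schwarz, by $\|\mathrm{Hess}\,f_h\|_\infty\,\sqrt{\mathbb{E}[\|\mathbb{E}[\tilde T - \Sigma\mid X]\|_{\mathrm{HS}}^2]}$.

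The main obstacle is bookkeeping: one must verify that inserting $B$ does not alter the derivative estimates on $f_h$ used in \cite{KG22} (it does not, since $f_h$ depends only on $\Sigma$ and the test class) and that the exchangeable-pair identities still close when evaluated at $Y$ rather than at $f(X)$. Both points are resolved by the independence of $B$ and $X$ and by the fact that $\Delta_j f(X) = \Delta_j Y$, which leaves the quantities $\gamma_1,\dots,\gamma_4$ unchanged. Assembling the two ingredients, the maximum over $\sqrt{\mathbb{E}[\|\mathbb{E}[\tilde T-\Sigma\mid X]\|_{\mathrm{HS}}^2]}$ and $\gamma_1,\gamma_2,\gamma_3,\gamma_4$ reproduces $\tilde\gamma$, and the universal prefactor from Theorem~\ref{KG22th} gives the stated bound.
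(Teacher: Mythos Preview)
Your proposal is correct and follows essentially the same approach as the paper: enter the proof of Theorem~2.2 in \cite{KG22} at the Stein-equation level, split $\langle Y,\nabla f_h(Y)\rangle$ into the $B$-part and the $f(X)$-part, eliminate the $B$-part via the Gaussian integration by parts of Lemma~\ref{Gauss_integ_multdim} (producing the $C_b\,\mathbf{1}_{d\times d}$ contribution to $\tilde T$), and then observe that the remaining analysis from \cite{KG22} goes through unchanged because $\Delta_j Y=\Delta_j f(X)$. The paper's own proof is more terse but records exactly this structure.
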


\begin{proof}
Following the proof of Theorem 2.2 in \cite{KG22}, and using the same notation, we obtain:
\begin{multline*}
d_C(Y, N_{\Sigma}) \leq \frac{4}{3} \sup_{h = 1_C,\, C\ \text{convex}} \left| 
\mathbb{E}\left[\langle \nabla f_{t,h,\Sigma}(Y), Y \rangle \right] 
- \mathbb{E}\left[\langle \operatorname{Hess}(f_{t,h,\Sigma})(Y), \tilde{T} \rangle_{\mathrm{HS}} \right] \right| \\
+ \frac{4}{3} \sup_{h = 1_C,\, C\ \text{convex}} \left| 
\mathbb{E}\left[\langle \operatorname{Hess}(f_{t,h,\Sigma})(Y), \tilde{T} - \Sigma \rangle_{\mathrm{HS}} \right] 
\right| 
+ \frac{20}{\sqrt{2}}\, k \frac{\sqrt{t}}{1 - t}.
\end{multline*}

Using the Gaussian integration by parts formula from Theorem~\ref{Gauss_integ_multdim}, the first term becomes
\begin{multline*}
\frac{4}{3} \sup_{h = 1_C,\, C\ \text{convex}} \left| 
\mathbb{E}\left[\langle \nabla f_{t,h,\Sigma}(Y), f(X) \rangle \right] 
- \mathbb{E}\left[\langle \operatorname{Hess}(f_{t,h,\Sigma})(Y), T \rangle_{\mathrm{HS}} \right] \right|.
\end{multline*}

Therefore,
\begin{multline*}
d_C(Y, N_{\Sigma}) \leq \frac{4}{3} \sup_{h = 1_C,\, C\ \text{convex}} \left| 
\mathbb{E}\left[\langle \nabla f_{t,h,\Sigma}(Y), f(X) \rangle \right] 
- \mathbb{E}\left[\langle \operatorname{Hess}(f_{t,h,\Sigma})(Y), T \rangle_{\mathrm{HS}} \right] \right| \\
+ \frac{4}{3} \sup_{h = 1_C,\, C\ \text{convex}} \left| 
\mathbb{E}\left[\langle \operatorname{Hess}(f_{t,h,\Sigma})(Y), \tilde{T} - \Sigma \rangle_{\mathrm{HS}} \right] \right| 
+ \frac{20}{\sqrt{2}}\, k \frac{\sqrt{t}}{1 - t}.
\end{multline*}

The rest of the proof follows exactly as in~\cite{KG22}.
\end{proof}

Applying  Remark \ref{fin_mom_z}, Theorem \ref{th_mult_fin} and Example 2.5 from \cite{KG22} to the neural network defined in \eqref{NN_expr} and to its Gaussian limit defined in Theorem \ref{Hanin}, next Theorem follows.
\begin{theorem}\label{mod_KG}
If $L \geq 1$ is an integer, $\mathbb{E}\left[\,|W_{1,1}^{(1)}|^6\,\right] < \infty$, and the matrix $K^{(L+1)}$ is invertible, then

\begin{multline*}
d_C(z_1^{(L+1)}(\mathcal{X}),G_1^{(L+1)}(\mathcal{X}))\le 541\cdot d^4 \sqrt{C_W}(1+C_W)\max\{1,\|(K^{(L+1)})^{-1}\|_{\mathrm{op}}^2\}\mathbb{E}\Big[|W_{1,1}^{(1)}|^6\Big]^{1/2}\cdot\\
\cdot\frac{1}{\sqrt{n_{L}}} \Bigg\{43\Big(1+\mathbb{E}\Big[\|\sigma(z_1^{(L)}(\mathcal{X})\|^6\Big]^{1/2}+\mathbb{E}\Big[\|\sigma(G_1^{(L)}(\mathcal{X})\|^4\Big]^{1/2}\Big)\\
+\sqrt{2}\sqrt{n_{L}}\Bigg(\sum_{j,k=1}^{d}\mathbb{E}\Big[(B_{L}(x^{(j)},x^{(k)}))^2\Big]\Bigg)^{1/2}\Bigg\}
\end{multline*}
where for every $i,j\in\{1,\dots,d\}$ and $\ell\ge 2$
\begin{equation}\label{B_l}
B_{\ell}(x^{(i)},x^{(j)}):=\mathbb{E}\Big[\sigma(z_{1}^{(\ell)}(x^{(i)}))\sigma(z_1^{(\ell)}(x^{(j)}))\big|\mathcal{F}_{\ell-1}\Big]-\mathbb{E}\Big[\sigma(G_{1}^{(\ell)}(x^{(i)}))\sigma(G_1^{(\ell)}(x^{(j)}))\Big].
\end{equation}
\end{theorem}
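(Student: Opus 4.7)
The overall plan is to condition on the $\sigma$-field $\mathcal{F}_L$ generated by the weights and biases up to layer $L$, apply the modified multivariate Stein bound of Theorem~\ref{th_mult_fin} to the conditional law, and then integrate out $\mathcal{F}_L$. Conditionally on $\mathcal{F}_L$, each component writes
\[
z_1^{(L+1)}(x^{(i)}) \;=\; b_1^{(L+1)} \;+\; \tfrac{\sqrt{C_W}}{\sqrt{n_L}}\sum_{k=1}^{n_L} W_{1,k}^{(L+1)}\,\sigma\!\bigl(z_k^{(L)}(x^{(i)})\bigr),
\]
so the $d$-dimensional random vector $z_1^{(L+1)}(\mathcal{X})$ takes the form $B + f(X)$ with $B=(b_1^{(L+1)},\dots,b_1^{(L+1)})$, $X=(W_{1,1}^{(L+1)},\dots,W_{1,n_L}^{(L+1)})$ i.i.d.\ centered unit-variance, and $f$ linear in each coordinate of $X$ with coefficients that are $\mathcal{F}_L$-measurable. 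This is exactly the setting of Theorem~\ref{th_mult_fin} with target covariance $\Sigma=K^{(L+1)}$, which is invertible by assumption.

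Using the tower property together with the inequality
\[
d_C\!\bigl(z_1^{(L+1)}(\mathcal{X}),G_1^{(L+1)}(\mathcal{X})\bigr) \;\le\; \mathbb{E}\!\left[\,d_C\!\bigl(z_1^{(L+1)}(\mathcal{X})\,\big|\,\mathcal{F}_L\,,\,G_1^{(L+1)}(\mathcal{X})\bigr)\right],
\]
Theorem~\ref{th_mult_fin} applied conditionally yields a bound proportional to $541\,d^4\max\{1,\|(K^{(L+1)})^{-1}\|_{\mathrm{op}}^2\}\,\mathbb{E}[\tilde\gamma]$. The main term is
\[
\mathbb{E}\!\left[\bigl\|\mathbb{E}[\tilde T - K^{(L+1)}\mid X,\mathcal{F}_L]\bigr\|_{\mathrm{HS}}^2\right]^{1/2}.
\]
Since $f$ is linear and $\mathbb{E}[(W-W')^2\mid W]=W^2+1$, an explicit computation gives
\[
\mathbb{E}[\tilde T_{ij}\mid X,\mathcal{F}_L]-K^{(L+1)}_{ij} = \tfrac{1}{2}\!\sum_{k=1}^{n_L}\tfrac{C_W}{n_L}(W_{1,k}^{(L+1)\,2}-1)\,\sigma(z_k^{(L)}(x^{(i)}))\sigma(z_k^{(L)}(x^{(j)}))
+ C_W\!\Bigl(\tfrac{1}{n_L}\!\sum_{k=1}^{n_L}\sigma(z_k^{(L)}(x^{(i)}))\sigma(z_k^{(L)}(x^{(j)})) - \mathbb{E}\bigl[\sigma(G_1^{(L)}(x^{(i)}))\sigma(G_1^{(L)}(x^{(j)}))\bigr]\Bigr).
\]
Squaring, summing over $(i,j)$, applying the triangle inequality and taking expectations, the first sum contributes $O(1/n_L)$ times a moment of $\|\sigma(z_1^{(L)}(\mathcal{X}))\|^4$, while by the same triangle-inequality splitting as in \eqref{bound_Q_2_postlemma} the second sum produces a variance term of order $\tfrac{1}{n_L}\mathbb{E}[\|\sigma(z_1^{(L)}(\mathcal{X}))\|^4]$ plus exactly the quantity $C_W^2\sum_{i,j}\mathbb{E}[B_L(x^{(i)},x^{(j)})^2]$ appearing in the statement.

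The remaining terms $\gamma_1,\gamma_2,\gamma_3,\gamma_4$ in the definition of $\tilde\gamma$ are estimated using Example~2.5 of \cite{KG22}: because $f$ is linear, $\Delta_k f(X)=\tfrac{\sqrt{C_W}}{\sqrt{n_L}}(W_{1,k}^{(L+1)}-W_{1,k}^{\prime(L+1)})\,\sigma(z_k^{(L)}(\mathcal{X}))$, and $\tilde\Delta_i\Delta_k f(X)=0$ unless $i=k$, which collapses most of the combinatorial sums in $\gamma_3,\gamma_4$. The resulting bounds are of the form $\tfrac{C}{\sqrt{n_L}}\mathbb{E}[|W|^6]^{1/2}(1+\mathbb{E}[\|\sigma(z_1^{(L)}(\mathcal{X}))\|^6]^{1/2})$, with the numerical constant collected into the coefficient $43$. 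Finiteness of all the $\sigma$-moments in question is guaranteed by the Lipschitz assumption on $\sigma$ together with Remark~\ref{fin_mom_z}, which ensures that $\sup_{n_1,\dots,n_L}\mathbb{E}[\|\sigma(z_1^{(L)}(\mathcal{X}))\|^6]<\infty$ whenever $\mathbb{E}[|W|^6]<\infty$.

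The main technical obstacle is the careful bookkeeping of the $\gamma_3,\gamma_4$ terms: although many summands vanish due to the linearity of $f$, the surviving contributions still mix fourth and sixth moments of $W$ with fourth and sixth moments of $\sigma(z_k^{(L)}(\mathcal{X}))$, and one must also handle the fact that the coefficients in $\tilde\gamma$ are $\mathcal{F}_L$-measurable, so Cauchy--Schwarz must be applied before integrating out $\mathcal{F}_L$ to produce a clean bound in terms of unconditional moments. After combining the bounds on the $\|\mathbb{E}[\tilde T -K^{(L+1)}|X]\|_{\mathrm{HS}}$ term with those on $\gamma_1,\dots,\gamma_4$, taking the outer expectation and factoring out $\sqrt{C_W}(1+C_W)\mathbb{E}[|W|^6]^{1/2}/\sqrt{n_L}$, the stated inequality follows.
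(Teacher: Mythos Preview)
Your proposal is correct and follows essentially the same approach as the paper: condition on $\mathcal{F}_L$, apply Theorem~\ref{th_mult_fin} to the conditionally linear structure, compute $\gamma_1,\dots,\gamma_4$ via the simplifications of Example~2.5 in \cite{KG22}, and decompose $\mathbb{E}[\tilde T\mid X]-K^{(L+1)}$ into the $(W^2-1)$ fluctuation part plus the covariance-mismatch part that produces the $B_L$ term after a further split using conditional i.i.d.\ structure at layer $L$. The only detail you leave implicit is that the $\mathbb{E}[\|\sigma(G_1^{(L)}(\mathcal{X}))\|^4]^{1/2}$ appearing in the final bound arises when controlling the centered term $\mathbb{E}\bigl[(\sigma(z_1^{(L)}(x^{(j)}))\sigma(z_1^{(L)}(x^{(k)}))-\mathbb{E}[\sigma(G_1^{(L)}(x^{(j)}))\sigma(G_1^{(L)}(x^{(k)}))])^2\bigr]$, but this is straightforward.
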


\begin{proof}
Denoting by $\mathcal{F}_{L}$ the $\sigma$-field generated by the weights and biases up to layer $L$, it follows that

\[
d_C(z_1^{(L+1)}(\mathcal{X}),G_1^{(L+1)}(\mathcal{X}))=\sup_{C\subseteq \mathbb{R}^d\, \text{convex}}\Big|\mathbb{E}\Big[1_{\{z_1^{(L+1)}(\mathcal{X})\in C\}}\Big]-\mathbb{E}\Big[1_{\{G_1^{(L+1)}(\mathcal{X})\in C\}}\Big]\Big|
\]
\[
=\sup_{C\subseteq \mathbb{R}^d\, \text{convex}}\Big|\mathbb{E}\Big[\mathbb{E}\Big[1_{\{{z}_1^{(L+1)}(\mathcal{X})\in C\}}\Big|\mathcal{F}_{L}\Big]\Big]-\mathbb{E}\Big[1_{\{G_1^{(L+1)}(\mathcal{X})\in C\}}\Big]\Big|
\]
\[
\le \sup_{C\subseteq \mathbb{R}^d\, \text{convex}}\mathbb{E}\Big[\Big|\mathbb{E}\Big[1_{\{{z}_1^{(L+1)}(\mathcal{X})\in C\}}\Big|\mathcal{F}_{L}\Big]-\mathbb{E}\Big[1_{\{G_1^{(L+1)}(\mathcal{X})\in C\}}\Big]\Big|\Big].
\]
{
Hence, by applying Theorem~\ref{KG22th} in the same way as Theorem~\ref{Kol_1} is used in the proof of Theorem~\ref{Kdist}, we obtain

}
\[
d_C\big(z_1^{(L+1)}(\mathcal{X}), G_1^{(L+1)}(\mathcal{X})\big) 
\leq 541\, d^4 \max\left\{1, \left\|(K^{(L+1)})^{-1}\right\|_{\mathrm{op}}^2\right\} \mathbb{E}[\tilde{\gamma}],
\]
where in this case
\[
Y_i := b_1 + f_i\big(W_{1,1}^{(L+1)}, \dots, W_{1,n_L}^{(L+1)}\big) 
:= b_1 + \frac{\sqrt{C_W}}{\sqrt{n_L}} \sum_{j=1}^{n_L} W_{1,j}^{(L+1)} \sigma\big(z_j^{(L)}(x^{(i)})\big),
\]
for each $i \in \{1, \dots, d\}$. In particular, denoting
\[
\sigma\big(z_1^{(L)}(\mathcal{X})\big) := 
\Big( \sigma\big(z_1^{(L)}(x^{(1)})\big), \dots, \sigma\big(z_1^{(L)}(x^{(d)})\big) \Big),
\]
one has that

\begin{multline*}
\gamma_1=\sum_{j=1}^{n_{L}}\mathbb{E}\Big[\Big\|\frac{\sqrt{C_W}}{\sqrt{n_{L}}}\Big(W_{1,j}^{(L+1)}-{W'}_{1,j}^{(L+1)}\Big)\sigma(z_j^{(L)}(\mathcal{X}))\Big\|^3\Big|\mathcal{F}_{L}\Big]\\
\le \Big(\frac{C_W}{n_{L}}\Big)^{3/2}\sum_{j=1}^{n_{L}}\mathbb{E}\Big[\Big(|W_{1,j}^{(L+1)}|+|{W'}_{1,j}^{(L+1)}|\Big)^3\Big]\Big\|\sigma(z_j^{(L)}(\mathcal{X}))\Big\|^3\\
\le 8\Big(\frac{C_W}{n_{L}}\Big)^{3/2}\sum_{j=1}^{n_{L}}\mathbb{E}\Big[|W_{1,j}^{(L+1)}|^3\Big]\Big\|\sigma(z_j^{(L)}(\mathcal{X}))\Big\|^3,
\end{multline*}
\begin{multline*}
\gamma_2=\Bigg(\sum_{j=1}^{n_{L}}\mathbb{E}\Big[\Big\|\frac{\sqrt{C_W}}{\sqrt{n_{L}}}\Big(W_{1,j}^{(L+1)}-{W'}_{1,j}^{(L+1)}\Big)\sigma(z_j^{(L)}(\mathcal{X}))\Big\|^4\Big|\mathcal{F}_{L}\Big]\Bigg)^{1/2}\\
\le \frac{4C_W}{n_{L}}\Bigg(\sum_{j=1}^{n_{L}}\mathbb{E}\Big[\Big(W_{1,j}^{(L+1)}\Big)^4\Big]\Big\|\sigma(z_j^{(L)}(\mathcal{X}))\Big\|^4\Bigg)^{1/2}.
\end{multline*}
Using the observations in Example 2.5 of~\cite{KG22}, we obtain that for $p = 1, 2$:

\begin{multline*}
\gamma_{p+2}=\Bigg\{\sum_{i=1}^{n_{L}}\mathbb{E}\Big[\Big(\|\Delta_i f(X)\|^p+\|\tilde{\Delta}_i f(X)\|^p\Big)\Big(\frac{3}{2}\|\Delta_i f(X)\|^4\\
+\Big(9+\frac{9}{p}\Big)\|\Delta_i f(X)\|^2 \|\tilde{\Delta}_i f(X)\|^2
+\Big(\frac{9}{2}+\frac{9}{2p}\Big)\|\tilde{\Delta}_i f(X)\|^4\Big)\Big|\mathcal{F}_{L}\Big]\Bigg\}^{\frac{1}{p+2}}
\end{multline*}
\begin{multline*}
=\Bigg\{\sum_{i=1}^{n_{L}}\mathbb{E}\Bigg[\Bigg(\Big\|\frac{\sqrt{C_W}}{\sqrt{n_{L}}}\Big(W_{1,i}^{(L+1)}-{W'}_{1,i}^{(L+1)}\Big)\sigma(z_i^{(L)}(\mathcal{X}))\Big\|^p\\
+\Big\|\frac{\sqrt{C_W}}{\sqrt{n_{L}}}\Big(W_{1,i}^{(L+1)}-{\tilde{W}}_{1,i}^{(L+1)}\Big)\sigma(z_i^{(L)}(\mathcal{X}))\Big\|^p\Bigg)\Bigg(\frac{3}{2}\Big\|\frac{\sqrt{C_W}}{\sqrt{n_{L}}}\Big(W_{1,i}^{(L+1)}-{W'}_{1,i}^{(L+1)}\Big)\sigma(z_i^{(L)}(\mathcal{X}))\Big\|^4\\
+\Big(9+\frac{9}{p}\Big)\Big\|\frac{\sqrt{C_W}}{\sqrt{n_{L}}}\Big(W_{1,i}^{(L+1)}-{W'}_{1,i}^{(L+1)}\Big)\sigma(z_i^{(L)}(\mathcal{X}))\Big\|^2 \Big\|\frac{\sqrt{C_W}}{\sqrt{n_{L}}}\Big(W_{1,i}^{(L+1)}-{\tilde{W}}_{1,i}^{(L+1)}\Big)\sigma(z_i^{(L)}(\mathcal{X}))\Big\|^2\\
+\Big(\frac{9}{2}+\frac{9}{2p}\Big)\Big\|\frac{\sqrt{C_W}}{\sqrt{n_{L}}}\Big(W_{1,i}^{(L+1)}-{\tilde{W}}_{1,i}^{(L+1)}\Big)\sigma(z_i^{(L)}(\mathcal{X}))\Big\|^4\Bigg)\Bigg|\mathcal{F}_{L}\Bigg]\Bigg\}^{\frac{1}{p+2}}
\end{multline*}
\[
\le \Big(\frac{4C_W}{n_{L}}\Big)^{\frac{p+4}{2(p+2)}}\Big(30+\frac{27}{p}\Big)^{\frac{1}{p+2}}\Bigg\{\sum_{i=1}^{n_{L}}\|\sigma(z_i^{(L)}(\mathcal{X}))\|^{p+4}\mathbb{E}\Big[|W_{1,i}^{(L+1)}|^{p+4}\Big]\Bigg\}^{\frac{1}{p+2}}.
\]
On the other hand,
\begin{multline*}
\mathbb{E}\Big[\|\mathbb{E}[\tilde{T}-K^{(L+1)}|W^{(L+1)}\lor \mathcal{F}_{L}]\|_{\text{HS}}^2\Big|\mathcal{F}_{L}\Big]\\
=\mathbb{E}\Big[\Big\|\mathbb{E}\Big[C_b\mathbf{1}_{d\times d}+\frac{1}{2}\sum_{i=1}^{n_{L}}\frac{C_W}{n_{L}}(W_{1,i}^{(L+1)}-{W'}_{1,i}^{(L+1)})^2\sigma(z_i^{(L)}(\mathcal{X}))\sigma(z_i^{(L)}(\mathcal{X}))^T\\
-K^{(L+1)}\Big|\mathcal{F}_{L+1}\Big]\Big\|_{\text{HS}}^2\Big|\mathcal{F}_{L}\Big]
\end{multline*}
\begin{multline*}
=\mathbb{E}\Bigg[\sum_{j,k=1}^{d}\Bigg(\mathbb{E}\Bigg[\frac{1}{2}\sum_{i=1}^{n_{L-1}}\frac{C_W}{n_{L}}\Big((W_{1,i}^{(L+1)}-{W'}_{1,i}^{(L+1)})^2\sigma(z_i^{(L)}(x^{(j)}))\sigma(z_i^{(L)}(x^{(k)}))\\
-2\mathbb{E}\Big[\sigma({G}_i^{(L)}(x^{(j)}))\sigma({G}_i^{(L)}(x^{(k)}))\Big]\Big)\Bigg|\mathcal{F}_{L+1}\Bigg]\Bigg)^2\Bigg|\mathcal{F}_{L}\Bigg]
\end{multline*}
\begin{multline*}
=\sum_{j,k=1}^{d}\mathbb{E}\Bigg[\Bigg(\frac{1}{2}\sum_{i=1}^{n_{L}}\frac{C_W}{n_{L}}\mathbb{E}\Big[\Big((W_{1,i}^{(L+1)}-{W'}_{1,i}^{(L+1)})^2-2\Big)\Big|W^{(L+1)}\Big]\sigma(z_i^{(L)}(x^{(j)}))\sigma(z_i^{(L)}(x^{(k)}))\\
+\sum_{i=1}^{n_{L}}\frac{C_W}{n_{L}}\Big(\sigma(z_i^{(L)}(x^{(j)}))\sigma(z_i^{(L)}(x^{(k)}))
-\mathbb{E}\Big[\sigma({G}_i^{(L)}(x^{(j)}))\sigma({G}_i^{(L)}(x^{(k)}))\Big]\Big)
\Bigg)^2\Bigg|\mathcal{F}_{L}\Bigg]
\end{multline*}
\begin{multline*}
\le 2\sum_{j,k=1}^{d}\mathbb{E}\Bigg[\Bigg(\frac{1}{2}\sum_{i=1}^{n_{L}}\frac{C_W}{n_{L}}\Big((W_{1,i}^{(L+1)})^2-1\Big)\sigma(z_i^{(L)}(x^{(j)}))\sigma(z_i^{(L)}(x^{(k)}))
\Bigg)^2\Bigg|\mathcal{F}_{L}\Bigg]\\
+2\sum_{j,k=1}^{d}\mathbb{E}\Bigg[\Bigg(\sum_{i=1}^{n_{L}}\frac{C_W}{n_{L}}\Big(\sigma(z_i^{(L)}(x^{(j)}))\sigma(z_i^{(L)}(x^{(k)}))
-\mathbb{E}\Big[\sigma({G}_i^{(L)}(x^{(j)}))\sigma({G}_i^{(L)}(x^{(k)}))\Big]\Big)\Bigg)^2\Bigg|\mathcal{F}_{L}\Bigg]\\
\end{multline*}

\begin{multline*}
= \frac{1}{2}\sum_{j,k=1}^{d}\sum_{i=1}^{n_{L}}\frac{C_W^2}{n_{L}^2}\mathbb{E}\Big[\Big((W_{1,i}^{(L+1)})^2-1\Big)^2\Big]\sigma^2(z_i^{(L)}(x^{(j)}))\sigma^2(z_i^{(L)}(x^{(k)}))\\
+2\sum_{j,k=1}^{d}\Bigg(\sum_{i=1}^{n_{L}}\frac{C_W}{n_{L}}\Big(\sigma(z_i^{(L)}(x^{(j)}))\sigma(z_i^{(L)}(x^{(k)}))
-\mathbb{E}\Big[\sigma({G}_i^{(L)}(x^{(j)}))\sigma({G}_i^{(L)}(x^{(k)}))\Big]\Big)\Bigg)^2\\
\end{multline*}
Therefore,
\begin{multline*}
d_C(z_1^{(L+1)}(\mathcal{X}),G_1^{(L+1)}(\mathcal{X}))\le 541 d^4 \max\{1,\|(K^{(L+1)})^{-1}\|_{\mathrm{op}}^2\}\Bigg(\mathbb{E}[\gamma_1]+\mathbb{E}[\gamma_2]\\
+\mathbb{E}[\gamma_3]+\mathbb{E}[\gamma_4]
+{\mathbb{E}\bigg[\mathbb{E}\Big[\|\mathbb{E}[\tilde{T}-K^{(L+1)}|W^{(L+1)}\lor \mathcal{F}_{L}]\|_{\text{HS}}^2\Big|\mathcal{F}_{L}\Big]^{1/2}\bigg]}\Bigg)
\end{multline*}
\begin{multline*}
\le 541 d^4 \max\{1,\|(K^{(L+1)})^{-1}\|_{\mathrm{op}}^2\}\Bigg\{8\Big(\frac{C_W}{n_{L}}\Big)^{3/2}\sum_{i=1}^{n_{L}}\mathbb{E}\Big[|W_{1,i}^{(L+1)}|^3\Big]\mathbb{E}\Big[\|\sigma(z_i^{(L)}(\mathcal{X}))\|^3\Big]\\
+ \frac{4C_W}{n_{L}}{\mathbb{E}\Big[\Big(W_{1,1}^{(L+1)}\Big)^4\Big]}^{1/2}\Bigg(\sum_{i=1}^{n_{L}}\mathbb{E}\Big[\|\sigma(z_i^{(L)}(\mathcal{X}))\|^4\Big]\Bigg)^{1/2}\\
+\Big(\frac{4C_W}{n_{L}}\Big)^{5/6}(57)^{1/3}\mathbb{E}\Big[|W_{1,1}^{(L+1)}|^{5}\Big]^{1/3}\Bigg(\sum_{i=1}^{n_{L}}\mathbb{E}\Big[\|\sigma(z_i^{(L)}(\mathcal{X}))\|^{5}\Big]\Bigg)^{1/3}\\
+\Big(\frac{4C_W}{n_{L}}\Big)^{3/4}\Big(\frac{87}{2}\Big)^{1/4}\mathbb{E}\Big[|W_{1,1}^{(L+1)}|^{6}\Big]^{1/4}\Bigg(\sum_{i=1}^{n_{L}}\mathbb{E}\Big[\|\sigma(z_i^{(L)}(\mathcal{X}))\|^{6}\Big]\Bigg)^{1/4}\\
+{\frac{1}{\sqrt{2}}}\frac{C_W}{n_{L}}\mathbb{E}\Big[(W_{1,1}^{(L+1)})^4\Big]^{1/2}\Bigg(\sum_{j,k=1}^{d}\sum_{i=1}^{n_{L}}\mathbb{E}\Big[\sigma^2(z_i^{(L)}(x^{(j)}))\sigma^2(z_i^{(L)}(x^{(k)}))\Big]\Bigg)^{1/2}\\
+\sqrt{2}\Bigg(\sum_{j,k=1}^{d}\mathbb{E}\Bigg[\Bigg(\sum_{i=1}^{n_{L}}\frac{C_W}{n_{L}}\Big(\sigma(z_i^{(L)}(x^{(j)}))\sigma(z_i^{(L)}(x^{(k)}))\\
-\mathbb{E}\Big[\sigma({G}_i^{(L)}(x^{(j)}))\sigma({G}_i^{(L)}(x^{(k)}))\Big]\Big)\Bigg)^2\Bigg]\Bigg)^{1/2}
\Bigg\}
\end{multline*}
\begin{multline*}
\le 541 d^4 \max\{1,\|(K^{(L+1)})^{-1}\|_{\mathrm{op}}^2\}\mathbb{E}\Big[|W_{1,1}^{(L+1)}|^6\Big]^{1/2}\frac{1}{\sqrt{n_{L}}}\Bigg\{8\sqrt{C_W^3}\mathbb{E}\Big[\|\sigma(z_1^{(L)}(\mathcal{X}))\|^3\Big]\\
+ {4C_W}\mathbb{E}\Big[\|\sigma(z_1^{(L)}(\mathcal{X}))\|^4\Big]^{1/2}
+({4C_W})^{5/6}(57)^{1/3}\mathbb{E}\Big[\|\sigma(z_1^{(L)}(\mathcal{X}))\|^{5}\Big]^{1/3}\\
+({4C_W})^{3/4}\Big(\frac{87}{2}\Big)^{1/4}\mathbb{E}\Big[\|\sigma(z_1^{(L)}(\mathcal{X}))\|^{6}\Big]^{1/4}
+{{\frac{1}{\sqrt{2}}}}{C_W}\mathbb{E}\Big[\|\sigma^2(z_1^{(L)}(\mathcal{X}))\|^4\Big]^{1/2}\\
+\sqrt{2}C_W\Bigg(\sum_{j,k=1}^{d}\mathbb{E}\Bigg[\Big(\sigma(z_1^{(L)}(x^{(j)}))\sigma(z_1^{(L)}(x^{(k)})
-\mathbb{E}\Big[\sigma({G}_1^{(L)}(x^{(k)}))\sigma(G_1^{(L)}(x^{(j)})\Big]\Big)^2\Bigg]\Bigg)^{1/2}\\
+\sqrt{2}\sqrt{n_{L}}C_W\Bigg(\sum_{j,k=1}^{d}\mathbb{E}\Bigg[\Big(\mathbb{E}\Big[\sigma(z_1^{(L)}(x^{(j)}))\sigma(z_1^{(L)}(x^{(k)}))\big|\mathcal{F}_{L-1}\Big]\\
-\mathbb{E}\Big[\sigma({G}_1^{(L)}(x^{(j)}))\sigma({G}_1^{(L)}(x^{(k)}))\Big]\Big)^2\Bigg]\Bigg)^{1/2}
\Bigg\}
\end{multline*}
where we used the fact that, conditionally on the $\sigma$-field $\mathcal{F}_{L-1}$ generated by the weights up to layer $L - 1$, the components of the neural network vector $z^{(L)}$ are independent and identically distributed.

Hence
\begin{multline*}
d_C(z_1^{(L+1)}(\mathcal{X}),G_1^{(L+1)}(\mathcal{X}))\le 541\cdot d^4 \sqrt{C_W}(1+C_W)\max\{1,\|(K^{(L+1)})^{-1}\|_{\mathrm{op}}^2\}\cdot\\
\cdot\mathbb{E}\Big[|W_{1,1}^{(L+1)}|^6\Big]^{1/2}\frac{1}{\sqrt{n_{L}}} \Bigg\{43\Big(1+\mathbb{E}\Big[\|\sigma(z_1^{(L)}(\mathcal{X})\|^6\Big]^{1/2}+\mathbb{E}\Big[\|\sigma(G_1^{(L)}(\mathcal{X})\|^4\Big]^{1/2}\Big)\\
+\sqrt{2}\sqrt{n_{L}}\Bigg(\sum_{j,k=1}^{d}\mathbb{E}\Big[(B_{L}(x^{(j)},x^{(k)}))^2\Big]\Bigg)^{1/2}\Bigg\}
\end{multline*}
\end{proof}
\subsection{A bound on $B_\ell(x^{(i)},x^{(j)})$}
{
Let $x^{(1)},\dots,x^{(d)}\in\mathbb{R}^{n_0}$ such that $x^{(i)}\ne x^{(j)}$ for $x\ne j$. Then,
for every $i, j \in \{1, \dots, d\}$ such that $i\ne j$, a bound on the term 

\[
\mathbb{E}\left[\big(B_{\ell}(x^{(i)}, x^{(j)})\big)^2\right]
\]
is now provided by the following lemma, whose proof is given in Appendix~B.
}
\begin{lemma}\label{somma_schifa}
\begin{multline*}
\sum_{1\le i\ne j\le d}\mathbb{E}\Big[\Big(B_\ell(x^{(i)},x^{(j)})\Big)^2\Big]\\
:=\sum_{1\le i\ne j\le d}\mathbb{E}\Big[\Big|\mathbb{E}\Big[\sigma(z_{1}^{(\ell)}(x^{(i)}))\sigma(z_1^{(\ell)}(x^{(j)}))\big|\mathcal{F}_{\ell-1}\Big]-\mathbb{E}\Big[\sigma(G_{1}^{(\ell)}(x_i))\sigma(G_1^{(\ell)}(x_j))\Big]\Big|^2\Big]\\
\le\sum_{u=0}^{\ell-2}V_1^{(\ell-u)(n)}(\mathcal{X})V_2^u
\end{multline*}
where
\[
V_2:=4{C_W^2}{\|\sigma\|_{\text{Lip}}^4},
\]
\begin{multline*}
V_1^{(\ell-u)}:=
2^9\Big(1+\|\sigma\|_{\text{Lip}}^2\Big)^2(1+C_W)^4\mathbb{E}\Big[ \big(W_{1,1}^{(1)}\big)^6\Big]^2\Bigg(\operatorname{tr} K^{(\ell)}+2\mathbb{E}\Big[\|\sigma(z^{(\ell)}_1(\mathcal{X}))\|^4\Big]^{1/2}\\
   +2(\|\sigma\|_{\text{Lip}}^2+1)\mathbb{E}\Big[\|z^{(\ell)}_1(\mathcal{X})\|^4\Big]^{1/2}
   +\mathbb{E}\Big[\|\sigma(G_1^{(\ell-1)}(\mathcal{X})\|^4\Big]^{1/2}
+2\mathbb{E}\Big[\|\sigma(z^{(\ell-1)}_1(\mathcal{X}))\|^4\Big]^{1/2}+\|\sigma\|_{\text{Lip}}^2\Bigg)\cdot\\
\Bigg(1+\mathbb{E}\Big[\|\sigma(G_1^{(\ell-1)}(\mathcal{X})\|^4\Big]^{1/2}+\mathbb{E}\Big[\|\sigma(z_1^{(\ell-1)}(\mathcal{X}))\|^{12}\Big]^{1/2}\Bigg)\cdot\\
\cdot \Big(1+\|(K^{(\ell)})^{-1}\|^2_{\text{HS}}\Big)\Big(1+ {(\operatorname{tr}K^{(\ell)})^2}\Big)\Bigg\{\sum_{i=1}^d\sqrt{Q_4^{(\ell-1)}(x^{(i)})}
+\frac{2}{n_{\ell-1}}
\\
+\frac{1}{{n_{\ell-1}}}\Big(1+\sqrt{d}\Big)
\Big(4d+5\|(K^{(\ell)})^{-1}\|^2_{op}+12\Big)\Bigg\},
\end{multline*}
recalling the notation $Q_4^{(r)}(x_i)$ from equation \eqref{Q_k}.
\end{lemma}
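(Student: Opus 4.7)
The plan is to set up a recursive inequality of the form
\[
\sum_{1\le i\ne j\le d}\mathbb{E}\Big[\bigl(B_\ell(x^{(i)},x^{(j)})\bigr)^2\Big]
\;\le\; V_1^{(\ell)(n)}(\mathcal{X})
\;+\; V_2 \sum_{1\le i\ne j\le d}\mathbb{E}\Big[\bigl(B_{\ell-1}(x^{(i)},x^{(j)})\bigr)^2\Big],
\]
and then to iterate it $\ell-2$ times down to the base layer, producing the telescoping geometric sum claimed in the statement. The base of the recursion is obtained by bounding the remaining term at layer $2$ directly via the moment estimates of Proposition \ref{mom_z_prop}, which are absorbed into $V_1^{(2)(n)}$.

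To establish the recursive step at layer $\ell$, I would fix $i\ne j$ and apply Stein's method to the test function $h(y) := \sigma(y_i)\sigma(y_j)$ on $\mathbb{R}^d$, using the multi-dimensional Stein equation \eqref{stein_eq_mult_intr} with covariance $K^{(\ell)}$ and denoting by $f^{(\ell)}_{x^{(i)},x^{(j)}}$ its canonical solution. First I would condition on $\mathcal{F}_{\ell-1}$ and write
\[
B_\ell(x^{(i)},x^{(j)})
= \mathbb{E}\bigl[\langle K^{(\ell)},\operatorname{Hess} f^{(\ell)}_{x^{(i)},x^{(j)}}(z_1^{(\ell)}(\mathcal{X}))\rangle_{\mathrm{HS}}
- \langle z_1^{(\ell)}(\mathcal{X}),\nabla f^{(\ell)}_{x^{(i)},x^{(j)}}(z_1^{(\ell)}(\mathcal{X}))\rangle\,\big|\,\mathcal{F}_{\ell-1}\bigr].
\]
The Gaussian bias $b^{(\ell)}_1$ is handled by Lemma \ref{Gauss_integ_multdim}, which cancels the diagonal $C_b\,\mathbf{1}_{d\times d}$ part of the inner product, and the weight part is treated via Lemma \ref{cov_ch} with independent copies $W'$ of the weights at layer $\ell$. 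A second-order Taylor expansion of $f^{(\ell)}_{x^{(i)},x^{(j)}}$ along the perturbations induced by replacing $W^{(\ell)}_{1,k}$ with ${W'}^{(\ell)}_{1,k}$ yields a leading quadratic term plus a remainder $R_{i,j}$.

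The leading term is structurally
\[
\Big\langle \sum_{k=1}^{n_{\ell-1}}\tfrac{C_W}{n_{\ell-1}}\sigma(z_k^{(\ell-1)}(\mathcal{X}))\sigma(z_k^{(\ell-1)}(\mathcal{X}))^{\!\top} - \hat K^{(\ell)},\;\operatorname{Hess} f^{(\ell)}_{x^{(i)},x^{(j)}}(z_1^{(\ell)}(\mathcal{X}))\Big\rangle_{\mathrm{HS}},
\]
where $\hat K^{(\ell)}:=K^{(\ell)}-C_b\mathbf{1}_{d\times d}$. Expanding the diagonal entries in this HS-inner product produces $Q_4^{(\ell-1)}(x^{(r)})$ contributions via H\"older, while the off-diagonal entries produce exactly the terms $B_{\ell-1}(x^{(r)},x^{(s)})$, which are the source of the recursion. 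The Hessian weighting is controlled using Lemmas \ref{sol_st_mult}, \ref{norm_inf_hess} and \ref{lemma_hess} of Appendix B, which give bounds in terms of $\|(K^{(\ell)})^{-1}\|_{\mathrm{HS}}$, $\operatorname{tr}K^{(\ell)}$ and $\|\sigma\|_{\mathrm{Lip}}$; the constant $V_2 = 4C_W^2\|\sigma\|_{\mathrm{Lip}}^4$ arises as the square of the coefficient that multiplies $B_{\ell-1}(x^{(r)},x^{(s)})$ after squaring and applying Jensen. The remainder $R_{i,j}$ is controlled in $L^2$ through the bounds on third and fourth derivatives of $f^{(\ell)}_{x^{(i)},x^{(j)}}$ supplied by Lemmas \ref{mom_quart_der_sec} and \ref{der_ter_lemma} together with the moment bounds of Proposition \ref{mom_z_prop}; this furnishes the $1/n_{\ell-1}$ and $\sqrt{Q_4^{(\ell-1)}(x^{(i)})}$ contributions appearing inside $V_1^{(\ell)(n)}$.

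The main obstacle I anticipate is the bookkeeping needed to absorb all moment, Hessian-norm and $\|(K^{(\ell)})^{-1}\|_{\mathrm{HS}}$ factors into a single $V_1^{(\ell)(n)}$ whose form matches the one stated, while simultaneously isolating a clean multiplicative factor equal to $V_2$ in front of the layer $\ell-1$ contribution (so that the recursion unrolls cleanly). In particular, after squaring, summing over $i\ne j$ and applying Cauchy--Schwarz, one must verify that no dimension-dependent blow-up is hidden inside $V_2$, since only $V_1^{(\ell-u)(n)}$ is allowed to depend on $d$. Once these constants are identified, iterating the recursion $\ell-2$ times and collecting the geometric sum $\sum_{u=0}^{\ell-2}V_1^{(\ell-u)(n)}V_2^u$ concludes the argument.
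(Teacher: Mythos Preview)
Your proposal is correct and follows essentially the same route as the paper's proof: Stein's equation for $h(y)=\sigma(y_i)\sigma(y_j)$, Gaussian integration by parts for the bias, Chatterjee's covariance identity (Lemma~\ref{cov_ch}) with a second-order Taylor expansion, and then a recursion in $\ell$ with constant $V_2=4C_W^2\|\sigma\|_{\mathrm{Lip}}^4$. The one point worth making explicit is that the $d$-independence of $V_2$ you worry about comes precisely from Lemma~\ref{sol_st_mult}, which forces $\partial^2_{y_u y_k} f^{(\ell)}_{x^{(i)},x^{(j)}}=0$ unless $u,k\in\{i,j\}$, so the Hessian inner product is effectively $2\times 2$ and only the single off-diagonal entry $(i,j)$ feeds $B_{\ell-1}(x^{(i)},x^{(j)})$ into the recursion.
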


\begin{remark}\label{bound_V_1}
Suppose now that $C_b \neq 0$. Performing some algebraic manipulations, and applying the Lipschitz property of $\sigma$, inequality~\eqref{moment_meglio}, and Lemma~\ref{Q_2}, one obtains that for every $\ell \geq k \geq 2$,
\begin{multline*}
V_1^{(k)(n)}(\mathcal{X})\le C \ell^{6k+44} 2^{4k} \Big(\sum_{j=1}^{k-1}\frac{1}{n_j}\Big)d^8(2d+1)2^9\Big(1+\|\sigma\|_{Lip}^2\Big)^8(1+C_W)^{23}(1+C_b)^{15}\cdot\\
\cdot E\Big[ \big(W_{1,1}^{(1)}\big)^{12}\Big]^3(1+|\sigma(0)|^2)^{22}
\Big(d+\frac{\sum_{j=1}^d\|x^{(j)}\|^8}{n_0^4}+\frac{\sum_{j=1}^d\|x^{(j)}\|^{24}}{{n_0}^{12}}\Big)^2 \cdot\\
\cdot  \Bigg(1+ \Big( K\frac{5\cdot 2^{k+1}}{\log(5\cdot 2^{k+1})}  \sqrt{C_W} \|\sigma\|_{\mathrm{Lip}} \mathbb{E}[(W_{1,1}^{(1)})^{5\cdot 2^{k+1}}]^{1/(5\cdot 2^{k+1})} \Big)^{(k - 1)}\Bigg)^{\frac{k}{2}+47}\cdot\\
\cdot \Big(1+(2C_W\|\sigma\|_{Lip}^2)^{4k}\Big)^{k}\Big(1+\frac{1}{\lambda(K^{(k)})^2}\Big)^2\left(\sum_{i=1}^d\left(1+\left(M^{(k-1)}_2(x^{(i)})\right)^{k/2}\right)\right)\cdot\\
\cdot \Bigg( 4\Big(
  \sqrt{ 2^{k+1} }
+ K\frac{ 2^{k}}{\log( 2^{k+2})}
\Big)^{4}\Bigg)^{k+4}
\end{multline*}
where $C > 0$ is an absolute constant independent of any parameter of the neural network, 
$\lambda(K^{(k)})$ denotes the smallest eigenvalue of the matrix $K^{(k)}$, $M_2^{(k- 1)}(x^{(j)})$ is defined in Lemma~\ref{Q_2}, and $K$
 is the constant defined in Lemma \ref{mom_p_iid0}.
 
Therefore, by Lemma \ref{somma_schifa} and Remark \ref{bound_V_1}, it follows that
\begin{multline*}
\sum_{1\le i\ne j\le d}\mathbb{E}[(B_\ell(x^{(i)},x^{(j)}))^2]\le\Big(4{C_W^2}{\|\sigma\|_{\text{Lip}}^4}\Big)^{\ell-2}C \ell^{6\ell+44} 2^{4\ell} \Big(\sum_{j=1}^{\ell-1}\frac{1}{n_j}\Big)d^8(2d+1)\cdot  \\
\cdot 2^9\Big(1+\|\sigma\|_{Lip}^2\Big)^8(1+C_W)^{23}(1+C_b)^{15}\Bigg( 4\Big(
  \sqrt{ 2^{\ell+1} }
+ K\frac{ 2^{\ell}}{\log( 2^{\ell+2})}
\Big)^{4}\Bigg)^{\ell+4}\cdot\\
\cdot E\Big[ \big(W_{1,1}^{(1)}\big)^{12}\Big]^3(1+|\sigma(0)|^2)^{22}
\Big(d+\frac{\sum_{j=1}^d\|x^{(j)}\|^8}{n_0^4}+\frac{\sum_{j=1}^d\|x^{(j)}\|^{24}}{{n_0}^{12}}\Big)^2 \cdot\\
\cdot  \Bigg(2+ \Big( K\frac{5\cdot 2^{\ell+1}}{\log(5\cdot 2^{\ell+1})}  \sqrt{C_W} \|\sigma\|_{\mathrm{Lip}} \mathbb{E}[(W_{1,1}^{(1)})^{5\cdot 2^{\ell+1}}]^{1/(5\cdot 2^{\ell+1})} \Big)^{(\ell - 1)}\Bigg)^{\frac{\ell}{2}+47}\cdot\\
\cdot \Big(2+(2C_W\|\sigma\|_{Lip}^2)^{4\ell}\Big)^{\ell}\left(\sum_{k=2}^{\ell}\Big(1+\frac{1}{\lambda(K^{(k)})^2}\Big)^2\right)\left(\sum_{i=1}^d\left(1+\left(M^{(\ell-1)}_2(x^{(i)})\right)^{\ell/2}\right)\right).
\end{multline*}
\end{remark}

\subsection{Lower bound for the determinant of $K^{(\ell)}$}\label{sec_low_bound_detK}
\begin{theorem}\label{bound_K_gauss}
For every $\ell \geq 3$, define 
\[
\hat{K}^{(\ell)} := K^{(\ell)} - C_b \mathbf{1}_{d \times d},
\]
where $K^{(\ell)}$ is the limiting covariance matrix defined in Theorem~\ref{Hanin} and $\mathbf{1}_{d \times d}$ is the $d \times d$ matrix with all entries equal to one. Then

\[
\operatorname{det}(\hat{K}^{(\ell)})\ge \frac{ C_W^{d(\ell-2)}}{d!}\lambda(K^{(2)})^{d}\prod_{i=1}^d\left(\sum_{\substack{k_i=1\\k_i\ne k_s\, \forall s=1,\dots,i-1}}^d\prod_{r_i=1}^{\ell-2}\mathbb{E}\big[\sigma'(G^{(\ell-r_i)}_1(x^{(k_i)}))\big]^2\right),
\]
where $\lambda(K^{(2)})$ denotes the minimum eigenvalue of $K^{(2)}$, as defined in Theorem~\ref{Hanin}.

\end{theorem}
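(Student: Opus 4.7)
The plan is to establish the one-step recursion
\[
\det(\hat{K}^{(\ell)}) \;\ge\; C_W^d \, \det(\hat{K}^{(\ell-1)}) \prod_{j=1}^d \mathbb{E}\big[\sigma'(G_1^{(\ell-1)}(x^{(j)}))\big]^2
\]
and then iterate it $\ell - 2$ times. The central tool will be the Cacoullos variance lower bound (Proposition \ref{Caco}), applied layer by layer. To set it up, we first observe that $\hat{K}^{(\ell)} = C_W\, \mathbb{E}[\sigma(G)\,\sigma(G)^\top]$, where $G := (G_1^{(\ell-1)}(x^{(1)}),\dots,G_1^{(\ell-1)}(x^{(d)})) \sim \mathcal{N}_d(0, K^{(\ell-1)})$. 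For an arbitrary $v \in \mathbb{R}^d$, writing $v^\top \hat{K}^{(\ell)} v = C_W\, \mathbb{E}[(\sum_j v_j \sigma(G_j))^2] \ge C_W\,\operatorname{Var}(\sum_j v_j \sigma(G_j))$ and applying Proposition \ref{Caco} to $g(x) := \sum_j v_j \sigma(x_j)$ (whose a.e.\ gradient, by Rademacher's theorem, is $\partial_j g(x) = v_j\sigma'(x_j)$) yields
\[
\operatorname{Var}(g(G)) \;\ge\; \mathbb{E}[\nabla g(G)]^\top K^{(\ell-1)} \mathbb{E}[\nabla g(G)] \;=\; v^\top D^{(\ell-1)} K^{(\ell-1)} D^{(\ell-1)} v,
\]
with $D^{(\ell-1)} := \operatorname{diag}\big(\mathbb{E}[\sigma'(G_1^{(\ell-1)}(x^{(j)}))]\big)_{j=1}^d$.

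Since $v$ is arbitrary, this gives the PSD ordering $\hat{K}^{(\ell)} \succeq C_W\, D^{(\ell-1)} K^{(\ell-1)} D^{(\ell-1)}$, and monotonicity of the determinant on the PSD cone (Minkowski) yields $\det(\hat{K}^{(\ell)}) \ge C_W^d \det(K^{(\ell-1)}) \prod_j \mathbb{E}[\sigma'(G_1^{(\ell-1)}(x^{(j)}))]^2$. To close the recursion we use $K^{(\ell-1)} = \hat{K}^{(\ell-1)} + C_b\, \mathbf{1}_{d\times d} \succeq \hat{K}^{(\ell-1)}$ (since $\mathbf{1}_{d\times d}$ is rank-one PSD), hence $\det(K^{(\ell-1)}) \ge \det(\hat{K}^{(\ell-1)})$. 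Iterating down to layer $2$ and applying at the base the elementary inequality $\det(K^{(2)}) \ge \lambda(K^{(2)})^d$ produces
\[
\det(\hat{K}^{(\ell)}) \;\ge\; C_W^{d(\ell-2)}\, \lambda(K^{(2)})^d \prod_{k=1}^d \prod_{r=1}^{\ell-2} \mathbb{E}\big[\sigma'(G_1^{(\ell-r)}(x^{(k)}))\big]^2.
\]

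It remains to rewrite this simple double product in the structured form appearing in the theorem. Setting $F(k) := \prod_{r=1}^{\ell-2} \mathbb{E}[\sigma'(G_1^{(\ell-r)}(x^{(k)}))]^2$ (which depends only on $k$), the constrained nested sum can be expanded as
\[
\prod_{i=1}^d \bigg(\sum_{\substack{k_i=1 \\ k_i \ne k_s,\, s<i}}^d F(k_i)\bigg) \;=\; \sum_{\pi \in S_d}\prod_{i=1}^d F(\pi(i)) \;=\; d!\, \prod_{k=1}^d F(k),
\]
and dividing by $d!$ recovers the stated formula. The main technical obstacle is applying Cacoullos' inequality when $\sigma$ is merely Lipschitz: this will be handled by a mollification argument, replacing $\sigma$ by $\sigma \star \rho_\varepsilon$, applying the inequality to the smooth approximants with the uniform bound $\|(\sigma\star\rho_\varepsilon)'\|_\infty \le \|\sigma\|_{\mathrm{Lip}}$, and passing to the limit $\varepsilon \downarrow 0$ via dominated convergence.
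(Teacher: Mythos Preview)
Your argument is correct and takes a genuinely different, more direct route than the paper. The paper represents $\det(\hat K^{(\ell)})$ as $\frac{C_W^d}{d!}\,\mathbb E\big[(\sum_{\rho\in\Sigma_d}\operatorname{sgn}(\rho)\prod_i\sigma(\hat G_i^{(\ell-1)}(x^{(\rho(i))})))^2\big]$ via $d$ independent copies $\hat G_1,\dots,\hat G_d$ of $G_1^{(\ell-1)}(\mathcal X)$ (Lemma~\ref{agg_quad}), then conditions on all but one copy, applies Proposition~\ref{Caco} across layers to that copy, and iterates over copies to produce the nested sum. You bypass the independent-copies construction entirely: applying Proposition~\ref{Caco} to $g(x)=\sum_j v_j\sigma(x_j)$ for arbitrary $v$ yields the matrix inequality $\hat K^{(\ell)}\succeq C_W\,D^{(\ell-1)}K^{(\ell-1)}D^{(\ell-1)}$, after which determinant monotonicity on the PSD cone and the trivial $K^{(\ell-1)}\succeq \hat K^{(\ell-1)}$ close the recursion.

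What each buys: your approach is shorter and produces the stronger intermediate statement $\hat K^{(\ell)}\succeq C_W\,D^{(\ell-1)}K^{(\ell-1)}D^{(\ell-1)}$, which in fact gives an eigenvalue recursion $\lambda(\hat K^{(\ell)})\ge C_W\,\lambda(K^{(\ell-1)})\min_j \mathbb E[\sigma'(G_1^{(\ell-1)}(x^{(j)}))]^2$ directly, without passing through the determinant and Remark~\ref{HS_bondK}. The paper's route makes the combinatorial structure of the constant (the nested distinct-index sum) emerge naturally from the proof, whereas you have to recognise a posteriori that $\prod_{i=1}^d\sum_{k_i\neq k_1,\dots,k_{i-1}}F(k_i)=d!\prod_k F(k)$; your reading of that product as the sum over ordered tuples of distinct indices is the intended one, and the identity is correct. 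Your mollification step to justify Proposition~\ref{Caco} for Lipschitz $\sigma$ is sound: $\sigma_\varepsilon\to\sigma$ uniformly and $\sigma_\varepsilon'\to\sigma'$ a.e.\ with the common bound $\|\sigma\|_{\mathrm{Lip}}$, so both sides of the inequality pass to the limit by dominated convergence.
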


\begin{remark}
The proof of Theorem~\ref{bound_K_gauss} relies on Proposition~\ref{Caco}, which is strongly based on the properties of the Gaussian distribution. In particular, the proof's strategy cannot be directly adapted to yield a lower bound for $\operatorname{det}(K^{(2)})$. 

Recall that $K^{(2)}$ is defined as
\[
K^{(2)}_{i,j} := C_b + C_W \, \mathbb{E}\Big[\sigma(z_1^{(1)}(x^{(i)})) \sigma(z_1^{(1)}(x^{(j)}))\Big] \quad \text{for every } i,j=1,\dots,d,
\]
where, for each $i=1,\dots,d$, 
\[
z_1^{(1)}(x^{(i)}) := b_1 + \frac{\sqrt{C_W}}{\sqrt{n_0}} \sum_{r=1}^{n_0} W_{1,r}^{(1)} x_r^{(i)}
\]
is the first component of the neural network at the first layer, and the random variables $\{W_{1,r}^{(1)}\}_{r=1,\dots,n_0}$ are i.i.d., but not necessarily Gaussian.
\end{remark}

The proof of Theorem~\ref{bound_K_gauss} will follow after having established several technical lemmas.

\begin{lemma}\label{agg_quad}
 For $\ell \geq 3$, consider $d$ independent copies of ${G}_1^{(\ell-1)}(\mathcal{X})$ (as defined in Theorem \ref{Hanin}), denoted by 
\[
\{\hat{G}_j^{(\ell-1)}(\mathcal{X})\}_{j=1,\dots,d},
\]
and define 
\[
\hat{K}^{(\ell)} := K^{(\ell)} - C_b \mathbf{1}_{d \times d},
\]
where $\mathbf{1}_{d \times d}$ is the $d \times d$ matrix with all entries equal to one.
Then, we have
\[
\operatorname{det}(\hat{K}^{(\ell+1)}) = \frac{C_W^d}{d!} \mathbb{E} \left[
\left( \sum_{\rho \in \Sigma_d} \operatorname{sgn}(\rho) \prod_{i=1}^d \sigma\big(\hat{G}_i^{(\ell-1)}(x^{(\rho(i))})\big) \right)^2
\right],
\]
where $\Sigma_d$ denotes the set of all permutations of $d$ elements.

\end{lemma}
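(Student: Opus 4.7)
The plan is to exploit the Leibniz expansion of the determinant together with a symmetrization trick that rewrites it as an average of a perfect square. Specifically, for any $d\times d$ matrix $M$ one has
\[
\det(M)=\sum_{\pi\in\Sigma_d}\operatorname{sgn}(\pi)\prod_{i=1}^d M_{i,\pi(i)}=\frac{1}{d!}\sum_{\rho,\tau\in\Sigma_d}\operatorname{sgn}(\rho)\operatorname{sgn}(\tau)\prod_{i=1}^d M_{\rho(i),\tau(i)},
\]
where the second identity follows from the first by reindexing $j=\rho(i)$ and setting $\pi=\tau\rho^{-1}$, then summing over $\rho$ (using $\operatorname{sgn}(\rho)\operatorname{sgn}(\tau)=\operatorname{sgn}(\tau\rho^{-1})$ together with the invariance of the product under relabeling). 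This symmetric form is the crucial stepping stone, since it prepares the algebraic structure needed to recognize the sum as a square inside a single expectation.

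Next, by Theorem~\ref{Hanin}, the entries of the recentered matrix satisfy
\[
\hat{K}^{(\ell)}_{j,k}=C_W\,\mathbb{E}\!\left[\sigma(G_1^{(\ell-1)}(x^{(j)}))\,\sigma(G_1^{(\ell-1)}(x^{(k)}))\right].
\]
Since each copy $\hat{G}_i^{(\ell-1)}$ has the same law as $G_1^{(\ell-1)}$ and the copies $\hat{G}_1^{(\ell-1)},\dots,\hat{G}_d^{(\ell-1)}$ are mutually independent, I would rewrite every factor of the product using the corresponding i.i.d.\ copy: for fixed permutations $\rho,\tau\in\Sigma_d$,
\[
\prod_{i=1}^d \hat{K}^{(\ell)}_{\rho(i),\tau(i)}=C_W^d\,\mathbb{E}\!\left[\prod_{i=1}^d\sigma(\hat{G}_i^{(\ell-1)}(x^{(\rho(i))}))\,\sigma(\hat{G}_i^{(\ell-1)}(x^{(\tau(i))}))\right],
\]
where independence of the copies is what allows the $d$ individual expectations to be collapsed into one joint expectation of the full product.

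Substituting this into the symmetrized Leibniz formula and regrouping the sums over $\rho$ and $\tau$ inside the expectation yields
\[
\det(\hat{K}^{(\ell)})=\frac{C_W^d}{d!}\,\mathbb{E}\!\left[\left(\sum_{\rho\in\Sigma_d}\operatorname{sgn}(\rho)\prod_{i=1}^d\sigma(\hat{G}_i^{(\ell-1)}(x^{(\rho(i))}))\right)^2\right],
\]
which is the claimed identity (modulo the layer index on the left, which should match the one appearing in the $\hat{G}_i^{(\ell-1)}$ on the right through the recursion of Theorem~\ref{Hanin}). The argument is purely algebraic, so there is no serious analytic obstacle; the only subtle point is bookkeeping the double permutation reindexing so that the $1/d!$ prefactor emerges and the inner double sum genuinely factorizes as a square, which is what will subsequently enable the Cauchy-type lower bound of Proposition~\ref{Caco} to be applied in the proof of Theorem~\ref{bound_K_gauss}.
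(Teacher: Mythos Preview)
Your proposal is correct and follows essentially the same approach as the paper: the symmetrized Leibniz identity you state is exactly the paper's Remark~\ref{oss_det}, and the subsequent use of the recursion from Theorem~\ref{Hanin} together with the independent copies to collapse the product of expectations into a single expectation of a square mirrors the paper's proof line by line. You also correctly flag the layer-index mismatch in the statement (the paper's own proof in fact establishes the identity for $\det(\hat{K}^{(\ell)})$ rather than $\det(\hat{K}^{(\ell+1)})$).
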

\begin{remark}\label{oss_det}
For every matrix $A\in\mathbb{R}^{d\times d}$ one has that
\[
\operatorname{det}A=\sum_{\eta\in\Sigma_d}\operatorname{sgn}(\eta)\prod_{i=1}^{d}A_{i,\eta(i)}=\frac{1}{d!}\sum_{\rho\in\Sigma_d}\sum_{\eta\in\Sigma_d}\operatorname{sgn}(\eta)\operatorname{sgn}(\rho)\prod_{i=1}^{d}A_{\rho(i),\eta(i)}.
\]
In fact
\[
\sum_{\rho\in\Sigma_d}\sum_{\eta\in\Sigma_d}\operatorname{sgn}(\eta)\operatorname{sgn}(\rho)\prod_{i=1}^{d}A_{\rho(i),\eta(i)}=\sum_{\rho\in\Sigma_d}\sum_{\eta\in\Sigma_d}\operatorname{sgn}(\eta\circ\rho^{-1})\prod_{i=1}^{d}A_{\rho(i),(\eta\circ\rho^{-1})(\rho(i))}
\]
\[
=\sum_{\rho\in\Sigma_d}\sum_{\eta\in\Sigma_d}\operatorname{sgn}(\eta)\prod_{i=1}^{d}A_{\rho(i),\eta(\rho(i))}=d!\sum_{\eta\in\Sigma_d}\operatorname{sgn}(\eta)\prod_{i=1}^{d}A_{i,\eta(i)}.
\]
\end{remark}
\begin{proof}
Applying Remark \ref{oss_det} and using the notation introduced in the statement, one has that
\[
 \operatorname{det}(\hat{K}^{(\ell)})=\frac{C_W^d}{d!}\sum_{\rho\in\Sigma_d}\sum_{\eta\in\Sigma_d}\operatorname{sgn}(\rho)\operatorname{sgn}(\eta)\prod_{i=1}^{d}\mathbb{E}\left[\sigma(G_1^{(\ell-1)}(x^{(\rho(i))}))\sigma(G_1^{(\ell-1)}(x^{(\eta(i))}))\right]
\]
\[
=\frac{C_W^d}{d!}\sum_{\rho\in\Sigma_d}\sum_{\eta\in\Sigma_d}\operatorname{sgn}(\rho)\operatorname{sgn}(\eta)\mathbb{E}\left[\prod_{i=1}^{d}\sigma(\hat{G}_i^{(\ell-1)}(x^{(\rho(i))}))\sigma(\hat{G}_i^{(\ell-1)}(x^{(\eta(i))}))\right]
\]

\[
=\frac{C_W^d}{d!}\mathbb{E}\left[\left(\sum_{\rho\in\Sigma_d}\operatorname{sgn}(\rho)\prod_{i=1}^{d}\sigma(\hat{G}_i^{(\ell-1)}(x^{(\rho(i))}))\right)^2\right],
\]
thus concluding the proof.
\end{proof}

In the proof of Theorem \ref{bound_K_gauss} the following proposition, proved by T. Cacoullos in \cite{Ca82}, will be used.

\begin{prop}[Proposition 3.7 in \cite{Ca82}]\label{Caco}
Let $N \sim \mathcal{N}_d(\mu, K)$ with $K \in \mathbb{R}^{d \times d}$, and let $g: \mathbb{R}^d \to \mathbb{R}$ be a differentiable function with gradient $\nabla g$ such that $\mathbb{E}[|\nabla g(N)|]<\infty$. Then

\[
\operatorname{Var}\Big[g(N)\Big]\ge \mathbb{E}\Big[\Big(\nabla g(N)\Big)^T\Big]K \mathbb{E}\Big[\nabla g(N)\Big].
\]
\end{prop}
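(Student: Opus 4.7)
The plan is to combine the multidimensional Gaussian integration by parts formula with the Cauchy--Schwarz inequality applied to an optimally chosen linear functional of $N$. The essential input is the Stein-type covariance identity for Gaussian vectors, which I would derive first and then tensorize against a single carefully selected direction.

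First, I would establish the vector-valued identity
\[
\operatorname{Cov}\bigl(N,\, g(N)\bigr) \;=\; K\,\mathbb{E}\bigl[\nabla g(N)\bigr],
\]
where the left-hand side is the $\mathbb{R}^d$-valued covariance with components $\operatorname{Cov}(N_i, g(N))$. This follows from Lemma \ref{Gauss_integ_multdim} applied componentwise, or equivalently by diagonalizing $K$, reducing to the case of independent Gaussian coordinates, and invoking Lemma \ref{int_gauss_uno} in each direction. Contracting against an arbitrary vector $v \in \mathbb{R}^d$ then yields
\[
\operatorname{Cov}\bigl(v^T N,\, g(N)\bigr) \;=\; v^T K\,\mathbb{E}[\nabla g(N)].
\]

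Next I would specialize to the optimal choice $v := a = \mathbb{E}[\nabla g(N)]$, which turns the preceding identity into the equality $\operatorname{Cov}(a^T N, g(N)) = a^T K a$. Since $a^T N$ is a (univariate) Gaussian with variance $a^T K a$, the classical Cauchy--Schwarz inequality for covariances gives
\[
a^T K a \;=\; \operatorname{Cov}\bigl(a^T N,\, g(N)\bigr) \;\le\; \sqrt{a^T K a}\;\sqrt{\operatorname{Var}(g(N))}.
\]
If $a^T K a > 0$, squaring both sides and dividing by $a^T K a$ yields the desired lower bound $\operatorname{Var}(g(N)) \ge \mathbb{E}[\nabla g(N)]^T K\, \mathbb{E}[\nabla g(N)]$; if instead $a^T K a = 0$, the conclusion is trivial since the right-hand side vanishes while $\operatorname{Var}(g(N)) \ge 0$.

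The only delicate point I anticipate is that Lemma \ref{Gauss_integ_multdim} is stated under the assumption of bounded first and second derivatives, whereas Proposition \ref{Caco} only assumes differentiability together with $\mathbb{E}[|\nabla g(N)|] < \infty$. The main obstacle is therefore purely this regularity gap, and I would close it by a standard mollification argument: set $g_\varepsilon := g \ast \varphi_\varepsilon$ for a smooth compactly supported bump $\varphi_\varepsilon$, apply the two steps above to each $g_\varepsilon$, and pass to the limit as $\varepsilon \downarrow 0$ using dominated convergence combined with $\nabla g_\varepsilon(N) \to \nabla g(N)$ in $L^1$ and $g_\varepsilon(N) \to g(N)$ in $L^2$. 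Once this approximation is justified, the core argument collapses to a two-line consequence of Stein's identity and Cauchy--Schwarz.
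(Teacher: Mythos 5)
Your proposal is correct and takes a genuinely different (more elementary) route than the paper. The paper cites the result from Cacoullos and then offers, as a remark, a Wiener-chaos derivation: it decomposes $g(N)$ via the Stroock formula into its chaos projections, uses orthogonality of chaoses to get $\operatorname{Var}(g(N)) \ge \operatorname{Var}(J_1(g(N)))$, and identifies $\operatorname{Var}(J_1(g(N))) = \mathbb{E}[\nabla g(N)]^T K\, \mathbb{E}[\nabla g(N)]$ via Malliavin-calculus identities. Your argument reaches the same inequality by contracting the Stein covariance identity $\operatorname{Cov}(N, g(N)) = K\,\mathbb{E}[\nabla g(N)]$ against the optimal direction $a = \mathbb{E}[\nabla g(N)]$ and applying Cauchy--Schwarz; the two approaches are two faces of the same ``project onto the linear part'' idea, but yours avoids the chaos machinery entirely and is in fact the classical route to the Cacoullos bound. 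Two small notes. First, like the paper's remark, your argument implicitly needs $\operatorname{Var}(g(N)) < \infty$; this should be flagged as the nontrivial case, since otherwise the inequality is vacuous. Second, your appeal to Lemma \ref{Gauss_integ_multdim} ``componentwise'' is not literally applicable because that lemma is stated with the Hessian rather than the gradient; your parenthetical alternative (diagonalize $K$ and reduce to Lemma \ref{int_gauss_uno} coordinate-by-coordinate) is the correct way to get the first-order identity $\mathbb{E}[(N-\mu)g(N)] = K\,\mathbb{E}[\nabla g(N)]$, and the mollification step you describe properly bridges the regularity gap.
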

{
\begin{remark}
   In the nontrivial case where $\mathbb{E}[g(N)^2]<\infty$ (otherwise the statement of Proposition~\ref{Caco} is immediate),  
Proposition~\ref{Caco} admits a natural interpretation in terms of the Wiener chaos decomposition of the random variable $g(N)$; see \cite{NP12}. By the Stroock formula (Corollary~2.7.8 in \cite{NP12}) together with Proposition~2.7.5 in \cite{NP12}, the square--integrable random variable
$g(N) \in L^2(\Omega)$ admits the orthogonal decomposition
\[
g(N) = \mathbb{E}[g(N)] + J_1(g(N)) + J_{\ge 2}(g(N)),
\]
where $J_1(g(N))$ denotes the projection of $g(N)$ onto the first Wiener chaos and
$J_{\ge 2}(g(N))$ collects all higher--order chaoses. Since Wiener chaoses are orthogonal in $L^2(\Omega)$, it follows that
\begin{equation}\label{proof_ca_chaos}
\operatorname{Var}\big(g(N)\big)
= \operatorname{Var}\!\left(J_1(g(N))\right)
+ \operatorname{Var}\!\left(J_{\ge 2}(g(N))\right)
\ge \operatorname{Var}\!\left(J_1(g(N))\right).
\end{equation}
Assume in addition that $g$ satisfies mild regularity conditions, for instance that its partial derivatives are bounded or that $g$ is Lipschitz continuous.
Then, by Proposition~2.3.7 or Proposition~2.3.8 in \cite{NP12}, combined with Corollary~2.7.8 and Proposition~2.7.5 in \cite{NP12}, one obtains
\[
\operatorname{Var}\!\left(J_1(g(N))\right)
= \mathbb{E}\!\left[\nabla g(N)\right]^T K\, \mathbb{E}\!\left[\nabla g(N)\right].
\]
Together with \eqref{proof_ca_chaos}, this yields exactly the lower bound stated in Proposition~\ref{Caco}.
\end{remark}
}
\begin{proof}[Proof of Theorem \ref{bound_K_gauss}]
Thanks to Lemma \ref{agg_quad} one has that
\[
\operatorname{det}(\hat{K}^{(L+1)})=\frac{C_W^d}{d!}\mathbb{E}\left[\left(\sum_{\rho\in\Sigma_d}\operatorname{sgn}(\rho)\prod_{i=1}^{d}\sigma(\hat{G}_i^{(\ell-1)}(x^{(\rho(i))}))\right)^2\right]
\]

\[
=\frac{C_W^d}{d!}\mathbb{E}\left[\mathbb{E}\left[\left(\sum_{\rho\in\Sigma_d}\operatorname{sgn}(\rho)\prod_{i=1}^{d}\sigma(\hat{G}_i^{(\ell-1)}(x^{(\rho(i))}))\right)^2\Bigg|\Big(\hat{G}_s^{(\ell-1)}(\mathcal{X})\Big)_{s=2,\dots,d}\right]\right]
\]

\[
\ge\frac{C_W^d}{d!}\mathbb{E}\left[\operatorname{Var}\left[\sum_{\rho\in\Sigma_d}\operatorname{sgn}(\rho)\prod_{i=1}^{d}\sigma(\hat{G}_i^{(\ell-1)}(x^{(\rho(i))}))\Bigg|\Big(\hat{G}_s^{(\ell-1)}(\mathcal{X})\Big)_{s=2,\dots,d}\right]\right]
\]

\begin{multline}\label{uso_caco}
\ge \frac{C_W^{d+1}}{d!}\mathbb{E}\Bigg[\Bigg(\sum_{k=1}^d\mathbb{E}\big[\sigma'\big({G}^{(\ell-1)}_{1}(x^{(k)})\big)\big]\sigma\big(\hat{G}^{(\ell-2)}_{1}(x^{(k)})\big)\cdot\\
\cdot \sum_{\substack{\rho\in\Sigma_d\\\rho(1)=k}}\operatorname{sgn}(\rho)\prod_{i=2}^{d}\sigma(\hat{G}_i^{(\ell-1)}(x^{(\rho(i))}))\Bigg)^2\Bigg]
\end{multline}

\begin{multline}\label{iter_caco}
\ge \frac{C_W^{d}C_W^{\ell-3}}{d!}\mathbb{E}\Bigg[\Bigg(\sum_{k=1}^d\left(\prod_{r=1}^{\ell-3}\mathbb{E}\big[\sigma'\big({G}^{(\ell-r)}_{1}(x^{(k)})\big)\big]\right)\sigma\big(\hat{G}^{(2)}_{1}(x^{(k)})\big)\cdot\\
\cdot \sum_{\substack{\rho\in\Sigma_d\\\rho(1)=k}}\operatorname{sgn}(\rho)\prod_{i=2}^{d}\sigma(\hat{G}_i^{(\ell-1)}(x^{(\rho(i))}))\Bigg)^2\Bigg]
\end{multline}

\begin{multline}\label{final_caco}
\ge \frac{C_W^{d}C_W^{\ell-3}}{d!}\lambda(K^{(2)})\sum_{k=1}^d\left(\prod_{r=1}^{\ell-2}\mathbb{E}\big[\sigma'\big({G}^{(\ell-r)}_{1}(x^{(k)})\big)\big]^2\right)\cdot\\
\cdot \mathbb{E}\left[\left(
 \sum_{\substack{\rho\in\Sigma_d\\\rho(1)=k}}\operatorname{sgn}(\rho)\prod_{i=2}^{d}\sigma(\hat{G}_i^{(\ell-1)}(x^{(\rho(i))}))\right)^2\right]
\end{multline}

where we have used Proposition \ref{Caco} in the inequalities \eqref{uso_caco} and \eqref{final_caco}, and iterated it for $\ell\ge 3$ to deduce inequality \eqref{iter_caco}.
We notice that for every $d\ge 2$ we have
\begin{multline*}
\mathbb{E}\left[\left(\sum_{\rho\in\Sigma_d}\operatorname{sgn}(\rho)\prod_{i=1}^{d}\sigma(\hat{G}_i^{(\ell-1)}(x^{(\rho(i))}))\right)^2\right]\\\ge C_W^{\ell-3}\lambda(K^{(2)})\sum_{k=1}^d\left(\prod_{r=1}^{\ell-2}\mathbb{E}\big[\sigma'\big({G}^{(\ell-r)}_{1}(x^{(k)})\big)\big]^2\right) \mathbb{E}\left[\left(
 \sum_{\substack{\rho\in\Sigma_d\\\rho(1)=k}}\operatorname{sgn}(\rho)\prod_{i=2}^{d}\sigma(\hat{G}_i^{(\ell-1)}(x^{(\rho(i))}))\right)^2\right]
\end{multline*}
and therefore one also has that for every $\ell\ge 3$

\[ 
\operatorname{det}(\hat{K}^{(\ell)})\ge \frac{ C_W^{d(\ell-2)}}{d!}\lambda(K^{(2)})^{d}\prod_{i=1}^d\left(\sum_{\substack{k_i=1\\k_i\ne k_s\, \forall s=1,\dots,i-1}}^d\prod_{r_i=1}^{\ell-2}\mathbb{E}\big[\sigma'(G^{(\ell-r_i)}_1(x^{(k_i)}))\big]^2\right).
\]
\end{proof}

\begin{remark}\label{HS_bondK}
Thanks to Theorem \ref{bound_K_gauss}, when $C_W \neq 0$, it is possible to provide an upper bound on $(\lambda(K^{(\ell)}))^{-1}$ for every $\ell = 3, \dots, L+1$. 
Observe first that, denoting $\hat{K}^{(\ell)} := K^{(\ell)} - C_b \mathbf{1}_{d \times d}$, where $\mathbf{1}_{d \times d}$ is the matrix of ones, we have
\[
\lambda(K^{(\ell)}) = \min_{\|y\|=1} y^T K^{(\ell)} y = C_b \sum_{i,j=1}^d y_i y_j + y^T \hat{K}^{(\ell)} y \geq \lambda(\hat{K}^{(\ell)}),
\]
where $\lambda(\hat{K}^{(\ell)})$ denotes the minimum eigenvalue of $\hat{K}^{(\ell)}$. Therefore

\begin{multline*}
\frac{1}{\lambda(K^{(\ell)})}\le \frac{1}{\lambda(\hat{K}^{(\ell)})}\le \frac{\Big(\operatorname{tr}(\hat{K}^{(\ell)})\Big)^{d-1}}{\operatorname{det}(\hat{K}^{(\ell)})}
\le \lambda(K^{(2)})^{-d} \left(C_W\sum_{j=1}^d\mathbb{E}\Big[\sigma^2(G_1^{(\ell-1)}(x^{(j)}))\Big]\right)^{d-1}\cdot\\
\cdot\frac{d!}{C_W^{d(\ell-2)}}\prod_{i=1}^d\left(\sum_{\substack{k_i=1\\k_i\ne k_s\, \forall s=1,\dots,i-1}}^d\prod_{r_i=1}^{\ell-2}\mathbb{E}\big[\sigma'(G^{(\ell-r_i)}_1(x^{(k_i)}))\big]^2\right)^{-1}
\end{multline*}
\begin{multline*}
   \le \lambda(K^{(2)})^{-d}\Bigg((1+C_W)^2(1+(2C_W\|\sigma\|_{Lip}^2)^{\ell-1})\Big(d\ell( C_b+2|\sigma(0)|^2)+\frac{\sum_{j=1}^d\|x^{(j)}\|^2}{n_0}\Big)\Bigg)^{d-1}\cdot\\
   \cdot \frac{d!}{C_W^{d(\ell-2)}}\prod_{i=1}^d\left(\sum_{\substack{k_i=1\\k_i\ne k_s\, \forall s=1,\dots,i-1}}^d\prod_{r_i=1}^{\ell-2}\mathbb{E}\big[\sigma'(G^{(\ell-r_i)}_1(x^{(k_i)}))\big]^2\right)^{-1}.
\end{multline*}
\end{remark}

\subsection{Final result}
The following theorem, proved in Appendix B, gives an explicit bound on the convex distance between the neural network evaluated at multiple points and its Gaussian limit.

\begin{theorem}\label{th_fin_mult_dim}
Let \( L \geq 1 \) be an integer, and assume \( C_b,C_W \neq 0 \). Let the neural network output \( z_1^{(L+1)} \), the activation function \( \sigma \), and the weight distribution \( W \) be defined as in Definition~\ref{def_NN}. Recall the Gaussian limit \( G_1^{(L+1)} \) from Theorem~\ref{Hanin}.
Suppose that \( \mathbb{E}[W^{5 \cdot 2^{L+2}}] < \infty \), and let \( d \geq 2 \) be an integer. Consider a set of distinct input points \( \mathcal{X} := \{x^{(1)}, \dots, x^{(d)}\} \subset \mathbb{R}^{n_0} \), and define
\[
z_1^{(L+1)}(\mathcal{X}) := \big(z_1^{(L+1)}(x^{(1)}), \dots, z_1^{(L+1)}(x^{(d)})\big) \in \mathbb{R}^{d \times n_{L+1}},
\]
and similarly define the Gaussian limit
\[
G_1^{(L+1)}(\mathcal{X}) := \big(G_1^{(L+1)}(x^{(1)}), \dots, G_1^{(L+1)}(x^{(d)})\big) \in \mathbb{R}^{d \times n_{L+1}}.
\]

If the covariance matrix \( K^{(2)} \) from Theorem~\ref{Hanin} satisfies \( \operatorname{det}(K^{(2)}) \neq 0 \) and for every $\ell=2,\dots,L$ and $i=1,\dots d-1$ we have that $\mathbb{E}\big[\sigma'(G^{(\ell)}_1(x^{(i)}))\big]\ne 0$, then
\begin{multline*}
d_C(z_1^{(L+1)}(\mathcal{X}),G_1^{(L+1)}(\mathcal{X}))\le 
 Cd^{d+6}(L + 1)^{3L+d+20}
\Big(\sum_{j=1}^{L}\frac{1}{n_j}\Big)^{1/2}\left(1+C_W+\frac{1}{C_W}\right)^{4d+dL-4}\cdot\\
\cdot (1+C_b)^{2d-2}(1+|\sigma(0)|)^4\Bigg(9d+\frac{\sum_{j=1}^d\|x^{(j)}\|^4}{n_0^2}\Bigg)^{d-1}\Big(2+(2C_W\|\sigma\|_{Lip}^2)^{2L}\Big)^{L+d}\cdot\\
\cdot \Bigg(2+ \Big( K\frac{5\cdot 2^{L+1}}{\log(5\cdot 2^{L+1})}  \sqrt{C_W} \|\sigma\|_{\mathrm{Lip}} \mathbb{E}[(W_{1,1}^{(1)})^{5\cdot 2^{L+1}}]^{1/(5\cdot 2^{L+1})} \Big)^{L - 1}\Bigg)^{\frac{L}{2}+24}\cdot\\
\cdot
 \Bigg(\sum_{i=1}^d\Big(1+\left(M^{(L)}_2(x^{(i)})\right)^{\frac{L}{2}+2}\Big)\Bigg)
  2^{4L}   \Big(
 4\sqrt{5\cdot 2^L }
+ K\frac{5\cdot 2^{L+1}}{\log(5\cdot 2^{L+1})}
\Big)^{2L+8}\cdot\\
\cdot \left(1+d!\lambda(K^{(2)})^{-2d}\sum_{\ell=2}^{L+1}\prod_{i=1}^d\left(\sum_{\substack{k_i=1\\k_i\ne k_s\, \forall s=1,\dots,i-1}}^d\prod_{r_i=1}^{\ell-2}\mathbb{E}\big[\sigma'(G^{(\ell-r_i)}_1(x^{(k_i)}))\big]^2\right)^{-1}\right),
\end{multline*}
where $C,K>0$ are numeric constants that do not depend on any parameter of the neural network, $K$ is defined as in Lemma \ref{mom_p_iid0}, and for every $j=1,\dots,d$, $M_2^{(L)}(x^{(j)})$ is defined as in Lemma \ref{Q_2}.

\end{theorem}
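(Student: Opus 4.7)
The plan is to assemble the theorem from the four main ingredients already developed in the paper: (i) the Stein-type bound in Theorem~\ref{mod_KG}, (ii) the recursive control on the off-diagonal terms $B_\ell(x^{(i)},x^{(j)})$ in Lemma~\ref{somma_schifa} together with Remark~\ref{bound_V_1}, (iii) the moment estimates of Proposition~\ref{mom_z_prop} and Lemma~\ref{cost_schif} for $Q_{2p}^{(\ell)}(x^{(i)})$, and (iv) the determinant lower bound of Theorem~\ref{bound_K_gauss}, translated into a bound on $\|(K^{(L+1)})^{-1}\|_{\mathrm{op}}$ via Remark~\ref{HS_bondK}. The main structural issue is to track the multiplicative constants so that the final expression matches the statement exactly; beyond this, each step is a direct application of the results cited above.

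First, I would start from Theorem~\ref{mod_KG}, which yields
\[
d_C\bigl(z_1^{(L+1)}(\mathcal{X}),G_1^{(L+1)}(\mathcal{X})\bigr) \leq C\, d^4\sqrt{C_W}(1+C_W)\,\max\{1,\|(K^{(L+1)})^{-1}\|_{\mathrm{op}}^2\}\,\mathbb{E}[|W_{1,1}^{(1)}|^6]^{1/2}\,\Upsilon_L,
\]
where $\Upsilon_L$ collects the moment terms $\mathbb{E}[\|\sigma(z_1^{(L)}(\mathcal{X}))\|^6]^{1/2}$, $\mathbb{E}[\|\sigma(G_1^{(L)}(\mathcal{X}))\|^4]^{1/2}$ and the crucial quantity $\bigl(\sum_{j,k}\mathbb{E}[(B_L(x^{(j)},x^{(k)}))^2]\bigr)^{1/2}$. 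Using the Lipschitz property of $\sigma$ and Proposition~\ref{mom_z_prop}, the pure moment part of $\Upsilon_L$ is absorbed into the factors of $M^{(L)}_2(x^{(i)})$ appearing in the statement. For the diagonal contribution (the terms with $j=k$), I would note that $\mathbb{E}[(B_\ell(x^{(i)},x^{(i)}))^2] = Q_2^{(\ell)}(x^{(i)})/C_W^2$ up to constants, and apply Lemma~\ref{cost_schif} with $p=1$.

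For the off-diagonal terms, the heart of the argument is Lemma~\ref{somma_schifa}, giving
\[
\sum_{i\neq j}\mathbb{E}\bigl[(B_\ell(x^{(i)},x^{(j)}))^2\bigr] \;\leq\; \sum_{u=0}^{\ell-2} V_1^{(\ell-u)(n)}(\mathcal{X})\,V_2^{u}.
\]
Applying this with $\ell = L$, and substituting the explicit form of $V_1^{(k)(n)}(\mathcal{X})$ derived in Remark~\ref{bound_V_1}, one sees that every factor $Q_4^{(k-1)}(x^{(i)})$ contributes a $\sum_{j=1}^{k-1}\tfrac{1}{n_j}$ via Lemma~\ref{cost_schif} applied with $p=2$, which under the square root produces the final $\bigl(\sum_{j=1}^{L}\tfrac{1}{n_j}\bigr)^{1/2}$. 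Summing the geometric-type series in $u$ contributes the factor $(2+(2C_W\|\sigma\|_{\mathrm{Lip}}^2)^{2L})^{L+d}$, while the $(L+1)^{3L+d+20}$, the factors involving $\|\sigma\|_{\mathrm{Lip}},|\sigma(0)|,C_b,C_W,\mathbb{E}[(W_{1,1}^{(1)})^{5\cdot 2^{L+1}}]$, and the term $\bigl(9d + \tfrac{\sum_j\|x^{(j)}\|^4}{n_0^2}\bigr)^{d-1}$ come from collecting, respectively, the combinatorial $d$-dependence in Lemma~\ref{somma_schifa}, the powers of $M^{(L)}_2(x^{(i)})$ in Lemma~\ref{cost_schif}, and the first-layer moment $\mathbb{E}[(z_1^{(1)}(x^{(i)}))^p]$ from Proposition~\ref{mom_z_prop}.

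Finally, I would handle $\|(K^{(L+1)})^{-1}\|_{\mathrm{op}}^2$ via Remark~\ref{HS_bondK}, which, combined with Theorem~\ref{bound_K_gauss}, yields
\[
\|(K^{(L+1)})^{-1}\|_{\mathrm{op}}^2 \;\leq\; \lambda(K^{(2)})^{-2d}\,d!\,\prod_{i=1}^d\Bigl(\!\!\sum_{\substack{k_i=1\\k_i\neq k_s\forall s<i}}^d\prod_{r_i=1}^{L-1}\mathbb{E}[\sigma'(G^{(L+1-r_i)}_1(x^{(k_i)}))]^2\!\Bigr)^{-1}
\]
up to powers of $C_W$, $C_b$, and the trace factors, which are absorbed into $(1+C_W+1/C_W)^{4d+dL-4}$ and $(1+C_b)^{2d-2}$. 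The sum over $\ell=2,\dots,L+1$ in the final bound appears naturally once one takes the maximum of $1$ and the product expression and uses the monotonicity coming from $C_b\neq 0$ to absorb the $\ell=2$ (trivial) term. The hardest part of the proof is not conceptual but bookkeeping: namely, verifying that the degree of every exponent and the shape of every polynomial factor in the final bound is consistent with the composite of Lemmas~\ref{somma_schifa}--\ref{cost_schif} and Proposition~\ref{mom_z_prop}, particularly the exponent $L/2+24$ on the moment factor, which arises from multiplying the $L/2+47$ exponent in $V_1^{(k)(n)}$ by the square root taken at the end and combining it with other polynomial contributions. This requires nothing more than careful algebraic reduction, and the resulting constant $C>0$ is universal.
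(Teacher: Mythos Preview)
Your proposal is correct and follows essentially the same route as the paper's own proof: start from Theorem~\ref{mod_KG}, bound the moments via Proposition~\ref{mom_z_prop}, control $\sum_{j,k}\mathbb{E}[(B_L(x^{(j)},x^{(k)}))^2]$ through Lemma~\ref{somma_schifa}, Remark~\ref{bound_V_1} and Lemma~\ref{cost_schif}, and then replace every occurrence of $\lambda(K^{(\ell)})^{-1}$ (for $\ell=2,\dots,L+1$, coming both from the $\max\{1,\|(K^{(L+1)})^{-1}\|_{\mathrm{op}}^2\}$ factor and from the $\|(K^{(k)})^{-1}\|_{\mathrm{HS}}$ terms inside $V_1^{(k)(n)}$) by the explicit bound of Remark~\ref{HS_bondK}. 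The paper's proof is itself only a sketch (``after some computations\ldots we obtain the desired bound''), and your write-up actually gives more detail about the origin of each factor; the only minor imprecision is that the sum $\sum_{\ell=2}^{L+1}$ in the final expression arises not from a monotonicity argument but directly from the factor $\sum_{k=2}^{\ell}(1+\lambda(K^{(k)})^{-2})^2$ in Remark~\ref{bound_V_1} combined with the additional $\|(K^{(L+1)})^{-1}\|_{\mathrm{op}}^2$ from Theorem~\ref{mod_KG}.
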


An immediate consequence of the previous theorem is the next Proposition \ref{fin_relu}, where the activation function $\sigma$ is the ReLU activation, defined as $\sigma(x):=x\,\mathbf{1}_{\{x\ge 0\}}$ for every $x \in \mathbb{R}$. In this case, it is interesting to notice that the upper bound on the convex distance between the neural network and its Gaussian limit is completely explicit in terms of $L, n_1, \dots, n_L$.

\begin{prop}\label{fin_relu}
Assume the hypotheses of Theorem~\ref{th_fin_mult_dim} and let the activation function be the ReLU function, defined by $\sigma(x) := x \mathbf{1}_{\{x \geq 0\}}$ for all $x \in \mathbb{R}$. Then the following bound holds:

\begin{multline}\label{d_c_relu}
d_C(z_1^{(L+1)}(\mathcal{X}),G_1^{(L+1)}(\mathcal{X}))\le 
 Cd^{d+6}(L + 1)^{3L+d+20}
\Big(\sum_{j=1}^{L}\frac{1}{n_j}\Big)^{1/2}\left(1+C_W+\frac{1}{C_W}\right)^{4d+dL-4}\cdot\\
\cdot (1+C_b)^{2d-2}\Bigg(9d+\frac{\sum_{j=1}^d\|x^{(j)}\|^4}{n_0^2}\Bigg)^{d-1}\Big(2+(2C_W)^{2L}\Big)^{L+d}\cdot\\
\cdot \Bigg(2+ \Big( K\frac{5\cdot 2^{L+1}}{\log(5\cdot 2^{L+1})}  \sqrt{C_W} \mathbb{E}[(W_{1,1}^{(1)})^{5\cdot 2^{L+1}}]^{1/(5\cdot 2^{L+1})} \Big)^{L - 1}\Bigg)^{\frac{L}{2}+24}\cdot\\
\cdot
 \Bigg(\sum_{i=1}^d\Big(1+\left(M^{(L)}_2(x^{(i)})\right)^{\frac{L}{2}+2}\Big)\Bigg)
  2^{4L}   \Big(
 4\sqrt{5\cdot 2^L }
+ K\frac{5\cdot 2^{L+1}}{\log(5\cdot 2^{L+1})}
\Big)^{2L+8} \left(1+\frac{4^{L}}{\lambda(K^{(2)})^{2d}}\right)
\end{multline}
where for every $x\in\mathbb{R}^{n_0}$
\begin{multline*}
    M_2^{(L)}(x)={5^{4}}2^{52}(1+C_W)^{38}\Big(1+\frac{\|x\|}{\sqrt{n_0}}\Big)^{48} \Big(2+C_b+\frac{1}{C_b}+\frac{1}{C_b^2}\Big)^{24}\Big(1+\mathbb{E}[(W_{1,1}^{(1)})^{5\cdot 2^{L+2}}]^{\frac{1}{2^L}}\Big)^3.
\end{multline*}
\end{prop}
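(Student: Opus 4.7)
The plan is to derive Proposition~\ref{fin_relu} as a direct specialization of Theorem~\ref{th_fin_mult_dim} to the case $\sigma(x)=x\mathbf{1}_{\{x\ge 0\}}$. First I would verify that the ReLU satisfies the hypotheses of Theorem~\ref{th_fin_mult_dim}: it is $1$-Lipschitz (so $\|\sigma\|_{\mathrm{Lip}}=1$) with $\sigma(0)=0$, and $\sigma'(x)=\mathbf{1}_{\{x\ge 0\}}$ almost everywhere. The assumption $\det(K^{(2)})\ne 0$ is already present in the hypotheses transferred from Theorem~\ref{th_fin_mult_dim}. Moreover, since $C_b>0$, for each $\ell\ge 2$ and $i=1,\dots,d$ the random variable $G_1^{(\ell)}(x^{(i)})$ is centered Gaussian with variance $K_{i,i}^{(\ell)}\ge C_b>0$, hence
\[
\mathbb{E}\big[\sigma'(G_1^{(\ell)}(x^{(i)}))\big]=\mathbb{P}\big(G_1^{(\ell)}(x^{(i)})\ge 0\big)=\tfrac{1}{2}\neq 0,
\]
which verifies the remaining non-degeneracy condition needed to invoke Theorem~\ref{th_fin_mult_dim}.

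Second, I would plug the ReLU-specific values into the explicit bound of Theorem~\ref{th_fin_mult_dim}. Substituting $\|\sigma\|_{\mathrm{Lip}}=1$ in the factor $(2+(2C_W\|\sigma\|_{\mathrm{Lip}}^2)^{2L})^{L+d}$ yields $(2+(2C_W)^{2L})^{L+d}$, while $|\sigma(0)|=0$ removes the factor $(1+|\sigma(0)|)^4$. Analogous simplifications propagate through the remaining prefactors and through the definition of $M_2^{(L)}(x)$, reducing $M_2^{(L)}(x^{(i)})$ to the explicit quantity stated in the proposition.

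Third, the key combinatorial step is to evaluate the final factor appearing in Theorem~\ref{th_fin_mult_dim}, namely
\[
d!\,\lambda(K^{(2)})^{-2d}\sum_{\ell=2}^{L+1}\prod_{i=1}^d\left(\sum_{\substack{k_i=1\\ k_i\ne k_s\,\forall s<i}}^d\prod_{r_i=1}^{\ell-2}\mathbb{E}\big[\sigma'(G_1^{(\ell-r_i)}(x^{(k_i)}))\big]^2\right)^{\!-1}.
\]
Since every derivative expectation equals $\tfrac12$, each inner product collapses to $4^{-(\ell-2)}$, the constrained inner sum over $k_i$ contributes the factor $(d-i+1)$, and the outer product over $i=1,\dots,d$ gives $d!\cdot 4^{-d(\ell-2)}$. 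Taking the reciprocal and summing in $\ell$ via the geometric series bound
\[
\sum_{\ell=2}^{L+1} \frac{4^{d(\ell-2)}}{d!}\le \frac{4^{dL}}{d!\,(4^d-1)},
\]
one obtains a contribution of order $\lambda(K^{(2)})^{-2d}\cdot 4^{dL}$, which, after absorbing the combinatorial constants into the universal constant $C$ and using $4^{dL}\le (4^L)^d$ together with the dimensional prefactors already present in the bound, yields the final factor $\bigl(1+4^L/\lambda(K^{(2)})^{2d}\bigr)$ displayed in~\eqref{d_c_relu}.

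The only mildly delicate point is the interpretation of $\sigma'$ at the non-differentiability point $\{0\}$; this is harmless because $G_1^{(\ell)}(x^{(i)})$ has a density and therefore $\mathbb{P}(G_1^{(\ell)}(x^{(i)})=0)=0$, so $\mathbb{E}[\sigma'(G_1^{(\ell)}(x^{(i)}))]$ is unambiguously $1/2$ regardless of the convention chosen at the origin. No further analytic work beyond direct substitution and the geometric sum above is required, since all regularity conditions used in the proof of Theorem~\ref{th_fin_mult_dim} are satisfied (the Lipschitz regularity alone suffices there, not differentiability of $\sigma$ at every point).
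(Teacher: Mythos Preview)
Your approach is correct and essentially identical to the paper's: the paper's proof simply invokes Theorem~\ref{th_fin_mult_dim} with the ReLU-specific values $\|\sigma\|_{\mathrm{Lip}}=1$, $\sigma(0)=0$, and $\mathbb{E}[\sigma'(G_1^{(\ell)}(x^{(i)}))]=\tfrac12$, and you carry out exactly this specialization with more detail (including the explicit combinatorial evaluation of the last factor, which the paper omits entirely). Your final absorption of $4^{dL}$ into the displayed factor $(1+4^{L}/\lambda(K^{(2)})^{2d})$ via ``dimensional prefactors'' is the only hand-wavy step, but the paper's own proof is equally silent on this piece of bookkeeping.
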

{
\begin{proof}
To obtain the bound in \eqref{d_c_relu}, it suffices to apply Theorem~\ref{th_fin_mult_dim} and note that for the ReLU activation function we have $\|\sigma\|_{\mathrm{Lip}} = 1$, $\sigma(0) = 0$, and $\sigma'(x) = 1_{\{x > 0\}}$ for $x \neq 0$.  
Consequently, 
\[
\mathbb{E}\big[\sigma'(G^{(\ell)}_1(x^{(i)}))\big] = \frac{1}{2}
\]
for every $\ell = 1,\dots,L+1$ and every $i = 1,\dots,d$.

\end{proof}

In the next subsection, we study another special case, namely when the activation function is linear.

}

\subsection{A special case: the identity activation function}\label{id:mult}

In this section, we consider the case where the activation function introduced in Definition~\ref{def_NN} is the identity:
\[
\sigma(x) := x \quad \text{for all } x \in \mathbb{R}.
\]
As in the proof of Proposition~\ref{mom_z_prop}, it follows that for every $\ell = 1, \dots, L$

\begin{multline*}
\mathbb{E}\Big[\|z_1^{(\ell)}(\mathcal{X})\|^6\Big]^{1/2}\le 8\sqrt{15}\ell C_b^{3/2}d\sqrt{d}\Big(108\max\{C_W,1\}^{3/2}\mathbb{E}[(W_{1,1}^{(1)})^6]^{1/2}\Big)^\ell\\
+\sqrt{d}\Big(108C_W^{3/2}\mathbb{E}[(W_{1,1}^{(1)})^6]^{1/2}\Big)^\ell\sum_{i=1}^d\sqrt{\sum_{j=1}^{n_0}\frac{|x_j^{(i)}|^6}{{n_0}}}
\end{multline*}
and
\begin{multline}\label{mom_4_nn_id}
\mathbb{E}\Big[\|z_1^{(\ell)}(\mathcal{X})\|^4\Big]^{1/2}\le 2\sqrt{3}(\ell+1)C_bd\Big(8\max\{C_W,1\}\mathbb{E}[(W_{1,1}^{(1)})^4]^{1/2}\Big)^\ell\\
+\Big(8C_W\mathbb{E}[(W_{1,1}^{(1)})^4]^{1/2}\Big)^\ell\sum_{i=1}^d\sqrt{\sum_{j=1}^{n_0}\frac{|x_j^{(i)}|^4}{{n_0}}}.
\end{multline}
On the other hand, by exploiting the properties of Gaussian distributions, we obtain that for every $\ell = 1, \dots, L$,

\begin{equation}\label{mom_4_G_id}
\mathbb{E}\Big[\|G_1^{(\ell)}(\mathcal{X})\|^4\Big]^{1/2} \le \sqrt{3}d(\ell+2)C_b\big(\max\{C_W,1\}\big)^{\ell+1} + \sqrt{3}C_W^{\ell+1}\sum_{i=1}^d \frac{\|x^{(i)}\|^2}{n_0}.
\end{equation}

We now observe that, recalling definition~\eqref{B_l},

\begin{multline*}
\sum_{i,j=1}^d\mathbb{E}\Big[B_L(x^{(i)},x^{(j)})^2\Big]\\
=\sum_{i,j=1}^d\mathbb{E}\Big[\Big|\mathbb{E}\Big[z_1^{(L)}(x^{(i)})z_1^{(L)}(x^{(j)})|\mathcal{F}_{L-1}\Big]-\mathbb{E}\Big[G_1^{(L-1)}(x^{(i)})G_1^{(L-1)}(x^{(j)})\Big]\Big|^2\Big]
\end{multline*}
\[
=\sum_{i,j=1}^d\mathbb{E}\Big[\Big|\frac{C_W}{n_{L-1}}\sum_{k=1}^{n_{L-1}}\Big(z_k^{(L-1)}(x^{(i)})z_k^{(L-1)}(x^{(j)})-\mathbb{E}\Big[G_k^{(L-1)}(x^{(i)})G_k^{(L-1)}(x^{(j)})\Big]\Big)\Big|^2\Big]
\]
\begin{equation}\label{iter_B_id}
\le \frac{2C_W^2}{n_{L-1}}\Big(\mathbb{E}\Big[\|z_1^{(L-1)}(\mathcal{X})\|^4\Big]+\mathbb{E}\Big[\|G_1^{(L-1)}(\mathcal{X})\|^4\Big]\Big)+C_W^2\sum_{i,j=1}^d\mathbb{E}\Big[B_{L-1}(x^{(i)},x^{(j)})^2\Big],
\end{equation}
using the recursive definition of the covariance matrix of $G^{(L)}_1(\mathcal{X})$ (see Theorem~\ref{Hanin}), together with the independence of the components of the neural network $z^{(L-1)}(\mathcal{X})$ conditional on the $\sigma$-field generated by the weights and biases up to layer $L-2$.

Iterating inequality~\eqref{iter_B_id}, we obtain

\[
\sum_{i,j=1}^d\mathbb{E}\Big[B_L(x^{(i)},x^{(j)})^2\Big]\le 2\sum_{k=1}^{L-1} \frac{C_W^{2k}}{n_{L-k}}\Big(\mathbb{E}\Big[\|z_1^{(L-k)}(\mathcal{X})\|^4\Big]+\mathbb{E}\Big[\|G_1^{(L-k)}(\mathcal{X})\|^4\Big]\Big)
\]
and therefore, by applying inequalities~\eqref{mom_4_nn_id} and~\eqref{mom_4_G_id},

\begin{multline*}
    \Bigg(\sum_{i,j=1}^d\mathbb{E}\Big[B_L(x^{(i)},x^{(j)})^2\Big]\Bigg)^{1/2}\le \sqrt{2}\Bigg(4\sqrt{3}dLC_b\Big(8\max\{C_W,1\}^2\mathbb{E}[(W_{1,1}^{(1)})^4]^{1/2}\Big)^L\\
    +\Big(8\max\{C_W,1\}^2\mathbb{E}[(W_{1,1}^{(1)})^4]^{1/2}\Big)^L\sum_{i=1}^d\Big(\frac{\|x^{(i)}\|^2}{n_0}+\sqrt{\sum_{j=1}^{n_0}\frac{|x^{(i)}_j|^4}{n_0}}\Big)\Bigg)\sqrt{\sum_{k=1}^{L-1}\frac{1}{{n_{L-k}}}}.
\end{multline*}
To obtain a bound on the convex distance $d_C(z_1^{(L+1)}(\mathcal{X}),G_1^{(L+1)}(\mathcal{X}))$ in this specific case, it remains to estimate from below the smallest eigenvalue of $K^{(L+1)}$, denoted by $\lambda(K^{(L+1)})$. For every $y \in \mathbb{R}^d$ with $\|y\| = 1$, we have
\[
y^T K^{(L+1)} y \ge C_W y^T K^{(L)} y \ge \dots \ge C_W^{L-1} y^T K^{(2)} y,
\]
and therefore,
\[
\lambda(K^{(L+1)}) \ge C_W^{L-1} \lambda(K^{(2)}).
\]

\vspace{0.5em}

If $\mathbb{E}[(W_{1,1}^{(1)})^6] < \infty$ and $\det(K^{(2)}) \ne 0$, then, by Theorem~\ref{mod_KG},

\begin{multline}\label{d_c_id_mult_noG}
   d_C(z_1^{(L+1)}(\mathcal{X}),G_1^{(L+1)}(\mathcal{X}))\le Cd^4\sqrt{d}\Big(d+\sum_{i=1}^d\frac{\|x^{(i)}\|^2}{n_0}+2\sum_{i=1}^d\sqrt{\sum_{j=1}^{n_0}\frac{|x_j^{(i)}|^6+|x_j^{(i)}|^4}{n_0}}\Big)\cdot\\
    \cdot \sqrt{C_W}(1+C_W)(1+C_b+LC_b(1+C_b))\mathbb{E}[(W_{1,1}^{(1)})^6]^{1/2}\max\Bigg\{1,\frac{1}{C_W^{2(L-1)}\lambda(K^{(2)})^2}\Bigg\}\Big(\sum_{k=0}^{L-1}\frac{1}{n_{L-k}}\Big)^{1/2}\cdot\\
    \cdot \Big(108\max\{1,C_W\}^2\mathbb{E}[(W_{1,1}^{(1)})^6]^{1/2}\Big)^{L+1},
\end{multline}
where $C > 0$ is a universal constant independent of all parameters of the neural network.

\begin{remark}
    The bound in \eqref{d_c_id_mult_noG} does not make explicit the dependence on the parameters $d$ and $n_0$, but it shows, under the sole assumption that $\mathbb{E}[(W_{1,1}^{(1)})^6] < \infty$ and that $\det(K^{(2)}) \ne 0$, how the parameters $L, n_1, \dots, n_L$ affect the convergence in relation to the values of $C_b$, $C_W$, and $\mathbb{E}[(W_{1,1}^{(1)})^6]$. 

{
   It is worth noting that when $C_b = 0$ and $C_W = 1$, the condition $\frac{L}{n} \to 0$ as $n, L \to \infty$ (with $n \approx n_1, \dots, n_L$) is not sufficient to guarantee that the upper bound in \eqref{d_c_id_mult_noG} converges to zero.  
This contrasts with Example~\ref{ex_id_G_1d}, where with Gaussian initialization both the Kolmogorov and Wasserstein distances converge to zero under the same scaling of $L$ and $n$.  
In particular, in Example~\ref{ex_id_G_1d} the choice of Gaussian weights ensures that all moments of the network depend only on the second moment, which remains bounded with $L$ when $C_W = 1$ and $C_b = 0$.  
We now show that the same phenomenon occurs in the multidimensional case.
} In fact, if we now assume that $W_{i,j}^{(\ell)} \sim \mathcal{N}_1(0,1)$ for all $i,j,k$, and we repeat the computations for the general linear case, we obtain:
\begin{multline*}
    d_C(z_1^{(L+1)}(\mathcal{X}), G_1^{(L+1)}(\mathcal{X})) \le 541\, d^4 \sqrt{C_W}(1 + C_W) \max\left\{1, \frac{1}{C_W^{2(L-1)} \lambda(K^{(2)})^2} \right\} \cdot \\
    \cdot \mathbb{E}[(W_{1,1}^{(1)})^6]^{1/2} \max\{1, C_W\}^{2(L+1)} \left(d + \sum_{i=1}^d \frac{\|x^{(i)}\|^2}{n_0} \right)^{3/2} (C_b L + 1)^{3/2} \sqrt{\sum_{k=0}^{L-1} \frac{1}{n_{L-k}}},
\end{multline*}
where $C > 0$ is a universal constant.

It follows that, when $C_b = 0$ and $C_W = 1$, 
\[
d_C(z_1^{(L+1)}(\mathcal{X}), G_1^{(L+1)}(\mathcal{X})) = O\left(\sqrt{\sum_{k=0}^{L-1} \frac{1}{n_{L-k}}} \right),
\]
which coincides with the one-dimensional case described in Example~\ref{ex_id_G_1d}.
\end{remark}

{
\section*{Acknowledgments}
The author was supported by the Luxembourg National Research Fund via the grant
PRIDE/21/16747448/MATHCODA. The author is grateful to Giovanni Peccati for guidance and to Larry Goldstein, Boris Hanin, Domenico Marinucci, Ivan Nourdin and Dario Trevisan for several useful discussions.

}

\bibliographystyle{plain}
\bibliography{references}
\newpage

\section{Appendix A}\label{Appendix_A}
In this Appendix, we collect all results that are necessary for the proofs of the main theorems, as presented in Section~\ref{uno_d}.

\subsection{Proof of Lemma \ref{I_1,2,3}}
Prior to proving Lemma~\ref{I_1,2,3}, we present some auxiliary results.  
The next lemma follows from of a simple computation.

\begin{lemma}\label{lips}
For every $y\in\mathbb{R}$ and $x\in\mathbb{R}^{n_0}$
\begin{equation}\label{lips2}
|\sigma^2(y)-\mathbb{E}[\sigma^2(G_{1}^{(\ell)}(x))]|\le 2\|\sigma\|_{\text{Lip}}^2y^2+4|\sigma(0)|^2
+2\|\sigma\|_{\text{Lip}}^2K^{(\ell)}.
\end{equation}
\end{lemma}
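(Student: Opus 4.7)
The plan is to exploit the Lipschitz property of $\sigma$ to bound $\sigma^2(y)$ pointwise and then bound the expectation by using the second moment of the Gaussian $G_1^{(\ell)}(x)$.

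First I would observe that, since $\sigma$ is Lipschitz, for every $y \in \mathbb{R}$ one has
\[
|\sigma(y)| \le |\sigma(y)-\sigma(0)| + |\sigma(0)| \le \|\sigma\|_{\text{Lip}}\,|y| + |\sigma(0)|,
\]
and therefore, by the elementary inequality $(a+b)^2 \le 2a^2+2b^2$,
\[
\sigma^2(y) \le 2\|\sigma\|_{\text{Lip}}^2 y^2 + 2|\sigma(0)|^2.
\]
Applying the same bound to $G_1^{(\ell)}(x) \sim \mathcal{N}_1(0,K^{(\ell)})$ and taking expectations, together with the identity $\mathbb{E}[(G_1^{(\ell)}(x))^2]=K^{(\ell)}$, yields
\[
\mathbb{E}[\sigma^2(G_1^{(\ell)}(x))] \le 2\|\sigma\|_{\text{Lip}}^2 K^{(\ell)} + 2|\sigma(0)|^2.
\]

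Finally, since both $\sigma^2(y)$ and $\mathbb{E}[\sigma^2(G_1^{(\ell)}(x))]$ are non-negative, the triangle inequality gives
\[
|\sigma^2(y) - \mathbb{E}[\sigma^2(G_1^{(\ell)}(x))]| \le \sigma^2(y) + \mathbb{E}[\sigma^2(G_1^{(\ell)}(x))],
\]
and combining the two bounds above produces
\[
|\sigma^2(y) - \mathbb{E}[\sigma^2(G_1^{(\ell)}(x))]| \le 2\|\sigma\|_{\text{Lip}}^2 y^2 + 4|\sigma(0)|^2 + 2\|\sigma\|_{\text{Lip}}^2 K^{(\ell)},
\]
which is exactly the claim. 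There is no real obstacle here: the lemma is a direct consequence of the Lipschitz bound for $\sigma$ and of the fact that $G_1^{(\ell)}(x)$ has variance $K^{(\ell)}$.
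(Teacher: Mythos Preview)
Your proof is correct and is precisely the ``simple computation'' the paper alludes to without spelling out: bound $\sigma^2$ pointwise via the Lipschitz estimate $|\sigma(y)|\le \|\sigma\|_{\mathrm{Lip}}|y|+|\sigma(0)|$, take expectations using $\mathbb{E}[(G_1^{(\ell)}(x))^2]=K^{(\ell)}$, and combine via the triangle inequality.
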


\begin{lemma}\label{fs}
For every $z\in\mathbb{R}$
\begin{equation}\label{bound_f_2}
|f_{\sigma^2}(z)|\le  2{\|\sigma\|_{\text{Lip}}^2}|z|+4\sqrt{\frac{\pi K^{(\ell)} }{2}}\Big(\frac{|\sigma(0)|^2}{K^{(\ell)}}+\|\sigma\|_{\text{Lip}}^2\Big).
\end{equation}
\end{lemma}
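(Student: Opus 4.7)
The plan is to work directly from the explicit representation \eqref{expr_f_s2_uni} of $f_{\sigma^2}$, combined with the bound \eqref{lips2} of Lemma \ref{lips} on the centered integrand $\sigma^2(y)-\mathbb{E}[\sigma^2(G_1^{(\ell)}(x))]$, and then apply classical Mills-ratio estimates for the Gaussian density.

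First I would treat the case $z\ge 0$. Since $\sigma^2(y)-\mathbb{E}[\sigma^2(G_1^{(\ell)}(x))]$ is centered under the Gaussian $\mathcal{N}(0,K^{(\ell)})$, we may rewrite \eqref{expr_f_s2_uni} as
\[
f_{\sigma^2}(z) \;=\; -\,\frac{e^{z^{2}/(2K^{(\ell)})}}{K^{(\ell)}}\int_{z}^{+\infty} e^{-y^{2}/(2K^{(\ell)})}\bigl(\sigma^2(y)-\mathbb{E}[\sigma^2(G_1^{(\ell)}(x))]\bigr)\,dy .
\]
Then I would plug in the pointwise estimate \eqref{lips2}, which yields
\[
|f_{\sigma^2}(z)| \;\le\; \frac{e^{z^{2}/(2K^{(\ell)})}}{K^{(\ell)}}\int_{z}^{+\infty}\!\!e^{-y^{2}/(2K^{(\ell)})}\Bigl(2\|\sigma\|_{\mathrm{Lip}}^{2}y^{2}+4|\sigma(0)|^{2}+2\|\sigma\|_{\mathrm{Lip}}^{2}K^{(\ell)}\Bigr)dy .
\]
The constant part is handled via the uniform Mills bound
\[
e^{z^{2}/(2K^{(\ell)})}\!\int_{z}^{+\infty}\!e^{-y^{2}/(2K^{(\ell)})}dy \;\le\; \sqrt{\tfrac{\pi K^{(\ell)}}{2}}, \qquad z\ge 0,
\]
obtained by rescaling $y=\sqrt{K^{(\ell)}}\,u$ and invoking the standard monotonicity of $u\mapsto e^{u^{2}/2}\int_{u}^{\infty}e^{-t^{2}/2}dt$ on $[0,\infty)$.

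For the quadratic term $\int_{z}^{\infty}y^{2}e^{-y^{2}/(2K^{(\ell)})}dy$ I would integrate by parts, using $\tfrac{d}{dy}[-K^{(\ell)}e^{-y^{2}/(2K^{(\ell)})}]=ye^{-y^{2}/(2K^{(\ell)})}$, to obtain
\[
\int_{z}^{+\infty}\!y^{2}e^{-y^{2}/(2K^{(\ell)})}dy \;=\; K^{(\ell)}z\,e^{-z^{2}/(2K^{(\ell)})} \;+\; K^{(\ell)}\!\int_{z}^{+\infty}\!e^{-y^{2}/(2K^{(\ell)})}dy .
\]
Multiplying by $e^{z^2/(2K^{(\ell)})}/K^{(\ell)}$ and applying the Mills bound again gives $2\|\sigma\|_{\mathrm{Lip}}^{2}z + 2\|\sigma\|_{\mathrm{Lip}}^{2}\sqrt{\pi K^{(\ell)}/2}$. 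Combining with the constant and $\sigma(0)^2$ contributions yields exactly
\[
|f_{\sigma^2}(z)|\le 2\|\sigma\|_{\mathrm{Lip}}^{2}z + 4\sqrt{\tfrac{\pi K^{(\ell)}}{2}}\Bigl(\tfrac{|\sigma(0)|^{2}}{K^{(\ell)}} + \|\sigma\|_{\mathrm{Lip}}^{2}\Bigr),\qquad z\ge 0.
\]

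Finally, the case $z<0$ is handled symmetrically: keep the original form \eqref{expr_f_s2_uni} with the integral on $(-\infty,z]$, perform the change of variable $y=-u$, and re-apply the same Mills bound and the same integration by parts on $[|z|,\infty)$, producing the analogous estimate with $|z|$ in place of $z$. There is no real obstacle here — everything reduces to the two classical Gaussian estimates above combined with Lemma \ref{lips}; the only point that requires any care is choosing the tail-integral representation (either $\int_{-\infty}^{z}$ or $-\int_{z}^{\infty}$) adapted to the sign of $z$, so that the factor $e^{z^{2}/(2K^{(\ell)})}$ can be absorbed by the Mills ratio.
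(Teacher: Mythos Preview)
Your proof is correct and follows essentially the same approach as the paper: both use the explicit formula \eqref{expr_f_s2_uni}, apply the pointwise bound of Lemma~\ref{lips}, handle the $y^2$ term by integration by parts, and close with the Mills-ratio estimate $e^{z^2/(2K^{(\ell)})}\int_{|z|}^{\infty}e^{-y^2/(2K^{(\ell)})}dy\le\sqrt{\pi K^{(\ell)}/2}$. The only cosmetic difference is that the paper treats both signs of $z$ at once by writing the bound directly with $\int_{|z|}^{\infty}$ (invoking Remark~3.2.4 of \cite{NP12}), whereas you split into the cases $z\ge 0$ and $z<0$ and reduce the second to the first by the change of variable $y\mapsto -y$.
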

\begin{proof}
Using the definition~\eqref{expr_f_s2_uni} of \( f_{\sigma^2} \), Lemma \ref{lips}, and Remark~3.2.4 on \( f_{\sigma^2} \) in~\cite{NP12}, we obtain that

\begin{multline}\label{bound_mod_f_s}
|f_{\sigma^2}(z)|\le \frac{e^{\frac{z^2}{2K^{(\ell)}}}}{K^{(\ell)}}\int_{|z|}^{\infty}e^{-\frac{y^2}{2K^{(\ell)}}}|\sigma^2(y)-\mathbb{E}[\sigma^2(G_{1}^{(\ell)}(x))]|dy\\
\le 2\|\sigma\|_{\text{Lip}}^2 \frac{e^{\frac{z^2}{2K^{(\ell)}}}}{K^{(\ell)}}\int_{|z|}^{\infty}|y|^2e^{-\frac{y^2}{2K^{(\ell)}}}dy+4|\sigma(0)|^2\frac{e^{\frac{z^2}{2K^{(\ell)}}}}{K^{(
\ell)}}\int_{|z|}^{\infty}e^{-\frac{y^2}{2K^{(\ell)}}}dy\\
+2\|\sigma\|_{\text{Lip}}^2e^{\frac{z^2}{2K^{(\ell)}}}\int_{|z|}^{\infty}e^{-\frac{y^2}{2K^{(\ell)}}}dy
\end{multline}
\begin{multline*}
\le 2 {\|\sigma\|_{\text{Lip}}^2}e^{\frac{z^2}{2K^{(\ell)}}}\Big(|z|e^{-\frac{z^2}{2K^{(\ell)}}}+\int_{|z|}^{\infty}e^{-\frac{y^2}{2K^{(\ell)}}}dy\Big)+4\frac{|\sigma(0)|^2}{\sqrt{K^{(\ell)}}}\sqrt{\frac{\pi}{2}}
+2\|\sigma\|_{\text{Lip}}^2 \sqrt{\frac{\pi K^{(\ell)}}{2}}
\end{multline*}
\[
\le 2{\|\sigma\|_{\text{Lip}}^2}\Big(|z|+\sqrt{\frac{\pi K^{(\ell)}}{2}}\Big)+4\frac{|\sigma(0)|^2}{\sqrt{K^{(\ell)}}}\sqrt{\frac{\pi}{2}}
+2\|\sigma\|_{\text{Lip}}^2 \sqrt{\frac{\pi K^{(\ell)}}{2}},
\]
where we have exploited the bound
\[
e^{\frac{z^2}{2K^{(\ell)}}} \int_{|z|}^{\infty} e^{-\frac{y^2}{2K^{(\ell)}}} \, dy \le \sqrt{\frac{\pi K^{(\ell)}}{2}},
\]
as shown in the proof of Theorem~3.3.1 in~\cite{NP12}.

\end{proof}

\begin{lemma}\label{lipf}
For every $z,t\in\mathbb{R}$, one has that
\begin{multline*}
|f_{\sigma^2}(z+t)-f_{\sigma^2}(z)|\le 
\Big\{{2\|\sigma\|_{\text{Lip}}^2}|z+t|+4\sqrt{\frac{\pi K^{(\ell)}}{2}}\Big(\frac{|\sigma(0)|^2}{K^{(\ell)}}+\|\sigma\|_{\text{Lip}}^2\Big)\Big\}\Big|\frac{(z+t)^2-z^2}{2K^{(\ell)}}\Big|\\
+\frac{ 2\|\sigma\|_{\text{Lip}}^2}{K^{(\ell)}}\max\{z^2, (z+t)^2\}|t|
+2\Big(\frac{2|\sigma(0)|^2}{K^{(\ell)}}+\|\sigma\|_{\text{Lip}}^2\Big)|t|.
\end{multline*}

\end{lemma}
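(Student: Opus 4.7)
The plan is to start from the explicit representation \eqref{expr_f_s2_uni} of $f_{\sigma^2}$ and decompose the increment as
\begin{multline*}
K^{(\ell)}\bigl[f_{\sigma^2}(z+t)-f_{\sigma^2}(z)\bigr]
=\bigl(e^{(z+t)^2/(2K^{(\ell)})}-e^{z^2/(2K^{(\ell)})}\bigr)\!\int_{-\infty}^{z+t}\!\!e^{-y^2/(2K^{(\ell)})}g(y)\,dy\\
+\,e^{z^2/(2K^{(\ell)})}\!\int_{z}^{z+t}\!\!e^{-y^2/(2K^{(\ell)})}g(y)\,dy,
\end{multline*}
where $g(y):=\sigma^2(y)-\mathbb{E}[\sigma^2(G_{1}^{(\ell)}(x))]$, and rewrite the first, ``exponential-factor'' summand as $\bigl(1-e^{(z^2-(z+t)^2)/(2K^{(\ell)})}\bigr)K^{(\ell)}f_{\sigma^2}(z+t)$.

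Next, I would bound the exponential coefficient via the elementary inequality $|1-e^{-v}|\le|v|$ applied with $v=((z+t)^2-z^2)/(2K^{(\ell)})\ge 0$, combined with the bound on $|f_{\sigma^2}(z+t)|$ supplied by Lemma \ref{fs}; this reproduces exactly the term labelled $(A)$ in the statement. For the ``shifted-integration-limit'' summand I would invoke Lemma \ref{lips} to estimate $|g(y)|\le 2\|\sigma\|_{\mathrm{Lip}}^2 y^2+4|\sigma(0)|^2+2\|\sigma\|_{\mathrm{Lip}}^2 K^{(\ell)}$, exploit the convexity of $y\mapsto y^2$ to deduce $y^2\le\max\{z^2,(z+t)^2\}$ throughout the integration interval $[\min(z,z+t),\max(z,z+t)]$, and note that in the regime $(z+t)^2\ge z^2$ the multiplicative factor $e^{(z^2-y^2)/(2K^{(\ell)})}$ is at most one on that interval; a termwise integration then yields the two remaining contributions $(B)$ and $(C)$.

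The main technical obstacle is the simultaneous treatment of the two sign regimes of $(z+t)^2-z^2$: in the regime $(z+t)^2<z^2$ the inequality $|1-e^{-v}|\le|v|$ fails, and the exponential factor inside the remainder integral may exceed one. To close the argument in this case I would use the dual decomposition in which the common upper limit is $z$ rather than $z+t$, so that the exponential-factor piece becomes $(e^{v}-1)K^{(\ell)}f_{\sigma^2}(z)$—for which $|e^{v}-1|\le|v|$ does hold when $v<0$—and the remainder is controlled via the symmetric observation that $e^{((z+t)^2-y^2)/(2K^{(\ell)})}\le 1$ throughout $[\min(z,z+t),\max(z,z+t)]$ in this regime. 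Reconciling the two cases into the single bound of the statement, with its coefficient $|z+t|$ rather than $\max\{|z|,|z+t|\}$ in term $(A)$, is the delicate bookkeeping step that completes the proof.
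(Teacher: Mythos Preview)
Your approach is essentially the paper's: split the increment into an ``exponential-factor'' piece (a multiple of $f_{\sigma^2}$ at one endpoint) and a remainder integral over $[z,z+t]$, bound the first via $1-e^{-v}\le v$ combined with Lemma~\ref{fs}, and the second via Lemma~\ref{lips} plus a pointwise bound on $y^2$. The only structural difference is that the paper splits into cases by the sign of $t$ (always anchoring at the \emph{right} endpoint of the interval), whereas you split by the sign of $(z+t)^2-z^2$; your choice is marginally cleaner for the exponential-factor piece, since it forces $v\ge0$ and so $1-e^{-v}\le v$ applies directly, while the paper's sign-of-$t$ split does not guarantee this.

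One caveat worth recording: your assertion that in the regime $(z+t)^2\ge z^2$ the factor $e^{(z^2-y^2)/(2K^{(\ell)})}$ is at most one throughout the integration interval fails when $0$ lies strictly between $z$ and $z+t$ (take $z=-1$, $t=3$, $y=0$). The paper's analogous claim --- that $e^{z^2/(2K^{(\ell)})}\le e^{y^2/(2K^{(\ell)})}$ for all $y\in[z,z+t]$ whenever $t\ge0$ --- has the same defect. Likewise, the $|z+t|$-versus-$\max\{|z|,|z+t|\}$ reconciliation you correctly flag as delicate is not carried out explicitly in the paper either; it simply asserts the unified bound~\eqref{dis_casi} after treating the two cases. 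So your proposal is at the same level of rigor as the paper's own argument.
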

\begin{proof}
Using the inequality \( 1 - e^{-z} \le z \) for all \( z \ge 0 \), one infers that 

\begin{multline*}
|f_{\sigma^2}(z+t)-f_{\sigma^2}(z)|
=\Big|\frac{e^{\frac{(z+t)^2}{2K^{(\ell)}}}}{K^{(\ell)}}\int_{-\infty}^{z+t}e^{-\frac{y^2}{2K^{(\ell)}}}\Big(\sigma^2( y)-\mathbb{E}[\sigma^2(G_1^{(\ell)}(x))]\Big)dy\\
-\frac{e^{\frac{z^2}{2K^{(\ell)}}}}{K^{(\ell)}}\int_{-\infty}^{z}e^{-\frac{y^2}{2K^{(\ell)}}}\Big(\sigma^2( y)-\mathbb{E}[\sigma^2(G_1^{(\ell)}(x)]\Big)dy\Big|
\end{multline*}
\begin{multline*}
\le
\begin{cases}
\Big|\Big(1-e^{\frac{(z^2-(z+t)^2)}{2K^{(\ell)}}}\Big)\frac{e^{\frac{(z+t)^2}{2K^{(\ell)}}}}{K^{(\ell)}}\int_{-\infty}^{z+t}e^{-\frac{y^2}{2K^{(\ell)}}}\Big(\sigma^2( y)-\mathbb{E}[\sigma^2(G_1^{(\ell)}(x))]\Big)dy\Big|\\
\quad\quad\quad\quad\quad\quad\quad+\Big|\frac{e^{\frac{z^2}{2K^{(\ell)}}}}{K^{(
\ell)}}\int_{z}^{z+t}e^{-\frac{y^2}{2K^{(\ell)}}}\Big(\sigma^2( y)-\mathbb{E}[\sigma^2(G_1^{(\ell)}(x))]\Big)dy\Big| & \text{if $t\ge 0$}\\
\quad\\
\Big|\Big(1-e^{\frac{((z+t)^2-z^2)}{2K^{(\ell)}}}\Big)\frac{e^{\frac{z^2}{2K^{(\ell)}}}}{K^{(\ell)}}\int_{-\infty}^{z}e^{-\frac{y^2}{2K^{(\ell)}}}\Big(\sigma^2(y)-\mathbb{E}[\sigma^2(G_1^{(\ell)}(x))]\Big)dy\Big|\\
\quad\quad\quad\quad\quad\quad\quad+\Big|\frac{e^{\frac{(z+t)^2}{2K^{(\ell)}}}}{K^{(\ell)}}\int_{z+t}^{z}e^{-\frac{y^2}{2K^{(\ell)}}}\Big(\sigma^2( y)-\mathbb{E}[\sigma^2(G_1^{(\ell)}(x))]\Big)dy\Big| & \text{if $t< 0$}\\
\end{cases}
\end{multline*}
\begin{multline}\label{dis_casi}
\le \Big\{{2\|\sigma\|_{\text{Lip}}^2}|z+t|+4\sqrt{\frac{\pi K^{(\ell)}}{2}}\Big(\frac{|\sigma(0)|^2}{K^{(\ell)}}+\|\sigma\|_{\text{Lip}}^2\Big)\Big\}\Big|\frac{(z+t)^2-z^2}{2K^{(\ell)}}\Big|\\
+\frac{ 2\|\sigma\|_{\text{Lip}}^2}{K^{(\ell)}}\max\{z^2, (z+t)^2\}|t|
+2\Big(\frac{2|\sigma(0)|^2}{K^{(\ell)}}+\|\sigma\|_{\text{Lip}}^2\Big)|t|.
\end{multline}
To verify that inequality~\eqref{dis_casi} holds, we consider separately the cases \( t \ge 0 \) and \( t < 0 \).

If \( t \ge 0 \), note that \( e^{\frac{z^2}{2K^{(\ell)}}} \le e^{\frac{y^2}{2K^{(\ell)}}} \) for every \( y \in [z, z+t] \). Using this fact together with the bound~\eqref{lips2}, we obtain

\begin{multline*}
\Big|\frac{e^{\frac{z^2}{2K^{(\ell)}}}}{K^{(\ell)}}\int_{z}^{z+t}e^{-\frac{y^2}{2K^{(\ell)}}}\Big(\sigma^2(y)-\mathbb{E}[\sigma^2(G_1^{(\ell)}(x))]\Big)dy\Big|\le 2\|\sigma\|_{\text{Lip}}^2\frac{e^{\frac{z^2}{2K^{(\ell)}}}}{K^{(\ell)}}\int_z^{z+t}|y|^2e^{-\frac{y^2}{2K^{(\ell)}}}dy\\
+4|\sigma(0)|^2\frac{e^{\frac{z^2}{2K^{(\ell)}}}}{K^{(\ell)}}\int_z^{z+t}e^{-\frac{y^2}{2K^{(\ell)}}}dy+2\|\sigma\|_{\text{Lip}}^2e^{\frac{z^2}{2K^{(\ell)}}}\int_z^{z+t}e^{-\frac{y^2}{2K^{(\ell)}}}dy
\end{multline*}
\[
\le \frac{2\|\sigma\|_{\text{Lip}}^2}{K^{(\ell)}}\max\{z^2, (z+t)^2\}t
+2\Big(\frac{2|\sigma(0)|^2}{K^{(\ell)}}+\|\sigma\|_{\text{Lip}}^2\Big)t.
\]

If $t<0$, one obtains the estimate
\begin{multline*}
\Big|\frac{e^{\frac{(z+t)^2}{2K^{(\ell)}}}}{K^{(\ell)}}\int_{z+t}^{x}e^{-\frac{y^2}{2K^{(\ell)}}}\Big(\sigma^2( y)-\mathbb{E}[\sigma^2(G_1^{(\ell)}(x))]\Big)dy\Big|\le  2\|\sigma\|_{\text{Lip}}^2\frac{e^{\frac{(z+t)^2}{2K^{(\ell)}}}}{K^{(\ell)}}\int_{z+t}^{z}|y|^2e^{-\frac{y^2}{2K^{(\ell)}}}dy\\
+\frac{4|\sigma(0)|^2}{K^{(\ell)}}e^{\frac{(z+t)^2}{2K^{(\ell)}}}\int_{z+t}^{z}e^{-\frac{y^2}{2K^{(\ell)}}}dy+2\|\sigma\|_{\text{Lip}}^2e^{\frac{(z+t)^2}{2K^{(\ell)}}}\int_{z+t}^{x}e^{-\frac{y^2}{2K^{(\ell)}}}dy
\end{multline*}
\[
\le\frac{ 2\|\sigma\|_{\text{Lip}}^2}{K^{(\ell)}}\max\{z^2, (z+t)^2\}|t|
+2\Big(\frac{2|\sigma(0)|^2}{K^{(\ell)}}+\|\sigma\|_{\text{Lip}}^2\Big)|t|.
\]
The rest of the proof follows directly from the bound~\eqref{bound_f_2}.
\end{proof}

\begin{proof}[Proof of Lemma \ref{I_1,2,3}]
We organize the proof into three steps:
\begin{itemize}
\item 
\[
I_3\le \frac{1}{K^{(\ell)}}\Big|\int_0^h \Big|\Big(\sigma(z_{1}^{(\ell)}(x)+t)-\sigma(z_{1}^{(\ell)}(x))\Big)\Big(\sigma(z_{1}^{(\ell)}(x)+t)+\sigma(z_{1}^{(\ell)}(x))\Big)\Big|dt\Big|
\]
\[
\le\frac{1}{K^{(\ell)}} \Big|\int_0^h \|\sigma\|_{\text{Lip}} |t|\Big(\|\sigma\|_{\text{Lip}} |t|+2|\sigma(z_{1}^{(\ell)}(x))|\Big)dt\Big|
\]
\[
=\frac{\|\sigma\|_{\text{Lip}}^2}{K^{(\ell)}} \frac{|h|^3}{3}+\frac{\|\sigma\|_{\text{Lip}}}{K^{(\ell)}} |\sigma(z_{1}^{(\ell)}(x))|{h^2}.
\]
\item To estimate $I_2$, we apply Lemma~\ref{fs}, which yields

\[
I_2\le \sqrt{\frac{\pi}{2K^{(\ell)}}}\Big(\frac{|\sigma(0)|^2}{K^{(\ell)}}+\|\sigma\|_{\text{Lip}}^2\Big)\frac{h^2}{2}
+\frac{2\|\sigma\|_{\text{Lip}}^2}{K^{(\ell)}}\Big|\int_0^h \Big|{z_{1}^{(\ell)}}(x)+t\Big||t| dt\Big|
\]
\[
\le \sqrt{\frac{\pi}{2K^{(\ell)}}}\Big(\frac{|\sigma(0)|^2}{K^{(\ell)}}+\|\sigma\|_{\text{Lip}}^2+\frac{2\|\sigma\|_{\text{Lip}}^2}{K^{(\ell)}}\Big)\frac{h^2}{2}
+\frac{2\|\sigma\|_{\text{Lip}}^2}{K^{(\ell)}}\frac{|h|^3}{3}.
\]

\item 
Applying Lemma~\ref{lipf}, we obtain

\begin{multline*}
|f_{\sigma^2}(z+t)-f_{\sigma^2}(z)|\le\Big\{{2\|\sigma\|_{\text{Lip}}^2}|z+t|+4\sqrt{\frac{\pi K^{(\ell)}}{2}}\Big(\frac{|\sigma(0)|^2}{K^{(\ell)}}+\|\sigma\|_{\text{Lip}}^2\Big)\Big\}\cdot\\
\cdot \Big|\frac{(z+t)^2-z^2}{2K^{(\ell)}}\Big|
+\frac{ 2\|\sigma\|_{\text{Lip}}^2}{K^{(\ell)}}\max\{z^2, (z+t)^2\}|t|
+2\Big(\frac{2|\sigma(0)|^2}{K^{(\ell)}}+\|\sigma\|_{\text{Lip}}^2\Big)|t|
\end{multline*}
\begin{multline*}
\le\Big\{{2\|\sigma\|_{\text{Lip}}^2}|z|+2\Big(2\sqrt{\frac{\pi K^{(\ell)}}{2}}+1\Big)\Big(\frac{2|\sigma(0)|^2}{K^{(\ell)}}+\|\sigma\|_{\text{Lip}}^2\Big)+4\frac{\|\sigma\|_{\text{Lip}}^2}{K^{(\ell)}}z^2\Big\}|t|\\
+\Big\{{6\|\sigma\|_{\text{Lip}}^2}|z|+4\sqrt{\frac{\pi K^{(\ell)}}{2}}\Big(\frac{|\sigma(0)|^2}{K^{(\ell)}}+\|\sigma\|_{\text{Lip}}^2\Big)\Big\}|t|^2
+\|\sigma\|_{\text{Lip}}^2\Big(\frac{4}{K^{(\ell)}}+2\Big)|t|^3.
\end{multline*}

Therefore
\begin{multline*}
I_1\le \frac{|z_{1}^{(\ell)}(x)|}{K^{(\ell)}}\Big[\Big(D_\ell(x)+\frac{4\|\sigma\|_{\text{Lip}}^2}{K^{(\ell)}} |z_{1}^{(\ell)}(x)|^2+2\|\sigma\|_{\text{Lip}}^2|z_{1}^{(\ell)}(x)|\Big)\frac{h^2}{2}\\
+\Big(D_{\ell}(x)+6{\|\sigma\|_{\text{Lip}}^2}|z_{1}^{(\ell)}(x)|\Big)\frac{|h|^3}{3}
+\|\sigma\|_{\text{Lip}}^2\Big(\frac{4}{K^{(\ell)}}+2\Big)\frac{h^4}{4}\Big]
\end{multline*}
where
\[
D_\ell(x)=2\Big(2\sqrt{\frac{\pi K^{(\ell)}}{2}}+1\Big)\Big(\frac{2|\sigma(0)|^2}{K^{(\ell)}}+\|\sigma\|_{\text{Lip}}^2\Big).
\]
\end{itemize}
\end{proof}
\subsection{Proof of Lemma \ref{derivative}}
Lemma~\ref{derivative} follows from inequalities~\eqref{lips2} and~\eqref{bound_f_2}.  
Indeed, using the identity~\eqref{stein_uno}, we obtain

\[
|{f'}_{\sigma^2}(z)|\le\frac{1}{K^{(\ell)}}|z{f}_{\sigma^2}(z)|+\frac{1}{K^{(\ell)}}|\sigma^2( z)-\mathbb{E}[\sigma^2(G_{1}^{(\ell)}(x)]|
\]
\[
\le 
 \frac{4\|\sigma\|_{\text{Lip}}^2}{K^{(\ell)}}|z|^2+4\sqrt{\frac{\pi  }{2K^{(\ell)}}}\Big(\frac{|\sigma(0)|^2}{K^{(\ell)}}+\|\sigma\|_{\text{Lip}}^2\Big)|z|+\frac{4|\sigma(0)|^2}{K^{(\ell)}}
+2\|\sigma\|_{\text{Lip}}^2,
\]
yielding to the desired conclusion.

\subsection{Proof of Lemma \ref{lemma_prim_Q2}}
Using (\ref{eq_in_Q2}), (\ref{eq_sec_Q2}), (\ref{term_R_Q2}) and (\ref{bound_der_Q2}),
\begin{multline}\label{bound_cond}
|\mathbb{E}[\sigma^2(z_{1}^{(\ell)}(x))|\mathcal{F}_{\ell-1}]-\mathbb{E}[\sigma^2 (G_{1}^{(\ell)}(x))]|\le \frac{C_W^{3/2}}{2{n_{\ell-1}^{3/2}}}\sum_{j=1}^{n_{\ell-1}}\mathbb{E}\Bigg[|W_{1,j}^{(\ell)}-{W'}_{1,j}^{(\ell)}|^3\cdot\\
\cdot|\sigma^3(z_{j}^{(\ell-1)}(x))|\Bigg(\frac{|z_{1}^{(\ell)}(x)|}{K^{(\ell)}}\Big(D_\ell(x)+\frac{4\|\sigma\|_{\text{Lip}}^2}{K^{(\ell)}} |z_{1}^{(\ell)}(x)|^2+2\|\sigma\|_{\text{Lip}}^2|z_{1}^{(\ell)}(x)|\Big)\\
+\sqrt{\frac{\pi}{2K^{(\ell)}}}\Big(\frac{|\sigma(0)|^2}{K^{(\ell)}}+\|\sigma\|_{\text{Lip}}^2+\frac{2\|\sigma\|_{\text{Lip}}^2}{K^{(\ell)}}\Big)
 +\frac{2\|\sigma\|_{\text{Lip}}}{K^{(\ell)}}\Big( |\sigma(0)|+\|\sigma\|_{\text{Lip}}|z_{1}^{(\ell)}(x)|\Big)\Bigg)\Big|\mathcal{F}_{\ell-1}\Bigg]\\
+\frac{C_W^2}{3n_{\ell-1}^2}\sum_{j=1}^{n_{\ell-1}}\mathbb{E}\Bigg[|W_{1,j}^{(\ell)}-{W'}_{1,j}^{(\ell)}|^4\sigma^4(z_{j}^{(\ell-1)}(x))\Bigg( \frac{|z_{1}^{(\ell)}(x)|}{K^{(\ell)}}\Big(D_\ell(x)+6{\|\sigma\|_{\text{Lip}}^2}|z_{1}^{(\ell)}(x)|\Big)\\
+\frac{3\|\sigma\|_{\text{Lip}}^2}{K^{(\ell)}}\Bigg)\Big|\mathcal{F}_{\ell-1}\Bigg]
+\frac{C_W^{5/2}}{4n_{\ell-1}^{5/2}}\|\sigma\|_{\text{Lip}}^2\Big(\frac{4}{K^{(\ell)}}+2\Big)\sum_{j=1}^{n_{\ell-1}}\mathbb{E}\Big[|W_{1,j}^{(\ell)}-{W'}_{1,j}^{(\ell)}|^5|\sigma^5(z_{j}^{(\ell-1)}(x))|\cdot\\
\cdot \frac{|z_{1}^{(\ell)}(x)|}{K^{(\ell)}}\Big|\mathcal{F}_{\ell-1}\Big]
+\Big(\tilde{D}_\ell(x)\mathbb{E}\Big[|z_{1}^{(\ell)}(x)|^2\big|\mathcal{F}_{\ell-1}\Big]+D_\ell(x)\Big)|A_{\ell}(x)|
+\mathbb{E}\Big[(W_{1,1}^{(\ell)})^4\Big]^{1/2}\frac{{C_W}}{2{n_{\ell-1}}} \cdot\\
\cdot\Big(\sum_{j=1}^{n_{\ell-1}}\sigma^4(z_{j}^{(\ell-1)}(x))\Big)^{1/2} \Bigg(\tilde{D}_\ell(x)\mathbb{E}\Big[|z_{1}^{(\ell)}(x)|^4|\mathcal{F}_{\ell-1}\Big]^{1/2}+D_\ell(x)\Bigg),
\end{multline}
where
\[
\tilde{D}_\ell(x):=\frac{4\|\sigma\|_{\text{Lip}}^2}{K^{(\ell)}}+4\sqrt{\frac{\pi  }{2K^{(\ell)}}}\Big(\frac{|\sigma(0)|^2}{K^{(\ell)}}+\|\sigma\|_{\text{Lip}}^2\Big),
\]
 \[
D_\ell(x):=2\Big(2\sqrt{\frac{\pi K^{(\ell)}}{2}}+1\Big)\Big(\frac{2|\sigma(0)|^2}{K^{(\ell)}}+\|\sigma\|_{\text{Lip}}^2\Big),
\]
and \[
A_{\ell}(x):=\sum_{j=1}^{n_{\ell-1}}\frac{{C_W}}{{n_{\ell-1}}}\sigma^2(z_{j}^{(\ell-1)}(x))-K^{(\ell)}+C_b.
\]
Considering an arbitrary $p \in \mathbb{N}$, one obtains

\begin{multline*}
\mathbb{E}\Big[|\mathbb{E}[\sigma^2(z_{1}^{(\ell)}(x))|\mathcal{F}_{\ell-1}]-\mathbb{E}[\sigma^2 (G_{1}^{(\ell)}(x))]|^{2p}\Big]\le
\frac{5^{2p-1}C_W^{3p}}{2^{2p}{n_{\ell-1}^{p+1}}}\sum_{j=1}^{n_{\ell-1}}\mathbb{E}\Bigg[|W_{1,j}^{(\ell)}-{W'}_{1,j}^{(\ell)}|^{6p}\cdot\\
\cdot|\sigma^{6p}(z_{j}^{(\ell-1)}(x))|\Bigg(\frac{|z_{1}^{(\ell)}(x)|}{K^{(\ell)}}\Big(D_\ell(x)+\frac{4\|\sigma\|_{\text{Lip}}^2}{K^{(\ell)}} |z_{1}^{(\ell)}(x)|^2+2\|\sigma\|_{\text{Lip}}^2|z_{1}^{(\ell)}(x)|\Big)\\
+\sqrt{\frac{\pi}{2K^{(\ell)}}}\Big(\frac{|\sigma(0)|^2}{K^{(\ell)}}+\|\sigma\|_{\text{Lip}}^2+\frac{2\|\sigma\|_{\text{Lip}}^2}{K^{(\ell)}}\Big)
 +\frac{2\|\sigma\|_{\text{Lip}}}{K^{(\ell)}}\Big( |\sigma(0)|+\|\sigma\|_{\text{Lip}}|z_{1}^{(\ell)}(x)|\Big)\Bigg)^{2p}\Bigg]\\
+\frac{5^{2p-1}C_W^{4p}}{3^{2p}n_{\ell-1}^{2p+1}}\sum_{j=1}^{n_{\ell-1}}\mathbb{E}\Bigg[|W_{1,j}^{(\ell)}-{W'}_{1,j}^{(\ell)}|^{8p}\sigma^{8p}(z_{j}^{(\ell-1)}(x))\cdot\\
\cdot \Bigg( \frac{|z_{1}^{(\ell)}(x)|}{K^{(\ell)}}\Big(D_\ell(x)+6{\|\sigma\|_{\text{Lip}}^2}|z_{1}^{(\ell)}(x)|\Big)+\frac{3\|\sigma\|_{\text{Lip}}^2}{K^{(\ell)}}\Bigg)^{2p}\Bigg]\\
+\frac{5^{2p-1}C_W^{5p}}{2^{2p}n_{\ell-1}^{3p+1}}\|\sigma\|_{\text{Lip}}^{4p}\Big(\frac{2}{K^{(\ell)}}+1\Big)^{2p}\sum_{j=1}^{n_{\ell-1}}\mathbb{E}\Big[|W_{1,j}^{(\ell)}-{W'}_{1,j}^{(\ell)}|^{10p}|\sigma^{10p}(z_{j}^{(\ell-1)}(x))|\frac{|z_{1}^{(\ell)}(x)|^{2p}}{(K^{(\ell)})^{2p}}\Big]\\
+5^{2p-1}\mathbb{E}\Big[|A_{\ell}(x)|^{4p}\Big]^{1/2}\mathbb{E}\Big[\Big(\tilde{D}_\ell(x)\mathbb{E}\Big[|z_{1}^{(\ell)}(x)|^4|\mathcal{F}_{\ell-1}\Big]^{1/2}
+D_\ell(x)\Big)^{4p}\Big]^{1/2}\\
+\mathbb{E}\Big[(W_{1,1}^{(\ell)})^4\Big]^{p}\frac{5^{2p-1}{C_W^{2p}}}{2^{2p}{n_{\ell-1}^{p}}} \mathbb{E}\Big[\sigma^{8p}(z_{1}^{(\ell-1)}(x))\Big]^{1/2}\mathbb{E}\Big[\Big(\tilde{D}_\ell(x)\mathbb{E}\Big[|z_{1}^{(\ell)}(x)|^4|\mathcal{F}_{\ell-1}\Big]^{1/2}+D_\ell(x)\Big)^{4p}\Big]^{1/2}
\end{multline*}
\begin{multline*}
\le \frac{5^{2p-1}C_W^{3p}}{2^{2p}{n_{\ell-1}^{p}}}\mathbb{E}\Big[|W_{1,1}^{(\ell)}-{W'}_{1,1}^{(\ell)}|^{12p}\Big]^{1/2}\mathbb{E}\Big[|\sigma^{12p}(z_{1}^{(\ell-1)}(x))|\Big]^{1/2}\cdot\\
\cdot \mathbb{E}\Bigg[\Bigg(\frac{|z_{1}^{(\ell)}(x)|}{K^{(\ell)}}\Big(D_\ell(x)+\frac{4\|\sigma\|_{\text{Lip}}^2}{K^{(\ell)}} |z_{1}^{(\ell)}(x)|^2+2\|\sigma\|_{\text{Lip}}^2|z_{1}^{(\ell)}(x)|\Big)\\
+\sqrt{\frac{\pi}{2K^{(\ell)}}}\Big(\frac{|\sigma(0)|^2}{K^{(\ell)}}+\|\sigma\|_{\text{Lip}}^2+\frac{2\|\sigma\|_{\text{Lip}}^2}{K^{(\ell)}}\Big)
 +\frac{2\|\sigma\|_{\text{Lip}}}{K^{(\ell)}}\Big( |\sigma(0)|+\|\sigma\|_{\text{Lip}}|z_{1}^{(\ell)}(x)|\Big)\Bigg)^{4p}\Bigg]^{1/2}\\
+\frac{5^{2p-1}C_W^{4p}}{3^{2p}n_{\ell-1}^{2p}}\mathbb{E}\Big[|W_{1,1}^{(\ell)}-{W'}_{1,1}^{(\ell)}|^{16p}\Big]^{1/2} \mathbb{E}\Big[\sigma^{16p}(z_{1}^{(\ell-1)}(x))\Big]^{1/2}\cdot\\
\cdot\mathbb{E}\Bigg[\Bigg( \frac{|z_{1}^{(\ell)}(x)|}{K^{(\ell)}}\Big(D_\ell(x)+6{\|\sigma\|_{\text{Lip}}^2}|z_{1}^{(\ell)}(x)|\Big)+\frac{3\|\sigma\|_{\text{Lip}}^2}{K^{(\ell)}}\Bigg)^{4p}\Bigg]^{1/2}
+\frac{5^{2p-1}C_W^{5p}}{2^{2p}n_{\ell-1}^{3p}}\|\sigma\|_{\text{Lip}}^{4p}\Big(\frac{2}{K^{(\ell)}}+1\Big)^{2p}\cdot\\
\cdot\mathbb{E}\Big[|W_{1,1}^{(\ell)}-{W'}_{1,1}^{(\ell)}|^{20p}\Big]^{1/2}\mathbb{E}\Big[|\sigma^{20p}(z_{1}^{(\ell-1)}(x))|\Big]^{1/2}\frac{\mathbb{E}[|z_{1}^{(\ell)}(x)|^{4p}]^{1/2}}{(K^{(\ell)})^{2p}}\Big]\\
+5^{2p-1}\mathbb{E}\Big[|A_{\ell}(x)|^{4p}\Big]^{1/2}\mathbb{E}\Big[\Big(\tilde{D}_\ell(x)\mathbb{E}\Big[|z_{1}^{(\ell)}(x)|^4|\mathcal{F}_{\ell-1}\Big]^{1/2}
+D_\ell(x)\Big)^{4p}\Big]^{1/2}\\
+\mathbb{E}\Big[(W_{1,1}^{(\ell)})^4\Big]^{p}\frac{5^{2p-1}{C_W^{2p}}}{2^{2p}{n_{\ell-1}^{p}}} \mathbb{E}\Big[\sigma^{8p}(z_{1}^{(\ell-1)}(x))\Big]^{1/2}\mathbb{E}\Big[\Big(\tilde{D}_\ell(x)\mathbb{E}\Big[|z_{1}^{(\ell)}(x)|^4|\mathcal{F}_{\ell-1}\Big]^{1/2}+D_\ell(x)\Big)^{4p}\Big]^{1/2}
\end{multline*}

\begin{equation}
\le \frac{L_1^{(\ell)(p)}(x)}{n_{\ell-1}^p}+L_2^{(\ell)(p)}(x)\sqrt{Q_{4p}^{(\ell-1)}(x)},
\end{equation}
where
\begin{multline}
L_1^{(\ell)(p)}(x)= \frac{5^{2p-1}C_W^{3p}}{2^{2p}}\mathbb{E}\Big[|W_{1,1}^{(\ell)}-{W'}_{1,1}^{(\ell)}|^{12p}\Big]^{1/2}\mathbb{E}\Big[|\sigma^{12p}(z_{1}^{(\ell-1)}(x))|\Big]^{1/2}\cdot\\
\cdot \mathbb{E}\Bigg[\Bigg(\frac{|z_{1}^{(\ell)}(x)|}{K^{(\ell)}}\Big({D}_\ell(x)+\frac{4\|\sigma\|_{\text{Lip}}^2}{K^{(\ell)}} |z_{1}^{(\ell)}(x)|^2+2\|\sigma\|_{\text{Lip}}^2|z_{1}^{(\ell)}(x)|\Big)\\
+\sqrt{\frac{\pi}{2{K}^{(\ell)}}}\Big(\frac{|\sigma(0)|^2}{K^{(\ell)}}
+\|\sigma\|_{\text{Lip}}^2+\frac{2\|\sigma\|_{\text{Lip}}^2}{K^{(\ell)}}\Big)
 +\frac{2\|\sigma\|_{\text{Lip}}}{K^{(\ell)}}\Big( |\sigma(0)|+\|\sigma\|_{\text{Lip}}|z_{1}^{(\ell)}(x)|\Big)\Bigg)^{4p}\Bigg]^{1/2}\\
+\frac{5^{2p-1}C_W^{4p}}{2^{2p}}\mathbb{E}\Big[|W_{1,1}^{(\ell)}-{W'}_{1,1}^{(\ell)}|^{16p}\Big]^{1/2}\mathbb{E}\Big[\sigma^{16p}(z_{1}^{(\ell-1)}(x))\Big]^{1/2}\cdot\\
\cdot \mathbb{E}\Bigg[\Bigg( \frac{|z_{1}^{(\ell)}(x)|}{K^{(\ell)}}\Big(D+6{\|\sigma\|_{\text{Lip}}^2}|z_{1}^{(\ell)}(x)|\Big)+\frac{3\|\sigma\|_{\text{Lip}}^2}{K^{(\ell)}}\Bigg)^{4p}\Bigg]^{1/2}
+\frac{5^{2p-1}C_W^{5p}}{2^{2p}}\|\sigma\|_{\text{Lip}}^{4p}\Big(\frac{2}{K^{(\ell)}}+1\Big)^{2p}\cdot\\
\cdot \mathbb{E}\Big[|W_{1,1}^{(\ell)}-{W'}_{1,1}^{(\ell)}|^{20p}\Big]^{1/2}\mathbb{E}\Big[|\sigma^{20p}(z_{1}^{(\ell-1)}(x))|\Big]^{1/2}\frac{\mathbb{E}[|z_{1}^{(\ell)}(x)|^{4p}]^{1/2}}{(K^{(\ell)})^{2p}}\Big]\\
+\mathbb{E}\Big[(W_{1,1}^{(\ell)})^4\Big]^{p}
\frac{5^{2p-1}{C_W^{2p}}}{2^{2p}}  
\mathbb{E}\Big[\sigma^{8p}(z_{1}^{(\ell-1)}(x))\Big]^{1/2}
\mathbb{E}\Bigg[\Big(\tilde{D}_\ell(x)
\mathbb{E}\Big[|z_{1}^{(\ell)}(x)|^4
|\mathcal{F}_{\ell-1}\Big]^{1/2}+D_\ell(x)\Big)^{4p}\Bigg]^{1/2}
\end{multline}
and 
\begin{equation}
L_2^{(\ell)(p)}(x)=5^{2p-1}\mathbb{E}\Big[\Big(\tilde{D}_\ell(x)\mathbb{E}\Big[|z_{1}^{(\ell)}(x)|^4|\mathcal{F}_{\ell-1}\Big]^{1/2}
+D_\ell(x)\Big)^{4p}\Big]^{1/2},
\end{equation}
Recalling that, for any integer $p$,  
\[
Q_{2p}^{(\ell-1)}(x) = \mathbb{E}\bigl[|A_\ell(x)|^{2p}\bigr].
\]  
After some straightforward calculations, one obtains a bound on $L_1^{(\ell)(p)}$ that more clearly highlights the dependence on the main parameters:

\begin{multline*}
L_1^{(\ell)(p)}(x)\le 
{5^{2p-1}2^{10 p+2}} C_W^{2p}(C_W+1)^{3p}\Big(1+\mathbb{E}[\sigma^{20p}(z_1^{(\ell-1)})]^{1/2}\Big)\Big(1+\mathbb{E}[|z_{1}^{(\ell)}(x)|^{12p}]^{1/2}\Big)\cdot\\
\cdot \Big(1+\mathbb{E}[(W_{1,1}^{(\ell)})^{20p}]^{1/2}\Big)\Big(|\sigma(0)|^2+\|\sigma\|_{\text{Lip}}^2\Big)^{2p}\Bigg({12}+{2K^{(\ell)}}+\frac{20}{K^{(\ell)}}+\frac{15}{(K^{(\ell)})^2}
\Bigg)^{2p}
\end{multline*}

\section{Appendix B}\label{Appendix_B}
In this appendix, as before we denote by  
\[
\sigma(x) := \bigl(\sigma(x_1), \dots, \sigma(x_d)\bigr)
\]  
the component-wise evaluation of $\sigma$ on a vector $x = (x_1, \dots, x_d) \in \mathbb{R}^d$.

\subsection{Preliminary Lemmas}
Before turning to the proof of Lemma~\ref{somma_schifa}, we first establish several technical results concerning the properties of the solution to a particular Stein's equation.

\begin{lemma}\label{sol_st_mult}
Suppose that $x^{(i)}, x^{(j)} \in \mathbb{R}^{n_0}$ are such that $x^{(i)} \ne x^{(j)}$, and let $\sigma:\mathbb{R} \to \mathbb{R}$ be a Lipschitz continuous function. 

Denote by ${\operatorname{Hess}} f$ the Hessian matrix of a function $f$, and by 
\[
\langle M, N \rangle_{\mathrm{HS}} := \sum_{i,j=1}^d M_{i,j}N_{i,j}
\]
the Hilbert--Schmidt inner product between two matrices $M, N \in \mathbb{R}^{d \times d}$. 

Then, the solution $f^{(\ell)}_{x^{(i)},x^{(j)}}$ to the Stein's equation

\[
\langle K^{(\ell)},{\operatorname{Hess}} f^{(\ell)}_{x^{(i)},x^{(j)}}(y)\rangle_{\text{HS}}-\langle y,\nabla f^{(\ell)}_{x^{(i)},x^{(j)}}(y)\rangle=\sigma(y_i)\sigma(y_j)-\mathbb{E}\Big[\sigma(G_{1}^{(\ell)}(x^{(i)}))\sigma(G_1^{(\ell)}(x^{(j)}))\Big].
\]
is
\begin{equation}\label{def_sol_stein_mult}
f^{(\ell)}_{x^{(i)},x^{(j)}}(y)=\int_0^{\infty}\mathbb{E}[h(N)-h(e^{-t}y+\sqrt{1-e^{-2t}}N)]dt, \quad y\in\mathbb{R}^d,
\end{equation}
where $h(y) := \sigma(y_i)\sigma(y_j)$ and $N \sim \mathcal{N}_d(0, K^{(\ell)})$, with $G^{(\ell)}$ and $K^{(\ell)}$ as defined in Theorem~\ref{Hanin}.
\end{lemma}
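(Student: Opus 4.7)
The plan is to recognise that the right-hand side of \eqref{def_sol_stein_mult} is exactly the Mehler/Ornstein--Uhlenbeck representation of the Stein solution associated with $N\sim\mathcal{N}_d(0,K^{(\ell)})$ and the test function $h(y) := \sigma(y_i)\sigma(y_j)$, and then to verify by direct computation that it satisfies the stated PDE. I would carry this out in three steps.

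First, I would introduce the Ornstein--Uhlenbeck semigroup
\[
P_t g(y) := \mathbb{E}\bigl[g\bigl(e^{-t} y + \sqrt{1-e^{-2t}}\, N\bigr)\bigr], \qquad t\ge 0,
\]
defined on measurable functions of at most polynomial growth, and recall (or verify by straightforward differentiation of the Mehler kernel, as in Chapter~1 of \cite{NP12}) that for $t>0$ its infinitesimal generator is precisely
\[
\mathcal{L}g(y) = \langle K^{(\ell)}, \operatorname{Hess} g(y)\rangle_{\mathrm{HS}} - \langle y, \nabla g(y)\rangle,
\]
so that $\partial_t P_t g = \mathcal{L} P_t g$. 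The candidate solution can be rewritten as $f^{(\ell)}_{x^{(i)},x^{(j)}}(y) = \int_0^\infty \bigl(\mathbb{E}[h(N)] - P_t h(y)\bigr)\,dt$.

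Second, I would check that the integral and the differential operators acting on it are well defined. Since $\sigma$ is Lipschitz, $h$ has at most quadratic growth; writing $\sigma(u) = \sigma(0) + (\sigma(u)-\sigma(0))$ and using $|e^{-t}y_k - y_k| \leq e^{-t}|y_k|$ together with $1-\sqrt{1-e^{-2t}} \leq e^{-2t}$ inside $\mathbb{E}[h(e^{-t}y + \sqrt{1-e^{-2t}}N) - h(N)]$, I would obtain an estimate of the form $|P_t h(y) - \mathbb{E}[h(N)]| \leq C(1+\|y\|^2)\,e^{-t}$, which makes the $t$-integral absolutely convergent. For $t>0$ the Gaussian kernel smooths $h$, so $P_t h \in C^\infty$, and integration by parts against the Gaussian density (using its invertibility encoded in $K^{(\ell)}$) yields explicit formulas for $\nabla P_t h$ and $\operatorname{Hess} P_t h$ whose norms are integrable in $t$ on every interval $[\varepsilon,\infty)$, thanks to the $e^{-t}$ and $e^{-2t}$ factors produced by differentiating $e^{-t}y$.

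Third, I would apply $\mathcal{L}$ to $f^{(\ell)}_{x^{(i)},x^{(j)}}$. To avoid singularities near $t=0$, I would split $[0,\infty) = [0,\varepsilon]\cup[\varepsilon,\infty)$, use $\mathcal{L}P_t h = \partial_t P_t h$ on the tail to telescope the integral, and pass to the limit $\varepsilon\to 0^+$ via continuity of $t\mapsto P_t h(y)$ (which follows from the continuity of $h$ and dominated convergence). Combined with $\lim_{t\to\infty} P_t h(y) = \mathbb{E}[h(N)]$, this gives
\[
\mathcal{L}f^{(\ell)}_{x^{(i)},x^{(j)}}(y) = -\int_0^\infty \partial_t P_t h(y)\,dt = h(y) - \mathbb{E}[h(N)],
\]
which is the required Stein's equation. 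The main obstacle is precisely the regularity argument at $t=0^+$: $\operatorname{Hess} P_t h$ blows up like $1/(1-e^{-2t})$, so one cannot apply $\mathcal{L}$ under the integral pointwise all the way down to zero. The fix, standard in this circle of ideas, is to work with $\partial_t P_t h$ (which stays integrable on $[0,\infty)$) rather than with $\mathcal{L} P_t h$ term by term, and to exploit the semigroup identity on the smooth region $t\ge \varepsilon$ before taking $\varepsilon\to 0$.
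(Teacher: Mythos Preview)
Your proposal is correct and follows essentially the same Mehler/Ornstein--Uhlenbeck route as the paper, which explicitly adapts Proposition~4.3.2 of \cite{NP12}. The only stylistic difference is that the paper carries out the verification by hand in coordinates---computing $\partial_{y_k} f$, $\partial^2_{y_i y_j} f$, $\partial^2_{y_i^2} f$ explicitly via Gaussian integration by parts (first for $K^{(\ell)}=I_{d\times d}$, then reducing the general case to this), and then recognising the resulting expression as $-\int_0^\infty \mathbb{E}[\partial_t h(e^{-t}y+\sqrt{1-e^{-2t}}N)]\,dt$---whereas you phrase the same computation abstractly through the semigroup identity $\partial_t P_t h = \mathcal{L}P_t h$. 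Your concern about the Hessian blowing up at $t=0^+$ is slightly overstated here: because $h$ depends only on $y_i,y_j$, one integration by parts already yields factors $e^{-2t}$ or $e^{-2t}/\sqrt{1-e^{-2t}}$ that are integrable near $0$, so the $\varepsilon$-splitting is not strictly needed, but it does no harm.
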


\begin{proof}
The proof is an adaptation of Proposition~4.3.2 in \cite{NP12} to the setting where the function $h$ is the square of a Lipschitz function.
The function $f^{(\ell)}_{x^{(i)},x^{(j)}}$ is well defined since, for any $t > 0$,

\begin{multline*}
|h(N)-h(e^{-t}y+\sqrt{1-e^{-2t}}N)|\\
=\Big|\sigma(N_i)\sigma(N_j)-\sigma(e^{-t}y_i+\sqrt{1-e^{-2t}}N_i)\sigma(e^{-t}y_j+\sqrt{1-e^{-2t}}N_j)\Big|
\end{multline*}
\begin{multline*}
\le\Big| \sigma(N_i)\Big(\sigma(N_j)-\sigma(e^{-t}y_j+\sqrt{1-e^{-2t}}N_j)\Big)\Big|\\
+\Big|\sigma(e^{-t}y_j+\sqrt{1-e^{-2t}}N_j)\Big(\sigma(N_i)-\sigma(e^{-t}y_i+\sqrt{1-e^{-2t}}N_i)\Big|
\end{multline*}
\begin{multline*}
\le\|\sigma\|_{\text{Lip}} \Big(|\sigma(0)|+\|\sigma\|_{\text{Lip}}|N_i|\Big)\Big(e^{-t}|y_j|\\
+e^{-2t}|N_j|\Big)+\Big(|\sigma(0)|+\|\sigma\|_{\text{Lip}}(|y_j|+|N_j|)\Big)\Big(e^{-t}|y_i|+e^{-2t}|N_i|\Big),
\end{multline*}
{
and hence 
\[
\mathbb{E}\left[|h(N)-h(e^{-t}y+\sqrt{1-e^{-2t}}N)|\right]<\infty.
\]
}

Let us first assume that $K^{(\ell)}=I_{d\times d}$ is the identity matrix of dimension $d$. Then almost everywhere with respect to the Lebesgue measure
\begin{multline*}
\frac{\partial}{\partial y_k}\Big(h(N)-h(e^{-t}y+\sqrt{1-e^{-2t}}N)\Big)\\
=
\begin{cases}
0 &\text{if $k\ne i,j$}\\
-e^{-t}\sigma'(e^{-t}y_i+\sqrt{1-e^{-2t}}N_i)\sigma(e^{-t}y_j+\sqrt{1-e^{-2t}}N_j) &\text{if $k= i$}\\
-e^{-t}\sigma(e^{-t}y_i+\sqrt{1-e^{-2t}}N_i)\sigma'(e^{-t}y_j+\sqrt{1-e^{-2t}}N_j) &\text{if $k= j$}.
\end{cases}
\end{multline*}
As a consequence, it is possible to pass with the derivative under the sign of integral and to use integration by parts (Theorem \ref{int_gauss_uno}) to obtain that, denoting $w_1,\dots,w_d$ the coordinates in the argument of $h$, 
\[
\frac{\partial f^{(\ell)}_{x^{(i)},x^{(j)}}}{\partial y_k}(y)=-\int_0^{\infty}e^{-t}\mathbb{E}\Big[\frac{\partial h}{\partial w_k}(e^{-t}y+\sqrt{1-e^{-2t}}N)\Big]dt
\]
\[
=
\begin{cases}
-\int_0^{\infty}\frac{e^{-t}}{\sqrt{1-e^{-2t}}}\mathbb{E}\Big[N_k{h}(e^{-t}y+\sqrt{1-e^{-2t}}N)\Big]dt &\text{if $k=i,j$}\\
0 &\text{if $k\ne i,j$}.
\end{cases}
\]
Moreover, again almost everywhere with respect to the Lebesgue measure,
\begin{multline*}
\frac{\partial^2}{\partial y_i \partial y_j}\Big(h(N)-h(e^{-t}y+\sqrt{1-e^{-2t}}N)\Big)\\
=-e^{-2t}\sigma'(e^{-t}y_i+\sqrt{1-e^{-2t}}N_i)\sigma'(e^{-t}y_j+\sqrt{1-e^{-2t}}N_j).
\end{multline*}
Therefore it is possible again to pass with the derivative under the sign of integral to get that
\begin{equation}\label{der_mista}
\frac{\partial^2 f^{(\ell)}_{x^{(i)},x^{(j)}}}{\partial y_i \partial y_j}(y)=-\int_{0}^{\infty}\mathbb{E}\Big[e^{-2t}\sigma'(e^{-t}y_i+\sqrt{1-e^{-2t}}N_i)\sigma'(e^{-t}y_j+\sqrt{1-e^{-2t}}N_j)\Big]dt.
\end{equation}
As before, one can see that
\[
\frac{\partial^2 f^{(\ell)}_{x^{(i)},x^{(j)}}}{\partial y_i^2}(y)=-\int_0^{\infty}\frac{e^{-2t}}{\sqrt{1-e^{-2t}}}\mathbb{E}\Big[N_i\frac{\partial h}{\partial w_i}(e^{-t}y+\sqrt{1-e^{-2t}}N)\Big]dt
\]
and
\[
\frac{\partial^2 f^{(\ell)}_{x^{(i)},x^{(j)}}}{\partial y_j^2}(y)=-\int_0^{\infty}\frac{e^{-2t}}{\sqrt{1-e^{-2t}}}\mathbb{E}\Big[N_j\frac{\partial h}{\partial w_j}(e^{-t}y+\sqrt{1-e^{-2t}}N)\Big]dt,
\]
while all the others second derivatives are equal to zero.
Then, denoting with $\triangle$ the Laplacian,
\begin{multline*}
\triangle f^{(\ell)}_{x^{(i)},x^{(j)}}(y)-\langle y,\nabla f^{(\ell)}_{x^{(i)},x^{(j)}}(y)\rangle\\
=\sum_{r=i,j}\Bigg\{-\int_0^{\infty}\frac{e^{-2t}}{\sqrt{1-e^{-2t}}}\mathbb{E}\Big[N_r\frac{\partial h}{\partial w_r}(e^{-t}y\\
+\sqrt{1-e^{-2t}}N)\Big]dt+y_r\int_0^{\infty}e^{-t}\mathbb{E}\Big[\frac{\partial h}{\partial w_r}(e^{-t}y+\sqrt{1-e^{-2t}}N)\Big]dt\Bigg\}\\
=\sum_{r=i,j}\Bigg\{\int_0^{\infty}\mathbb{E}\Bigg[\Big(-\frac{e^{-2t}}{\sqrt{1-e^{-2t}}}N_r+y_r e^{-t}\Big)\frac{\partial h}{\partial w_r}(e^{-t}y+\sqrt{1-e^{-2t}}N)\Bigg]dt\Bigg\}
\end{multline*}
\[
=-\int_0^{\infty}\mathbb{E}\Big[\frac{\partial}{\partial t}h(e^{-t}y+\sqrt{1-e^{-2t}}N)\Big]dt=-\mathbb{E}\Big[\int_0^{\infty}\frac{\partial}{\partial t}h(e^{-t}y+\sqrt{1-e^{-2t}}N)dt\Big]
\]
\[
=-\mathbb{E}\Big[\lim_{t\to\infty}h(e^{-t}y+\sqrt{1-e^{-2t}}N)\Big]+h(y)=h(y)-\mathbb{E}[h(N)].
\]
If $K^{(\ell)}$ is a symmetric and non-negative matrix then the rest of the proof follows exactly as in Proposition 4.3.2 in \cite{NP12}.
\end{proof}
\begin{lemma}\label{norm_inf_hess}
If $x^{(i)},x^{(j)}\in\mathbb{R}^d$ are such that $x^{(i)}\ne x^{(j)}$, then for every $\ell=1,\dots,L+1$ and every $y\in\mathbb{R}^d$
\[
\Big|\frac{\partial^2f^{(\ell)}_{x^{(i)},x^{(j)}}}{\partial y_i\partial y_j}(y)\Big|\le\frac{ \|\sigma\|_{\text{Lip}}^2}{2}.
\]
\end{lemma}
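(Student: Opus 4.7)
\medskip

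\textbf{Proof proposal for Lemma~\ref{norm_inf_hess}.}
The plan is to read off the mixed partial derivative $\partial^2 f^{(\ell)}_{x^{(i)},x^{(j)}}/\partial y_i \partial y_j$ directly from the integral representation established in Lemma~\ref{sol_st_mult}, and then apply the Lipschitz bound on $\sigma$ together with a trivial integration in $t$. The key observation is that formula~\eqref{der_mista} for the mixed partial, although derived under the assumption $K^{(\ell)}=I_{d\times d}$, in fact holds for an arbitrary positive semi-definite covariance matrix $K^{(\ell)}$: it is obtained purely by differentiating twice under the integral sign in \eqref{def_sol_stein_mult}, using the chain rule and the product structure $h(w)=\sigma(w_i)\sigma(w_j)$, and this computation never uses the specific form of the covariance of $N \sim \mathcal{N}_d(0,K^{(\ell)})$.

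More precisely, the first step is to justify differentiation under the integral in
\[
f^{(\ell)}_{x^{(i)},x^{(j)}}(y)=\int_0^{\infty}\mathbb{E}\bigl[h(N)-h(e^{-t}y+\sqrt{1-e^{-2t}}\,N)\bigr]\,dt
\]
when $\sigma$ is Lipschitz: the Lipschitz property yields $|\sigma'|\le\|\sigma\|_{\mathrm{Lip}}$ almost everywhere, which provides the integrable dominating majorant $e^{-2t}\|\sigma\|_{\mathrm{Lip}}^{2}$ uniformly in $y$. Applying $\partial/\partial y_i$ and then $\partial/\partial y_j$ to the integrand, the chain rule produces the factor $e^{-t}\cdot e^{-t}=e^{-2t}$ and turns $\sigma(\cdot)\sigma(\cdot)$ into $\sigma'(\cdot)\sigma'(\cdot)$, giving
\[
\frac{\partial^2 f^{(\ell)}_{x^{(i)},x^{(j)}}}{\partial y_i\partial y_j}(y)
=-\int_{0}^{\infty}e^{-2t}\,\mathbb{E}\bigl[\sigma'(e^{-t}y_i+\sqrt{1-e^{-2t}}\,N_i)\,\sigma'(e^{-t}y_j+\sqrt{1-e^{-2t}}\,N_j)\bigr]\,dt,
\]
for every $y\in\mathbb{R}^d$. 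This is exactly \eqref{der_mista}, now valid for general $K^{(\ell)}$.

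The second step is the bound itself. Moving the absolute value inside the expectation and the integral, and using $|\sigma'|\le\|\sigma\|_{\mathrm{Lip}}$ a.e.,
\[
\left|\frac{\partial^2 f^{(\ell)}_{x^{(i)},x^{(j)}}}{\partial y_i\partial y_j}(y)\right|
\le\int_{0}^{\infty}e^{-2t}\,\|\sigma\|_{\mathrm{Lip}}^{2}\,dt
=\frac{\|\sigma\|_{\mathrm{Lip}}^{2}}{2},
\]
which is the claim. There is essentially no hard step here: the only point to be careful about is justifying the extension of \eqref{der_mista} to non-identity $K^{(\ell)}$, but this is automatic because the computation of the derivatives of the integrand is algebraic and does not interact with the law of $N$; only the subsequent identification of $\langle K^{(\ell)},\mathrm{Hess}\,f\rangle_{\mathrm{HS}}-\langle y,\nabla f\rangle$ with $h(y)-\mathbb{E}[h(N)]$ required the change of variables to the identity covariance in the proof of Lemma~\ref{sol_st_mult}.
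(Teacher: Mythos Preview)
Your proof is correct and follows exactly the same approach as the paper: the paper's own proof simply states that ``the proof follows directly by \eqref{der_mista}, which is true also if $K^{(\ell)}$ is not the identity matrix,'' and your argument spells out precisely this reasoning together with the elementary bound $\int_0^\infty e^{-2t}\,dt=\tfrac12$.
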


\begin{proof}
The proof follows directly by (\ref{der_mista}), which is true also if $K^{(\ell)}$ is not the identity matrix.
\end{proof}

\begin{lemma}\label{lemma_hess}
For $x^{(i)}\ne x^{(j)}\in\mathbb{R}^d$ and for every $\ell=1,\dots,L+1$ one has that
\begin{multline}\label{hess_sol_stein_mult}
\operatorname{Hess} f^{(\ell)}_{x^{(i)},x^{(j)}}(y)\\
=-\int_0^{\infty}\frac{e^{-t}}{\sqrt{1-e^{-2t}}}(K^{(\ell)})^{-1}\mathbb{E}\Big[G_1^{(\ell)}(\mathcal{X})\Big(\nabla_y h(e^{-t}y+\sqrt{1-e^{-2t}}G_1^{(\ell)}(\mathcal{X}))\Big)^T\Big]dt,
\end{multline}
recalling that $h(y):=\sigma(y_i)\sigma(y_j)$ and denoting with $\nabla_y$ the gradient with respect to the variable $y$.
\end{lemma}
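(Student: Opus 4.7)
The plan is to differentiate the integral representation \eqref{def_sol_stein_mult} of $f^{(\ell)}_{x^{(i)},x^{(j)}}$ twice in $y$, using the multivariate Gaussian integration by parts formula (Lemma \ref{Gauss_integ_multdim}) to shift one derivative off of the merely Lipschitz function $h(y)=\sigma(y_i)\sigma(y_j)$ and onto the Gaussian density. The target formula \eqref{hess_sol_stein_mult} involves a single \emph{a.e.}\ derivative of $h$, which is exactly what one expects after one integration by parts applied to a twice-differentiated Gaussian expectation.

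Concretely, I would first differentiate under the integral sign to obtain, with $u_t := e^{-t}y + \sqrt{1-e^{-2t}}\,G_1^{(\ell)}(\mathcal{X})$,
\[
\nabla_y f^{(\ell)}_{x^{(i)},x^{(j)}}(y) = -\int_0^{\infty} e^{-t}\,\mathbb{E}\bigl[\nabla_y h(u_t)\bigr]\,dt,
\]
justified by dominated convergence, using Rademacher's theorem (so that $\nabla h$ exists a.e.\ with at most linear growth controlled by $\|\sigma\|_{\mathrm{Lip}}$ and $|\sigma(0)|$) together with the finiteness of all moments of $G_1^{(\ell)}(\mathcal{X})$. Viewing $v := \sqrt{1-e^{-2t}}\,G_1^{(\ell)}(\mathcal{X})\sim \mathcal{N}_d(0,(1-e^{-2t})K^{(\ell)})$ as the centred fluctuation of $u_t$ around its mean $e^{-t}y$ and applying Lemma \ref{Gauss_integ_multdim} to $\psi(v):=h(e^{-t}y+v)$ produces the Stein identity
\[
\mathbb{E}\bigl[\nabla_y h(u_t)\bigr] = \frac{1}{\sqrt{1-e^{-2t}}}\,(K^{(\ell)})^{-1}\,\mathbb{E}\bigl[G_1^{(\ell)}(\mathcal{X})\,h(u_t)\bigr].
\]
Differentiating once more in $y$ under the integral—legitimate because only one a.e.\ derivative of $h$ is now invoked—brings down an additional factor from the chain rule and replaces $h(u_t)$ by $\nabla_y h(u_t)$ paired with $G_1^{(\ell)}(\mathcal{X})$ in the outer-product pattern $G_1^{(\ell)}(\mathcal{X})(\nabla_y h(u_t))^T$, yielding exactly the matrix-valued integrand of \eqref{hess_sol_stein_mult} once the scalar prefactors are collected.

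The main obstacle is the justification of the second differentiation under the integral sign, because the $(1-e^{-2t})^{-1/2}$ factor produced by Stein's identity is singular at $t=0$. The plan is to split $\int_0^\infty = \int_0^\varepsilon + \int_\varepsilon^\infty$: on $[\varepsilon,\infty)$, smoothness in $y$ together with uniform control of $\nabla h$ via $\|\sigma\|_{\mathrm{Lip}}$ and of the moments of $G_1^{(\ell)}(\mathcal{X})$ make the interchange routine; on $[0,\varepsilon]$ I would differentiate the pre-IBP expression (which is non-singular, since $\nabla h$ is a.e.\ bounded and no inverse power of $\sqrt{1-e^{-2t}}$ appears) and then pass to the limit $\varepsilon\downarrow 0$, using continuity in $(t,y)$ of the post-IBP integrand and the integrability of $t^{-1/2}$ near the origin. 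This mirrors the scalar argument from Proposition 4.3.2 of \cite{NP12} already invoked at the end of the proof of Lemma \ref{sol_st_mult}.
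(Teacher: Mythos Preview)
Your proposal is correct and follows essentially the same route as the paper: differentiate the representation \eqref{def_sol_stein_mult} once, use Gaussian integration by parts to rewrite $\mathbb{E}[(\nabla h)(u_t)]$ as $\tfrac{1}{\sqrt{1-e^{-2t}}}(K^{(\ell)})^{-1}\mathbb{E}[G_1^{(\ell)}(\mathcal{X})\,h(u_t)]$, and then differentiate once more in $y$. The paper carries out the IBP step by writing $G_1^{(\ell)}(\mathcal{X})=\sqrt{K^{(\ell)}}N$ with $N\sim\mathcal{N}_d(0,I)$ and applying the one-dimensional Lemma~\ref{int_gauss_uno} componentwise, rather than citing Lemma~\ref{Gauss_integ_multdim} (which, as stated, is a scalar identity and not quite the vector relation $\mathbb{E}[Ng(N)]=\Sigma\,\mathbb{E}[\nabla g(N)]$ you invoke); this is a cosmetic difference, and your more explicit treatment of the $t\downarrow 0$ singularity is a welcome addition that the paper leaves implicit.
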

\begin{proof}
It is enough to observe that if $N\sim \mathcal{N}_d(0,I_{d\times d})$, then
\[
h(e^{-t}y+\sqrt{1-e^{-2t}}G_1^{(\ell)}(\mathcal{X}))\stackrel{law}{=}h(e^{-t}y+\sqrt{1-e^{-2t}}\sqrt{K^{(\ell)}}N)
\]
and 
\begin{multline*}
 \mathbb{E}\Big[\nabla_N h(e^{-t}y+\sqrt{1-e^{-2t}}\sqrt{K^{(\ell)}}N)\Big]\\
 =e^t\sqrt{1-e^{-2t}}\sqrt{K^{(\ell)}}\nabla_y \mathbb{E}\Big[h(e^{-t}y+\sqrt{1-e^{-2t}}G_1^{(\ell)}(\mathcal{X}))\Big],
\end{multline*}
but also, because of Theorem \ref{int_gauss_uno}, it holds that
\[
 \mathbb{E}\Big[\nabla_Nh(e^{-t}y+\sqrt{1-e^{-2t}}\sqrt{K^{(\ell)}}N)\Big]
=\mathbb{E}\Big[Nh(e^{-t}y+\sqrt{1-e^{-2t}}\sqrt{K^{(\ell)}}N)\Big].
\]
Therefore
\begin{multline*}
\nabla_y \mathbb{E}\Big[h(e^{-t}y+\sqrt{1-e^{-2t}}G_1^{(\ell)}(\mathcal{X}))\Big]\\
=\frac{e^{-t}}{\sqrt{1-e^{-2t}}}(K^{(\ell)})^{-1}\mathbb{E}\Big[G_1^{(\ell)}(\mathcal{X})h(e^{-t}y+\sqrt{1-e^{-2t}}G_1^{(\ell)}(\mathcal{X}))\Big].
\end{multline*}
Identity \eqref{hess_sol_stein_mult} immediately follows from \eqref{def_sol_stein_mult}.
\end{proof}
\begin{lemma}\label{mom_quart_der_sec}
\noindent
If $x^{(i)}\ne x^{(j)}\in\mathbb{R}^d$, then for every $\ell=1,\dots,L+1$ 
\begin{multline}\label{der_sec_sol_stein_div}
\mathbb{E}\Big[\Big(\frac{\partial^2f^{(\ell)}_{x^{(i)},x^{(j)}}}{\partial y_i^2}(z_1^{(\ell)}(\mathcal{X}))\Big)^4\Big]
\le
 8 {\|\sigma\|^4_{\text{Lip}}}\Big({\sum_{k=1}^d\big|((K^{(\ell)})^{-1})_{i,k}\big|^2}\Big)^{2}{\operatorname{tr}(K^{(\ell)})}^2\cdot\\
\cdot \Big(|\sigma(z_1^{(\ell)}(x_j))|+{\|\sigma\|_{\text{Lip}}}|z_1^{(\ell)}(x_j)|\Big)^4
+8\|\sigma\|^8_{\text{Lip}}\Big({\sum_{k=1}^d\big|((K^{(\ell)})^{-1})_{i,k}\big|^2}\Big)^{2}\Big(K^{(\ell)}_{j,j}\Big)^2.
\end{multline}

An analogous estimate holds for $\mathbb{E}\Big[\Big(\frac{\partial^2f^{(\ell)}_{x^{(i)},x^{(j)}}}{\partial y_j^2}(z_1^{(\ell)}(\mathcal{X}))\Big)^4\Big]$.
\end{lemma}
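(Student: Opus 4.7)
The starting point is the integral representation of the Hessian provided by Lemma~\ref{lemma_hess}. Extracting the $(i,i)$ entry and using the chain rule to convert the $y$-gradient into the gradient of $h$ in its own argument, one obtains
\[
\frac{\partial^2 f^{(\ell)}_{x^{(i)},x^{(j)}}}{\partial y_i^2}(y) = -\int_0^\infty \frac{e^{-2t}}{\sqrt{1-e^{-2t}}}\sum_{k=1}^d ((K^{(\ell)})^{-1})_{i,k}\,\mathbb{E}\bigl[G_{1,k}^{(\ell)}(\mathcal{X})\,\sigma'(U_i^t)\,\sigma(U_j^t)\bigr]\,dt,
\]
where $U_s^t := e^{-t}y_s + \sqrt{1-e^{-2t}}\,G_{1,s}^{(\ell)}(\mathcal{X})$ and $\sigma'$ is the almost-everywhere defined Lipschitz derivative, which satisfies $|\sigma'|\le\|\sigma\|_{\text{Lip}}$. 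The plan is to estimate this quantity uniformly in $y$, then raise the resulting bound to the fourth power and take expectation at $y = z_1^{(\ell)}(\mathcal{X})$.

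The first key step is to apply Cauchy--Schwarz to the sum over $k$, extracting the factor $\sqrt{\sum_{k}|((K^{(\ell)})^{-1})_{i,k}|^2}$. To control the remaining sum of squared expectations, I would split
\[
\sigma(U_j^t) = \sigma(y_j) + \bigl[\sigma(U_j^t) - \sigma(y_j)\bigr],
\]
and use the Lipschitz bound $|\sigma(U_j^t) - \sigma(y_j)| \le \|\sigma\|_{\text{Lip}}\bigl[(1-e^{-t})|y_j| + \sqrt{1-e^{-2t}}\,|G_{1,j}^{(\ell)}|\bigr]$ together with $|\sigma'|\le\|\sigma\|_{\text{Lip}}$ and a Cauchy--Schwarz estimate on each expectation involving $G_{1,k}^{(\ell)}$. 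The outcome is an inequality of the form
\[
\bigl|\mathbb{E}[G_{1,k}^{(\ell)}\sigma'(U_i^t)\sigma(U_j^t)]\bigr| \le \sqrt{K_{k,k}^{(\ell)}}\Bigl\{\|\sigma\|_{\text{Lip}}(|\sigma(y_j)|+\|\sigma\|_{\text{Lip}}|y_j|) + \|\sigma\|_{\text{Lip}}^2\sqrt{1-e^{-2t}}\sqrt{K_{j,j}^{(\ell)}}\Bigr\},
\]
so that summing the squares over $k$ produces the factor $\operatorname{tr}(K^{(\ell)})$ appearing in the statement.

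Substituting this back into the integral formula and evaluating the two elementary integrals $\int_0^\infty e^{-2t}(1-e^{-2t})^{-1/2}\,dt = 1$ and $\int_0^\infty e^{-2t}\,dt = 1/2$ yields a pointwise estimate on $|\partial^2 f/\partial y_i^2(y)|$. Raising this estimate to the fourth power with the convexity inequality $(a+b)^4 \le 8(a^4+b^4)$, setting $y = z_1^{(\ell)}(\mathcal{X})$, and taking expectation produces~\eqref{der_sec_sol_stein_div}. The analogous bound for $\partial^2 f/\partial y_j^2$ follows by symmetry, interchanging the roles of $i$ and $j$.

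The main obstacle I expect is the rigorous handling of $\sigma'$. Since $\sigma$ is only Lipschitz, its derivative exists solely almost everywhere, so the integrand involving $\sigma'(U_i^t)$ must be interpreted with care. Because $U_i^t$ is absolutely continuous with respect to Lebesgue measure for every $t>0$ (as $G_{1,i}^{(\ell)}(\mathcal{X})$ is a non-degenerate Gaussian), the a.e.\ bound $|\sigma'|\le\|\sigma\|_{\text{Lip}}$ extends to a uniform integrable bound; a short mollification argument (replacing $\sigma$ by $\sigma\ast\rho_\varepsilon$ with $\|\sigma\ast\rho_\varepsilon\|_{\text{Lip}}\le\|\sigma\|_{\text{Lip}}$ and passing to the limit via dominated convergence, which is legitimate thanks to the moment bounds of Proposition~\ref{mom_z_prop}) will make the argument rigorous without affecting any constants.
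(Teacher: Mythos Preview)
Your proposal is correct and follows essentially the same route as the paper's proof: both start from the $(i,i)$ entry of the Hessian formula in Lemma~\ref{lemma_hess}, bound $|\sigma'|\le\|\sigma\|_{\text{Lip}}$, split $\sigma(U_j^t)$ via the Lipschitz property, apply Cauchy--Schwarz over $k$ to extract $(\sum_k|((K^{(\ell)})^{-1})_{i,k}|^2)^{1/2}$ and $\sqrt{\operatorname{tr}(K^{(\ell)})}$, evaluate the elementary $t$-integrals, and finish with $(a+b)^4\le 8(a^4+b^4)$. Your additional remark on justifying the use of $\sigma'$ via absolute continuity of $U_i^t$ (or mollification) is a welcome clarification that the paper leaves implicit.
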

\begin{proof}
{
We only prove the upper bound in \eqref{der_sec_sol_stein_div}, as the second part of the statement follows by a similar argument.
}
From Lemma \ref{lemma_hess} we have that
\begin{multline*}
\Big|\frac{\partial^2f^{(\ell)}_{x^{(i)},x^{(j)}}}{\partial y_i^2}(y)\Big|
\le
\int_0^{\infty}\Big|\frac{e^{-2t}}{\sqrt{1-e^{-2t}}}\sum_{k=1}^{d}((K^{(\ell)})^{-1})_{i,k}\cdot\\
\cdot \mathbb{E}\Big[G_1^{(\ell)}(x^{(k)})\sigma'(e^{-t}y_i+\sqrt{1-e^{-2t}}G_1^{(\ell)}(x^{(i)}))\sigma(e^{-t}y_j+\sqrt{1-e^{-2t}}G_1^{(\ell)}(x^{(j)}))\Big]\Big|dt
\end{multline*}
\begin{multline*}
\le {\|\sigma\|_{\text{Lip}}}
\int_0^{\infty}\frac{e^{-2t}}{\sqrt{1-e^{-2t}}}dt\sum_{k=1}^{d}\Big|((K^{(\ell)})^{-1})_{i,k}\Big| \mathbb{E}\Big[\Big|G_1^{(\ell)}(x^{(k)})\Big|\Big]\Big(|\sigma(y_j)|+\|\sigma\|_{\text{Lip}}|y_j|\Big)\\
+{\|\sigma\|^2_{\text{Lip}}}\int_0^{\infty}\frac{e^{-2t}}{\sqrt{1-e^{-2t}}}dt\sum_{k=1}^{d}\Big|((K^{(\ell)})^{-1})_{i,k}\Big| \mathbb{E}\Big[\Big|G_1^{(\ell)}(x^{(k)})G_1^{(\ell)}(x^{(j)})\Big|\Big]
\end{multline*}
\begin{multline*}
\le {\|\sigma\|_{\text{Lip}}}\Big({\sum_{k=1}^d\big|((K^{(\ell)})^{-1})_{i,k}\big|^2}\Big)^{1/2}\mathbb{E}\Big[\|G_1^{(\ell)}(\mathcal{X})\|^2\Big]^{1/2}\Big(|\sigma(y_j)|+\|\sigma\|_{\text{Lip}}|y_j|\Big)\\
+{\|\sigma\|^2_{\text{Lip}}}\sum_{k=1}^{d}\Big|((K^{(\ell)})^{-1})_{i,k}\Big| \mathbb{E}\Big[\Big|G_1^{(\ell)}(x^{(k)})\Big|^2\Big]^{1/2}\mathbb{E}\Big[\Big|G_1^{(\ell)}(x^{(j)})\Big|^2\Big]^{1/2}
\end{multline*}
\begin{multline}\label{der_sec_simp}
\le  {\|\sigma\|_{\text{Lip}}}\Big({\sum_{k=1}^d\big|((K^{(\ell)})^{-1})_{i,k}\big|^2}\Big)^{1/2}\sqrt{\operatorname{tr}(K^{(\ell)})}\Big(|\sigma(y_j)|+\|\sigma\|_{\text{Lip}}|y_j|\Big)\\
+{\|\sigma\|^2_{\text{Lip}}}\Big({\sum_{k=1}^d\big|((K^{(\ell)})^{-1})_{i,k}\big|^2}\Big)^{1/2}\sqrt{K^{(\ell)}_{j,j}}
\end{multline}

Therefore
\begin{multline*}
\mathbb{E}\Big[\Big(\frac{\partial^2f^{(\ell)}_{x^{(i)},x^{(j)}}}{\partial y_i^2}(z_1^{(\ell)}(\mathcal{X}))\Big)^4\Big]\le 8 {\|\sigma\|^4_{\text{Lip}}}\Big({\sum_{k=1}^d\big|((K^{(\ell)})^{-1})_{i,k}\big|^2}\Big)^{2}{\operatorname{tr}(K^{(\ell)})}^2\cdot\\
\cdot \Big(|\sigma(z_1^{(\ell)}(x_j))|+{\|\sigma\|_{\text{Lip}}}|z_1^{(\ell)}(x_j)|\Big)^4
+8\|\sigma\|^8_{\text{Lip}}\Big({\sum_{k=1}^d\big|((K^{(\ell)})^{-1})_{i,k}\big|^2}\Big)^{2}\Big(K^{(\ell)}_{j,j}\Big)^2.
\end{multline*}

\end{proof}
\begin{lemma}\label{der_ter_lemma}
If $x^{(i)}\ne x^{(j)}\in\mathbb{R}^d$, then for every $\ell=1,\dots,L+1$ one has 
\begin{multline*}
\Big\|\operatorname{Hess}\Big(\frac{\partial f^{(\ell)}_{x^{(i)},x^{(j)}}}{\partial y_i}\Big)(y)\Big\|_{\text{HS}}\\
\le
\|\sigma\|^2_{\text{Lip}}\sqrt{\operatorname{tr}(K^{(\ell)})}\Bigg(\Big(\sum_{k=1}^d |((K^{(\ell)})^{-1})_{j,k}|^2\Big)^{1/2}+\Big(\sum_{k=1}^d |((K^{(\ell)})^{-1})_{i,k}|^2\Big)^{1/2}\Bigg)\\
+\|\sigma\|_{\text{Lip}}\|(K^{(\ell)})^{-1}\|_{\mathrm{op}}\|(K^{(\ell)})^{-1}\|_{\text{HS}}\,\sqrt{\operatorname{tr}K^{(\ell)}}\sqrt{K^{(\ell)}_{i,i}}\Big(\|\sigma\|_{\text{Lip}}|y_j|+|\sigma(y_j)|\Big)\\
+\sqrt{3}\|\sigma\|^2_{\text{Lip}}\|(K^{(\ell)})^{-1}\|_{\mathrm{op}}\|(K^{(\ell)})^{-1}\|_{\text{HS}}\sqrt{\operatorname{tr}K^{(\ell)}}\sqrt{K^{(\ell)}_{i,i}}\sqrt{K^{(\ell)}_{j,j}}\\
+\|\sigma\|_{\text{Lip}}\Big|((K^{(\ell)})^{-1})_{i,i}\Big|\Big(|\sigma(y_j)|+\|\sigma\|_{\text{Lip}}|y_j|+\|\sigma\|_{\text{Lip}}\sqrt{K^{(\ell)}_{j,j}}\Big)
\end{multline*}
and an analogous estimate holds for $\Big\|\operatorname{Hess}\Big(\frac{\partial f^{(\ell)}_{x^{(i)},x^{(j)}}}{\partial y_j}\Big)(y)\Big\|_{\text{HS}}$.
\end{lemma}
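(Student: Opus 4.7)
Since $h(y)=\sigma(y_i)\sigma(y_j)$ depends only on the two coordinates $y_i$ and $y_j$, the $(a,b)$-entry $\partial_a\partial_b\partial_i f^{(\ell)}_{x^{(i)},x^{(j)}}(y)$ of the matrix $\operatorname{Hess}(\partial f^{(\ell)}_{x^{(i)},x^{(j)}}/\partial y_i)(y)$ vanishes unless $\{a,b\}\subseteq\{i,j\}$. Consequently the Hilbert--Schmidt norm reduces to a sum of at most four nonzero contributions, indexed by $(i,i),(i,j),(j,i),(j,j)$. The plan is to control the three ``mixed'' entries via a straightforward iterated application of the Gaussian change--of--variables identity, and to handle the diagonal entry $(i,i)$ separately by an extra Hermite--cancellation argument. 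Throughout I shall write $\alpha:=e^{-t}$, $\beta:=\sqrt{1-e^{-2t}}$, $Y:=\alpha y+\beta G$ with $G:=G^{(\ell)}_1(\mathcal{X})\sim\mathcal{N}_d(0,K^{(\ell)})$, and rely on the Mehler-type representation of Lemma \ref{sol_st_mult}.

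For $(a,b)\in\{(i,j),(j,i),(j,j)\}$, I would start from the representation
\[
\frac{\partial^2 f^{(\ell)}_{x^{(i)},x^{(j)}}}{\partial y_i\partial y_j}(y)=-\int_0^\infty \alpha^2\,\mathbb{E}[\sigma'(Y_i)\sigma'(Y_j)]\,dt,
\]
which avoids $\sigma''$ because the two chain--rule derivatives land on distinct coordinates of $h$. A further derivative in $y_a$ is then obtained from the identity
\[
\partial_{y_a}\mathbb{E}[\phi(\alpha y+\beta G)]=\frac{\alpha}{\beta}\sum_{l=1}^d\bigl((K^{(\ell)})^{-1}\bigr)_{a,l}\,\mathbb{E}[G_l\,\phi(\alpha y+\beta G)],
\]
which again does not introduce $\sigma''$. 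The resulting integrand has order $\alpha^3/\beta$ and is integrable near $t=0$; combining the uniform bound $|\mathbb{E}[G_l\sigma'(Y_i)\sigma'(Y_j)]|\le\|\sigma\|_{\text{Lip}}^2\sqrt{K^{(\ell)}_{l,l}}$ with the Cauchy--Schwarz estimate $\sum_l|((K^{(\ell)})^{-1})_{a,l}|\sqrt{K^{(\ell)}_{l,l}}\le (\sum_l|((K^{(\ell)})^{-1})_{a,l}|^2)^{1/2}\sqrt{\operatorname{tr}K^{(\ell)}}$ produces the first two terms of the stated bound.

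The diagonal entry $\partial_i^3 f^{(\ell)}_{x^{(i)},x^{(j)}}$ is the principal obstacle. Differentiating the expression for $(\operatorname{Hess} f)_{i,i}$ from Lemma \ref{lemma_hess} once more in $y_i$ via the same change--of--variables identity produces an integrand of the form
\[
\frac{\alpha^3}{\beta^2}\,\mathbb{E}\bigl[(W^2-\tau)\sigma'(Y_i)\sigma(Y_j)\bigr],
\]
where $W:=\sum_r((K^{(\ell)})^{-1})_{i,r}G_r\sim\mathcal{N}(0,\tau)$ and $\tau:=((K^{(\ell)})^{-1})_{i,i}$. The factor $\alpha^3/\beta^2\sim 1/(2t)$ is by itself non--integrable at $t=0^+$, but the Hermite--type orthogonality $\mathbb{E}[W^2-\tau]=0$ combined with the Gaussian smoothing of the integrand provides the exact cancellation that keeps the integral finite; a standard smooth regularization of $\sigma$ with the same Lipschitz constant, followed by passage to the limit, legitimates every integration by parts in the merely Lipschitz setting. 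I would then split the inner expectation as $\mathbb{E}[W^2\sigma'(Y_i)\sigma(Y_j)]-\tau\,\mathbb{E}[\sigma'(Y_i)\sigma(Y_j)]$ and bound the first piece by Cauchy--Schwarz with $\mathbb{E}[W^4]^{1/2}=\sqrt{3}\,\tau$ and $\mathbb{E}[(\sigma'(Y_i)\sigma(Y_j))^2]^{1/2}\le\|\sigma\|_{\text{Lip}}(|\sigma(y_j)|+\|\sigma\|_{\text{Lip}}|y_j|+\|\sigma\|_{\text{Lip}}\sqrt{K^{(\ell)}_{j,j}})$, matching the resulting ``double-$(K^{-1})$'' structure against $\|(K^{(\ell)})^{-1}\|_{\mathrm{op}}\|(K^{(\ell)})^{-1}\|_{\text{HS}}\sqrt{\operatorname{tr}K^{(\ell)}}\sqrt{K^{(\ell)}_{i,i}}$, which produces the second and third terms in the statement. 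The second piece yields the ``delta--correction'' term proportional to $((K^{(\ell)})^{-1})_{i,i}$. Assembling the four contributions, and observing that the analogous argument with the roles of $i$ and $j$ swapped yields the bound for $\operatorname{Hess}(\partial f^{(\ell)}_{x^{(i)},x^{(j)}}/\partial y_j)$, completes the proof. The hardest step is the taming of the $1/t$ singularity in the $(i,i)$ entry, which is only possible via the explicit Hermite cancellation just described.
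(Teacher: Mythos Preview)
Your overall architecture coincides with the paper's proof: both compute the mixed third derivatives $\partial_i^2\partial_j f$ and $\partial_i\partial_j^2 f$ directly from the single--IBP representation of Lemma~\ref{lemma_hess} (producing the integrable kernel $e^{-3t}/\sqrt{1-e^{-2t}}$ and the first line of the stated bound), and both handle the pure derivative $\partial_i^3 f$ by first passing to a second--order IBP representation of $\partial_i^2 f$ and then taking one more $y_i$--derivative. Your compact form $\frac{\alpha^3}{\beta^2}\,\mathbb{E}[(W^2-\tau)\sigma'(Y_i)\sigma(Y_j)]$, with $W=((K^{(\ell)})^{-1}G)_i$ and $\tau=((K^{(\ell)})^{-1})_{i,i}$, is correct and equivalent to the paper's integrand; the paper records the same quantity through the matrix $K^{-2}\,\mathbb{E}[GG^Th]$.

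There is, however, a genuine gap in your last step, and it is precisely the one the paper's own argument shares. After correctly observing that $\alpha^3/\beta^2\sim(2t)^{-1}$ is non--integrable at $t=0^+$ and that finiteness of $\partial_i^3 f$ rests on the cancellation $\mathbb{E}[W^2-\tau]=0$, you then \emph{split} $\mathbb{E}[(W^2-\tau)\sigma'(Y_i)\sigma(Y_j)]$ into $\mathbb{E}[W^2\sigma'(Y_i)\sigma(Y_j)]$ and $\tau\,\mathbb{E}[\sigma'(Y_i)\sigma(Y_j)]$ and bound each by Cauchy--Schwarz. Each of those two bounds is a strictly positive quantity that does \emph{not} vanish as $t\to0^+$, so integrating your pointwise bound against $\alpha^3/\beta^2$ over $(0,\infty)$ yields $\int_0^\infty e^{-3t}(1-e^{-2t})^{-1}\,dt=+\infty$; the split destroys exactly the cancellation you just invoked. (The paper performs the same splitting and then tacitly treats this divergent integral as a finite constant, so this is a shared lacuna rather than a deviation on your part.) To make the argument go through quantitatively one must keep $W^2-\tau$ together and extract a factor of $\beta$ from the expectation before integrating in~$t$, e.g.\ by writing $\sigma'(Y_i)\sigma(Y_j)=\sigma'(Y_i)\sigma(y_j)+\sigma'(Y_i)(\sigma(Y_j)-\sigma(y_j))$ and, for the first piece, applying one further Gaussian IBP on the second Hermite polynomial $W^2-\tau$. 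Note also that your Cauchy--Schwarz route produces the constant $\sqrt{3}\,\tau=\sqrt{3}\,((K^{(\ell)})^{-1})_{i,i}$, which does not by itself reproduce the factor $\|(K^{(\ell)})^{-1}\|_{\mathrm{op}}\|(K^{(\ell)})^{-1}\|_{\mathrm{HS}}\sqrt{\operatorname{tr}K^{(\ell)}}\sqrt{K^{(\ell)}_{i,i}}$ in the statement; the paper obtains that combination from $\sum_k|((K^{(\ell)})^{-2})_{i,k}|\,\mathbb{E}|G_kG_i|$, a different bookkeeping of the same (divergent) piece.
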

\begin{proof}
From Lemma \ref{lemma_hess} it follows that
\begin{multline*}
\frac{\partial^2 f^{(\ell)}_{x^{(i)},x^{(j)}}}{\partial y_i \partial y_j}(y)\\
=-\int_0^{\infty}\frac{e^{-t}}{\sqrt{1-e^{-2t}}}\sum_{k=1}^{d}((K^{(\ell)})^{-1})_{i,k}\mathbb{E}\Big[G_1^{(\ell)}(x^{(k)})\frac{\partial h}{\partial y_j}(e^{-t}y+\sqrt{1-e^{-2t}}G_1^{(\ell)}(\mathcal{X}))\Big]dt.
\end{multline*}
and, similarly,
\begin{multline*}
\frac{\partial^2 f^{(\ell)}_{x^{(i)},x^{(j)}}}{\partial y_i \partial y_j}(y)\\
=-\int_0^{\infty}\frac{e^{-t}}{\sqrt{1-e^{-2t}}}\sum_{k=1}^{d}((K^{(\ell)})^{-1})_{j,k}\mathbb{E}\Big[G_1^{(\ell)}(x^{k)})\frac{\partial h}{\partial y_i}(e^{-t}y+\sqrt{1-e^{-2t}}G_1^{(\ell)}(\mathcal{X}))\Big]dt.
\end{multline*}
As a consequence,
\begin{multline*}
\frac{\partial^3 f^{(\ell)}_{x^{(i)},x^{(j)}}}{\partial y_i^2 \partial y_j}(y)=-\int_0^{\infty}\frac{e^{-3t}}{\sqrt{1-e^{-2t}}}\sum_{k=1}^{d}((K^{(\ell)})^{-1})_{i,k}\cdot\\
\cdot \mathbb{E}\Big[G_1^{(\ell)}(x^{(k)})\sigma'(e^{-t}y_i+\sqrt{1-e^{-2t}}G_1^{(\ell)}(x^{(i)}))\sigma'(e^{-t}y_j+\sqrt{1-e^{-2t}}G_1^{(\ell)}(x^{(j)}))\Big]dt
\end{multline*}
and
\begin{multline*}
\frac{\partial^3 f^{(\ell)}_{x^{(i)},x^{(j)}}}{\partial y_i \partial y_j^2}(y)=-\int_0^{\infty}\frac{e^{-3t}}{\sqrt{1-e^{-2t}}}\sum_{k=1}^{d}((K^{(\ell)})^{-1})_{j,k}\cdot\\
\cdot \mathbb{E}\Big[G_1^{(\ell)}(x^{(k)})\sigma'(e^{-t}y_i+\sqrt{1-e^{-2t}}G_1^{(\ell)}(x^{(i)}))\sigma'(e^{-t}y_j+\sqrt{1-e^{-2t}}G_1^{(\ell)}(x^{(j)}))\Big]dt.
\end{multline*}
It follows that
\[
\Big|\frac{\partial^3 f^{(\ell)}_{x^{(i)},x^{(j)}}}{\partial y_i \partial y_j^2}(y)\Big|\le 
\|\sigma\|^2_{\text{Lip}}\Big(\sum_{k=1}^d |((K^{(\ell)})^{-1})_{j,k}|^2\Big)^{1/2}\sqrt{\operatorname{tr}(K^{(\ell)})}
\]
and that 
\[
\Big|\frac{\partial^3 f^{(\ell)}_{x^{(i)},x^{(j)}}}{\partial y_i^2 \partial y_j}(y)\Big|\le \|\sigma\|^2_{\text{Lip}}\Big(\sum_{k=1}^d |((K^{(\ell)})^{-1})_{i,k}|^2\Big)^{1/2}\sqrt{\operatorname{tr}(K^{(\ell)})}.
\]
{
Now let us observe that taking $N\sim\mathcal{N}_d(0,I_{d\times d})$, one can write $G_1^{(\ell)}(\mathcal{X})\overset{\text{law}}{=}\sqrt{K^{(\ell)}}N$ and obtain that
}
\begin{multline*}
\mathbb{E}\Big[\nabla_N\Big(\sqrt{K^{(\ell)}}Nh(e^{-t}y+\sqrt{1-e^{-2t}}\sqrt{K^{(\ell)}}N)\Big)\Big]\\
=\sqrt{K^{(\ell)}}\mathbb{E}\Big[h(e^{-t}y+\sqrt{1-e^{-2t}}G_1^{(\ell)}(\mathcal{X}))\Big]\\
+e^t\sqrt{1-e^{-2t}}\sqrt{K^{(\ell)}}\mathbb{E}\Big[G_1^{(\ell)}(\mathcal{X})\Big(\nabla_y h(e^{-t}y+\sqrt{1-e^{-2t}}G_1^{(\ell)}(\mathcal{X}))\Big)^T\Big],
\end{multline*}
hence, thanks to Theorem \ref{int_gauss_uno},
\begin{multline*}
\mathbb{E}\Big[G_1^{(\ell)}(\mathcal{X})\Big(\nabla_y h(e^{-t}y+\sqrt{1-e^{-2t}}G_1^{(\ell)}(\mathcal{X})\Big)^T\Big]\\
=\frac{e^{-t}}{\sqrt{1-e^{-2t}}}(\sqrt{K^{(\ell)}})^{-1}\Bigg(\mathbb{E}\Big[\nabla_N\Big(\sqrt{K^{(\ell)}}Nh(e^{-t}y+\sqrt{1-e^{-2t}}\sqrt{K^{(\ell)}}N)\Big)\Big]\\
-\sqrt{K^{(\ell)}}\mathbb{E}\Big[h(e^{-t}y+\sqrt{1-e^{-2t}}G_1^{(\ell)}(\mathcal{X}))\Big]\Bigg)
\end{multline*}
\begin{multline*}
=\frac{e^{-t}}{\sqrt{1-e^{-2t}}}(\sqrt{K^{(\ell)}})^{-1}\Bigg(\mathbb{E}\Big[N\Big(\sqrt{K^{(\ell)}}N\Big)^T h(e^{-t}y+\sqrt{1-e^{-2t}}\sqrt{K^{(\ell)}}N)\Big]\\
-\sqrt{K^{(\ell)}}\mathbb{E}\Big[h(e^{-t}y+\sqrt{1-e^{-2t}}G_1^{(\ell)}(\mathcal{X}))\Big]\Bigg)
\end{multline*}
\begin{multline*}
=\frac{e^{-t}}{\sqrt{1-e^{-2t}}}\Bigg((K^{(\ell)})^{-1}\mathbb{E}\Big[G_1^{(\ell)}(\mathcal{X})\Big(G_1^{(\ell)}(\mathcal{X}\Big)^T h(e^{-t}y+\sqrt{1-e^{-2t}}\sqrt{K^{(\ell)}}N)\Big]\\
-I_{d\times d}\mathbb{E}\Big[h(e^{-t}y+\sqrt{1-e^{-2t}}G_1^{(\ell)}(\mathcal{X}))\Big]\Bigg)
\end{multline*}
At this point, one can apply Lemma \ref{lemma_hess} to infer that
\begin{multline*}
\operatorname{Hess} f^{(\ell)}_{x^{(i)},x^{(j)}}(y)\\
=-\int_0^{\infty}\frac{e^{-t}}{\sqrt{1-e^{-2t}}}(K^{(\ell)})^{-1}\mathbb{E}\Big[G_1^{(\ell)}(\mathcal{X})\Big(\nabla_y h(e^{-t}y+\sqrt{1-e^{-2t}}G_1^{(\ell)}(\mathcal{X}))\Big)^T\Big]dt
\end{multline*}
\begin{multline*}
=-\int_0^{\infty}\frac{e^{-2t}}{{1-e^{-2t}}}(K^{(\ell)})^{-2}\mathbb{E}\Big[G_1^{(\ell)}(\mathcal{X})\Big(G_1^{(\ell)}(\mathcal{X})\Big)^T h(e^{-t}y+\sqrt{1-e^{-2t}}G_1^{(\ell)}(\mathcal{X}))\Big]dt\\
+\int_0^{\infty}\frac{e^{-2t}}{{1-e^{-2t}}}(K^{(\ell)})^{-1}\mathbb{E}\Big[h(e^{-t}y+\sqrt{1-e^{-2t}}G_1^{(\ell)}(\mathcal{X}))\Big]dt.
\end{multline*}
Then,
\begin{multline*}
\frac{\partial^2f^{(\ell)}_{x^{(i)},x^{(j)}}}{\partial y_i^2}(y)\\
=-\int_0^{\infty}\frac{e^{-2t}}{{1-e^{-2t}}}\sum_{k=1}^{d}((K^{(\ell)})^{-2})_{i,k}\mathbb{E}\Big[G_1^{(\ell)}(x_k)G_1^{(\ell)}(x_i) h(e^{-t}y+\sqrt{1-e^{-2t}}G_1^{(\ell)}(\mathcal{X}))\Big]dt\\
+\int_0^{\infty}\frac{e^{-2t}}{{1-e^{-2t}}}((K^{(\ell)})^{-1})_{i,i}\mathbb{E}\Big[h(e^{-t}y+\sqrt{1-e^{-2t}}G_1^{(\ell)}(\mathcal{X}))\Big]dt
\end{multline*}
and so
\begin{multline*}
\frac{\partial^3f^{(\ell)}_{x^{(i)},x^{(j)}}}{\partial y_i^3}(y)=-\int_0^{\infty}\frac{e^{-3t}}{{1-e^{-2t}}}\sum_{k=1}^{d}((K^{(\ell)})^{-2})_{i,k}\cdot\\
\cdot \mathbb{E}\Big[G_1^{(\ell)}(x_k)G_1^{(\ell)}(x_i) \sigma'(e^{-t}y_i+\sqrt{1-e^{-2t}}G_1^{(\ell)}(x_i))\sigma(e^{-t}y_j+\sqrt{1-e^{-2t}}G_1^{(\ell)}(x_j))\Big]dt\\
+\int_0^{\infty}\frac{e^{-3t}}{{1-e^{-2t}}}((K^{(\ell)})^{-1})_{i,i}\mathbb{E}\Big[\sigma'(e^{-t}y_i+\sqrt{1-e^{-2t}}G_1^{(\ell)}(x_i))\cdot\\
\cdot\sigma(e^{-t}y_j+\sqrt{1-e^{-2t}}G_1^{(\ell)}(x_j))\Big]dt.
\end{multline*}
Therefore,
\begin{multline*}
\Big|\frac{\partial^3f^{(\ell)}_{x^{(i)},x^{(j)}}}{\partial y_i^3}(y)\Big|\le\|\sigma\|_{\text{Lip}}\int_0^{\infty}\frac{e^{-3t}}{{1-e^{-2t}}}\sum_{k=1}^{d}\Big|((K^{(\ell)})^{-2})_{i,k}\Big|\cdot\\
\cdot \mathbb{E}\Big[\Big|G_1^{(\ell)}(x_k)G_1^{(\ell)}(x_i)\Big| \Big(\Big|\sigma(e^{-t}y_j+\sqrt{1-e^{-2t}}G_1^{(\ell)}(x_j))-\sigma(y_j)\Big|+|\sigma(y_j)|\Big)\Big]dt\\
+\|\sigma\|_{\text{Lip}}\int_0^{\infty}\frac{e^{-3t}}{{1-e^{-2t}}}\Big|((K^{(\ell)})^{-1})_{i,i}\Big|\cdot\\
\cdot \mathbb{E}\Big[\Big|\sigma(e^{-t}y_j+\sqrt{1-e^{-2t}}G_1^{(\ell)}(x_j))-\sigma(y_j)\Big|+|\sigma(y_j)|\Big]dt.
\end{multline*}
\begin{multline*}
\le\|\sigma\|_{\text{Lip}}\int_0^{\infty}\frac{e^{-3t}}{{1-e^{-2t}}}dt\sum_{k=1}^{d}\Big|((K^{(\ell)})^{-2})_{i,k}\Big|\mathbb{E}\Big[\Big|G_1^{(\ell)}(x_k)G_1^{(\ell)}(x_i)\Big|\Big]\Big(\|\sigma\|_{\text{Lip}}|y_j|+|\sigma(y_j)|\Big)\\
+\|\sigma\|^2_{\text{Lip}}\int_0^{\infty}\frac{e^{-3t}}{{1-e^{-2t}}}dt\sum_{k=1}^{d}\Big|((K^{(\ell)})^{-2})_{i,k}\Big|\mathbb{E}\Big[\Big|G_1^{(\ell)}(x_k)G_1^{(\ell)}(x_i)\Big|\Big|G_1^{(\ell)}(x_j)\Big|\Big]\\
+\|\sigma\|_{\text{Lip}}\int_0^{\infty}\frac{e^{-3t}}{{1-e^{-2t}}}dt\Big|((K^{(\ell)})^{-1})_{i,i}\Big|\Big(|\sigma(y_j)|+\|\sigma\|_{\text{Lip}}|y_j|+\|\sigma\|_{\text{Lip}}\mathbb{E}[|G_1^{(\ell)}(x_j)|]\Big)
\end{multline*}
\begin{multline*}
\le\|\sigma\|_{\text{Lip}}\int_0^{\infty}\frac{e^{-3t}}{{1-e^{-2t}}}dt\|(K^{(\ell)})^{-2}\|_{\text{HS}}\sqrt{\operatorname{tr}K^{(\ell)}}\sqrt{K^{(\ell)}_{i,i}}\Big(\|\sigma\|_{\text{Lip}}|y_j|+|\sigma(y_j)|\Big)\\
+\|\sigma\|^2_{\text{Lip}}\int_0^{\infty}\frac{e^{-3t}}{{1-e^{-2t}}}dt\|(K^{(\ell)})^{-2}\|_{\text{HS}}\sqrt{\operatorname{tr} K^{(\ell)}}\mathbb{E}[|G_1^{(\ell)}(x_i)|^2|G_1^{(\ell)}(x_j)|^2]^{1/2}\\
+\|\sigma\|_{\text{Lip}}\int_0^{\infty}\frac{e^{-3t}}{{1-e^{-2t}}}dt\Big|((K^{(\ell)})^{-1})_{i,i}\Big|\Big(|\sigma(y_j)|+\|\sigma\|_{\text{Lip}}|y_j|+\|\sigma\|_{\text{Lip}}\sqrt{K^{(\ell)}_{j,j}}\Big)
\end{multline*}

\begin{multline}\label{bound_terd}
\le \|\sigma\|_{\text{Lip}}\|(K^{(\ell)})^{-1}\|_{\mathrm{op}}\|(K^{(\ell)})^{-1}\|_{\text{HS}}\,\sqrt{\operatorname{tr}K^{(\ell)}}\sqrt{K^{(\ell)}_{i,i}}\Big(\|\sigma\|_{\text{Lip}}|y_j|+|\sigma(y_j)|\Big)\\
+\sqrt{3}\|\sigma\|^2_{\text{Lip}}\|(K^{(\ell)})^{-1}\|_{\mathrm{op}}\|(K^{(\ell)})^{-1}\|_{\text{HS}}\sqrt{\operatorname{tr}K^{(\ell)}}\sqrt{K^{(\ell)}_{i,i}}\sqrt{K^{(\ell)}_{j,j}}\\
+\|\sigma\|_{\text{Lip}}\Big|((K^{(\ell)})^{-1})_{i,i}\Big|\Big(|\sigma(y_j)|+\|\sigma\|_{\text{Lip}}|y_j|+\|\sigma\|_{\text{Lip}}\sqrt{K^{(\ell)}_{j,j}}\Big)
\end{multline}

An analogous bound as (\ref{bound_terd}) holds also for $\Big|\frac{\partial^3 f^{(\ell)}_{x^{(i)},x^{(j)}}}{\partial y_j^3}(y)\Big|$. 

We can now prove the Lemma: 
\[
\Big\|\operatorname{Hess}\Big(\frac{\partial f^{(\ell)}_{x^{(i)},x^{(j)}}}{\partial y_i}\Big)(y)\Big\|_{\text{HS}}\le \Bigg(\Big|\frac{\partial^3 f^{(\ell)}_{x^{(i)},x^{(j)}}}{\partial y_i^3}(y)\Big|+\Big|\frac{\partial^3 f^{(\ell)}_{x^{(i)},x^{(j)}}}{\partial y_i\partial y_j^2}(y)\Big|
+\Big|\frac{\partial^3 f^{(\ell)}_{x^{(i)},x^{(j)}}}{\partial y_i^2\partial y_j}\Big|\Bigg)
\]
\begin{multline*}
\le 
\|\sigma\|^2_{\text{Lip}}\sqrt{\operatorname{tr}(K^{(\ell)})}\Bigg(\Big(\sum_{k=1}^d |((K^{(\ell)})^{-1})_{j,k}|^2\Big)^{1/2}+\Big(\sum_{k=1}^d |((K^{(\ell)})^{-1})_{i,k}|^2\Big)^{1/2}\Bigg)\\
+\|\sigma\|_{\text{Lip}}\|(K^{(\ell)})^{-1}\|_{\mathrm{op}}\|(K^{(\ell)})^{-1}\|_{\text{HS}}\,\sqrt{\operatorname{tr}K^{(\ell)}}\sqrt{K^{(\ell)}_{i,i}}\Big(\|\sigma\|_{\text{Lip}}|y_j|+|\sigma(y_j)|\Big)\\
+\sqrt{3}\|\sigma\|^2_{\text{Lip}}\|(K^{(\ell)})^{-1}\|_{\mathrm{op}}\|(K^{(\ell)})^{-1}\|_{\text{HS}}\sqrt{\operatorname{tr}K^{(\ell)}}\sqrt{K^{(\ell)}_{i,i}}\sqrt{K^{(\ell)}_{j,j}}\\
+\|\sigma\|_{\text{Lip}}\Big|((K^{(\ell)})^{-1})_{i,i}\Big|\Big(|\sigma(y_j)|+\|\sigma\|_{\text{Lip}}|y_j|+\|\sigma\|_{\text{Lip}}\sqrt{K^{(\ell)}_{j,j}}\Big),
\end{multline*}
yielding the desired estimate.

\end{proof}
\subsection{Proof of Lemma \ref{somma_schifa}}
Let us consider the solution to the Stein's equation (with $x^{(i)}\ne x^{(j)}\in\mathbb{R}^d$)
\[
\langle K^{(\ell)},\operatorname{Hess} f^{(\ell)}_{x^{(i)},x^{(j)}}(y)\rangle_{\text{HS}}-\langle y,\nabla f^{(\ell)}_{x^{(i)},x^{(j)}}(y)\rangle=\sigma(y_i)\sigma(y_j)-\mathbb{E}\Big[\sigma(G_{1}^{(\ell)}(x^{(i)}))\sigma(G_1^{(\ell)}(x^{(j)}))\Big].
\]
Then
\begin{multline*}
B_{\ell}(x^{(i)},x^{(j)}):=\Big|\mathbb{E}\Big[\sigma(z_{1}^{(\ell)}(x^{(i)}))\sigma(z_1^{(\ell)}(x^{(j)}))\big|\mathcal{F}_{\ell-1}\Big]-\mathbb{E}\Big[\sigma(G_{1}^{(\ell)}(x^{(i)}))\sigma(G_1^{(\ell)}(x^{(j)}))\Big]\Big|\\
=\Big|\mathbb{E}\Big[\langle K^{(\ell)},\operatorname{Hess} f^{(\ell)}_{x^{(i)},x^{(j)}}(z_1^{(\ell)}(\mathcal{X}))\rangle_{\text{HS}}\big|\mathcal{F}_{\ell-1}\Big]-\mathbb{E}\Big[\langle z_1^{(\ell)}(\mathcal{X}),\nabla f^{(\ell)}_{x^{(i)},x^{(j)}}( z_1^{(\ell)}(\mathcal{X}))\rangle\big|\mathcal{F}_{\ell-1}\Big]\Big|.
\end{multline*}
Introduce ${W'}^{(\ell)}$ as a copy of $W^{(\ell)}$ independent of $\mathcal{F}_{\ell}$ and, for every $s=1,\dots, n_\ell$, define
\begin{equation}\label{z_1_l_s}
    z_1^{(\ell)(s)}(\mathcal{X})=b_1+\frac{\sqrt{C_W}}{{\sqrt{n_{\ell-1}}}}\sum_{k=1,k\ne s}^d W_{1,k}^{(\ell)}\sigma(z_k^{(\ell-1)}(\mathcal{X}))+\frac{\sqrt{C_W}}{\sqrt{n_{\ell-1}}} W_{1,s}^{'(\ell)}\sigma(z_s^{(\ell-1)}(\mathcal{X}))
\end{equation}
We can now use Lemma \ref{cov_ch} and the Gaussian integration by parts (Lemma \ref{Gauss_integ_multdim}) to obtain that
\begin{multline*}
\mathbb{E}\Big[\langle z_1^{(\ell)}(\mathcal{X}),\nabla f^{(\ell)}_{x^{(i)},x^{(j)}} (z_1^{(\ell)}(\mathcal{X}))\rangle\big|\mathcal{F}_{\ell-1}\Big]=\sum_{k=1}^{d}\mathbb{E}\Big[ z_1^{(\ell)}(x^{(k)})\frac{\partial f^{(\ell)}_{x^{(i)},x^{(j)}}}{\partial y_k}( z_1^{(\ell)}(\mathcal{X}))\big|\mathcal{F}_{\ell-1}\Big]\\
=\sum_{k=1}^{d}\mathbb{E}\Big[b_1^{(\ell)}\frac{\partial f^{(\ell)}_{x^{(i)},x^{(j)}}}{\partial y_k}( z_1^{(\ell)}(\mathcal{X}))\Big|\mathcal{F}_{\ell-1}\Big]
+ \sum_{k=1}^{d}\sum_{s=1}^{n_{\ell-1}}\frac{\sqrt{C_W}}{2\sqrt{n_{\ell-1}}}\mathbb{E}\Big[ \big(W_{1,s}^{(\ell)}-W^{'(\ell)}_{1,s}\big)\sigma(z_s^{(\ell-1)}(x^{(k)}))\cdot\\
\cdot\Big(\frac{\partial f^{(\ell)}_{x^{(i)},x^{(j)}}}{\partial y_k}( z_1^{(\ell)}(\mathcal{X}))-\frac{\partial f^{(\ell)}_{x^{(i)},x^{(j)}}}{\partial y_k}( z_1^{(\ell)(s)}(\mathcal{X}))\Big)\big|\mathcal{F}_{\ell-1}\Big]\\
=\sum_{k=1}^d\sum_{u=1}^d \mathbb{E}\Big[C_b\frac{\partial^2 f^{(\ell)}_{x^{(i)},x^{(j)}}}{\partial y_k\partial y_u}( z_1^{(\ell)}(\mathcal{X}))\Big|\mathcal{F}_{\ell-1}\Big]+\sum_{k=1}^{d}\sum_{s=1}^{n_{\ell-1}}\frac{\sqrt{C_W}}{2\sqrt{n_{\ell-1}}}\mathbb{E}\Big[ \big(W_{1,s}^{(\ell)}-W^{'(\ell)}_{1,s}\big)\cdot\\
\cdot\sigma(z_s^{(\ell-1)}(x^{(k)}))\Big\langle\nabla\frac{\partial f^{(\ell)}_{x^{(i)},x^{(j)}}}{\partial y_k}( z_1^{(\ell)}(\mathcal{X})),\frac{\sqrt{C_W}}{\sqrt{n_{\ell-1}}}\big(W_{1,s}^{(\ell)}-W^{'(\ell)}_{1,s}\big)\sigma(z_s^{(\ell-1)}(\mathcal{X}))\Big\rangle\big|\mathcal{F}_{\ell-1}\Big]\\
\end{multline*}
\begin{multline*}
=\sum_{k=1}^d\sum_{u=1}^d \mathbb{E}\Big[C_b\frac{\partial^2 f^{(\ell)}_{x^{(i)},x^{(j)}}}{\partial y_k\partial y_u}( z_1^{(\ell)}(\mathcal{X}))\Big|\mathcal{F}_{\ell-1}\Big]\\
+\sum_{k=1}^{d}\sum_{u=1}^{d}\sum_{s=1}^{n_{\ell-1}}\frac{{C_W}}{2{n_{\ell-1}}}\mathbb{E}\Big[ \big(W_{1,s}^{(\ell)}-W^{'(\ell)}_{1,s}\big)^2\sigma(z_s^{(\ell-1)}(x^{(k)}))\sigma(z_s^{(\ell-1)}(x^{(u)}))\cdot\\
\cdot \frac{\partial^2 f^{(\ell)}_{x^{(i)},x^{(j)}}}{\partial y_u\partial y_k}( z_1^{(\ell)}(\mathcal{X}))\big|\mathcal{F}_{\ell-1}\Big]
+\sum_{k=1}^{d}\sum_{s=1}^{n_{\ell-1}}\frac{\sqrt{C_W}}{2\sqrt{n_{\ell-1}}}\mathbb{E}\Big[ \big(W_{1,s}^{(\ell)}-W^{'(\ell)}_{1,s}\big)\sigma(z_s^{(\ell-1)}(x^{(k)}))\cdot\\
\cdot \Big(\frac{\partial f^{(\ell)}_{x^{(i)},x^{(j)}}}{\partial y_k}( z_1^{(\ell)}(\mathcal{X}))-\frac{\partial f^{(\ell)}_{x^{(i)},x^{(j)}}}{\partial y_k}( z_1^{(\ell)(s)}(\mathcal{X}))\\
-\Big\langle\nabla\frac{\partial f^{(\ell)}_{x^{(i)},x^{(j)}}}{\partial y_k}( z_1^{(\ell)}(\mathcal{X})),\frac{\sqrt{C_W}}{\sqrt{n_{\ell-1}}}\big(W_{1,s}^{(\ell)}
-W^{'(\ell)}_{1,s}\big)\sigma(z_s^{(\ell-1)}(\mathcal{X}))\Big\rangle\Big)\big|\mathcal{F}_{\ell-1}\Big]\\
\end{multline*}
Therefore
\begin{multline*}
B_{\ell}(x^{(i)},x^{(j)})\le\Big|\sum_{k=1}^{d}\sum_{u=1}^{d}\mathbb{E}\Big[\Big(K^{(\ell)}_{u,k}-C_b-\sum_{s=1}^{n_{\ell-1}}\frac{{C_W}}{2{n_{\ell-1}}} \big(W_{1,s}^{(\ell)}-W^{'(\ell)}_{1,s}\big)^2\sigma(z_s^{(\ell-1)}(x^{(k)}))\cdot\\
\cdot\sigma(z_s^{(\ell-1)}(x^{(u)}))\Big)\frac{\partial^2 f^{(\ell)}_{x^{(i)},x^{(j)}}}{\partial y_u\partial y_k}( z_1^{(\ell)}(\mathcal{X}))\big|\mathcal{F}_{\ell-1}\Big]\Big|
+\Big|\sum_{k=1}^{d}\sum_{s=1}^{n_{\ell-1}}\frac{\sqrt{C_W}}{2\sqrt{n_{\ell-1}}}\mathbb{E}\Big[ \big(W_{1,s}^{'(\ell)}-W^{(\ell)}_{1,s}\big)\cdot\\
\cdot\sigma(z_s^{(\ell-1)}(x^{(k)}))\Big(\frac{\partial f^{(\ell)}_{x^{(i)},x^{(j)}}}{\partial y_k}( z_1^{(\ell)}(\mathcal{X}))-\frac{\partial f^{(\ell)}_{x^{(i)},x^{(j)}}}{\partial y_k}( z_1^{(\ell)(s)}(\mathcal{X}))\\
+\Big\langle\nabla\frac{\partial f^{(\ell)}_{x^{(i)},x^{(j)}}}{\partial y_k}( z_1^{(\ell)}(\mathcal{X})),\frac{\sqrt{C_W}}{\sqrt{n_{\ell-1}}}\big(W_{1,s}^{'(\ell)}-W^{(\ell)}_{1,s}\big)\sigma(z_s^{(\ell-1)}(\mathcal{X}))\Big\rangle\Big)\big|\mathcal{F}_{\ell-1}\Big]\Big|\\
\end{multline*}
\begin{multline*}
\le \Big|\sum_{k=1}^{d}\sum_{u=1}^{d}\mathbb{E}\Big[\Big({K^{(\ell)}_{u,k}-C_b}-\sum_{s=1}^{n_{\ell-1}}\frac{{C_W}}{{n_{\ell-1}}}\sigma(z_s^{(\ell-1)}(x^{(k)}))\sigma(z_s^{(\ell-1)}(x^{(u)}))\Big)\cdot\\
\cdot\frac{\partial^2 f^{(\ell)}_{x^{(i)},x^{(j)}}}{\partial y_u\partial y_k}( z_1^{(\ell)}(\mathcal{X}))\big|\mathcal{F}_{\ell-1}\Big]\Big|
+\Big|\sum_{k=1}^{d}\sum_{u=1}^{d}\mathbb{E}\Big[\sum_{s=1}^{n_{\ell-1}}\frac{{C_W}}{2{n_{\ell-1}}}\Big( \big(W_{1,s}^{(\ell)}-W^{'(\ell)}_{1,s}\big)^2-2\Big) \sigma(z_s^{(\ell-1)}(x^{(k)}))\cdot\\
\cdot\sigma(z_s^{(\ell-1)}(x^{(u)}))\frac{\partial^2 f^{(\ell)}_{x^{(i)},x^{(j)}}}{\partial y_u\partial y_k}( z_1^{(\ell)}(\mathcal{X}))\big|\mathcal{F}_{\ell-1}\Big]\Big|
+\sum_{k=1}^{d}\sum_{s=1}^{n_{\ell-1}}\frac{\sqrt{C_W}}{4\sqrt{n_{\ell-1}}}\mathbb{E}\Big[ \big|W_{1,s}^{'(\ell)}-W^{(\ell)}_{1,s}\big|\cdot\\
\cdot|\sigma(z_s^{(\ell-1)}(x^{(k)}))|\Big\|\operatorname{Hess}\Big(\frac{\partial f^{(\ell)}_{x^{(i)},x^{(j)}}}{\partial y_k}\Big)(\eta^{(\ell,s)})\Big\|_{\mathrm{op}}\Big\|\frac{\sqrt{C_W}}{\sqrt{n_{\ell-1}}}\big(W_{1,s}^{'(\ell)}-W^{(\ell)}_{1,s}\big)\sigma(z_s^{(\ell-1)}(\mathcal{X}))\Big\|^2\big|\mathcal{F}_{\ell-1}\Big],
\end{multline*}
where $\eta^{(\ell,s)}:=t_0z_1^{(\ell)}(\mathcal{X})+(1-t_0)z_1^{(\ell)(s)}(\mathcal{X})$, for a certain $t_0\in [0,1]$. 

From Lemma \ref{sol_st_mult} it results that $\frac{\partial^2 f^{(\ell)}_{x^{(i)},x^{(j)}}}{\partial y_u\partial y_k}(y)=0$ if $k$ or $u$ are different from $i$ and $j$, therefore

\begin{multline*}
B_{\ell}(x^{(i)},x^{(j)})\\
\le 2 \Big|\mathbb{E}\Big[\Big({K^{(\ell)}_{i,j}-C_b}-\sum_{s=1}^{n_{\ell-1}}\frac{{C_W}}{{n_{\ell-1}}} \sigma(z_s^{(\ell-1)}(x^{(i)}))\sigma(z_s^{(\ell-1)}(x^{(j)}))\Big)\frac{\partial^2 f^{(\ell)}_{x^{(i)},x^{(j)}}}{\partial y_i\partial y_j}( z_1^{(\ell)}(\mathcal{X}))\big|\mathcal{F}_{\ell-1}\Big]\Big|\\
+\Big|\sum_{r=i,j}\mathbb{E}\Big[\Big({K^{(\ell)}_{r,r}-C_b}-\sum_{s=1}^{n_{\ell-1}}\frac{{C_W}}{{n_{\ell-1}}} \sigma^2(z_s^{(\ell-1)}(x^{(r)}))\Big)\frac{\partial^2 f^{(\ell)}_{x^{(i)},x^{(j)}}}{\partial y_r^2}( z_1^{(\ell)}(\mathcal{X}))\big|\mathcal{F}_{\ell-1}\Big]\Big|\\
+\Big|\sum_{r=i,j}\sum_{u=i,j}\mathbb{E}\Big[\sum_{s=1}^{n_{\ell-1}}\frac{{C_W}}{2{n_{\ell-1}}} \Big( \big(W_{1,s}^{(\ell)}-W^{'(\ell)}_{1,s}\big)^2-2\Big)\sigma(z_s^{(\ell-1)}(x^{(r)}))\sigma(z_s^{(\ell-1)}(x^{(u)}))\cdot\\
\cdot \frac{\partial^2 f^{(\ell)}_{x^{(i)},x^{(j)}}}{\partial y_r\partial y_u}( z_1^{(\ell)}(\mathcal{X}))\big|\mathcal{F}_{\ell-1}\Big]\Big|
+\sum_{r=i,j}\sum_{s=1}^{n_{\ell-1}}\frac{{C_W^{3/2}}}{4{n_{\ell-1}^{3/2}}}\mathbb{E}\Big[ \big|W_{1,s}^{'(\ell)}-W^{(\ell)}_{1,s}\big|^3|\sigma(z_s^{(\ell-1)}(x^{(r)}))|\cdot\\
\cdot \Big\|\operatorname{Hess}\Big(\frac{\partial f^{(\ell)}_{x^{(i)},x^{(j)}}}{\partial y_r}\Big)(\eta^{(\ell,s)})\Big\|_{\mathrm{op}}\big\|\sigma(z_s^{(\ell-1)}(\mathcal{X}))\big\|^2\big|\mathcal{F}_{\ell-1}\Big],
\end{multline*}

 Using the Cauchy-Schwartz inequality twice and Lemma \ref{norm_inf_hess} it follows that
\begin{multline*}
B_{\ell}(x^{(i)},x^{(j)})\\
\le 2\sum_{r=i,j} {\Big({K^{(\ell)}_{r,r}-C_b}-\sum_{s=1}^{n_{\ell-1}}\frac{{C_W}}{{n_{\ell-1}}} \sigma^2(z_s^{(\ell-1)}(x^{(r)}))\Big)}{\mathbb{E}\Big[\frac{\partial^2 f^{(\ell)}_{x^{(i)},x^{(j)}}}{\partial y_r^2}( z_1^{(\ell)}(\mathcal{X}))\big|\mathcal{F}_{\ell-1}\Big]}\\
+\|\sigma\|_{\text{Lip}}^2
\Big|{K^{(\ell)}_{i,j}-C_b}-\sum_{s=1}^{n_{\ell-1}}\frac{{C_W}}{{n_{\ell-1}}}\sigma(z_s^{(\ell-1)}(x^{(i)}))\sigma(z_s^{(\ell-1)}(x^{(j)}))\Big|\\
+\sum_{r=i,j}\sum_{u=i,j}\sqrt{\mathbb{E}\Big[\Big(\frac{\partial^2 f^{(\ell)}_{x^{(i)},x^{(j)}}}{\partial y_r \partial y_u}( z_1^{(\ell)}(\mathcal{X}))\Big)^2\big|\mathcal{F}_{\ell-1}\Big]}\cdot\\
\cdot \sqrt{\mathbb{E}\Big[\Big(\sum_{s=1}^{n_{\ell-1}}\frac{{C_W}}{2{n_{\ell-1}}}\Big( \big(W_{1,s}^{(\ell)}-W^{'(\ell)}_{1,s}\big)^2-2\Big) \sigma(z_s^{(\ell-1)}(x^{(r)}))\sigma(z_s^{(\ell-1)}(x^{(u)}))\Big)^2\big| \mathcal{F}_{\ell-1}\Big]}\\
+\sum_{r=i,j}\sum_{s=1}^{n_{\ell-1}}\frac{{C_W}^{3/2}}{4{n_{\ell-1}}^{3/2}}|\sigma(z_s^{(\ell-1)}(x^{(r)}))|\|\sigma(z_s^{(\ell-1)}(\mathcal{X}))\|^2\mathbb{E}\Big[ \big|W_{1,s}^{'(\ell)}-W^{(\ell)}_{1,s}\big|^3\cdot\\
\cdot
\Big\|\operatorname{Hess}\Big(\frac{\partial f^{(\ell)}_{x^{(i)},x^{(j)}}}{\partial y_r}\Big)(\eta^{(\ell,s)})\Big\|_{\text{HS}}\big|\mathcal{F}_{\ell-1}\Big]
\end{multline*}
\begin{multline*}
\le 2\sum_{r=i,j} {\mathbb{E}\Big[\frac{\partial^2 f^{(\ell)}_{x^{(i)},x^{(j)}}}{\partial y_r^2}( z_1^{(\ell)}(\mathcal{X}))\big|\mathcal{F}_{\ell-1}\Big]}
{\Big({K^{(\ell)}_{r,r}-C_b}-\sum_{s=1}^{n_{\ell-1}}\frac{{C_W}}{{n_{\ell-1}}} \sigma^2(z_s^{(\ell-1)}(x^{(r)}))\Big)}\\
+\|\sigma\|_{\text{Lip}}^2
\Big|{K^{(\ell)}_{i,j}-C_b}-\sum_{s=1}^{n_{\ell-1}}\frac{{C_W}}{{n_{\ell-1}}} \sigma(z_s^{(\ell-1)}(x^{(i)}))\sigma(z_s^{(\ell-1)}(x^{(j)}))\Big|
\\
+\sum_{r=i,j}\sum_{u=i,j}\sqrt{\mathbb{E}\Big[\Big(\frac{\partial^2 f^{(\ell)}_{x^{(i)},x^{(j)}}}{\partial y_r \partial y_u}( z_1^{(\ell)}(\mathcal{X}))\Big)^2\big|\mathcal{F}_{\ell-1}\Big]}\cdot\\
\cdot\sqrt{\sum_{s=1}^{n_{\ell-1}}\frac{{C_W^2}}{4{n_{\ell-1}^2}}\mathbb{E}\Big[\Big( \big(W_{1,s}^{(\ell)}-W^{'(\ell)}_{1,s}\big)^2-2\Big)^2\Big] \sigma^2(z_s^{(\ell-1)}(x^{(r)}))\sigma^2(z_s^{(\ell-1)}(x^{(u)}))}\\
+\sum_{r=i,j}\sum_{s=1}^{n_{\ell-1}}\frac{{C_W}^{3/2}}{4{n_{\ell-1}}^{3/2}}|\sigma(z_s^{(\ell-1)}(x^{(r)}))|\Big\|\sigma(z_s^{(\ell-1)}(\mathcal{X}))\Big\|^2\sqrt{\mathbb{E}\Big[ \big|W_{1,s}^{'(\ell)}-W^{(\ell)}_{1,s}\big|^6\big|\mathcal{F}_{\ell-1}\Big]}\cdot\\
\cdot\sqrt{\mathbb{E}\Big[
\Big\|\operatorname{Hess}\Big(\frac{\partial f^{(\ell)}_{x^{(i)},x^{(j)}}}{\partial y_r}\Big)(\eta^{(\ell,s)})\Big\|_{\text{HS}}^2\big|\mathcal{F}_{\ell-1}\Big]}.
\end{multline*}
As a consequence,
\begin{multline*}
\sum_{1\le i\ne j\le d} \mathbb{E}\Big[\Big(B_{\ell}(x^{(i)},x^{(j)})\Big)^2\Big]\\
\le 16  \sum_{1\le i\ne j\le d}\mathbb{E}\Bigg[\mathbb{E}\Big[\frac{\partial^2 f^{(\ell)}_{x^{(i)},x^{(i)}}}{\partial y_i^2}( z_1^{(\ell)}(\mathcal{X}))\big|\mathcal{F}_{\ell-1}\Big]^2
\Big({K^{(\ell)}_{i,i}-C_b}-\sum_{s=1}^{n_{\ell-1}}\frac{{C_W}}{{n_{\ell-1}}} \sigma^2(z_s^{(\ell-1)}(x^{(i)}))\Big)^2\Bigg]\\
+4\|\sigma\|_{\text{Lip}}^4\sum_{1\le i\ne j\le d}
 \mathbb{E}\Bigg[
\Big|{K^{(\ell)}_{i,j}-C_b}-\sum_{s=1}^{n_{\ell-1}}\frac{{C_W}}{{n_{\ell-1}}} \sigma(z_s^{(\ell-1)}(x^{(i)}))\sigma(z_s^{(\ell-1)}(x^{(j)}))\Big|^2\Bigg]\\
+32 \sum_{i=1}^d \sum_{j=1,j\ne i}^d \mathbb{E}\Bigg[{\mathbb{E}\Big[\Big(\frac{\partial^2 f^{(\ell)}_{x^{(i)},x^{(j)}}}{\partial y_i^2 }( z_1^{(\ell)}(\mathcal{X}))\Big)^2\big|\mathcal{F}_{\ell-1}\Big]}\cdot\\
\cdot \sum_{s=1}^{n_{\ell-1}}\frac{{C_W^2}}{4{n_{\ell-1}^2}}\mathbb{E}\Big[\Big( \big(W_{1,s}^{(\ell)}-W^{'(\ell)}_{1,s}\big)^2-2\Big)^2\Big] \sigma^4(z_s^{(\ell-1)}(x^{(i)}))\Bigg]\\
+32\sum_{1\le i\ne j\le d}\mathbb{E}\Bigg[{\mathbb{E}\Big[\Big(\frac{\partial^2 f^{(\ell)}_{x^{(i)},x^{(j)}}}{\partial y_i \partial y_j}( z_1^{(\ell)}(\mathcal{X}))\Big)^2\big|\mathcal{F}_{\ell-1}\Big]}\cdot\\
\cdot{\sum_{s=1}^{n_{\ell-1}}\frac{{C_W^2}}{4{n_{\ell-1}^2}}\mathbb{E}\Big[\Big( \big(W_{1,s}^{(\ell)}-W^{'(\ell)}_{1,s}\big)^2-2\Big)^2\Big] \sigma^2(z_s^{(\ell-1)}(x^{(i)}))\sigma^2(z_s^{(\ell-1)}(x^{(j)}))}\Bigg]\\
+32\sum_{1\le i\ne j\le d}\mathbb{E}\Bigg[\Bigg(\sum_{s=1}^{n_{\ell-1}}\frac{{C_W}^{3/2}}{4{n_{\ell-1}}^{3/2}}|\sigma(z_s^{(\ell-1)}(x^{(i)}))|\Big\|\sigma(z_s^{(\ell-1)}(\mathcal{X}))\Big\|^2\sqrt{\mathbb{E}\Big[ \big|W_{1,s}^{'(\ell)}-W^{(\ell)}_{1,s}\big|^6\Big]}\cdot\\
\cdot\sqrt{\mathbb{E}\Big[
\Big\|\operatorname{Hess}\Big(\frac{\partial f^{(\ell)}_{x^{(i)},x^{(j)}}}{\partial y_i}\Big)(\eta^{(\ell,s)})\Big\|_{\text{HS}}^2\big|\mathcal{F}_{\ell-1}\Big]}\Bigg)^2\Bigg]
\end{multline*}
Using the bound (\ref{der_sec_simp}) for the second derivative of $f^{(\ell)}_{x^{(i)},x^{(j)}}$ it follows that:
\,

\,

(I)
\begin{multline*}
\sum_{1\le i\ne j\le d}\mathbb{E}\Bigg[\mathbb{E}\Big[\frac{\partial^2 f^{(\ell)}_{x^{(i)},x^{(j)}}}{\partial y_i^2}( z_1^{(\ell)}(\mathcal{X}))\big|\mathcal{F}_{\ell-1}\Big]^2
\Big({K^{(\ell)}_{i,i}-C_b}-\sum_{s=1}^{n_{\ell-1}}\frac{{C_W}}{{n_{\ell-1}}} \sigma^2(z_s^{(\ell-1)}(x^{(i)}))\Big)^2\Bigg]\\
\le\Bigg[\Bigg(4\|\sigma\|_{\text{Lip}}^2\|(K^{(\ell)})^{-1}\|^2_{\text{HS}}\,\operatorname{tr}K^{(\ell)}\Big(\|\sigma(z^{(\ell)}_1(\mathcal{X}))\|^2+\|\sigma\|_{\text{Lip}}^2\|z^{(\ell)}_1(\mathcal{X})\|^2\Big)\\
+2\|\sigma\|_{\text{Lip}}^4{\|(K^{(\ell)})^{-1}\|^2}_{\text{HS}}\,\operatorname{tr}K^{(\ell)}\Bigg) \sum_{i=1}^d\Big({K^{(\ell)}_{i,i}-C_b}-\sum_{s=1}^{n_{\ell-1}}\frac{{C_W}}{{n_{\ell-1}}} \sigma^2(z_s^{(\ell-1)}(x^{(i)}))\Big)^2\Bigg]
\end{multline*}

\begin{multline*}
\le 2\|\sigma\|_{\text{Lip}}^2\|(K^{(\ell)})^{-1}\|^2_{\text{HS}}\,\operatorname{tr}K^{(\ell)}\Bigg(2\mathbb{E}\Big[\|\sigma(z^{(\ell)}_1(\mathcal{X}))\|^4\Big]^{1/2}+2\|\sigma\|_{\text{Lip}}^2\mathbb{E}\Big[\|z^{(\ell)}_1(\mathcal{X})\|^4\Big]^{1/2}\\
+\|\sigma\|_{\text{Lip}}^2\Bigg) \sum_{i=1}^d\mathbb{E}\Bigg[\Big({K^{(\ell)}_{i,i}-C_b}-\sum_{s=1}^{n_{\ell-1}}\frac{{C_W}}{{n_{\ell-1}}} \sigma^2(z_s^{(\ell-1)}(x^{(i)}))\Big)^4\Bigg]^{1/2},
\end{multline*}
\begin{multline*}
= 2\|\sigma\|_{\text{Lip}}^2\|(K^{(\ell)})^{-1}\|^2_{\text{HS}}\,\operatorname{tr}K^{(\ell)}\sum_{i=1}^d\sqrt{Q_4^{(\ell-1)}(x^{(i)})}\cdot\\
\cdot\Bigg(2\mathbb{E}\Big[\|\sigma(z^{(\ell)}_1(\mathcal{X}))\|^4\Big]^{1/2}+2\|\sigma\|_{\text{Lip}}^2\mathbb{E}\Big[\|z^{(\ell)}_1(\mathcal{X})\|^4\Big]^{1/2}
+\|\sigma\|_{\text{Lip}}^2\Bigg) ,
\end{multline*}
where $Q_4^{(\ell-1)}(x^{(i)})$ is defined in \eqref{Q_k};
\,

\,

(II)
\begin{multline*}
   \sum_{i=1}^d \sum_{j=1,j\ne i}^d \mathbb{E}\Bigg[{\mathbb{E}\Big[\Big(\frac{\partial^2 f^{(\ell)}_{x^{(i)},x^{(j)}}}{\partial y_i^2 }( z_1^{(\ell)}(\mathcal{X}))\Big)^2\big|\mathcal{F}_{\ell-1}\Big]}\cdot\\
\cdot \sum_{s=1}^{n_{\ell-1}}\frac{{C_W^2}}{4{n_{\ell-1}^2}}\mathbb{E}\Big[\Big( \big(W_{1,s}^{(\ell)}-W^{'(\ell)}_{1,s}\big)^2-2\Big)^2\Big] \sigma^4(z_s^{(\ell-1)}(x^{(i)}))\Bigg]
\end{multline*}
\begin{multline*}
\le \frac{{C_W^2}}{4{n_{\ell-1}^2}}\mathbb{E}\Bigg[\Bigg(4\|\sigma\|_{\text{Lip}}^2\|(K^{(\ell)})^{-1}\|^2_{\text{HS}}\,\operatorname{tr}K^{(\ell)}\Big(\|\sigma(z^{(\ell)}_1(\mathcal{X}))\|^2+\|\sigma\|_{\text{Lip}}^2\|z^{(\ell)}_1(\mathcal{X})\|^2\Big)\\
+2\|\sigma\|_{\text{Lip}}^4{\|(K^{(\ell)})^{-1}\|^2}_{\text{HS}}\,\operatorname{tr}K^{(\ell)}\Bigg)\sum_{s=1}^{n_{\ell-1}}\|\sigma(z^{(\ell-1)}_s(\mathcal{X}))\|^4\Bigg]\operatorname{Var}\Big((W_{1,1}^{(1)}-W_{1,1}^{'(1)})^2\Big)
\end{multline*}

\begin{multline*}
\le \frac{4{C_W^2}}{{n_{\ell-1}}}\|\sigma\|_{\text{Lip}}^2\|(K^{(\ell)})^{-1}\|^2_{\text{HS}}\,\operatorname{tr}K^{(\ell)}\mathbb{E}\Big[\|\sigma(z^{(\ell-1)}_1(\mathcal{X}))\|^8\Big]^{1/2}\mathbb{E}\Big[(W_{1,1}^{(1)})^4\Big]\cdot\\
\cdot \Bigg(2\mathbb{E}\Big[\|\sigma(z^{(\ell)}_1(\mathcal{X}))\|^4\Big]^{1/2}+2\|\sigma\|_{\text{Lip}}^2\mathbb{E}\Big[\|z^{(\ell)}_1(\mathcal{X})\|^4\Big]^{1/2}
+\|\sigma\|_{\text{Lip}}^2\Bigg).
\end{multline*}

Moreover, applying Lemma \ref{der_ter_lemma} and using that for every $s=1,\dots, n_{\ell-1}$ the random variable defined in \eqref{z_1_l_s}, $z_1^{(\ell)}(\mathcal{X})$, is equal in law to $z_1^{(\ell)(s)}(\mathcal{X})$, one can easily show that

\begin{multline*}
\sum_{1\le i\ne j\le d}\mathbb{E}\Bigg[\Bigg(\sum_{s=1}^{n_{\ell-1}}\frac{{C_W}^{3/2}}{4{n_{\ell-1}}^{3/2}}|\sigma(z_s^{(\ell-1)}(x^{(i)}))|\Big\|\sigma(z_s^{(\ell-1)}(\mathcal{X}))\Big\|^2\sqrt{\mathbb{E}\Big[ \big|W_{1,s}^{'(\ell)}-W^{(\ell)}_{1,s}\big|^6\Big]}\cdot\\
\cdot\sqrt{\mathbb{E}\Big[
\Big\|\operatorname{Hess}\Big(\frac{\partial f^{(\ell)}_{x^{(i)},x^{(j)}}}{\partial y_i}\Big)(\eta^{(\ell,s)})\Big\|_{\text{HS}}^2\big|\mathcal{F}_{\ell-1}\Big]}\Bigg)^2\Bigg]
\end{multline*}
\begin{multline*}
    \le\frac{16{C_W}^{3}\|\sigma\|_{\text{Lip}}^2}{{n_{\ell-1}}}(1+\|\sigma\|_{\text{Lip}}^2)(1+C_W)\|(K^{(\ell)})^{-1}\|^2_{\text{HS}}{\mathbb{E}\Big[ \big(W_{1,1}^{(1)}\big)^6\Big]^2}\cdot\\
\cdot\Big(\operatorname{tr}K^{(\ell)}+\sqrt{d}\mathbb{E}\Big[\|z_1^{(\ell)}(\mathcal{X})\|^4\Big]^{1/2}
+2\sqrt{d}\mathbb{E}\Big[\|\sigma(z_1^{(\ell-1)}(\mathcal{X})\|^4\Big]^{1/2}+2\sqrt{d}\mathbb{E}\Big[\|\sigma(z_1^{(\ell)}(\mathcal{X})\|^4\Big]^{1/2}\Big)\\
\cdot \mathbb{E}\Big[\|\sigma(z_1^{(\ell-1)}(\mathcal{X}))\|^{12}\Big]^{1/2}
\Big(4d
+5\|(K^{(\ell)})^{-1}\|^2_{op}\,{(\operatorname{tr}K^{(\ell)})^2}
+12\Big)
\end{multline*}

Hence, applying the previous bounds and Lemma \ref{norm_inf_hess},
\begin{multline*}
    \sum_{1\le i\ne j\le d} \mathbb{E}\Big[\Big(B_{\ell}(x^{(i)},x^{(j)})\Big)^2\Big]
\le  
32\|\sigma\|_{\text{Lip}}^2\|(K^{(\ell)})^{-1}\|^2_{\text{HS}}\,\operatorname{tr}K^{(\ell)}\sum_{i=1}^d\sqrt{Q_4^{(\ell-1)}(x^{(i)})}\cdot\\
\cdot\Bigg(2\mathbb{E}\Big[\|\sigma(z^{(\ell)}_1(\mathcal{X}))\|^4\Big]^{1/2}+2\|\sigma\|_{\text{Lip}}^2\mathbb{E}\Big[\|z^{(\ell)}_1(\mathcal{X})\|^4\Big]^{1/2}
+\|\sigma\|_{\text{Lip}}^2\Bigg)
\\
+8\|\sigma\|_{\text{Lip}}^4\frac{C_W^2}{n_{\ell-1}}\mathbb{E}\Big[\|\sigma(G_1^{(\ell-1)}(\mathcal{X})\|^4\Big]+8\|\sigma\|_{\text{Lip}}^4\frac{C_W^2}{n_{\ell-1}}\mathbb{E}\Big[\|\sigma(z_1^{(\ell-1)}(\mathcal{X})\|^4\Big]\\
+4C_W^2\|\sigma\|_{\text{Lip}}^4
\sum_{1\le i\ne j\le d} \mathbb{E}\Big[\Big(B_{\ell-1}(x^{(i)},x^{(j)})\Big)^2\Big]
\\
+ \frac{2^7{C_W^2}}{{n_{\ell-1}}}\|\sigma\|_{\text{Lip}}^2\|(K^{(\ell)})^{-1}\|^2_{\text{HS}}\,\operatorname{tr}K^{(\ell)}\mathbb{E}\Big[\|\sigma(z^{(\ell-1)}_1(\mathcal{X}))\|^8\Big]^{1/2}\mathbb{E}\Big[(W_{1,1}^{(1)})^4\Big]\cdot\\
\cdot \Bigg(2\mathbb{E}\Big[\|\sigma(z^{(\ell)}_1(\mathcal{X}))\|^4\Big]^{1/2}+2\|\sigma\|_{\text{Lip}}^2\mathbb{E}\Big[\|z^{(\ell)}_1(\mathcal{X})\|^4\Big]^{1/2}
+\|\sigma\|_{\text{Lip}}^2\Bigg)
\\
+16\|\sigma\|_{\text{Lip}}^4\frac{{C_W^2}}{{n_{\ell-1}}}\mathbb{E}\Big[( W_{1,1}^{(1)})^4\Big]\mathbb{E}\Big[ \|\sigma(z_1^{(\ell-1)}(\mathcal{X}))\|^4\Big]\\
+\frac{2^9{C_W}^{3}\|\sigma\|_{\text{Lip}}^2}{{n_{\ell-1}}}(1+\|\sigma\|_{\text{Lip}}^2)(1+C_W)\|(K^{(\ell)})^{-1}\|^2_{\text{HS}}{\mathbb{E}\Big[ \big(W_{1,1}^{(1)}\big)^6\Big]^2}\cdot\\
\cdot\Big(\operatorname{tr}K^{(\ell)}+\sqrt{d}\mathbb{E}\Big[\|z_1^{(\ell)}(\mathcal{X})\|^4\Big]^{1/2}
+2\sqrt{d}\mathbb{E}\Big[\|\sigma(z_1^{(\ell-1)}(\mathcal{X})\|^4\Big]^{1/2}+2\sqrt{d}\mathbb{E}\Big[\|\sigma(z_1^{(\ell)}(\mathcal{X})\|^4\Big]^{1/2}\Big)\\
\cdot \mathbb{E}\Big[\|\sigma(z_1^{(\ell-1)}(\mathcal{X}))\|^{12}\Big]^{1/2}
\Big(4d
+5\|(K^{(\ell)})^{-1}\|^2_{op}\,{(\operatorname{tr}K^{(\ell)})^2}
+12\Big).
\end{multline*}

After some computations,
\[
\sum_{1\le i\ne j\le d} \mathbb{E}\Big[\Big(B_{\ell}(x^{(i)},x^{(j)})\Big)^2\Big]\le V_1^{(\ell)(n)}(\mathcal{X})+V_2\sum_{1\le i\ne j\le d} \mathbb{E}\Big[\Big(B_{\ell-1}(x^{(i)},x^{(j)})\Big)^2\Big],
\]
where
\[
V_2:=4C_W^2\|\sigma\|_{\text{Lip}}^4
\]
and
\begin{multline*}
    V_1^{(\ell)(n)}(\mathcal{X}):=2^9\Big(1+\|\sigma\|_{\text{Lip}}^2\Big)^2(1+C_W)^4\mathbb{E}\Big[ \big(W_{1,1}^{(1)}\big)^6\Big]^2\Bigg(\operatorname{tr} K^{(\ell)}+2\mathbb{E}\Big[\|\sigma(z^{(\ell)}_1(\mathcal{X}))\|^4\Big]^{1/2}\\
   +2(\|\sigma\|_{\text{Lip}}^2+1)\mathbb{E}\Big[\|z^{(\ell)}_1(\mathcal{X})\|^4\Big]^{1/2}
   +\mathbb{E}\Big[\|\sigma(G_1^{(\ell-1)}(\mathcal{X})\|^4\Big]^{1/2}
+2\mathbb{E}\Big[\|\sigma(z^{(\ell-1)}_1(\mathcal{X}))\|^4\Big]^{1/2}+\|\sigma\|_{\text{Lip}}^2\Bigg)\cdot\\
\Bigg(1+\mathbb{E}\Big[\|\sigma(G_1^{(\ell-1)}(\mathcal{X})\|^4\Big]^{1/2}+\mathbb{E}\Big[\|\sigma(z_1^{(\ell-1)}(\mathcal{X}))\|^{12}\Big]^{1/2}\Bigg)\cdot\\
\cdot \Big(1+\|(K^{(\ell)})^{-1}\|^2_{\text{HS}}\Big)\Big(1+ {(\operatorname{tr}K^{(\ell)})^2}\Big)\Bigg\{\sum_{i=1}^d\sqrt{Q_4^{(\ell-1)}(x^{(i)})}
+\frac{2}{n_{\ell-1}}
\\
+\frac{1}{{n_{\ell-1}}}\Big(1+\sqrt{d}\Big)
\Big(4d+5\|(K^{(\ell)})^{-1}\|^2_{op}+12\Big)\Bigg\}.
\end{multline*}
Therefore
\begin{multline*}
\sum_{1\le i\ne j\le d} \mathbb{E}\Big[\Big(B_{\ell}(x^{(i)},x^{(j)})\Big)^2\Big]\\
\le \sum_{r=0}^{\ell-3}V_1^{(\ell-r)(n)}(\mathcal{X})(V_2)^r+(V_2)^{\ell-2}\sum_{1\le i\ne j\le d} \mathbb{E}\Big[\Big(B_{2}(x^{(i)},x^{(j)})\Big)^2\Big],
\end{multline*}
where
\[
\mathbb{E}[(B^{(2)})^2]\le V_1^{(2)},
\]
and so
\[
\sum_{1\le i\ne j\le d} \mathbb{E}\Big[\Big(B_{\ell}(x^{(i)},x^{(j)})\Big)^2\Big]\le \sum_{r=0}^{\ell-2}V_1^{(\ell-r)(n)}(\mathcal{X})(V_2)^r.
\]
\subsection{Proof of Theorem \ref{th_fin_mult_dim}}
From Theorem \ref{mod_KG}, one deduces that

\begin{multline*}
d_C(z_1^{(L+1)}(\mathcal{X}),G_1^{(L+1)}(\mathcal{X}))\le
541\cdot d^4 \sqrt{C_W}(1+C_W)\max\{1,\|(K^{(L+1)})^{-1}\|_{\mathrm{op}}^2\}\cdot\\
\cdot\mathbb{E}\Big[|W_{1,1}^{(1)}|^6\Big]^{1/2}\frac{1}{\sqrt{n_{L}}} \Bigg\{43\Big(1+\mathbb{E}\Big[\|\sigma(z_1^{(L)}(\mathcal{X})\|^6\Big]^{1/2}+\mathbb{E}\Big[\|\sigma(G_1^{(L)}(\mathcal{X})\|^4\Big]^{1/2}\Big)\\
+\sqrt{2}\sqrt{n_{L}}\Bigg(\sum_{j,k=1}^{d}\mathbb{E}\Big[(B_{L}(x^{(j)},x^{(k)}))^2\Big]\Bigg)^{1/2}\Bigg\}.
\end{multline*}
Suppose now $C_b,C_W\ne 0$ and use 
Proposition \ref{mom_z_prop} to bound the moments of the neural network and Lemma \ref{cost_schif}, Lemma \ref{somma_schifa} and Remark \ref{bound_V_1} to bound $\sum_{j=1}^d\sum_{k=1}^{d}\mathbb{E}\Big[(B_{L}(x^{(j)},x^{(k)}))^2\Big]$. 
After some computations and applying Remark \ref{HS_bondK} to bound the inverse of the minimum eigenvalue of $K^{(\ell)}$ for $\ell=1,\dots,L+1$, we obtain the desired bound.

\end{document}